\documentclass[11pt, reqno, a4paper]{amsart}

\usepackage{fullpage}

\usepackage{amssymb, amsmath, amscd}
\usepackage{amsthm}

\usepackage{verbatim}

\usepackage{color}

\usepackage{etex}
\usepackage{dsfont}
\usepackage[matrix, arrow, curve, frame, arc]{xy}
\usepackage{pgf}
\usepackage{tikz-cd}
\usetikzlibrary{arrows,shapes,automata,backgrounds,petri,calc}
\usepackage[square,sort,comma,numbers]{natbib}
 \usepackage{url}
 \usepackage{hyperref}
 \usepackage[shortlabels]{enumitem} 

\newcommand{\tikzAngleOfLine}{\tikz@AngleOfLine}
\def\tikz@AngleOfLine(#1)(#2)#3{%
\pgfmathanglebetweenpoints{%
\pgfpointanchor{#1}{center}}{%
\pgfpointanchor{#2}{center}}
\pgfmathsetmacro{#3}{\pgfmathresult}%
}

\newcommand{\cO}{\mathcal{O}}
\newcommand{\bN}{\mathbb{N}}

\newcommand{\cB}{\mathcal{B}}
\newcommand{\wt}{\widetilde}
\newcommand{\ba}{\bar{\alpha}}
\newcommand{\La}{\Lambda}
\newcommand{\vf}{\varphi}
\newcommand{\ve}{\varepsilon}

\newcommand{\cP}{\mathcal{P}}

\newcommand{\bD}{\mathbb{D}}
\newcommand{\bE}{\mathbb{E}}

\newcommand{\rad}{\operatorname{rad}}
\newcommand{\soc}{\operatorname{soc}}
\renewcommand{\mod}{\operatorname{mod}}
\newcommand{\Hom}{\operatorname{Hom}} 
\newcommand{\ima}{\operatorname{im}}
\newcommand{\alg}{\operatorname{alg}} 
\newcommand{\opo}{\operatorname{op}}

\def\vec#1{\left[\begin{smallmatrix}#1\end{smallmatrix}\right]}

\begin{document}

\newtheorem{defi}{Definition}[section]
\newtheorem{rem}[defi]{Remark}
\newtheorem{prop}[defi]{Proposition}
\newtheorem{ques}[defi]{Question}
\newtheorem{lemma}[defi]{Lemma}
\newtheorem{cor}[defi]{Corollary}
\newtheorem{thm}[defi]{Theorem}
\newtheorem{expl}[defi]{Example} 
\newtheorem*{mthm}{Main Theorem}

\parindent0pt

\title[Period four]{Algebras of generalized quaternion type: biregular case$^*$ \footnote{\tiny $^*$ This research has been 
supported from the grant no. 2023/51/D/ST1/01214 of the Polish National Science Center}} 

\author[K. Erdmann]{Karin Erdmann}
\address[Karin Erdmann]{Mathematical Institute, University of Oxford, ROQ, Oxford OX2 6GG, United Kingdom} 
\email{erdmann@maths.ox.ac.uk}

\author[A. Hajduk]{Adam Hajduk}
\address[Adam Hajduk]{Faculty of Mathematics and Computer Science, 
Nicolaus Copernicus University, Chopina 12/18, 87-100 Toru\'n, Poland}
\email{ahajduk@mat.umk.pl} 

\author[A. Skowyrski]{Adam Skowyrski}
\address[Adam Skowyrski]{Faculty of Mathematics and Computer Science, 
Nicolaus Copernicus University, Chopina 12/18, 87-100 Toru\'n, Poland}
\email{skowyr@mat.umk.pl}

\subjclass[2020]{Primary: 16D50, 16E30, 16G20, 16G60}
\keywords{Symmetric algebra, Tame algebra, Periodic algebra, Generalized quaternion type, Weighted surface algebra, 
Gabriel quiver}

\begin{abstract} This paper provides the next step towards classification of all algebras of generalized 
quaternion type. We extend the results of \cite{AGQT}, where such classification has been obtained in the case, 
when the Gabriel quiver of an algebra is $2$-regular (meaning two arrows start and end at each vertex). 
A natural generalization motivated by existing examples of weighted surface algebras leads to the study of 
algebras of generalized quaternion type with Gabriel quivers being biregular, i.e. consisting of $1$-regular 
and $2$-regular vertices. In the main result, we prove that any such an algebra is (up to socle equivalence) 
either a weighted surface algebra \cite{WSA} or the so called higher spherical algebra \cite{HSS}. \end{abstract}

\maketitle

\section{Introduction}\label{sec:1} 
In this paper, an algebra is a finite-dimensional, basic, indecomposable $K$-algebra over a fixed 
algebraically closed field $K$. For an algebra $\La$, we denote by $\mod\La$ its module category, consisting of 
finitely generated right $\La$-modules, and by $D$ the standard duality $\Hom_K(-,K)$. Any algebra admits 
a presentation by quiver and relations, that is, we have an isomorphism $\La=KQ/I$, where $KQ$ is the 
path algebra of a quiver $Q$, and $I$ is an admissible ideal of $KQ$ (for details see Section \ref{sec:2}). 
The quiver $Q$ is uniquely determined (up to permutation of vertices), it is called the Gabriel quiver 
of $\La$, and denoted by $Q_\La$. 

Recall that an algebra 
$\La$ is called self-injective, provided that $\La$ is injective module in $\mod\La$. We will focus on the 
symmetric algebras, that is self-injective algebras $\La$, for which there exists an associative, non-degenerate 
symmetric $K$-bilinear form $(-,-):\La\times\La\to K$, or equivalently, $\La$ and $D(\La)$ are isomorphic as 
$\La$-bimodules (see \cite[Theorem IV.2.2]{SkY}). By the remarkable Tame and Wild theorem \cite{Dro} (see also \cite{CB}), 
every algebra is either tame or wild, where by a tame algebra we mean an algebra $\La$, whose indecomposable modules 
in $\mod\La$ occur (in each dimension) in a finite number of discrete and a finite number of one-parameter families. 
Note that any representation-finite algebra, i.e. algebra $\La$ with finitely many isoclasses of indecomposable 
modules in $\mod\La$ is tame. \smallskip 

We are especially interested in the class of periodic algebras. We recall that a module $M$ in $\mod\La$ is 
periodic, when it is periodic with respect to syzygy, i.e. $\Omega_\La^d(M)\simeq M$, for some $d\geqslant 1$, 
where $\Omega_\La$ is the syzygy operator, associating to a module $X$ the kernel of its projective cover in 
$\mod\La$ (see Section \ref{sec:2}). By a periodic algebra we mean an algebra $\La$, such that $\La$ is periodic 
as a $\La$-bimodule, equivalently, as a module over the enveloping algebra $\La^e=\La^{\opo}\otimes_K\La$. \medskip 

Our main concern is the classification of all tame symmetric periodic algebras of period four (TSP4 algebras), 
or slightly more generally, all the so called algebras of generalized quaternion type (GQT algebras, for short). 
Following \cite{AGQT}, an algebra $\La$ is of generalized quaternion type if and only if $\La$ is tame, symmetric 
of infinite representation type, and every simple module in $\mod\La$ is periodic of period $4$. We note that any TSP4 
algebra of infinite representation type is automatically a GQT algebra, but the converse is an open question (see 
\cite{perconj}; see also \cite{note}). For a relevant background and motivations related to TSP4 
(or GQT) algebras we refer to the introductions of papers \cite{WSA,AGQT}. \smallskip 

The main class of algebras involved in the classification are the so called weighted 
surface algebras (WSA's), introduced and investigated in \cite{WSA,WSA-GV,WSA-corr}. Note that any weighted 
surface algebra $\La=KQ/I$ is determined by a triangulation quiver $Q$ (coming from a surface) and a set of 
weights and parameters defining the ideal $I$. We skip the details of the construction (more in Section 
\ref{sec:HSA}), we only mention that most WSA's are TSP4 (and GQT), except four peculiar families, consisted of the 
so called exceptional algebras. \smallskip 

We recall that the weighted surface algebras not only provide a variety of examples of TSP4, but 
also exhaust all TSP4 (or GQT) algebras in some cases. It is conjectured in general, and confirmed already 
for $2$-regular algebras in \cite[see Main Theorem]{AGQT}, where by a $2$-regular algebra we mean an algebra 
$\La$ with $2$-regular Gabriel quiver $Q_\La$, that is, every vertex of $Q_\La$ is $2$-regular, i.e. two arrows 
start and two arrows end at the vertex. Namely, it has been shown in \cite{AGQT} that for any $2$-regular algebra $\La$ 
(with $Q_\La$ having at least three vertices), $\La$ is TSP4 if and only if $\La$ is GQT if and only if $\La$ is isomorphic 
to a weighted surface algebra (up to socle equivalence) different from the exceptional algebras, or to the so called 
higher tetrahedral algebra (see \cite{HTA}). The higher tetrahedral algebras form an exotic family given by a triangulation 
quiver $Q_\La$, but not isomorphic to WSA's. The main result of this paper is an extension of this classification to 
the biregular case, and we will see that there is an analogous division of the algebras into the weigthed surface ones 
and remaining `higher' versions, which are closely related. \medskip 

The main result of \cite{AGQT} deals with $2$-regular algebras, for which $Q=Q_\La$ is a triangulation quiver. 
This gives a part of WSA's, for which the quiver $Q$ does not contain the so called virtual arrows (see Section 
\ref{sec:HSA}). General WSA \cite{WSA-GV} is of the form $\La=KQ/I$, where possibly $Q\neq Q_\La$ contains virtual arrows, not 
contained in the Gabriel quiver of $\La$. In this case, the Gabriel quiver $Q_\La$ consists of $2$-regular vertices 
and $1$-regular vertices (one arrow starts and ends at the vertex), which means that $Q_\La$ is biregular. Moreover, 
the $1$-regular vertices are contained in two types of blocks of the following form 
$$\xymatrix@R=.7ex{&&& &\bullet\ar[rd]& \\ 
\circ\ar@<+.4ex>[r]&\bullet_i\ar@<+.4ex>[l]&\mbox{ or }& \circ\ar[ru] && \circ\ar[ld]  \\ 
&&& &\ar[lu]\bullet&}$$
denoted by $V_1$ and $V_2$, respectively (see also the introduction of \cite{EHS2}); note that the vertices 
marked by $\bullet$ are $1$-regular, while $\circ$ are $2$-regular vertices of $Q_\La$. The property was confirmed 
in general, i.e. for arbitrary GQT algebras with biregular Gabriel quiver (see Theorem \ref{prop:4.3}), which was 
the main result of a preparatory paper \cite{EHS2}. A natural step forward was to generalize the classification 
for biregular algebras. It led to the following theorem, which is the main result of this article.

\begin{mthm} Let $\La$ be an algebra whose Gabriel quiver is biregular and has at least three vertices. 
Then the following conditions are equivalent. 
\begin{enumerate}
\item[(i)] $\Lambda$ is a TSP4 algebra. 
\item[(ii)] $\La$ is a GQT algebra.   
\item[(iii)] $\La$ is isomorphic to a weighted surface algebra different from an exceptional algebra, or 
it is isomorphic to a higher spherical algebra or a higher tetrahedral algebra. \end{enumerate} 
\end{mthm}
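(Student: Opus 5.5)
The implications (i)$\Rightarrow$(ii) and (iii)$\Rightarrow$(i) are largely known, so the real content is (ii)$\Rightarrow$(iii). For (i)$\Rightarrow$(ii): a TSP4 algebra is tame and symmetric. If $\La$ is periodic of period $4$ as a bimodule, then every simple module has $\Omega^4$-period dividing $4$. A biregular Gabriel quiver with at least three vertices rules out representation-finite type, so the period is exactly $4$ and $\La$ is GQT. For (iii)$\Rightarrow$(i) I cite the literature. Non-exceptional weighted surface algebras are TSP4 by \cite{WSA,WSA-GV}. Higher tetrahedral algebras are TSP4 by \cite{HTA}, and higher spherical algebras by \cite{HSS}. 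I then check that the Gabriel quivers in all these families are indeed biregular, which holds because virtual arrows yield exactly the blocks $V_1$, $V_2$.

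For (ii)$\Rightarrow$(iii), the first step is to invoke Theorem \ref{prop:4.3} from \cite{EHS2}. It tells me that every $1$-regular vertex of $Q_\La$ lies in a block of type $V_1$ or $V_2$. If there are no $1$-regular vertices, $\La$ is $2$-regular and the Main Theorem of \cite{AGQT} gives the conclusion directly. Otherwise, I would build a triangulation quiver $Q$ from $Q_\La$ by adding virtual arrows. In a $V_1$ block I add a loop at the $1$-regular vertex $i$. In a $V_2$ block I add an arrow between the two $1$-regular vertices, so that together with the existing arrows they complete two triangles. I then have to check that the resulting permutation $f$ of the arrows, of order $3$ with triangles, together with the induced permutation $g$, gives a genuine triangulation quiver.

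The second step is to recover the relations. I use the periodicity of the simple modules: the minimal projective resolution $0\to S_i\to P_i\to \bigoplus P_j\to\bigoplus P_k\to P_i\to S_i\to 0$ forces the dimension vectors and the shape of the relations, exactly as in \cite{AGQT}. Let $\alpha$ be an arrow starting at a $2$-regular vertex. Comparing $\Omega^2(S_i)$ with $\Omega^{-2}(S_i)$ produces commutativity-type relations $\alpha f(\alpha) = c_{\bar\alpha} A_{\bar\alpha}$, where $A_{\bar\alpha}$ is a path along a $g$-orbit, together with zero relations. At the $1$-regular vertices, the virtual arrow $f(\alpha)$ gets replaced by the corresponding $g$-cycle monomial. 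This is where the weights $m_\cO$ and parameters $c_\cO$ appear. The tame/symmetric assumptions, together with the length computations from periodicity (the Cartan matrix being determined by $\Omega^4$), pin down that all relevant paths have the predicted lengths up to socle.

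The main obstacle is distinguishing the generic weighted surface algebra case from the exotic families. Here the local data around the blocks is constrained, and the relations may fail to admit the standard normal form. Concretely, I expect a small number of configurations, namely small spherical or tetrahedral triangulation quivers with virtual arrows, in which certain weights are forced to be $1$ or $2$. In these configurations the commutativity relations become deformed: a path must equal a scalar combination of two different $g$-cycles rather than one. My approach is as follows. I enumerate the possible global shapes of $Q$ with few orbits using the constraint from \cite{EHS2}, and I normalize the parameters by changing arrows. Then I show that the deformed relations yield either a higher spherical algebra or a higher tetrahedral algebra, and otherwise can be normalized to a weighted surface algebra. Finally, I must exclude the exceptional algebras, since these fail periodicity and therefore contradict (ii), and show that socle deformations are trivial over the given quiver shapes. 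This step needs a case-by-case analysis of dimensions of $e_i\La e_j$, which is where most of the work will lie.
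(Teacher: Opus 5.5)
Your treatment of (i)$\Rightarrow$(ii) and (iii)$\Rightarrow$(i) is fine, with one small slip. The simples have period exactly $4$, rather than $1$ or $2$, because of the quiver shape, not because of representation-infiniteness; the latter is a separate requirement in the GQT definition.

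\textbf{The gap.} The real gap is in (ii)$\Rightarrow$(iii), where the decisive step is missing.

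The exact sequences $0\to S_i\to P_i\to P_i^-\to P_i^+\to P_i\to S_i\to 0$ are the right tool. However, they do not yield WSA-shaped relations ``exactly as in \cite{AGQT}'', because those arguments use $2$-regularity of all neighbours. In the biregular setting several new things happen:
\begin{enumerate}
\item[(a)] no path of length two starting at a $1$-vertex is involved in a minimal relation;
\item[(b)] the minimal relations through blocks $V_1$, $V_2$ have leading terms of length three;
\item[(c)] critical blocks occur;
\item[(d)] $e_iJ^2/e_iJ^3$ can be one-dimensional at a $2$-vertex;
\item[(e)] showing that $A_\alpha$ has length exactly $m_\alpha n_\alpha-1$ needs separate second-socle arguments for the length-three paths in these blocks.
\end{enumerate}
Also, a $V_2$ block is completed by a $2$-cycle $\xi_i\colon c_i\to d_i$, $\mu_i\colon d_i\to c_i$, not by a single arrow.

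More importantly, suppose $I$ is shown to be generated by relations of WSA shape. ``Normalizing parameters'' plus a case-by-case count of $\dim_K e_i\Lambda e_j$ still does not show that $\Lambda$ is a weighted surface algebra.

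\textbf{The missing idea: the paper's lift.}
\begin{enumerate}
\item[(1)] On the $2$-regular quiver $\hat{Q}\supset Q$, the paper defines $\hat{\Lambda}(\hat m)=K\hat{Q}/\hat I$ with free weights $m_i\geq 2$, $m_i'\geq 3$ on the added orbits. For the trivial choice $m_i=1$, $m_i'=2$ one has $\hat{\Lambda}(\mathbf{1})\cong\Lambda$.
\item[(2)] It proves that $\hat{\Lambda}(\hat m)$ is symmetric, via an explicit form.
\item[(3)] It proves that $\hat{\Lambda}(\hat m)$ has $4$-periodic simples, via explicit middle maps $\hat{d}_2$ checked by dimension counts.
\item[(4)] It proves that $\hat{\Lambda}(\hat m)$ is tame, via a degeneration to a special biserial algebra and Geiss' theorem.
\item[(5)] It then applies Theorem \ref{MTGQT} to $\hat{\Lambda}(\hat m)$. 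The higher tetrahedral case is excluded since $\hat{Q}\neq Q$, and specializing the weights shows that $\Lambda$ is a WSA.
\end{enumerate}
Without this, or a full re-proof of \cite{AGQT} in the biregular setting, the argument does not close.

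\textbf{The exotic cases.} Your picture of these is also misdirected.
\begin{itemize}
\item A higher tetrahedral algebra cannot occur once $Q_\Lambda$ has a $1$-vertex. Its Gabriel quiver is the $2$-regular tetrahedral quiver, all of whose $g$-orbits have length $3$, so there are no virtual arrows to remove.
\item Higher spherical algebras arise only for $Q_\Lambda=Q^S$. There, two minimal relations from $1$-vertices end at the same $2$-vertex and the normal form breaks down. The paper imports the classification for this quiver (Theorem \ref{prop:spherical}, from \cite{E25}) rather than enumerating small configurations.
\item The paper treats $Q^T$ and $Q^{T'}$ separately. For $Q^T$ there are two WSA realizations: one with two virtual loops and one with a virtual $2$-cycle.
\item Do not aim to prove that socle deformations are trivial. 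They need not be, and they are absorbed by the border function $b_\bullet$ in the general definition of a weighted surface algebra.
\end{itemize}
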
 \medskip 

The paper is organized as follows. We start with recalling some basic notions in Section \ref{sec:2} and then 
we discuss the central class of algebras we are dealing with, that is, the class of weighted surface algebras 
(Section \ref{sec:HSA}). Further in Section \ref{sec:3}, we recall known facts concerning arbitrary GQT algebras, 
and investigate in greater details the shape of relations around $1$-vertices in biregular case. Section \ref{bases} 
is the most technical part of the article, where we give a detailed description of all other relations, i.e. 
relations near the `$2$-regular part' of the Gabriel quiver, which is then used (together with partial results 
from Section \ref{sec:3}) to compute bases of indecomposable projective $\La$-modules over arbitrary GQT 
algebra $\La$ with biregular Gabriel quiver. After this preparation, we finish the paper with Section \ref{sec:6}, 
containing the proof of the Main Theorem. This section is also relatively technical, 
and it is splitted into subsections, most of the content covers the proof in case, when the Gabriel quiver 
is different from the spherical or the triangle quivers (see Section \ref{sec:HSA}), and the final part 
deals with the remaining exceptional quivers. \medskip 

For the neccessary background in the representation theory of algebras we refer the reader to books \cite{ASS,SkY}.  

\section{Basic notions}\label{sec:2} 

By a quiver we mean a quadruple $Q=(Q_0,Q_1,s,t)$, where $Q_0$ is a finite set of vertices, $Q_1$ a finite set of 
arrows and $s,t:Q_1\to Q_0$ are functions assigning to every arrow $\alpha$ its source $s(\alpha)$ and its target 
$t(\alpha)$. For a quiver $Q$, we denote by $KQ$ the {\it path algebra} of $Q$, whose $K$-basis is given by all 
paths of length $\geqslant 0$ in $Q$. Recall that the Jacobson radical of $KQ$ is the ideal $R_Q$ of $Q$ generated 
by all paths of length $\geqslant 1$, and ideal $I$ of $KQ$ is called {\it admissible}, provided that 
$R_Q^m\subseteq I \subseteq R_Q^2$, for some $m\geqslant 2$. Note that the paths $\ve_i$ of length $0$ at 
vertex $i\in Q_0$ form a complete set of primitive idempotents of $KQ$ (whose sum is the identity).  \smallskip  

If $Q$ is a quiver and $I$ is an admissible ideal $I$ of $KQ$, then $(Q,I)$ is said to be a {\it bound quiver}, 
and the associated algebra $KQ/I$ is called a {\it bound quiver algebra}. The Jacobson radical of an algebra 
$\La=KQ/I$ is $J_\La=R_Q+I$, and it is denoted by $J$. It is well-known that any algebra over an algebraically 
closed field is a bound quiver algebra, and by a presentation of an algebra $\Lambda$ we mean particular 
isomorphism $\Lambda\cong KQ/I$, for some bound quiver $(Q,I)$. In the case, the cosets $e_i=\varepsilon_i+I\in \La$ 
form a complete set of primitive idempotents of $\La$ and $\sum_{i\in Q_0}e_i$ is the identity of $\Lambda$. 
Note that the presentation may not be unique, but the quiver $Q$ is (up to permutation). It is called the {\it Gabriel 
quiver} of $\La$ and it is denoted by $Q_\La$; moreover, the arrows $i\to j$ in $Q_\La$ are in one-to-one correspondence 
with the elements of a basis of $e_i J e_j/e_iJ^2e_j$. \smallskip 

A {\it relation} in the path algebra $KQ$ is any $K$-linear combination of the form 
$$\sum_{i=1}^r \lambda_i w_i,\leqno{(1)}$$ 
where all $\lambda_i\in K$ are non-zero and $w_i$ are pairwise different paths of of length $\geqslant 2$ with 
common source and target. It is known that an ideal $I$ of $KQ$ is admissible if and only if $I$ is generated 
by a finite number of relations $\rho_1,\dots,\rho_m$. Moreover, we may choose such relations $\rho_1,\dots,\rho_m$ 
to be minimal (i.e. each $\rho_i$ is not a linear combination of relations from $I$). For a bound quiver 
algebra $A=KQ/I$, given the set of (minimal) relations $\rho_1,\dots,\rho_m$ generating $I$, we have the (minimal) 
equalities $\rho_1=0,\dots,\rho_m=0$ in $\La$, called {\it minimal relations}. \smallskip 

For a relation of the form $(1)$ and a path $w$ in $Q$, we write $w\prec \rho$, if $w$ is one of the summands of 
$\rho$, i.e. $w=w_i$, for some $i\in\{1,\dots,r\}$. Moreover, if $w$ is a path in $Q$, we will use notation $w\prec I$, 
if $w\prec \rho_i$, for $i\in\{1,\dots,m\}$ and some minimal relations $\rho_1,\dots,\rho_m$ generating $I$. \medskip  

Let $\Lambda$ be an algebra with given presentation $\Lambda=KQ/I$. Then modules $P_i=e_i\La$, for $i\in Q_0$, 
form a complete set of all pairwise non-isomorphic indecomposable projective modules in $\mod\La$, and modules 
$I_i=D(\La e_i)$, for $i\in Q_0$, form a complete set of all pairwise non-isomorphic indecomposable injective 
modules in $\mod\La$. We denote by $S_i$, for $i\in Q_0$, the associated simple module $S_i=P_i/\rad P_i\cong \soc I_i$. 
\smallskip  

We will always assume that algebras $\La$ are symmetric, i.e. there is a non-degenerate (associative) symmetric 
$K$-bilinear form $\La\times\La\to K$. For a symmetric algebra $\La$, we have $P_i\simeq I_i$, for any $i\in Q_0$, 
hence in particular, then $\Lambda$ is self-injective. We also assume $Q$ is connected, or equivalently, $\La$ is 
indecomposable as an algebra. \smallskip 

For $i\in Q_0$, we denote by $i^-$ be the set of arrows ending at $i$, and by $i^+$ the set of arrows starting at 
$i$. In this paper, the sizes $|i^-|$ and $|i^+|$ are at most $2$. A quiver $Q$ is said to be {\it 2-regular} if 
$|i^-|=|i^+|=2$, and {\it biregular} if $|i^-|=|i^+|\in\{1,2\}$, for all $i\in Q_0$. Recall that $Q$ is called 
biserial, if $|i^\pm|\leqslant 2$, for $i\in Q_0$. \smallskip  

We will use the following notation and convention for arrows: we write $\alpha, \ba$ for the arrows starting at 
vertex $i$, with the convention that $\ba$ does not exist in case $|i^+|=1$. Similarly we write $\gamma, \gamma^*$ 
for the arrows ending at some vertex $i$, where again $\gamma^*$ does not exist if $|i^-|=1$. Then $Q$ has a subquiver 
$$\xymatrix@R=0.3cm{x\ar[rd]^{\gamma}&&y\ar[ld]_{\gamma^*}\\&i\ar[ld]_{\alpha}\ar[rd]^{\ba}&\\j&&k}$$ 

We note that for a module $M$ in $\mod\La$, its {\it syzygy} is a kernel $\Omega(M)=Ker(\pi)$ of a projective 
cover $\pi:P(M)\to M$ of $M$ in $\mod\La$ (it is defined up to isomorphism). A module $M$ in $\mod\La$ is called 
{\it periodic} if $\Omega^d(M)\simeq M$, for some $n\geqslant 1$, and the smallest such a number is the {\it period} 
of $M$. The notion of {\it inverse syzygy} $\Omega^{-1}(M)$, for a module $M$ in $\mod \La$, is defined dually using 
injective envelopes. \smallskip  

Let $i\in Q_0$. Recall that there are natural isomorphisms $\Omega(S_i)=\rad P_i=\alpha\La+\ba\La$ and 
$\Omega^{-1}(S_i)\cong (\gamma,\gamma^*)\Lambda\subset P_x\oplus P_y$. In particular, it follows that the epimorphism 
$[\alpha \ \ba]:P_i^+\to \Omega(S_i)$, where $P_i^+=P_j\oplus P_k$, is a projective cover of $\Omega(S_i)$ and 
the monomorphism $\vec{\gamma \\ \gamma^*}:\Omega^{-1}(S_i)\to P_i^-$, where $P_i^-=P_x\oplus P_y$, is an injective 
envelope of $\Omega^{-1}(S_i)$ ($\La$ is symmetric, see \cite[Lemma 4.1]{AGQT}). As a result, if $S_i$ is a periodic 
module of period $4$, then $\Omega^2(S_i)\simeq\Omega^{-2}(S_i)$, and hence there is an exact sequence in $\mod\La$ of the form 
$$0\to S_i\to P_i \stackrel{d_3}\to  P_i^- \stackrel{d_2}\to  P_i^+ \stackrel{d_1}\to P_i \to S_i\to 0 \leqno{(*)}$$ 
with $\ima(d_k)\cong\Omega^k(S_i)$, for $k\in\{1,2,3\}$. By our convention, $P_y$ or $P_k$ may not exist. We may assume 
that $d_1(x, y) : = \alpha x + \ba y$, since the induced epimorphism $[\alpha \ \ba]:P_i^+ \to \Omega(S_i)$ is a 
projective cover of $\Omega(S_i)$ in $\mod\La$. Adjusting arrows $\gamma$ or $\gamma^*$ (including impact on 
presentation, i.e. on generators of $I$), we can fix $d_3(e_i) = (\gamma, \gamma^*)$ for some choice of the arrows 
$\gamma, \gamma^*$ ending at $i$ (see \cite[Proposition 4.3]{AGQT}). \smallskip 

The kernel of $d_1$ is then $\Omega^2(S_i)=\ima(d_2)$, and it has at most two minimal generators. They are images 
of idempotents $e_x\in P_x=e_x\La$ and $e_y\in P_y$ via $d_2:P_i^-\to P_i^+$. We may write them as $\vf$ and $\psi$, 
respectively, and they are contained in $P_j\oplus P_k$, so we can also write 
$$\vf = d_2(e_x,0) = (\vf_{jx}, \ \vf_{kx}) \ \ \mbox{and} \ \ \psi = d_2(0,e_y) = (\psi_{jy}, \ \psi_{ky}),$$
where $\vf_{jx}$ belongs to $e_j\La e_x$ (similarly for the other components of $\vf, \psi$). \smallskip 

Recall that any homomorphism $d:P_x\oplus P_y\to P_j\oplus P_k$ in $\mod\Lambda$ can be represented in the matrix 
form $$M={m_{jx} \ m_{jy}\choose m_{kx} \ m_{ky}},$$ 
where $m_{ab}$ is a homomorphism $P_b\to P_a$ in $\mod\La$, identified with an element $m_{ab}\in e_a\La e_b$, for 
any $a\in\{j,k\}$, $b\in\{x,y\}$. In this way, $d$ becomes multiplication by $M$, i.e. $d(u)=M\cdot u$, for $u\in P_i^-$ 
(using column notation for vectors in $P_i^-$ and $P_i^+$). \smallskip

Continuing with the generators of $\Omega^2(S_i)$, let $M_i$ be the matrix with column the components of $\vf$ and 
$\psi$, that is $d_2$ is given by a matrix 
$$M_i={\vf_{jx} \ \psi_{jy} \choose \vf_{kx} \ \psi_{ky}}.$$ 
Rewriting compositions $d_1d_2=0$ and $d_2d_3=0$ in the matrix form, we get the following identities 
$$(\alpha \ \ba)\cdot M_i = 0\mbox{ and }M_i\cdot {\gamma \choose \gamma^*} =0\leqno{(1)} $$
for some choice of arrows $\gamma, \gamma^*$ ending at $i$. This gives at most two minimal relations 
starting, and respectively, and ending at $i$ (cf. \cite[Proposition 4.3]{AGQT}). 

\section{Weighted surface algebras}\label{sec:HSA} 

In this section, we recall the definition of a weighted surface algebra and discuss some other 
related algebras, including the exotic family of higher spherical algebras. These algebras 
are main examples of TSP4 (or GQT) algebras, and their properties lead to general facts we are about to 
discuss in the next section. We also recap the classification of TSP4 algebras with spherical quiver; 
see Theorem \ref{prop:spherical}. We will see in Section \ref{sec:3} that this quiver 
naturally appears in the description of paths involved in minimal relations in blocks containing $1$-regular 
vertices. This is analogous situation as for the tetrahedral quiver arising in the classification of $2$-regular 
GQT algebras \cite{AGQT}. Another special quiver mentioned in this section is the triangle quiver, but its role is a 
classification is slightly different (see Section \ref{subsec:6.5}). \medskip 

First, we will explain what is a weighted surface (triangulation) algebra. By a triangulation 
quiver, we mean a ($2$-regular) quiver $Q$ which is a glueing of a finite number of the following three 
types of blocks 
$$\xymatrix@C=0.3cm@R=0.2cm{\\ \ar@(lu, ld)[]_{\alpha}\circ} \qquad \qquad
\xymatrix@C=0.6cm@R=0.2cm{&\\ \ar@(lu, ld)[]_{\gamma}\bullet\ar@<.35ex>[r]^{\alpha}&\ar@<.35ex>[l]^{\beta}\circ} 
\quad \qquad 
\xymatrix@C=0.3cm@R=0.2cm{&\circ\ar[ld]_{\alpha}&\\ \circ\ar[rr]_{\beta}&&\circ\ar[lu]_{\gamma}}$$ 
$$\mbox{ I }\qquad\qquad\quad\quad\mbox{ II }\qquad\qquad\qquad\mbox{ III }$$
where by a glueing we mean that each vertex $\circ$ in any of the blocks is glued with exactly one vertex 
$\circ$ in a different block (see \cite[Section 2]{SS} for a precise definition of the glueing). 
Then the set of arrows of $Q$ admits a permutation $f:Q_1\to Q_1$ which fixes a loop in each block of type I, 
and otherwise, $f$ has an orbit of the form $(\alpha \ \beta \ \gamma)$. Moreover, $Q$ is $2$-regular, hence 
we have an involution $\overline{(-)}:Q_1\to Q_1$, which sends any arrow $\alpha$ to the arrow 
$\ba\neq \alpha$ starting at the same vertex as $\alpha$. In particular, one can consider the second 
permutation $g:Q_1\to Q_1$ given as $g(\alpha)=\overline{f(\alpha)}$. Now, take any collection of integers 
$m_\alpha$ and parameters $c_\alpha\in K\setminus\{0\}$, for $\alpha\in Q_1$, which are constant on $g$-orbits, 
and define the paths 
$$A_{\alpha}:=\alpha g(\alpha)\cdots g^{m_\alpha n_\alpha-2}(\alpha)\mbox{ and }B_\alpha=A_\alpha g^{-1}(\alpha),$$  
where $n_\alpha$ is the length of the $g$-orbit of $\alpha$. We assume that $m_\alpha n_\alpha\geqslant 2$, 
for all arrows, and an arrow $\alpha$ is said to be virtual, if $m_\alpha n_\alpha=2$. In the most general 
version, we also consider a border function, which is an arbitrary function $b_\bullet:\partial Q_0\to K$, 
assigning a coefficient $b_i$ to any vertex $i\in\partial Q_0$, where $\partial Q_0$ consists of vertices that 
admit a loop contained in a block of type I (sometimes called the border vertices). With this setup, the 
{\it weighted surface (triangulation) algebra} is a quotient $\La(Q,f,m_\bullet,c_\bullet,b_\bullet)=\La:=KQ/I$, 
where $I$ is generated by the following relations. 
\begin{enumerate}
\item[(1)] $\alpha f(\alpha)-c_{\ba}A_{\ba}$, if $\alpha\in Q_1$ is not a loop in block of type I.  
\item[(1')] $\alpha^2-c_{\ba}A_{\ba} - b_{s(\alpha)} B_{\alpha}$, if $\alpha$ is a loop in a block of type I. 
\item[(2)] $\alpha f(\alpha) g(f(\alpha))$, for all arrows $\alpha\in Q_1$, except $f^2(\alpha)$ is virtual, or 
$f(\ba)$ is virtual with $m_{\ba}n_{\ba}=3$.  
\item[(3)] $\alpha g(\alpha) f(g(\alpha))$, for all arrows $\alpha\in Q_1$, except $f(\alpha)$ is virtual, or 
$f^2(\alpha)$ is virtual with $m_{f(\alpha)}n_{f(\alpha)}=3$. 
\end{enumerate} \medskip 

{\it Remarks}. 

(i) Originally, the weighted surface algebras $\La(Q,f,m_\bullet,c_\bullet)$ were defined without the border 
function, or equivalently, with zero border function. In this paper, we use the above most general version, 
which covers all socle deformations of the algebras $\La(Q,f,m_\bullet,c_\bullet)$. Then one can keep isomorphism 
in the Main Theorem (instead of socle equivalence, as in \cite{AGQT}). Actually, algebras socle equivalent to 
algebras of the form $\La(Q,f,m_\bullet,c_\bullet)$ are exactly the algebras $\La(Q,f,m_\bullet,c_\bullet,b_\bullet)$, 
for some border function $b_\bullet:\partial Q_0\to K$ \cite[Theorem 1.2]{WSA-SD}. \smallskip 

(ii) Every virtual arrow $\alpha\in Q_1$ satisfies $A_\alpha=\alpha$, and then $\ba f(\ba)-c_\alpha\alpha\in I$, 
so $\alpha\in J^2$ is not an arrow of the Gabriel quiver $Q_\La\subset Q$. It is easy to check that virtual 
arrows are either loops in blocks of type II, or arrows of the $2$-cycle created by glueing of two triangles 
sharing two vertices. Hence, if $\La$ is a WSA, its Gabriel quiver $Q_\La$ is a glueing of finite number of 
blocks of types I-III and $V_1$, $V_2$. In particular, the quiver $Q_\La$ is biregular, and $1$-vertices 
are contained in blocks of types $V_1$, $V_2$. \smallskip 

(iii) We note that there are examples of TSP4 algebras, whose Gabriel quivers are not biregular or 
not biserial. We omit the details, just to mention, there are algebras called virtual mutations of WSA's 
\cite{HSS}, given by quivers, which are glueings of four types of blocks, the blocks I-III from WSA's, and 
a new block IV, containing two $1$-vertices $\bullet$ and two non-regular vertices $\circ$ (see 
\cite[Introduction]{HSS}). In most cases, the Gabriel quiver 
of a virtual mutation contains $(2,3)$- and $(3,2)$-vertices, so it is not biserial. In one special case, 
when we glue two blocks of type IV, one can get a $2$-cycle of virtual arrows, and the Gabriel quiver 
of the virtual mutation is the spherical quiver (disucussed below). \smallskip

(iv) We mention that the WSA's are TSP4 (GQT) algebras, except few cases, which we call exceptional 
algebras. These are the so called singular disc, triangle, spherical and tetrahedral algebras. Each of 
the algebras is a WSA with particular weights and parameters such that it fails to be symmetric or periodic. 
 Two of the cases (spherical and triangle) will be quickly recalled further, since these are 
related to the classification in biregular case. For more details we refer to 
\cite[Examples 3.1-3.3 and 3.6]{WSA-GV}. \smallskip 

(vi) There are also higher versions of two exceptional algebras. Indeed, the first algebra, called the 
higher tetrahedral algebra $\La=\La(m,\lambda)$ (for short, HTA), is a TSP4 algebra given by a triangulation 
quiver $Q=Q_\La$ (the same as the singular tetrahedral algebra), but not isomorphic to any WSA given by this 
quiver. Relations resembling similar pattern can be viewed as higher perturbations of relations defining WSA's. 
The second family consists of the higher spherical algebras $S(m,\lambda)$ (for short, HSA), which admit analogous 
description (see also \cite{E25}). The higher spherical algebras are given by the same Gabriel quiver as the singular 
spherical algebras, but they are not isomorphic to a WSA; see \cite[Lemma 5.2]{HSS}. \medskip 

Finally, recall the following theorem \cite[Main Theorem]{AGQT}, which shows that the WSA's exhaust almost all 
TSP4 (or GQT) algebras in the case, when the Gabriel quiver is $2$-regular. 

\begin{thm}\label{MTGQT} Let $A$ be an algebra with $2$-regular Gabriel quiver having at least three 
vertices. Then the following statements are equivalent. 
\begin{enumerate}
\item[(i)] $A$ is a TSP4 algebra. 
\item[(ii)] $A$ is a GQT algebra. 
\item[(iii)] $A$ is isomorphic to a weighted surface algebra different from the exceptional algebras, or 
is isomorphic to the higher tetrahedral algebra. 
\end{enumerate} 
\end{thm}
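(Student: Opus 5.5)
The statement is the Main Theorem of \cite{AGQT}. I would prove it by the cycle of implications (iii)$\Rightarrow$(i)$\Rightarrow$(ii)$\Rightarrow$(iii), with nearly all of the work in the last step.

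\emph{(iii)$\Rightarrow$(i).} Here I would simply invoke the known results. Non-exceptional weighted surface algebras are tame, symmetric and periodic of period $4$ \cite{WSA,WSA-GV}. This includes the socle deformations given by border functions, by \cite[Theorem 1.2]{WSA-SD}, since periodicity of period $4$ is preserved there. The higher tetrahedral algebras are TSP4 by \cite{HTA}.

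\emph{(i)$\Rightarrow$(ii).} If $A$ is periodic of period $4$ as a bimodule, then applying $-\otimes_A S_i$ to a periodic bimodule resolution shows $\Omega^4(S_i)\cong S_i$. To get exactly period $4$ and infinite representation type, I would use the $2$-regular quiver directly.
\begin{itemize}
\item Period $1$ is impossible, since $\Omega(S_i)=\rad P_i$ has two tops.
\item Period $2$ would force $\Omega^2(S_i)=S_i$. Comparing with the exact sequence $(*)$, this would make $P_i^+\to P_i$ a projective cover of $\rad P_i$ with simple kernel, which contradicts symmetry together with the $2$-regularity of the quiver.
\item Representation-finite symmetric periodic algebras are classified (Brauer tree algebras and their relatives). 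None of them has a $2$-regular Gabriel quiver with at least three vertices, so $A$ has infinite type.
\end{itemize}

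\emph{(ii)$\Rightarrow$(iii).} Let $A=KQ/I$ be a GQT algebra. For each vertex $i$ I would use the sequence $(*)$ and the matrix identities $(1)$, so that $(\alpha\ \ba)M_i=0$ and $M_i\binom{\gamma}{\gamma^*}=0$. These give exactly two minimal relations starting at $i$ and two ending at $i$.
\begin{enumerate}
\item Define $f(\alpha)$ as the arrow such that $\alpha f(\alpha)$ occurs as a summand of a minimal relation. Show that this is a well-defined permutation with $f^3=1$, so that $Q$ becomes a triangulation quiver. Set $g(\alpha)=\overline{f(\alpha)}$ and take the $g$-orbits as the cycles that will carry the weights.
\item Show each minimal relation has the form $\alpha f(\alpha)-c_{\ba}A_{\ba}+(\text{higher terms})$. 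I would compare dimensions of the $P_i$ computed from $(*)$ with the symmetric Cartan matrix, and use $\dim\Omega^2(S_i)=\dim P_i^+-\dim P_i+1$.
\item Compute bases of the projective modules $P_i$ and rescale arrows to remove the higher terms. This yields the WSA relations (1) and (1'), together with the zero relations (2) and (3).
\item Treat the tetrahedral quiver separately. There the removal of higher terms can fail, and the surviving perturbation gives exactly the higher tetrahedral algebras.
\end{enumerate}
Throughout, tameness is what excludes the alternative shapes of relations. Each forbidden configuration produces a factor algebra, or a Galois covering, that degenerates to a known wild algebra (for example a wild hereditary or wild concealed algebra).

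The main obstacle I expect is step (2), determining the precise form of the minimal relations. The difficulty is controlling higher-order terms in the relations and showing they can be absorbed by a change of presentation. I would attack this by induction on the radical layer, using the symmetric form to pair basis elements of $e_iAe_j$ with those of $e_jAe_i$. The tetrahedral exception shows that this absorption genuinely fails in one case, so the argument must detect precisely that case.
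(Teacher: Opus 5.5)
The paper does not prove this statement; it quotes it as \cite[Main Theorem]{AGQT}. Your opening sentence is therefore exactly the paper's treatment, and if you are content to cite \cite{AGQT}, nothing more is needed. Your arguments for (iii)$\Rightarrow$(i) and (i)$\Rightarrow$(ii) also match how Section~\ref{sec:6} handles the same implications for its own Main Theorem. For infinite type you can even avoid the classification of representation-finite symmetric algebras. The separated quiver of $A/J^2$ is a $2$-regular bipartite graph, so it is a disjoint union of Euclidean graphs $\widetilde{\mathbb{A}}_n$, and $A/J^2$ is already representation-infinite.

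Read as an independent proof, however, your (ii)$\Rightarrow$(iii) has a genuine gap at the step you flag yourself, and the tool you propose there cannot close it. Comparing dimensions along $(*)$ and with the Cartan matrix only gives additive identities such as $\dim P_x+\dim P_y=\dim P_j+\dim P_k$. These hold for every weighted surface algebra, whatever its weights. So they cannot decide the length of $A_{\ba}$ in $\alpha f(\alpha)-c_{\ba}A_{\ba}$, nor whether the zero relations (2), (3) hold. The missing lemma, proved in \cite{AGQT} and reused in Section~\ref{hat:construction}, is that every path $\alpha f(\alpha)$ lies in the second socle \cite[Proposition 9.1]{AGQT}. Combine this with bases of the $P_i$ by initial submonomials of $B_\alpha$ and $B_{\ba}$, obtained by a radical-layer induction as in Lemmas~\ref{lem:gen} and~\ref{basis2reg}. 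Together they force $A_{\ba}=\ba g(\ba)\cdots g^{m_{\ba}n_{\ba}-2}(\ba)$: a shorter $A_{\ba}$ would give a nonzero socle element of length less than $m_{\ba}n_{\ba}$, and hence $B_{\ba}=0$. Your idea of pairing basis elements through the symmetric form points in this direction, but it is precisely this second-socle statement that you must establish.

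Step (1) is likewise only asserted.
\begin{itemize}
\item The triangle property behind $f^3=1$ comes from $(*)$. Minimal relations starting at $i$ end at $s(\gamma)$ or $s(\gamma^*)$, so $\alpha\beta\prec I$ forces an arrow $t(\beta)\to i$ (Lemma~\ref{lem:3.3}).
\item Tameness, via wild subcategories of coverings, only shows that some $\alpha\beta$ is involved at all (compare Proposition~\ref{GQT4.2}).
\item To make $f$ a well-defined permutation, you must show that the involved paths starting at $i$ can be placed in two independent minimal relations rather than a single relation of type C. This is where the Markov quiver appears (Lemma~\ref{lem:relations}, \cite[Lemma 5.2]{AGQT}).
\item Two type C relations at one vertex lead to the tetrahedral quiver or to the six-dimensional projectives of \cite[Lemma 7.1]{AGQT} (see the proof of Lemma~\ref{lem:gen}).
\end{itemize}

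Finally, the tetrahedral quiver is not the only place where higher terms resist absorption. At border loops in characteristic $2$, the term $b_{s(\alpha)}B_\alpha$ of (1') cannot be removed; this is why \cite{AGQT} states its result up to socle equivalence. Your outline also never uses the hypothesis that $Q$ has at least three vertices.
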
 \medskip

The second part of this section is devoted to discuss some special weighted surface algebras, i.e. spherical 
and triangle algebras, and present some clasification results. \smallskip 

Consider first the following triangulation quiver $Q$
$$\xymatrix@C=1cm@R=1.2cm{&&1 \ar[ld]^{\alpha} \ar[rrd]^{\rho} && \\ 
5 \ar[rru]^{\delta}\ar@<+0.5ex>[r]^(.7){\xi} & 2 \ar@<+0.5ex>[l]^(.3){\mu}\ar[rd]^{\beta} & & 4 \ar[lu]^{\sigma} \ar@<+0.5ex>[r]^(.3){\xi'} & 6\ar[lld]^{\omega} \ar@<+0.5ex>[l]^(.7){\mu'} \\ 
&& 3 \ar[ru]^{\gamma} \ar[llu]^{\nu} &&}$$ 
which is a glueing of four triangles (note only that $Q$ is obtained from a certain triangulation of a sphere 
containing four triangles with coherent orientation \cite[see Example 3.2]{WSA-GV}). This has clearly four $f$-orbits, 
and four $g$-orbits: 
$$(\xi \ \mu), \ (\xi' \ \mu'), \ (\alpha \ \beta \ \gamma \ \sigma), \mbox{ and } (\rho \ \omega \ \nu \ \delta)$$ 
of lengths $n_\xi=n_{\xi'}=2$ and $n_\alpha=n_\rho=4$. If weights $n=m_\xi,n'=m_{\xi'}$ satisfy $n,n'\geqslant 2$, 
then $Q_\La=Q$ is $2$-regular and all paths of the form $\eta f(\eta)$ are involved in minimal relations (1) 
defining the weighted surface algebra $S=\La(Q,f,m_\bullet,c_\bullet)$ - note that then relations (2) and (3) 
hold for all arrows. In this case, $I$ is admissible, and using Theorem \ref{MTGQT}, we obtain that the unique TSP4 
(or GQT) algebra $\La$ with $Q_\La=Q$ is the weighted surface algebra $\La(Q,f,m_\bullet,c_\bullet)$, for some 
$m_\bullet,c_\bullet$ satisfying $n,n'\geqslant 2$ (see also \cite[Example 3.6]{WSA-GV}). Note that the singular 
shperical algebra appears only in case $n=n'=1$. \medskip 

If one of $n,n'$ is $1$, then we get at least one pair of virtual arrows ($\xi,\mu$ or $\xi',\mu'$). In 
this case, the Gabriel quiver of the associated weighted surface algebra is one of the following two 
quivers  
$$\xymatrix{&&1 \ar[ld]^{\alpha} \ar[rrd]^{\rho} && \\ 
5 \ar[rru]^{\delta} & 2 \ar[rd]^{\beta} & & 4 \ar[lu]^{\sigma} & 6\ar[lld]^{\omega} \\ 
&& 3 \ar[ru]^{\gamma} \ar[llu]^{\nu} &&} \qquad \qquad 
\xymatrix{&&1 \ar[ld]^{\alpha} \ar[rrd]^{\rho} && \\ 
5 \ar[rru]^{\delta} & 2 \ar[rd]^{\beta} & & 4 \ar[lu]^{\sigma} \ar@<+0.5ex>[r]^{\xi'} & 6\ar[lld]^{\omega} \ar@<+0.5ex>[l]^{\mu'} \\ 
&& 3 \ar[ru]^{\gamma} \ar[llu]^{\nu} &&}$$ 
denoted respectively, by $Q^S$ and $Q^{S'}$. The quiver $Q^S$ is called the spherical quiver, while $Q'$ the 
almost spherical quiver. \smallskip 

Recall that the higher spherical algebras introduced in \cite{HSA}, were proved to be derived equivalent with  
the higher tetrahedral algebras, but there was no classification of all TSP4 algebras given by the spherical quiver. 
The following recent result \cite[see Theorem 1.1]{E25} is filling this gap.  

\begin{thm}\label{prop:spherical} Assume $\La$ is a GQT algebra with Gabriel quiver $Q^S$. Then one of the following holds:\\
(a) \  $\La$ is isomorphic to a weighted surface algebra $KQ/I$ (different from the exceptional algebras), where 
$Q$ is the spherical quiver with four virtual arrrow $\xi, \mu, \xi', \mu'$.\\
(b) \ $\La$ is isomorphic to a Higher Spherical Algebra $S(m, \lambda)$ with $m>1$.
\end{thm}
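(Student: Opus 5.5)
The plan is to run, for the spherical quiver $Q^S$, the same local analysis through the exact sequences $(*)$ that is used in \cite{AGQT} for the tetrahedral quiver. Then I would compare the resulting presentations with the two candidate families. In $Q^S$ the vertices $1$ and $3$ are $2$-regular and $2,4,5,6$ are $1$-regular. The arrows are $\alpha:1\to 2$, $\beta:2\to 3$, $\nu:3\to 5$, $\delta:5\to 1$, $\rho:1\to 6$, $\omega:6\to 3$, $\gamma:3\to 4$ and $\sigma:4\to 1$. Every nontrivial path therefore alternates between $1$ and $3$ through one of the four $1$-vertices.

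\emph{Relations at the $1$-vertices.} For a $1$-vertex $i$, say $i=2$, the module $\Omega^2(S_2)$ is the kernel of left multiplication by $\beta$ on $P_3$. Since $|2^-|=1$, this kernel is cyclic, so $M_2$ is a single element $\vf_2\in e_3\La e_1$. The identities (1) then read $\beta\vf_2=0$ and $\vf_2\alpha=0$. Modulo $J^3$, $\vf_2$ is a combination $a\,\nu\delta+b\,\gamma\sigma$ plus terms of higher degree in the two cycles $\nu\delta\alpha\beta$ and $\gamma\sigma\rho\omega$ (suitably rotated). The same holds at $4,5,6$. Using that $\La$ is symmetric, non-degeneracy forces the socle elements at $1$ and $3$ to be the maximal cycles. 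I would also use the fact that the simples $S_1,S_3$ have period $4$, so $\Omega^2(S_1)$ and $\Omega^2(S_3)$ have two generators. Together these show that both coefficients $a,b$ are nonzero. Rescaling the arrows, I would normalize to relations of the form $\beta\nu\delta=\beta\gamma\sigma+(\text{higher})$, and similarly for the other three $1$-vertices. I would then read off $\dim P_i$ from these relations, compare with the Cartan matrix forced by period $4$ (the alternating sum in $(*)$ gives $\dim P_i$ from the neighbours), and get the two weights $m,m'$ of the $g$-orbits $(\alpha\,\beta\,\gamma\,\sigma)$ and $(\rho\,\omega\,\nu\,\delta)$.

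\emph{Normal form.} Next I would write the general form of the four cubic relations with their higher correction terms, i.e.\ scalar multiples of powers of $B_\alpha$, $B_\rho$ inserted before the socle. Admissible changes of arrows $\alpha\mapsto\alpha+(\text{higher paths})$, together with scalar rescalings, should remove all but one correction coefficient. If the weights and the surviving coefficient make the relations coincide with those of the weighted surface algebra on the spherical triangulation quiver with virtual arrows $\xi,\mu,\xi',\mu'$, we are in case (a). Tameness and symmetry, via \cite{WSA-GV}, exclude the singular spherical parameters. Otherwise the surviving term is a genuine perturbation. It must then be of the specific shape $\lambda(\ldots)^{m-1}$, because the relations have to be compatible with $d_2d_3=0$ at both $1$ and $3$ simultaneously. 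This should identify the algebra with $S(m,\lambda)$, with $m>1$ forced since $m=1$ collapses back to a WSA.

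\emph{Main obstacle.} I expect the hardest step to be the normal form: showing that a change of presentation kills all but one higher-order coefficient, while keeping the matrices $M_1,M_3$ of the required shape. I would attack it by induction on radical degree, clearing correction terms layer by layer from $J^3$ up to the socle. At each layer I would use the relations $M_i\binom{\gamma}{\gamma^*}=0$ at the two $2$-vertices to show that the corrections at $2,4,5,6$ are linked, leaving a single free scalar. Once that is in place, checking that the resulting algebra is one of the two families is a direct comparison of relations.
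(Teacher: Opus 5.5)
\textbf{Note.} The paper does not prove this statement; it imports it from \cite{E25}. Your outline therefore has to stand on its own, and it breaks at its first substantive step.

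\textbf{The normal form fails in case (a) itself.} You claim that symmetry and periodicity force both coefficients in $\vf_2\equiv a\,\nu\delta+b\,\gamma\sigma \pmod{J^3}$ to be non-zero. You then normalize every $1$-vertex relation to $\beta\nu\delta=\beta\gamma\sigma+(\text{higher})$. Take the weighted surface algebra on the spherical triangulation quiver with $\xi,\mu,\xi',\mu'$ virtual and $m_\alpha\geqslant 2$. The relations $\beta\nu=c_\mu\mu$ and $\mu\delta=c_\beta A_\beta$ give $\beta\nu\delta=c\,\beta(\gamma\sigma\alpha\beta)^{m_\alpha-1}\gamma\sigma$. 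Hence $\Omega^2(S_2)$ is generated by $\vf_2=\nu\delta-c(\gamma\sigma\alpha\beta)^{m_\alpha-1}\gamma\sigma\equiv\nu\delta \pmod{J^3}$, so $b=0$. Likewise, when $m_\rho\geqslant 2$ the $\nu\delta$-term of $\vf_6$ only appears in degree $4m_\rho-2$. Your normal form therefore covers only the case where both $g$-orbits have weight one, and the dichotomy has to be organised differently:
\begin{itemize}
\item the second path enters $\vf_i$ either in degree $2$, or only as a tail of degree $4m-2$ along a $g$-cycle;
\item at vertex $3$ the relevant cycles are $\gamma\sigma\alpha\beta$ and $\nu\delta\rho\omega$, not the block cycles $\nu\delta\alpha\beta$, $\gamma\sigma\rho\omega$ you name;
\item the weights are read off from where the tail starts;
\item each configuration must be treated separately, including showing that no exotic perturbation survives when a tail is long.
\end{itemize}

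\textbf{The weights cannot come from $(*)$.} The alternating sum of dimensions along $(*)$ does not determine $\dim P_i$, because the two copies of $P_i$ cancel. You only get $\dim P_i^-=\dim P_i^+$; for $i=2$ this is just $\dim P_1=\dim P_3$. That equality holds in (a) for every pair $(m_\alpha,m_\rho)$.

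\textbf{The decisive step is only asserted.} You claim that changes of arrows kill all correction terms but one, and that the survivor must have the shape of $S(m,\lambda)$ with $m>1$. No argument is given. This is exactly where the spherical quiver differs from the situation in Section~\ref{hat:construction}. There, two minimal relations starting at $1$-vertices never end at the same $2$-vertex. In $Q^S$ they do: those starting at $2$ and $6$ both end at $1$, and those starting at $4$ and $5$ both end at $3$. Consequently the hypothesis of the lemma in Section~\ref{hat:construction} that yields $X\in J^3$ can fail (for instance $\beta\gamma\sigma$ may occur alongside $\beta\nu\delta$), and a genuine perturbation can persist. Determining the possible form of that perturbation, and showing it gives exactly $S(m,\lambda)$ with $m>1$, is the real content of \cite{E25}. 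Appealing to compatibility with $d_2d_3=0$ at $1$ and $3$ does not by itself fix the exponent or rule out further terms.
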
 \medskip 

For a classification of GQT algebras with Gabriel quiver $Q^{S'}$, we refer to \cite[Theorem 1.2]{E25}. 
We only note that the HSA's can be also realized as iterated virtual mutations of the spherical algebras; 
see also {\it Remark}(iii) after the definition of WSA. \medskip 

Finally, we shall briefly discuss the triangle quiver and associated algebras. Namely, consider the following 
triangulation quiver $Q$
$$\xymatrix{\ar@(lu, ld)[]_{\rho}1\ar@<.35ex>[r]^{\delta}&\ar@<.35ex>[l]^{\nu} 2 \ar@<.35ex>[r]^{\alpha} & 
\ar@<.35ex>[l]^{\beta}3 \ar@(ru,rd)[]^{\sigma}}$$ 
being glueing of two blocks of type II. The permutation $f$ has two orbits $(\alpha \ \sigma \ \beta)$ and 
$(\nu \ \rho \ \delta)$, which induces permutation $g$ with one orbit $(\alpha \ \beta \ \nu \ \delta)$ of 
length $4$ and two orbits $(\rho)$, $(\sigma)$ of length $1$. As in the previous case, the weighted 
surface algebra $\La=\La(Q,f,m_\bullet,c_\bullet)$ depends on the weights $n=m_\rho,n'=m_\sigma\geqslant 2$. 
Indeed, if $n,n'\geqslant 3$, then $Q_\La=Q$ is $2$-regular, and all TSP4 algebras with $Q$ as a Gabriel 
quiver are classified, due to Theorem \ref{MTGQT}. If one of $n,n'$ is equal to $2$, then $Q_\La$ is a 
biregular quiver obtained from $Q$ by deleting one loop, and it is (up to permutation) the quiver of the form 
$$\xymatrix{\ar@(lu, ld)[]_{\rho}1\ar@<.35ex>[r]^{\delta}&\ar@<.35ex>[l]^{\nu} 2 \ar@<.35ex>[r]^{\alpha} & 
\ar@<.35ex>[l]^{\beta}3}$$ 
called the almost triangle quiver, and denoted by $Q^{T'}$. In case, when $n=n'=2$, both loops $\rho,\sigma$ are 
virtual, so the Gabriel quiver of $\La$ is the following quiver 
$$\xymatrix{1\ar@<.35ex>[r]^{\delta}&\ar@<.35ex>[l]^{\nu} 2 \ar@<.35ex>[r]^{\alpha} & 
\ar@<.35ex>[l]^{\beta}3}$$ 
called the triangle quiver, and denoted by $Q^T$. \medskip 

{\it Remark.} The classification of GQT algebras given by the triangle or almost quiver is covered in the final part 
of the proof (see Section \ref{subsec:6.5}). The GQT algebras $\La$ given by the almost triangle quiver 
$Q_\La=Q^{T'}$ admit the unique realization as a weighted surface algebra given by the triangulation quiver with 
one virtual loop (a glueing of two blocks of type II). The triangle quiver $Q^T$ requires 
a separate treatement (Section \ref{subsec:6.5}), since it generates two possible cases of WSA's with this Gabriel quiver. 
Namely, we can get $Q_\La=Q^T$ from a WSA $\La(Q,f,m_\bullet,c_\bullet)$, where $(Q,f)$ is glueing of two blocks of type II 
(with two virtual loops), or a glueing of two blocks of type III (with a $2$-cycle of virtual arrows). 
In the first case, $Q_\La$ is obtained from $Q$ by deleting both virtual loops, wheras in the second, by deleting 
the $2$-cycle of virtual arrows. The corresponding weighted surface algebras form two families of algebras called 
the triangle and disc algebras, which are isomorphic, up to a scalar deformation. Note also that in 
both cases we have a different structure of permutation $f$ (and $g$); for details, see \cite[Examples 3.3 and 3.4]{WSA-GV}.

\section{Algebras of generalized quaternion type}\label{sec:3} 

In this section, we recall known facts on GQT algebras, which will be needed further. In what follows, we denote 
by $\La$ a fixed GQT algebra. \medskip 

\subsection{General properties} 

The following results can be found in \cite[see Proposition 4.1 and Lemmas 4.3-4.4]{EHS1}. 

\begin{lemma}\label{lem:3.3} Assume $\alpha: i\to j$ and $\beta: j\to k$ are arrows such that $\alpha\beta \prec I$. Then there
is an arrow in $Q$ from $k$ to $i$, so that  $\alpha$ and $\beta$ are part of a triangle in $Q$. \end{lemma}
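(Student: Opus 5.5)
Since $\alpha\beta\prec I$, there is a minimal relation $\rho$ starting at $i$ and ending at $k$ in which $\alpha\beta$ occurs. I would locate $\rho$ inside the periodic resolution $(*)$ of a simple module, so that the exact sequence forces an arrow $k\to i$.

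\textbf{Step 1: choose the right simple module.} By the discussion after $(*)$, the minimal relations starting at a vertex $v$ are exactly the rows of the identity $(\alpha\ \ba)\cdot M_v=0$ in $(1)$. Dually, the minimal relations ending at $v$ come from $M_v\cdot{\gamma\choose\gamma^*}=0$. So every minimal relation in $I$ with source $i$ is a component $\alpha\vf_{jx}+\ba\vf_{kx}$ (or the analogous $\psi$-component) for $S_i$. Here $x$ runs over the sources of the arrows ending at $i$, i.e. over the summands of $P_i^-$. More precisely, the generators of $\Omega^2(S_i)$ give relations from $i$ to $x$ and to $y$, where $x,y$ are the starting vertices of the arrows $\gamma,\gamma^*$ into $i$.

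\textbf{Step 2: match the endpoint.} The relation $\rho$ from $i$ to $k$ containing $\alpha\beta$ is a minimal relation starting at $i$. Its endpoint must therefore be one of the vertices indexing $P_i^-=P_x\oplus P_y$. A relation starting at $i$ lies in $\ker d_1\cap e_i\La e_k$ summed appropriately. Equivalently, minimal generators of $\Omega^2(S_i)$ correspond to the top of $\Omega^2(S_i)$, which is isomorphic to the top of $P_i^-$.

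To justify this, I would use that $\Omega^2(S_i)=\Omega^{-2}(S_i)$ (period $4$) together with the description $\Omega^{-1}(S_i)\cong(\gamma,\gamma^*)\La$. These give $P(\Omega^2(S_i))=P_i^-=P_x\oplus P_y$. Hence the top of $\Omega^2(S_i)$ has composition factors $S_x,S_y$ only. Minimal relations starting at $i$ correspond bijectively to top elements of $\Omega^2(S_i)=\ker[\alpha\ \ba]$. This is the standard identification $\ker(P_i^+\to \rad P_i)/\rad\ker \leftrightarrow$ minimal relations from $i$.

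\textbf{Step 3: conclude.} Since $\rho$ ends at $k$, we get $k\in\{x,y\}$, so $k$ is the source of an arrow $\gamma$ or $\gamma^*$ ending at $i$. This is the required arrow $k\to i$ closing the triangle $\alpha,\beta,\gamma$.

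\textbf{Main obstacle.} The delicate point is Step 2: showing that a minimal relation containing $\alpha\beta$ genuinely contributes a top element of $\Omega^2(S_i)$ with nonzero $e_k$-component. It must not disappear modulo $\rad\Omega^2(S_i)$, e.g. by being a combination of other relations multiplied by arrows. I would handle this by taking $\rho$ from a minimal generating set adapted to the matrices $M_i$ (as in \cite[Proposition 4.3]{AGQT}). Minimality of $\rho$ means precisely that $\rho\notin R_Q I+IR_Q$, so its image in $\Omega^2(S_i)$ is not in the radical.
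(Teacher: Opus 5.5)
Your proof is correct, and it is essentially the standard argument behind this lemma; the paper does not reprove it but cites \cite{EHS1}. The argument runs as follows: since $\rho\notin R_QI+IR_Q$, the relation yields an element of $\Omega^2(S_i)e_k$ outside $\rad\Omega^2(S_i)$, while $\Omega^2(S_i)$ is an epimorphic image of $P_i^-=\bigoplus_{x\to i}P_x$, so $k$ is the source of an arrow ending at $i$. Two small points: in Step 1 the relations come from the columns (not rows) of $(\alpha\ \ba)\cdot M_i=0$, and Step 1 as stated presupposes what your Step 2 and final paragraph actually prove.
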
 

\begin{lemma}\label{lem:3.4}  Assume $Q$ contains  a triangle
        \[
 \xymatrix@R=1.3pc@C=1.8pc{
    x
    \ar[rr]^{\gamma}
    && i
    \ar@<.35ex>[ld]^{\alpha}
    \\
    & j
    \ar@<.35ex>[lu]^{\beta}
  }
\]
with $\alpha\beta\prec I$. If $\gamma$ is the unique arrow $x\to i$, then $\gamma\alpha\prec I$ and 
$\beta\gamma\prec I$. If we have double arrows $\gamma,\bar{\gamma}: x\to i$, then there is one  
$\delta\in\{\gamma,\bar{\gamma}\}$ such that $\delta\alpha\prec I$ and $\beta\delta\prec I$. \end{lemma}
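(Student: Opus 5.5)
The plan is to use the exact sequence $(*)$ for the simple module $S_x$, specifically its part ending at $x$, together with the matrix identity $M_x\cdot\binom{\gamma'}{\gamma'^*}=0$ from (1) written for the vertex $x$. The point is that $\alpha\beta\prec I$ means that the path $\alpha\beta$, which ends at $x$, occurs in some minimal relation. By the discussion after $(*)$ (cf. \cite[Proposition 4.3]{AGQT}), the minimal relations ending at a vertex are, up to the choice of generators, exactly the entries of $M_x\cdot\binom{\gamma'}{\gamma'^*}$, where $\gamma',\gamma'^*$ are the arrows ending at $x$.

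First I will identify the relation containing $\alpha\beta$. Every minimal relation ending at $x$ has the form $m_{a,u}\gamma'+m_{a,v}\gamma'^*$, with $\gamma'$, $\gamma'^*$ the arrows into $x$. Since $\alpha\beta$ occurs in such a relation, $\beta$ is one of these arrows, and $\alpha\beta$ is a summand of $\varphi_{i?}\beta$. So the row of $M_x$ indexed by $i$ contains a component $m$ whose expansion involves the arrow $\alpha$ with nonzero coefficient.

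Next I will apply the dual identity $(\alpha_x\ \bar\alpha_x)\cdot M_x=0$, where $\alpha_x,\bar\alpha_x$ are the arrows starting at $x$. Here the rows of $M_x$ are indexed by the targets of the arrows starting at $x$. Hence $i$ is the target of an arrow starting at $x$, and such an arrow goes $x\to i$.

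In the case where $\gamma$ is the unique arrow $x\to i$, the $i$-row of $M_x$ is multiplied on the left by $\gamma$, and it contains an entry with $\alpha$ as a linear term. So the relation $\gamma\,m_{i\cdot}+(\text{other terms})=0$ contains $\gamma\alpha$ as a summand, which gives $\gamma\alpha\prec I$. I still have to rule out cancellation: no other term of the relation can equal $\gamma\alpha$. The other terms start with a different arrow out of $x$, so they cannot cancel. A minimality argument is also needed to show that the relation is minimal, i.e. that $\gamma\alpha$ does not lie in $J^3$ modulo the other terms. I will get this from the fact that the entries of $M_x$ lie in $J$ but not all in $J^2$, since $\Omega^2(S_x)$ has a minimal projective cover; this is the step I expect to be the main obstacle.

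The statement $\beta\gamma\prec I$ is obtained symmetrically, by running the same argument for the vertex $i$. Alternatively, I will use that $\gamma\alpha\prec I$ is now known and apply Lemma~\ref{lem:3.3} and the first part of the argument to the pair $(\gamma,\alpha)$, and then repeat around the triangle.

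For double arrows $\gamma,\bar\gamma$, the $i$-row appears twice, once for each arrow. After a change of basis in the arrows $x\to i$, only one combination $\delta$ multiplies the entry carrying $\alpha$ linearly. This yields a single $\delta$ with both $\delta\alpha\prec I$ and, by the symmetric argument, $\beta\delta\prec I$. Checking that the two symmetric arguments produce the same $\delta$ will use the fixed normalization $d_3(e_i)=(\gamma,\gamma^*)$.
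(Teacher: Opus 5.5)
The paper gives no proof of this lemma; it quotes it from \cite{EHS1}. Your argument for a unique arrow $x\to i$ is sound, and it is the standard technique used throughout the paper. The entry of $M_x$ in the row of $\gamma$ and the column of $\beta$ occurs in two places: in the relation ending at $x$ (a row of $M_x\binom{\beta}{\beta^*}$), and in the relation starting at $x$ (a column of $(\gamma\ \gamma')M_x$, with $\gamma'$ the other arrow leaving $x$). So its nonzero linear $\alpha$-term gives $\gamma\alpha\prec I$, and the dual argument with $M_i$ gives $\beta\gamma\prec I$.

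The ``main obstacle'' you anticipate is not one. Since $R_QI+IR_Q\subseteq R_Q^3$, a length-two path with nonzero coefficient in the quadratic part of any element of $I$ occurs in a member of every generating set of relations. The only input needed is the nonzero $\alpha$-coefficient from your first step, not that entries of $M_x$ avoid $J^2$. Your alternative route to $\beta\gamma$ (rotating to $j$) would need $\beta$ to be the unique arrow $j\to x$, which is not assumed, so keep the dual argument at $i$.

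The genuine gap is the double-arrow case, exactly at the step you defer. With $\gamma,\bar\gamma\colon x\to i$ the two arguments produce two candidate combinations:
\begin{itemize}
\item At $x$ you get $\delta_x=a_1\gamma+a_2\bar\gamma$. Here $a_1,a_2$ are the $\alpha$-coefficients of the two entries of $M_x$ in the column of $\beta$; the rows are indexed by the arrows leaving $x$.
\item At $i$ you get $\delta_i=b_1\gamma+b_2\bar\gamma$. Here $b_1,b_2$ are the $\beta$-coefficients of the two entries of $M_i$ in the row of $\alpha$; the columns are indexed by the arrows entering $i$.
\end{itemize}
The rows of $M_x\binom{\beta}{\beta^*}$ and the columns of $(\alpha\ \bar\alpha)M_i$ are two bases of the same two-dimensional space of minimal relations from $i$ to $x$. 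They are related by an a priori arbitrary invertible $2\times 2$ matrix, so nothing forces $(a_1,a_2)$ and $(b_1,b_2)$ to be proportional. The normalization $d_3(e_i)=(\gamma,\bar\gamma)$ only fixes $M_i$ and says nothing about $M_x$.

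Without proportionality you could have, say, $\gamma\alpha\prec I$ and $\beta\bar\gamma\prec I$ while $\bar\gamma\alpha,\beta\gamma\nprec I$. That is exactly what the lemma rules out, so this consistency is the whole content of the second part.

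One way to close the gap is to read both conclusions off a single matrix entry, assuming $\alpha$ is the only arrow $i\to j$:
\begin{enumerate}
\item From $M_i$ you get a relation from $j$ to $i$ whose quadratic part contains $\beta\delta_i$.
\item Modulo $R_QI+IR_Q$, the minimal relations from $j$ to $i$ are spanned by the column of $(\beta\ \bar\beta)M_j$ belonging to $\alpha$. Hence the entry of $M_j$ in the row of $\beta$ and column of $\alpha$ has linear part $\lambda\delta_i$ with $\lambda\neq 0$.
\item The row of $M_j\binom{\alpha}{\alpha^*}$ belonging to $\beta$ then has quadratic part containing $\lambda\delta_i\alpha$.
\end{enumerate}
Consequently any $\delta\in\{\gamma,\bar\gamma\}$ with nonzero coefficient in $\delta_i$ satisfies both $\delta\alpha\prec I$ and $\beta\delta\prec I$.
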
 

\begin{lemma}\label{lem:3.5} Assume $i$ is a 1-vertex which is part of a triangle 

\[
  \xymatrix@R=1.3pc@C=1.8pc{
    & i
    \ar[rd]^{\alpha}
    \\
  x 
    \ar[ru]^{\gamma}
    && j
   \ar[ll]_{\beta}
  }
\]

Then both $x$ and $j$ must be at least $2$-vertices. \end{lemma}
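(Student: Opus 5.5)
The plan is to show that a triangle can never contain two consecutive $1$-vertices. This suffices for both claims. If $j$ were a $1$-vertex, then $i\to j$ would be a pair of consecutive $1$-vertices. If $x$ were a $1$-vertex, then $x\to i$ would be such a pair, and we simply relabel the triangle cyclically ($x\mapsto i\mapsto j\mapsto x$). So assume $i$ and $j$ are both $1$-vertices on the triangle $x\stackrel{\gamma}\to i\stackrel{\alpha}\to j\stackrel{\beta}\to x$, with $x,i,j$ pairwise distinct. Then $\gamma$ is the unique arrow ending at $i$ and $\alpha$ the unique arrow starting at $i$. Likewise $\alpha$ is the unique arrow ending at $j$ and $\beta$ the unique arrow starting at $j$.

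The main tool is the exact sequence $(*)$ of Section \ref{sec:2}, available because every simple module is periodic of period $4$ and $\La$ is symmetric. For $S_i$ it reads
$$0\to S_i\to P_i\to P_x\to P_j\to P_i\to S_i\to 0,$$
since $P_i^+=P_j$ and $P_i^-=P_x$. For $S_j$ it reads
$$0\to S_j\to P_j\to P_i\to P_x\to P_j\to S_j\to 0,$$
since $P_j^+=P_x$ and $P_j^-=P_i$. Taking alternating sums of dimensions, the first sequence gives $\dim P_x=\dim P_j$ and the second gives $\dim P_i=\dim P_x$. Hence all three indecomposable projectives $P_i,P_j,P_x$ have the same dimension.

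Now we compute $\Omega^2(S_i)$. We have $\Omega(S_i)=\rad P_i=\alpha\La$, so $\dim\Omega(S_i)=\dim P_i-1$. From $0\to\Omega^2(S_i)\to P_j\to\Omega(S_i)\to 0$ we then get
$$\dim\Omega^2(S_i)=\dim P_j-\dim P_i+1=1.$$
So $\Omega^2(S_i)$ is simple. By the description of $d_2$ in Section \ref{sec:2}, it is generated by the image $\vf=d_2(e_x)\in e_j\La e_x$, so it is isomorphic to $S_x$. On the other hand, it is a simple submodule of $P_j$. Since $\La$ is symmetric, $\soc P_j\cong S_j$, so $\Omega^2(S_i)\cong S_j$. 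This forces $x=j$, contradicting that the triangle has three distinct vertices. (Equivalently, $\vf$ would have to lie in $e_j\La e_j\cap e_j\La e_x=0$ for $x\neq j$.)

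The only delicate point I expect is bookkeeping. First, one must make sure $(*)$ genuinely applies with $P_i^\pm$ consisting of a single summand, which holds because the arrows at $i$ and $j$ are unique. Second, one must justify that $x,i,j$ are distinct. That $i\ne j$ and $j\ne x$ is immediate, since a $1$-vertex carrying a loop would give a connected component consisting of that vertex alone, excluded by the standing assumptions (connectedness, at least three vertices). The case $x=i$ would mean that $\gamma$, being an arrow from $x$ to $i$, is a loop at $i$, again impossible for a $1$-vertex.
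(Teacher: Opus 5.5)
Your proof is correct. The paper does not prove this lemma itself; it is quoted from \cite{EHS1}, so there is no internal argument to compare against. Yours is a self-contained dimension count that uses only that $\La$ is symmetric and that $\Omega^4(S)\cong S$ for the simples involved. It makes no use of tameness, which fits the Remark following Lemma~\ref{lem:3.7}.

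The key chain is sound. The two sequences $(*)$ for $S_i$ and $S_j$ give $\dim P_i=\dim P_x=\dim P_j$, so $\Omega^2(S_i)$ is one-dimensional. A simple module that is both a quotient of $P_x$ and a submodule of $P_j$ (whose socle is $S_j$) forces $x=j$. The cyclic relabelling that handles $x$, and your treatment of degenerate triangles, are fine.

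One remark on scope. In the biregular setting, the only place the lemma is used in this paper, ``not a $1$-vertex'' is the same as ``$2$-vertex'', so you prove exactly what is needed. Now read the lemma for arbitrary Gabriel quivers, where in- and out-degrees may differ (Remark~(iii) in Section~\ref{sec:HSA}), and take ``at least $2$-vertex'' to mean $|v^-|,|v^+|\geqslant 2$. Then your argument as written only excludes $x$ and $j$ being $1$-vertices.

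The same counting gives a bit more. Suppose $|j^-|=1$. The sequence for $S_j$ gives $\dim P_i=\sum_{a\in j^+}\dim P_{t(a)}\geqslant \dim P_x$. The sequence for $S_i$ gives $\dim P_x=\dim P_j$. Finally $\dim\Omega^2(S_i)=\dim P_j-\dim P_i+1\geqslant 1$. So all of these are equalities, $j^+=\{\beta\}$, and you are back in your case. The same argument with $S_x$ gives $|x^+|\geqslant 2$. The degrees $|j^+|$ and $|x^-|$, however, are not controlled by this count alone.
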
 

\medskip

Next two lemmas show similar properties of paths of length $3$ (see \cite[Proposition 4.5 and Lemma 4.6]{EHS1}).  

\begin{lemma}\label{lem:3.6} Suppose there is a path 
$$\xymatrix{i \ar[r]^{\alpha} & k \ar[r]^{\beta} & t \ar[r]^{\gamma} & j}$$ 
such that $\alpha\beta \not\prec I$ and $\alpha$ is the unique arrow $i\to k$. If $\alpha\beta\gamma \prec I$, then 
there is an arrow $j\to i$.  \end{lemma}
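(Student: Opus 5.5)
We would argue as for Lemma \ref{lem:3.3}, using the exact sequence $(*)$ for the simple module $S_i$ and the matrix identities $(1)$ of Section \ref{sec:2}. The approach is to show that $\alpha\beta\gamma$ must occur as a summand of one of the entries of the matrix $M_x$ attached to some vertex $x$, and then read off the required arrow $j\to i$ from the last column identity in $(1)$.

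The first step locates the relation. Since $\alpha\beta\gamma\prec I$, there is a minimal relation $\rho$ from $i$ to $j$ with $\alpha\beta\gamma\prec\rho$. By the description in Section \ref{sec:2}, minimal relations starting at $i$ are the components of $(\alpha\ \ba)\cdot M_i=0$ (at most two of them). Thus $\rho$ has the form $\alpha m_{k?}+\ba m'$, where the relevant column of $M_i$ is $\vf$ or $\psi$, corresponding to an arrow $\delta$ ending at $i$, with source $u$. In particular the target of $\rho$ is the source of some arrow into $i$, so $j=u$ and there is an arrow $\delta\colon j\to i$.

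To make this rigorous, we expand the entry $\vf_{kx}$ (say) of $M_i$ multiplied by $\alpha$. Because $\alpha$ is the unique arrow $i\to k$, the summand $\alpha\beta\gamma$ of the relation arises only from $\alpha\cdot\vf_{kx}$, with $\beta\gamma\prec\vf_{kx}$ (up to lower and higher terms). Here $\vf_{kx}\in e_k\La e_x$, so $x=j$. Since $x$ is by definition the source of an arrow $\gamma_x\colon x\to i$ appearing in $d_3$, this gives the arrow $j\to i$.

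The main obstacle is the first reduction: showing that the given occurrence $\alpha\beta\gamma\prec I$ really comes from the relations $(\alpha\ \ba)M_i=0$, rather than being generated inside $I$ by relations starting at $k$. This is where $\alpha\beta\not\prec I$ and the uniqueness of $\alpha$ are used. If $\alpha\beta\gamma$ were $\alpha$ times a relation starting at $k$, then $\beta\gamma$ would lie in $I$ modulo other terms, and we would have to exclude that $\alpha\beta\gamma$ is a minimal summand. We would first try to handle this by noting that minimal relations are not combinations of other elements of $I$, so $\rho$ is a genuine element of $\ker d_1$ applied at $i$. We would then argue that $\Omega^2(S_i)$ is generated by the columns $\vf,\psi$. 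Consequently every element of $e_i J e_j$ lying in $I$ of minimal type is a combination of the two relations in $(\alpha\ \ba)M_i=0$ (multiplied on the right by elements of $\La$). The condition $\alpha\beta\not\prec I$ then ensures that the $\beta$-part is not absorbed by a degree-two relation. This lets us compare coefficients of $\alpha\beta\gamma$ and force $j\in\{x,y\}$.
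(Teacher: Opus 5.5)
There is a genuine gap. Your overall mechanism is the right one: a minimal relation from $i$ to $j$ forces $S_j$ into the top of $\Omega^2(S_i)$, whose projective cover is $P_i^-=\bigoplus_{x\to i}P_x$. But the step you yourself call the main obstacle is never carried out, and the route you sketch for it would not work.

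In your second and third paragraphs you treat the given minimal relation $\rho$ (with $\alpha\beta\gamma\prec\rho$) as if it were literally a component of $(\alpha\ \ba)\cdot M_i$. An arbitrary minimal relation is only congruent to a linear combination of those components modulo $R_QI+IR_Q$, so reading off $x=j$ there is circular. (Also, $\beta\gamma\prec\vf_{kx}$ is not meaningful, since $\vf_{kx}\in\La$.)

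In the last paragraph you propose to force $j\in\{x,y\}$ by comparing coefficients of $\alpha\beta\gamma$, with $\alpha\beta\nprec I$ preventing the $\beta$-part from being ``absorbed''. This cannot be made rigorous. The expression of $\rho$ through the columns of $d_2$ holds only in $\La$, i.e.\ modulo $I$, and the coefficient of a path is not an invariant of the class of $\rho$ modulo $R_QI+IR_Q$. Concretely, nothing in the hypotheses excludes $\beta\gamma\prec I$. If $\beta\gamma-w\in I$, then $\alpha(\beta\gamma-w)\in R_QI$ is an element of $I$ containing $\alpha\beta\gamma$ that says nothing about arrows $j\to i$. Adding multiples of it to $\rho$ changes the coefficient of $\alpha\beta\gamma$ without changing the class of $\rho$. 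So the contradiction cannot come from coefficients; it has to come from minimality itself.

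You actually wrote down the key ingredient (every relation from $i$ is a combination of the column relations multiplied on the right by elements of $\La$), but drew the wrong conclusion from it. Here is the missing step.

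Write $\rho=\alpha u+\ba v$ in $KQ$, with $v=0$ if $\ba$ does not exist. Then $(u+I,v+I)\in\ker d_1=\Omega^2(S_i)=\sum_x d_2(e_x)\La$. The sum runs over the summands $P_x$ of $P_i^-$, i.e.\ over the arrows $x\to i$. So $(u+I,v+I)=\sum_x d_2(e_x)a_x$ with $a_x\in e_x\La e_j$.

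If $j$ were not the source of an arrow ending at $i$, every $a_x$ would lie in $J$, since $x\neq j$. Lift the $a_x$ into $R_Q$, and let $\rho_x\in I$ be lifts of the relations $(\alpha\ \ba)\cdot d_2(e_x)=0$. Then $\rho\in\sum_x\rho_xR_Q+\alpha I+\ba I\subseteq IR_Q+R_QI$, contradicting the minimality of $\rho$. Hence $j=x$ for some arrow $x\to i$. Equivalently, the number of minimal relations from $i$ to $j$ is $\dim_K\operatorname{Ext}^2_\La(S_i,S_j)$, the multiplicity of $P_j$ in $P_i^-$.

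This is exactly the argument behind Lemma \ref{lem:3.3}. It uses neither $\alpha\beta\nprec I$ nor the uniqueness of $\alpha$; even with several arrows $i\to k$, the path $\alpha\beta\gamma$ sits in the $\alpha$-component. So, with $\prec I$ defined through minimal generating relations as in Section \ref{sec:2}, trying to locate where those two hypotheses enter was a detour rather than the crux.
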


\begin{lemma}\label{lem:3.7} Assume $Q$ contains a square 
$$\xymatrix@R=1.3pc{j\ar[r]^{\delta} & i\ar[d]^{\alpha} \\ t\ar[u]^{\gamma} & k\ar[l]_{\beta}}$$ 
with $\alpha\beta\gamma\prec I$ but $\beta\gamma\nprec I$. If $\delta$ is the unique arrow $j\to i$, then 
$\beta\gamma\delta\prec I$.  \end{lemma}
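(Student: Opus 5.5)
The plan is to read off both relations from the two matrix identities $(1)$ attached to the vertex $i$, using the exact sequence $(*)$ for $S_i$. Write $\ba$ for the second arrow starting at $i$, if it exists, and $\delta^*$ for the second arrow ending at $i$, if it exists. Since $\delta$ is the unique arrow $j\to i$, we may use \cite[Proposition 4.3]{AGQT} to normalize $d_3(e_i)=(\delta,\delta^*)$, so that $j$ plays the role of $x$ in the notation of Section \ref{sec:2}. Then $d_2$ is given by the matrix
$$M_i={\vf_{kj} \ \psi_{ky} \choose \vf_{k'j} \ \psi_{k'y}},$$
where $k'=t(\ba)$. The identities $(1)$ become
$$\alpha\vf_{kj}+\ba\vf_{k'j}=0,\qquad \vf_{kj}\delta+\psi_{ky}\delta^*=0 .$$
By \cite[Proposition 4.3]{AGQT}, these exhaust (up to the usual linear combinations) the minimal relations starting at $i$ and ending at $j$, respectively starting at $k$ and ending at $i$.

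The first step is to show that $\beta\gamma$ occurs as a summand of $\vf_{kj}$, after writing $\vf_{kj}$ as a linear combination of paths $k\to j$. Since $\alpha\beta\gamma\prec I$, some minimal relation from $i$ to $j$ contains $\alpha\beta\gamma$. Minimal relations from $i$ to $j$ all come from the first identity above, so the first identity must contain $\alpha\beta\gamma$. The summands beginning with $\alpha$ there are exactly $\alpha$ times the summands of $\vf_{kj}$. The danger is that the term $\alpha\beta\gamma$ was produced only after rewriting $\alpha w$ by means of another minimal relation. I would exclude this using the hypothesis $\beta\gamma\nprec I$: then no relation allows $\beta\gamma$ to be rewritten, and $\alpha\beta\gamma$ can only arise as $\alpha$ times a summand $\beta\gamma$ of $\vf_{kj}$, with nonzero coefficient.

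The second step is to transport this summand to the second identity. The summand $\beta\gamma$ of $\vf_{kj}$ produces $\beta\gamma\delta$ in $\vf_{kj}\delta+\psi_{ky}\delta^*=0$. Terms from $\psi_{ky}\delta^*$ end with $\delta^*\neq\delta$, so they cannot cancel it. Other summands of $\vf_{kj}$ are different paths $k\to j$, and they can only coincide with $\beta\gamma$ after multiplying by $\delta$ if they already coincided. Hence $\beta\gamma\delta$ appears with nonzero coefficient in a relation from $k$ to $i$.

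The main obstacle is to verify that this relation is a genuine minimal relation, so that $\beta\gamma\delta\prec I$, rather than a consequence of relations of length two. I would argue that $\beta\gamma\delta$ is not absorbed by relations of smaller length: neither $\beta\gamma\prec I$ (hypothesis) nor $\gamma\delta$ can be rewritten without moving the term away from the form ``$\beta\gamma$ times $\delta$''. The point to check is that if $\gamma\delta\prec I$, then the term is still not eliminated. I would handle this as in the proof of Lemma \ref{lem:3.6}, via Lemmas \ref{lem:3.3}--\ref{lem:3.4}: $\gamma\delta\prec I$ forces a triangle through $t,j,i$, which is incompatible with the square unless $t$ is a target of $\alpha$. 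That case reduces the square to a triangle and is handled by Lemma \ref{lem:3.4} directly.
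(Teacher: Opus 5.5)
The paper does not prove this lemma itself; it quotes it from \cite{EHS1}. Your route through the exact sequence $(*)$ for $S_i$ is the natural one, and your second step is fine: passing from a summand $\beta\gamma$ of $\vf_{kj}$ to a summand $\beta\gamma\delta$ of the row relation $\vf_{kj}\delta+\psi_{ky}\delta^*$ works as you say. The gap is in the first step.

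A minimal relation $\alpha u+\ba v$ from $i$ to $j$ containing $\alpha\beta\gamma$ agrees with a multiple of the column relation of $\vf$ only modulo $R_QI+IR_Q$. Concretely, in $\La$ you have $(u,v)=\vf X+\psi Y$, where $X=\lambda e_j+X'$ with $X'\in J$, and $Y\in e_yJe_j$. Here $y\neq j$ because $\delta$ is the unique arrow $j\to i$. Since $\beta\gamma\nprec I$, the coefficient of $\beta\gamma$ is a well-defined functional on $e_k\La e_j$. So the coefficient of $\beta\gamma$ in $u$ equals $\lambda$ times its coefficient in $\vf_{kj}$, plus its coefficient in $\vf_{kj}X'+\psi_{ky}Y$.

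Your hypothesis does not control this last term. Suppose $\delta^*$ starts at $t$ and $\psi_{ky}$ has the arrow $\beta$ as a linear term. This means $\alpha\beta$ occurs in the second minimal relation $\alpha\psi_{ky}+\ba\psi_{k'y}$ starting at $i$, which Lemma~\ref{lem:3.3} permits because of the arrow $\delta^*:t\to i$. Then $(\alpha\psi_{ky}+\ba\psi_{k'y})\gamma\in IR_Q$ has $\alpha\beta\gamma$ as a summand. Adding it to the column relation of $\vf$ gives a minimal relation containing $\alpha\beta\gamma$ even when $\vf_{kj}$ has no summand $\beta\gamma$. In that case the row relation has no summand $\beta\gamma\delta$, and your argument yields nothing.

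So the dangerous rewriting is that of $\alpha\beta$, multiplied by $\gamma$ on the right, not that of $\beta\gamma$. The hypothesis $\beta\gamma\nprec I$ only disposes of the latter. (The term $\vf_{kj}X'$ matters only in the degenerate case where $\gamma$ is a loop.) To close the gap you have two options:
\begin{enumerate}
\item[(a)] Assume $\alpha\beta\nprec I$. This excludes a linear term $\beta$ in $\psi_{ky}$, and it holds in every application of the lemma in this paper (Lemmas~\ref{lemA}, \ref{lemB} and Section~\ref{subsec:6.5}), where no path of length two in the square is involved in a minimal relation.
\item[(b)] Give a separate treatment of the case $s(\delta^*)=t$.
\end{enumerate}

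Your last paragraph is unnecessary, and its argument is wrong. Minimality of the row relation is automatic. Applying $\Hom_\La(-,\La)$ to $(*)$ gives the beginning of a minimal projective resolution of the simple left module at $i$. Hence the rows of $d_2$ are minimal generators of the left kernel of $\vec{\delta \\ \delta^*}$, i.e.\ they give the minimal relations ending at $i$; this is the part of \cite[Proposition 4.3]{AGQT} you already invoked. Moreover, $\gamma\delta\prec I$ is not incompatible with the square. Lemma~\ref{lem:3.3} only gives an arrow $i\to t$, and this may simply be $\ba$ with $t(\ba)=t$.
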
 \medskip 

{\it Remark.} All the above lemmas hold for arbitrary algebra with all simples periodic of period $4$, 
so in particular, for any TSP4 algebra (with arbitrary Gabriel quiver). In case $\La$ is a GQT algebra with 
biregular Gabriel quiver, more is known. Namely, the following two results from \cite[see Proposition 5.1 and 
the Main Theorem]{EHS2} hold for arbitrary GQT algebra with biregular Gabriel quiver $Q=Q_\La$.  

\begin{prop}\label{prop:4.2} Let $\La$ be a GQT algebra with biregular Gabriel quiver $Q$ and $i\in Q_0$ a 
$1$-vertex in a triangle 
$$\xymatrix@R=0.6pc@C=0.5cm{&i\ar[rd]^{\alpha}&\\ x\ar[ru]^{\gamma}&&j\ar[ll]_{\beta}} $$  
Then $Q$ contains a block of the form    
$$\xymatrix@R=0.7pc{ & \ar[ldd] \bullet & \\ & \bullet^i \ar[rd]_{\alpha} & \\ 
\bullet \ar[rd] \ar[ru]_{\gamma} && \ar[ll]^{\beta} \bullet \ar[luu] \\ 
& \circ \ar[ru] & }$$ 
where $\circ$ is a $2$-vertex and $\bullet$ are $1$-vertices. \end{prop}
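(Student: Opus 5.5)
By Lemma \ref{lem:3.5}, $x$ and $j$ are $2$-vertices. I would write $\bar\beta\colon j\to t$ for the second arrow starting at $j$, $\bar\gamma\colon x\to c$ for the second arrow starting at $x$, and $\delta\colon z\to x$ for the second arrow ending at $x$. The claim then amounts to two things. First, $\bar\beta$ is followed by a single arrow $t\to x$, with $t$ a $1$-vertex. Second, $\bar\gamma$ is followed by an arrow $c\to j$, with $c$ a $2$-vertex. The plan is to read these off the period-four sequences $(*)$ at $i$, $j$ and $x$, using the triangle lemmas \ref{lem:3.3}--\ref{lem:3.7}.

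\emph{Step 1: the relation at $i$.} Since $i$ is a $1$-vertex, $(*)$ reads
$$0\to S_i\to P_i\to P_x\to P_j\to P_i\to S_i\to 0,$$
so $\Omega^2(S_i)$ is generated by a single element $\vf\in e_j\La e_x$ with $\alpha\vf=0$ and $\vf\gamma=0$. The element $\vf$ cannot lie in $J^2$: otherwise $\Omega^2(S_i)$ would be too small, since $\dim\Omega^2(S_i)=\dim P_j-\dim P_i+1$ and the same count applies from the other side. Hence $\vf$ has a nonzero $\beta$-component, and, after adjusting $\beta$, $\vf=\beta+\vf'$ with $\vf'\in J^2$. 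Thus $\alpha\beta\prec I$, and by Lemma \ref{lem:3.4} also $\gamma\alpha\prec I$ and $\beta\gamma\prec I$. The relation $\vf\gamma=0$ gives $\beta\gamma=-\vf'\gamma$. I expect $\vf'$ to be a linear combination of paths from $j$ to $x$ that avoid the arrow $\beta$.

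\emph{Step 2: locate $t$ and $c$.} I would first analyse the minimal relations starting at $j$, namely the rows of $(\alpha\ \ \bar\alpha)\cdot M_j=0$ in the matrix notation of Section \ref{sec:2}. One of these relations starts with $\beta\gamma$. Since $\beta\gamma\prec I$, it must involve some path $\bar\beta u$ from $j$ to $i$. Because $\gamma$ is the only arrow into $i$, this path is $\bar\beta u'\gamma$. Next I would apply Lemma \ref{lem:3.6} or Lemma \ref{lem:3.7} to $\bar\beta u'\gamma$, and to the dual relations ending at $x$ given by $M_x\cdot{\beta\choose\delta}=0$. The aim is to force $u'$ to be a single arrow $t\to x$, so that $z=t$ and $\delta\colon t\to x$. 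Symmetrically, I would use the relation ending at $j$, coming from $\alpha\beta\prec I$ and the column of $M_j$, to find a path $x\xrightarrow{\bar\gamma}c\to j$, again shortening it to length two with Lemma \ref{lem:3.7}.

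\emph{Step 3: regularity of $t$ and $c$.} To show $t$ is a $1$-vertex, I would argue as follows. If $t$ had a second outgoing arrow, the composite $\bar\beta\delta$ would lie in a minimal relation. Lemma \ref{lem:3.3} then produces an arrow $x\to j$, which contradicts biregularity, because $j$ already receives $\alpha$ and the arrow $c\to j$. To show $c$ is a $2$-vertex, I would note that otherwise $x\to c\to j\to i\to x$ and the mirror square close up into a configuration violating Lemma \ref{lem:3.5} applied to the triangle through $c$.

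\emph{Main obstacle.} I expect Step 2 to be the hardest part, namely proving that the connecting paths have length exactly two. The difficulty is that the higher terms $\vf'$ may a priori be long. I would first try to handle this with a dimension count in $(*)$ at $x$, using $\dim e_x\La e_i=\dim e_i\La e_x$ from symmetry, combined with Lemma \ref{lem:3.7} applied repeatedly to shorten the path.
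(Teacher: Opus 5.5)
There is a genuine gap, and it is in Step~1. You claim that the generator $\varphi\in e_j\La e_x$ of $\Omega^2(S_i)$ lies outside $J^2$. From that you conclude that $\alpha\beta$ is involved in a minimal relation, and then by Lemma~\ref{lem:3.4} also $\beta\gamma$ and $\gamma\alpha$. This is false in general.

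The dimension count you cite only gives $\dim\Omega^2(S_i)=\dim P_j-\dim P_i+1=\dim P_x-\dim P_i+1$. That says nothing about which radical layer contains $\varphi$. The paper in fact records the opposite situation. A non-critical block may sit inside the configuration of the proposition (see the paragraph on critical blocks), and then Lemma~\ref{lemA} gives $\delta_i\overline{\alpha_i}\nprec I$, which is your $\alpha\beta$ with $i=d_i$.

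Here is a concrete counterexample. Take a weighted surface algebra whose triangulation quiver consists of:
\begin{itemize}
\item the $f$-triangles $j\xrightarrow{\bar\beta}t\xrightarrow{\xi}i\xrightarrow{\alpha}j$, $x\xrightarrow{\gamma}i\xrightarrow{\mu}t\xrightarrow{\delta}x$ and $j\xrightarrow{\beta}x\xrightarrow{\bar\gamma}c\xrightarrow{\omega}j$;
\item a loop at $c$ to close it up;
\item weight $1$ on the $g$-orbit $(\xi\ \mu)$, so that $\xi,\mu$ are virtual.
\end{itemize}
Then $(\beta\ \gamma\ \alpha)$ is a $g$-orbit of length $3$; give it weight $m\geqslant 2$. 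The Gabriel quiver contains exactly the configuration of the proposition. The defining relations $\alpha\bar\beta=c_1\mu$ and $\mu\delta=c_2(\alpha\beta\gamma)^{m-1}\alpha\beta$ show that $\Omega^2(S_i)$ is generated by
\[
\varphi=\bar\beta\delta-c_1c_2(\beta\gamma\alpha)^{m-1}\beta\in J^2 ,
\]
and none of $\alpha\beta,\beta\gamma,\gamma\alpha$ is involved in a minimal relation. Your Step~1 is correct only for $m=1$, the critical case, where $\varphi=\bar\beta\delta-c_1c_2\beta$.

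Step~2 starts from $\beta\gamma\prec I$, so in the non-critical case it has no input. There the whole content of the proposition is carried by the leading term $\bar\beta\delta$ of a generator lying in $J^2$. That content is that $\bar\beta$ ends at a $1$-vertex whose only arrow goes to $x$, and that $\bar\gamma$ reaches $j$ through a $2$-vertex. Lemmas~\ref{lem:3.3}--\ref{lem:3.7} cannot reach this, since each needs a path already involved in a relation as input. So a further ingredient is required.

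You never use tameness. The comparable local statements in the paper (Lemmas~\ref{lemA}, \ref{lemAcritical}, \ref{lemB}) are proved by excluding wild subcategories in a covering. The proposition itself is not reproved in the paper; it is imported from \cite{EHS2}.

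Step~3 also fails as written:
\begin{itemize}
\item Nothing forces $\bar\beta\delta\prec I$ when $t$ has a second outgoing arrow.
\item The cycle $x\to c\to j\to i\to x$ does not exist, since $\alpha$ goes from $i$ to $j$.
\item Lemma~\ref{lem:3.5}, applied to a triangle $x\to c\to j\to x$ with $c$ a $1$-vertex, only says that $x$ and $j$ are $2$-vertices. You already know this, so it gives no contradiction.
\end{itemize}
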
 

\begin{thm}\label{prop:4.3} 
If the Gabriel quiver $Q=Q_\La$ is biregular, then every $1$-vertex $i\in Q_0$ is contained in a block of the form 
$$\xymatrix@R=0.4pc{&&& &\bullet_i\ar[rd]& \\ 
\circ\ar@<+.4ex>[r]&\bullet_i\ar@<+.4ex>[l]&\mbox{ or }& \circ\ar[ru] && \circ\ar[ld]  \\ 
&&& &\ar[lu]\bullet&}$$ 
where $\bullet$ and $\circ$ denote, respectively, the $1$-vertices and $2$-vertices. \end{thm}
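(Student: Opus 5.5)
Fix a $1$-vertex $i$ with unique outgoing arrow $\alpha: i\to j$ and unique incoming arrow $\gamma: x\to i$. The aim is to show that either $x=j$ and the $2$-cycle $i\rightleftarrows j$ forms a block $V_1$, or $i$ lies in a triangle, in which case Proposition \ref{prop:4.2} yields a block of type $V_2$.

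\textbf{Step 1: the relation starting at $i$.} Since $|i^+|=1$, the exact sequence $(*)$ has $P_i^+=P_j$ and $P_i^-=P_x$. The matrix $M_i$ is therefore a single element $\vf=\vf_{jx}\in e_j\La e_x$. The identities (1) become $\alpha\vf=0$ and $\vf\gamma=0$.

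\textbf{Step 2: $\vf$ lies in the radical.} Minimality of the resolution forces $\vf\in J$. If $\vf$ were an idempotent multiple, i.e. $x=j$ and $\vf=e_j$, then $\alpha=0$, which is absurd.

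\textbf{Step 3: $x\neq j$ and $\alpha\vf$ is a minimal relation.} Suppose first $x\neq j$. Then $\alpha\vf=0$ is a nonzero minimal relation starting at $i$, because $\Omega^2(S_i)$ is generated by $\vf$. Write $\vf=\sum_\beta \beta\vf_\beta$ over arrows $\beta$ starting at $j$, and pick a path $\alpha\beta\cdots\prec I$ of minimal length.
\begin{itemize}
\item If the path has length $2$, Lemma \ref{lem:3.3} gives an arrow $t(\beta)\to i$. Since $\gamma$ is the unique arrow ending at $i$, this forces $t(\beta)=x$, so $i,j,x$ form a triangle. Proposition \ref{prop:4.2} then produces the block $V_2$ containing $i$. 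In that block $x$ and $j$ are $2$-vertices, as required by Lemma \ref{lem:3.5}.
\item If the path has length at least $3$, I would use Lemmas \ref{lem:3.6} and \ref{lem:3.7} to produce an arrow back to $i$, which again must be $\gamma$. The goal is to show this yields a square or longer cycle through $i$ that is incompatible with periodicity. For this I would compare dimensions via $(*)$: with $P_i^\pm$ indecomposable, we get $\dim P_i-\dim P_j+\dim P_x-\dim P_i=0$, so $\dim P_j=\dim P_x$. Combined with the Cartan matrix symmetry, this should exclude $x\neq j$ unless a triangle exists.
\end{itemize}

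\textbf{Step 4: the case $x=j$.} The quiver contains the $2$-cycle $\alpha:i\to j$, $\gamma:j\to i$. I would show $j$ is a $2$-vertex. If $j$ were a $1$-vertex, the quiver would consist of just $i$ and $j$ by connectedness, contradicting the assumption of at least three vertices (biregularity and connectedness). This gives the block $V_1$.

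\textbf{Main obstacle.} I expect the hardest part to be excluding, in the case $x\neq j$ with no length-$2$ relation through $\alpha$, configurations where $i$ sits on a longer cycle. Here I would rely on the $4$-periodicity of $S_j$ and $S_x$, repeating the analysis of $(*)$ at these neighbours to force the arrow closing the triangle. That is the step where the argument of \cite{EHS2} presumably does the real work.
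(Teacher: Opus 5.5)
The paper gives no proof of this statement here; it quotes it from \cite[Main Theorem]{EHS2}. On its own terms, your argument correctly handles two cases: the $V_1$ case (Step 4), and the case $\alpha\beta\prec I$ for some $\beta$, where you get a triangle and then apply Proposition~\ref{prop:4.2}. The remaining case of Step 3 is the real content of the theorem, and there you aim at a false conclusion.

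A block of type $V_2$ is a \emph{square} $x\to i\to j\to u\to x$, with $u$ a second $1$-vertex. A triangle through $i$ occurs only in the sup-critical situation of Proposition~\ref{prop:4.2}. In a non-critical block there is no arrow $j\to x$, and no path of length $2$ starting at the $1$-vertex is involved in a minimal relation. The relation starting there involves the length-$3$ path around the block (Lemma~\ref{lemA} and the relation $\rho_2^{(i)}$ in Section~\ref{summary}). For instance, vertex $2$ of the spherical quiver $Q^S$ has $x=1\neq j=3$ and lies on no triangle. Yet GQT algebras with Gabriel quiver $Q^S$ exist, e.g.\ the higher spherical algebras (cf.\ Theorem~\ref{prop:spherical}).

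So neither of your intended moves can succeed. You cannot exclude $x\neq j$ unless a triangle exists. Nor can you show that the square produced by Lemma~\ref{lem:3.6} is incompatible with periodicity: that square \emph{is} the block. Likewise, the identity $\dim P_j=\dim P_x$ from $(*)$ holds automatically for $V_2$ blocks. In a weighted surface algebra, $P_a$ and $P_b$ are spanned along the same two $g$-orbits. Neither this identity nor Cartan symmetry can rule anything out.

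What you are missing is the correct target in the case $x\neq j$ with $\alpha\beta\nprec I$ for all $\beta\in j^+$. It has three parts:
\begin{enumerate}
\item[(a)] Show that some path $\alpha\beta\nu$ of length $3$ occurs in the relation starting at $i$, i.e.\ rule out that all its terms have length $\geqslant 4$. Lemmas~\ref{lem:3.6} and~\ref{lem:3.7} do not give this. It needs tameness, e.g.\ wild subcategories in a covering, as in the proofs of Lemmas~\ref{lemA} and~\ref{lemB}.
\item[(b)] Apply Lemma~\ref{lem:3.6} to get $t(\nu)=x$. This gives the square $x\to i\to j\to u\to x$ with $u=t(\beta)$.
\item[(c)] Prove that $u$ is a $1$-vertex and that $x$ and $j$ are $2$-vertices. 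In particular, $u$ has no second incoming or outgoing arrow.
\end{enumerate}
Step (c) is where further input (periodicity of the neighbouring simples and tameness) is genuinely needed, and your proposal never formulates it. As written, it proves the theorem only for $1$-vertices lying on a triangle or on a $2$-cycle.
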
 

The above theorem confirms that in biregular case, the $1$-regular vertices of $Q$ belong to the desired blocks, 
exactly as for general weighted surface algebras (see Section \ref{sec:HSA}). \smallskip  

\subsection{Minimal relations around $1$-vertices}

Now, we assume $\La$ has biregular Gabriel quiver $Q_\La=Q$. According to Theorem \ref{prop:4.3} (see also \ref{prop:4.2}), 
every $1$-vertex of $Q$ is contained in one of two types of blocks $V_1$ or $V_2$. We will now discuss minimal relations 
of $I$ around such blocks. From now on, we assume that $Q=Q_\La$ is different from the spherical quiver $Q^S$ and the 
triangle or the almost triangle quivers $Q^T$ and $Q^{T'}$; see Section \ref{sec:HSA}. Moreover, we will also assume that 
$Q$ has at least three vertices. \bigskip 

Clearly, $Q$ consists of $2$-regular and $1$-regular vertices, and by Theorem \ref{prop:4.3}, one can group 
all $1$-regular vertices in $Q_0$ into two sets of vertices, namely, vertices $c_i,d_i$, $i\in\{1,\dots,p\}$, 
and vertices $y_i$, $i\in\{1,\dots,q\}$, where for any $i\leqslant p$, vertices $c_i,d_i$ are $1$-veritces 
in a block $B_i$ of type $V_2$  
$$\xymatrix@R=0.6pc{&\bullet_{c_i}\ar[rd]^{\beta_i}& \\ 
\circ_{a_i}\ar[ru]^{\alpha_i} && \circ_{b_i}\ar[ld]^{\nu_i}\\ & \bullet_{d_i} \ar[lu]^{\delta_i}& }$$ 
whereas $y_i$ lies in a block $B_i'$ of type V$_1$: $\xymatrix{\circ_{x_i}\ar@<+.4ex>[r]^{\varepsilon_i} & 
\bullet_{y_i}\ar@<+.4ex>[l]^{\eta_i}}$, 
if $i\leqslant q$. In particular, all vertices $x_i,a_i,b_i$ are $2$-regular in $Q$, and $y_i,c_i,d_i$ exhaust  
$1$-regular vertices in $Q$. Note also that $a_i\neq b_i$, for all $i$, since otherwise $Q=Q^T$ is the triangle 
quiver, excluded for now. \smallskip 

Observe further that for any $i\in\{1,\dots,p\}$, we have no arrows between $c_i$ and $d_i$ in $Q$ (and no 
loops at $y_i$'s), so we conclude from Lemma \ref{lem:3.3} that $\delta_i\alpha_i\nprec I$, $\beta_i\nu_i\nprec I$ 
(and $\eta_i\ve_i\nprec I$). Moreover, we can assume that at least one $\alpha_i\beta_i\nprec I$ or $\nu_i\delta_i\nprec I$. 
Indeed, if one of the paths, say $\nu_i\delta_i$, is involved in a minimal relation, then it is a part of a 
triangle (with $1$-vertex), by Lemma \ref{lem:3.3}, and hence $Q$ has a local shape as described in Proposition 
\ref{prop:4.2}. But then there is no arrow $b_i\to a_i$ in $Q$, and again by Lemma \ref{lem:3.3}, we conclude that 
$\alpha_i\beta_i\nprec I$. Hence up to relabelling of arrows, one can assume that $\alpha_i\beta_i\nprec I$, for all 
$i\in\{1,\dots,p\}$. In particular, we have also $\beta_i\bar{\nu}_i\nprec I$ and $\delta_i^*\alpha_i$, due 
to Lemmas \ref{lem:3.3} and \ref{lem:3.4}. 

\subsection*{Critical block.} A block $B_i$ with $\nu_i\delta_i\prec I$ will be called {\it critical}. 
We note that a non-critical block $B_i$ may be a part of a block depicted in Proposition \ref{prop:4.2}, 
but then we have $\nu_i\delta_i\nprec I$. The block from Proposition \ref{prop:4.2} will be sometimes called 
{\it sup-critical}, as any critical block in $Q$ is contained in such a block. \medskip 

Note finally that for a block $B'_i$, $i\in\{1,\dots,q\}$, we have also $\ve_i\eta_i\nprec I$. 
Indeed, if $\ve_i\eta_i\prec I$, we get a loop at $x$, by Lemma \ref{lem:3.3}, and then $Q$ consists of two vertices 
(and three arrows), which contradicts the assumptions on $Q$. Moreover, there are no arrows $t(\bar{\ve})\to y_i$ 
and $y_i\to s(\eta^*)$, so Lemma \ref{lem:3.3} gives $\eta_i\bar{\ve}_i\nprec I$ and $\eta_i^*\ve_i\nprec I$. \medskip 

\begin{lemma}\label{lemA} If $B_i$ is not critical, then $\beta_i^*\nu_i,\delta_i\bar{\alpha}_i\nprec I$ and all 
the paths $$\alpha_i\beta_i\nu_i, \ \beta_i\nu_i\delta_i, \ \nu_i\delta_i\alpha_i,\delta_i\alpha_i\beta_i$$ 
are involved in minimal relations of $I$. Moreover, we have $\beta_i^*\overline{\nu_i}\prec I$ (respectively, 
$\delta_i^*\overline{\alpha_i}\prec I$), if $t(\overline{\nu_i})$ (respectively, $t(\overline{\alpha_i})$) 
is a $2$-vertex. \end{lemma}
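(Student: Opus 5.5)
We work in a non-critical block $B_i$ and drop the index $i$. The block has $1$-vertices $c,d$ and $2$-vertices $a,b$. At $a$ the two outgoing arrows are $\alpha$ and $\ba$, and the two incoming arrows are $\delta$ and $\delta^*$. At $b$ the incoming arrows are $\beta$ and $\beta^*$, and the outgoing arrows are $\nu$ and $\bar\nu$.

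We already know that $\alpha\beta,\nu\delta,\delta\alpha,\beta\nu\nprec I$; the last two follow from Lemma \ref{lem:3.3}, since there are no arrows between $c$ and $d$. The plan is to read off the minimal relations at the $1$-vertices $c$ and $d$ from the exact sequence $(*)$ and the identities (1) of Section \ref{sec:2}.

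\textbf{Step 1: the relation starting at $c$.} Since $c$ is a $1$-vertex, $P_c^+=P_b$ and $d_1$ is left multiplication by $\beta$. So $\Omega^2(S_c)$ is cyclic, generated by a single element $\vf_{bd}\in e_b\La e_d$, and $P_c^-=P_a$ with $d_3(e_c)=\alpha$. The identities (1) give $\beta\vf_{bd}=0$ and $\vf_{bd}\alpha=0$.

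Write $\vf_{bd}=\lambda\nu+(\text{higher terms})$. It lies in $e_bJe_d$, and the only arrow $b\to d$ is $\nu$. If $\lambda=0$, then $\Omega^2(S_c)$ would be generated by an element of $J^2$. This contradicts the fact that $d_2$ is part of a minimal projective resolution: the top of $\Omega^2(S_c)$ is $S_d$, and the generator cannot lie in $\rad$ once we compare with $\Omega^{-2}(S_c)$. Hence $\vf_{bd}=\lambda\nu+\dots$ with $\lambda\neq0$.

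Then $\beta\vf_{bd}=0$ is a minimal relation starting at $c$ whose leading term $\beta\nu$ is not in $I$. So it must read $\beta\nu\cdot(\text{unit})+(\text{longer paths})=0$; but $\beta\nu\nprec I$, so in fact we should expand further. Instead, we use the relation $\vf_{bd}\alpha=0$: its leading summand is $\nu\delta\alpha$ after substituting $\vf_{bd}=\nu\delta$-type terms. I expect to carry this out by the symmetric-algebra argument that $\vf_{bd}$ has length at least $2$ in $e_b\La e_d$, i.e. $\vf_{bd}=\nu\cdot u+\bar\nu\cdot w$ with $u\in e_d\La e_d$.

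Getting the correct leading form of $\vf_{bd}$ is the main obstacle. The key input is non-criticality, $\nu\delta\nprec I$, which prevents the leading part from cancelling. We then conclude that $\beta\nu\delta\prec I$ and $\nu\delta\alpha\prec I$.

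\textbf{Step 2: the relation at $d$.} We argue symmetrically at $d$, with $P_d^+=P_a$ via $\delta$ and $d_3$ given by $\nu$. This yields $\delta\alpha\beta\prec I$ and $\alpha\beta\nu\prec I$. In both steps, Lemma \ref{lem:3.6} and Lemma \ref{lem:3.7}, applied to the square $a\to c\to b\to d\to a$, give the needed consistency. For instance, $\alpha\beta\nu\prec I$ together with $\beta\nu\nprec I$ and uniqueness of $\delta$ yields $\beta\nu\delta\prec I$.

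\textbf{Step 3: the non-involvement claims.} Suppose $\beta^*\nu\prec I$. By Lemma \ref{lem:3.3} there would be an arrow $d\to s(\beta^*)$, so $s(\beta^*)=a$ and $\beta^*:a\to b$. Then $a,b,d$ form a triangle with $1$-vertex $d$, and Lemma \ref{lem:3.4} applied to that triangle (the arrow $\delta$ into $a$ being unique from $d$) forces $\nu\delta\prec I$, contradicting non-criticality. The claim $\delta\ba\nprec I$ is handled the same way.

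\textbf{Step 4: the final claims.} Assume $t(\bar\nu)$ is a $2$-vertex. Consider $\Omega^2(S_b)$, with $P_b^+=P_d\oplus P_{t(\bar\nu)}$ and $P_b^-=P_c\oplus P_{s(\beta^*)}$. By Steps 1 and 3, the first column of $M_b$ is essentially $(\delta\alpha,\ \cdot)$; this uses the $c$-relation read backwards. Since $\beta^*\nu\nprec I$, the second generator $\psi$ must have a nonzero component in $e_{t(\bar\nu)}J e_{s(\beta^*)}$ of length $1$ in the second slot. Applying $M_b(\beta,\beta^*)^T=0$ then exhibits $\beta^*\bar\nu$ as a summand of a minimal relation. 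The claim for $\delta^*\ba$ follows dually at $a$.
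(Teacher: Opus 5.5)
Your Step 3 is correct and matches the paper: Lemmas \ref{lem:3.3} and \ref{lem:3.4}, applied to the would-be triangle through the $1$-vertex $d$, give $\beta^*\nu\nprec I$, and dually $\delta\ba\nprec I$. Your use of Lemma \ref{lem:3.7} around the square $a\to c\to b\to d\to a$ in Step 2 is also what the paper does. It reduces the second claim to showing that \emph{at least one} path of length $3$ in $B_i$ is involved in a minimal relation. That is the step you never establish.

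Step 1 is set up incorrectly. Since $P_c^-=P_a$, the map $d_2$ goes from $P_a$ to $P_b$. So $\Omega^2(S_c)$ has top $S_a$, and its generator $\vf$ lies in $e_b\La e_a$, not $e_b\La e_d$. There is no $\lambda\nu$ term, and in fact $\vf\in J^2$: an arrow $b\to a$ occurring in $\vf$ would give $\beta\bar\nu\prec I$, which is excluded.

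More seriously, even after this correction the sequence for $S_c$ only tells you that the minimal relation starting at $c$ is $\beta\vf$, with $\vf\in e_bJ^2e_a$ and $\vf\alpha=0$. You give no reason why $\nu\delta$ must occur in $\vf$. Nothing rules out $\vf$ consisting only of paths beginning with $\bar\nu$, which is exactly the configuration where none of the four paths is involved. The hypothesis $\nu\delta\nprec I$ says nothing about this.

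The paper does not get this from the periodic resolution at all. It assumes that no path of length $3$ in $B_i$ is involved, and deduces $\delta\alpha\beta\nu\nprec I$ from \cite[Lemma 4.7]{EHS1}. It then exhibits a wild subcategory in a covering, splitting according to whether $s(\beta^*)$ is a $2$-vertex or a $1$-vertex. In the $1$-vertex case it uses Lemmas \ref{lem:3.3} and \ref{lem:3.6} together with $Q\neq Q^S$. The missing ingredient is tameness, which your argument never invokes; it never uses $Q\neq Q^S$ either.

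Step 4 has the same defect, plus a mechanical error. The relations coming from $M_b\cdot(\beta,\beta^*)^T=0$ end at $b$, and every summand ends in $\beta$ or $\beta^*$. So they can never contain $\beta^*\bar\nu$, which starts at $s(\beta^*)$ and passes through $b$. Nothing justifies your claim that $\psi$ has an entry of length one.

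In the paper this claim is again a tameness argument. Suppose $t(\bar\nu)$ is a $2$-vertex and $\beta^*\bar\nu\nprec I$. Together with $\beta\nu,\beta\bar\nu,\beta^*\nu\nprec I$, none of the four paths of length $2$ through $b$ is involved in a minimal relation. This configuration yields a wild hereditary subcategory in the covering. The statement for $\delta^*\ba$ is dual.
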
 

\begin{proof} We write $\alpha,\beta,\nu,\delta$ for the arrows $\alpha_i,\beta_i,\nu_i,\delta_i$, 
skipping the indices (the same convention for vertices $a:=a_i,\dots$). It follows easily from Lemmas \ref{lem:3.3} 
and \ref{lem:3.4} that $\beta^*\nu\nprec I$ and $\delta\ba\nprec I$, since in a non-critical block we have 
$\nu_i\delta_i\nprec I$, by definition. Hence all paths of length $2$ starting or ending with an arrow from 
$B_i$ are not involved in minimal relations of $I$. In particular, using Lemma \ref{lem:3.7}, we deduce that 
either none or all of the paths of length $3$ in $B_i$ are involved in relations. Suppose to the 
contrary that all these paths are not involved in minimal relations of $I$. Then by \cite[Lemma 4.7]{EHS1}, 
we have $\delta\alpha\beta\nu\nprec I$. \medskip 

Note that both $a$ and $b$ are $2$-regular, due to Lemma \ref{lem:3.5}. \smallskip 

Now, if $b^*:=s(\beta^*)$ is a $2$-vertex, then we obtain the following wild subcategory in 
covering. 
$$\xymatrix@R=0.7pc{ & d \ar[d]_{\delta} && d && \\ 
\bar{a} & \ar[l]_{\ba} a \ar[r]^{\alpha} & c \ar[r]^{\beta} & b \ar[u]^{\nu} & \ar[l]_{\beta^*} b^* \ar[r]^{\sigma} & b' }$$ 

Hence, we can assume that $b^*$ is a $1$-vertex, so $b^*=c_k,d_k$ or $y_k$, for some $k$. If $\sigma$ denotes the 
unique arrow ending at $b^*$, then $\sigma\beta^*\nprec I$, because otherwise, we have an arrow $b\to s(\sigma)$, 
by Lemma \ref{lem:3.3}, which is impossible, since $s(\sigma)$ is a $2$-vertex, whereas arrows $\nu,\bar{\nu}$ 
starting at $b$ have only $1$-regular targets. Observe also that $\sigma\beta^*\nu\nprec I$. Indeed, if this is 
not the case, then due to Lemma \ref{lem:3.6}, there is an arrow $d_i\to s(\sigma)$, hence $s(\sigma)=a_i$, 
so that $Q=Q^S$, a contradiction. Consequently, we obtain a wild subcategory of the same type as above, but 
with reversed orientation of $\sigma$. \medskip 

This shows the first part of the claim. Finally, it follows that $\beta^*\bar{\nu}\prec I$ in case $t(\bar{\nu})$ 
is $2$-regular (the inverse implication is trivial). Indeed, if this was not true, we would get all the four paths 
of length $2$ passing through $b$ not involved in minimal relations of $I$. But then $\bar{b}:=t(\bar{\nu})$ is 
$2$-regular and we get the following wild subcategory in covering. 
$$\xymatrix@R=0.7pc{&c \ar[d]^{\beta} && \\ 
b' \ar[r]^{\beta^*} & b \ar[r]^{\bar{\nu}} \ar[d]^{\nu} & \bar{b} & \ar[l]_{\sigma} a' \\ &d&& }$$ 
Dually, we prove that $\delta\ba\prec I$, for $t(\ba)$ being $2$-regular. \end{proof} \medskip

Now, assume that $B_i$ is a critical block. Then $\overline{\alpha}=\beta^*$ and $B_i$ is contained in 
a block of the form  
$$\xymatrix@R=0.6pc{&\bullet_{c}\ar[rdd]^{\beta}& \\ & \bullet_{d}\ar[ld]^{\delta}& \\ 
\bullet_{a}\ar[rr]_{\ba}\ar[ruu]^{\alpha} && \bullet_{b}\ar[lu]^{\nu} \ar[ld]^{\bar{\nu}} \\ 
& \circ_{e} \ar[lu]^{\delta^*} &  }$$ 
with $e=t(\overline{\nu})=s(\delta^*)$ being $2$-regular. Additionally, we denote 
$\tau=\delta^*$, $\sigma=\bar{\alpha}$ and $\gamma=\bar{\nu}$. In this case, we have the following.  

\begin{lemma}\label{lemAcritical} If $B_i$ is critical, then all the paths $\alpha\beta\nu$, $\beta\nu\delta$, 
$\nu\delta\alpha$ and $\delta\alpha\beta$ are involved in minimal relations of $I$, whereas $\tau\alpha\beta\nprec I$ 
and $\alpha\beta\gamma\nprec I$. Moreover, we have $\tau\alpha\nprec I$, $\beta\gamma\nprec I$ and 
$\tau\sigma,\sigma\gamma,\gamma\tau\prec I$. \end{lemma}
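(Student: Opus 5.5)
Let me fix the local picture. The vertices are $a,b,c,d,e$; $c,d$ are $1$-vertices, and $a,b,e$ are $2$-vertices. The arrows are $\alpha:a\to c$, $\sigma=\ba:a\to b$, $\beta:c\to b$, $\nu:b\to d$, $\gamma=\bar\nu:b\to e$, $\delta:d\to a$ and $\tau=\delta^*:e\to a$. Criticality gives $\nu\delta\prec I$. We already know $\alpha\beta\nprec I$, $\delta\alpha\nprec I$ and $\beta\nu\nprec I$ from the discussion preceding Lemma \ref{lemA}.

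The plan is to settle first the length-two paths and then the length-three paths. All arguments are local (Lemmas \ref{lem:3.3}--\ref{lem:3.7}), plus one covering argument.

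\emph{Length two.} The triangle $b\to d\to a\to b$ contains $\nu\delta\prec I$, and $\sigma$ is the unique arrow $a\to b$. So Lemma \ref{lem:3.4} gives $\delta\sigma\prec I$ and $\sigma\nu\prec I$.

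\begin{enumerate}
\item[(i)] $\tau\alpha\nprec I$ and $\beta\gamma\nprec I$. By Lemma \ref{lem:3.3}, $\tau\alpha\prec I$ would require an arrow $c\to e$. The only arrow starting at $c$ is $\beta$, and $b\neq e$, so no such arrow exists. Similarly, $\beta\gamma\prec I$ would force an arrow $e\to c$, but the only arrow into $c$ is $\alpha$, starting at $a\ne e$.
\item[(ii)] $\tau\sigma,\sigma\gamma,\gamma\tau\prec I$. I would use the shape of $(*)$ at the $2$-vertex $a$. The arrows $\delta,\tau$ end at $a$, and the relations $M_a\binom{\delta}{\tau}=0$ produce at most two minimal relations ending at $a$. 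Since $\delta\alpha\nprec I$, the relation containing $\delta\sigma$ must also contain a term starting with $\tau$. By (i) this term cannot be $\tau\alpha$ (followed by something), so $\tau\sigma$ (possibly extended) occurs. To upgrade this to $\tau\sigma\prec I$ itself, I would exploit homogeneity and the symmetric argument at $b$ via $(\alpha\ \sigma)M_a=0$.
\end{enumerate}

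Alternatively, I would argue by contradiction. If $\tau\sigma\nprec I$, then together with $\tau\alpha\nprec I$ and $\delta\alpha\nprec I$ at least three of the four paths through $a$ are not in relations, and I would exhibit a wild subcategory of a Galois covering, exactly as in the proof of Lemma \ref{lemA}. The same treatment at $b$ gives $\sigma\gamma\prec I$. Finally, once $\tau\sigma\prec I$ holds, Lemma \ref{lem:3.3} shows that $\tau,\sigma$ lie in the triangle $e\to a\to b\to e$. Lemma \ref{lem:3.4}, applied with $\gamma$ the unique arrow $b\to e$, then yields $\gamma\tau\prec I$ (and recovers $\sigma\gamma$).

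\emph{Length three.} For the paths in the square $a\to c\to b\to d\to a$ I would follow Lemma \ref{lemA}. Here $\alpha\beta$, $\beta\nu$ and $\delta\alpha$ are not in relations, while $\nu\delta$ is. For paths with $\alpha\beta\nprec I$, Lemma \ref{lem:3.7} propagates: if $\delta\alpha\beta\prec I$ and $\alpha\beta\nprec I$, then $\alpha\beta\nu\prec I$, since $\nu$ is the unique arrow $b\to d$; and so on around the cycle. It remains to exclude that none of these four paths is in a relation. In that case $\delta\alpha\beta\nu\nprec I$ by \cite[Lemma 4.7]{EHS1}, and I would again produce a wild covering subquiver around $c$ together with $\sigma,\gamma,\tau$, adapting the pictures from Lemma \ref{lemA}.

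For $\tau\alpha\beta\nprec I$: by Lemma \ref{lem:3.6} (with $\tau\alpha\nprec I$, and $\tau$ the unique arrow $e\to a$), this would require an arrow $b\to e$ — and $\gamma$ is one. So Lemma \ref{lem:3.6} does not decide it. Instead I would use the relation at $a$: the minimal relations ending at $b$ are already accounted for by $\sigma\nu$-type and $\alpha\beta\nu$-type terms, and the counting of at most two minimal relations in $(*)$ leaves no room for $\tau\alpha\beta$. The case $\alpha\beta\gamma$ is dual.

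I expect these last exclusions, together with promoting $\tau\sigma$ to a genuine minimal relation, to be the main obstacle. They likely need an explicit analysis of $d_2$ at the vertex $a$, or a covering argument.
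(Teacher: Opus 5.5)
Your proposal has genuine gaps: the easy steps are right, but none of the three substantive claims is established, and the mechanisms you sketch for them fail as stated.

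\textbf{What is correct.} The following agree with the paper: $\tau\alpha,\beta\gamma\nprec I$ by Lemma \ref{lem:3.3}; $\sigma\nu\prec I$ (and $\delta\sigma\prec I$) by Lemma \ref{lem:3.4} applied to the triangle $(\nu\ \delta\ \sigma)$; and, once $\tau\sigma\prec I$ is known, $\gamma\tau,\sigma\gamma\prec I$ by Lemma \ref{lem:3.4} applied to $(\tau\ \sigma\ \gamma)$.

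\textbf{Gap 1: $\tau\sigma\prec I$.} Your first route cannot work. The path $\delta\sigma$ starts at the $1$-vertex $d$, so every term of a relation containing it starts with $\delta$, not $\tau$. Moreover, that relation is not one of the relations $M_a\binom{\delta}{\tau}=0$, which end at $a$. Your second route, a wild covering ``exactly as in Lemma \ref{lemA}'', does not transfer either. The subcategories there are hereditary trees built only from paths that are $\nprec I$. Here $\delta\sigma\prec I$ at $a$ (dually $\sigma\nu\prec I$ at $b$), so any subcategory testing $\tau\sigma$ (or $\sigma\gamma$) carries a zero relation. The paper argues at $b$. Assuming $\sigma\gamma\nprec I$, it takes $\beta,\sigma$ into $b$ and $\nu,\gamma$ out of $b$, with the single zero relation $\sigma\nu$. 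It extends this by an arrow $e'\to e$ and by unwinding $a\to c\to b\leftarrow a\to c$ from the source of $\sigma$. The result is a wild one-relation algebra from Ringel's list (XII).

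\textbf{Gap 2: the length-three paths.} Forward use of Lemma \ref{lem:3.7} breaks at the step $\beta\nu\delta\Rightarrow\nu\delta\alpha$. That step would need $\nu\delta\nprec I$, which is false in a critical block. You need the dual of Lemma \ref{lem:3.7} as well; with both, ``one implies all'' does hold. The real content is producing one such relation, and your ``wild covering subquiver around $c$'' is never specified. The paper uses an explicit $\widetilde{\widetilde{\mathbb{E}}}_8$ subcategory centred at a lift of $a$, with three arms:
\begin{itemize}
\item $d\xrightarrow{\delta}a$;
\item $\circ\leftarrow e\xrightarrow{\tau}a$;
\item $a\xrightarrow{\alpha}c\xrightarrow{\beta}b\xleftarrow{\sigma}a\xrightarrow{\alpha}c\xrightarrow{\beta}b\xleftarrow{\sigma}a$.
\end{itemize}
This gives $\tau\alpha\beta\prec I$ or $\delta\alpha\beta\prec I$, and the paper then excludes the first alternative.

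\textbf{Gap 3: $\tau\alpha\beta\nprec I$.} Your counting looks at the wrong relations. The paths $\sigma\nu$ and $\alpha\beta\nu$ lie in the relation from $a$ to $d$, whereas $\tau\alpha\beta$ runs from $e$ to $b$. The right count is at $S_e$:
\begin{itemize}
\item $\gamma$ is the only arrow $b\to e$, so $\Omega^2(S_e)$ has a single minimal generator at $b$.
\item After adjusting $\sigma$, this generator is $\varphi=(\sigma,-{}'A_{\bar\tau})$, coming from the relation $\tau\sigma=A_{\bar\tau}$.
\item A relation $\tau\alpha\beta+\tau z_1+\bar\tau z_2$ would give $(\alpha\beta+z_1,z_2)=\varphi x$ with $x\in e_b\La e_b$.
\item Comparing first coordinates, $\sigma x=\alpha\beta+z_1$, puts $\alpha\beta$ into a minimal relation, contradicting $\alpha\beta\nprec I$.
\end{itemize}
The claim $\alpha\beta\gamma\nprec I$ is the dual statement.
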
 

\begin{proof} First, it is clear from Lemma \ref{lem:3.3} that $\tau\alpha\nprec I$ and $\beta\gamma\nprec I$, 
because we have no arrows from (or to) $1$-regular vertices ending (or starting) at vertex $e$. \medskip 

Next, observe that $\sigma\nu\prec I$, by Lemma \ref{lem:3.4}, since we have a triangle $(\nu \ \delta \ \sigma)$ 
and $\nu\delta\prec I$. Moreover, we conclude that $\sigma\gamma\prec I$, because otherwise, we would get a 
wild subcategory of the form 
$$\xymatrix@R=0.6pc{ && c \ar[d]_{\beta} & a \ar[ld]_{\ve} \ar[r]^{\alpha} & c \ar[r]^{\beta} & b & a \ar[l]_{\sigma}  
\ar[r]^{\alpha} & c \\ e' \ar[r] & e & \ar[l]_{\gamma} b \ar[r]^{\nu} & d &&&& }$$ 
which is isomorphic to a wild one-relation algebra \cite[see XII in Theorem 1.5.2]{Rin}. By Lemma \ref{lem:3.4}, 
we have all $\sigma\gamma,\gamma\tau,\tau\sigma\prec I$. \medskip

Finally, due to Lemma \ref{lem:3.5} (and its dual), it is sufficient to show that one of the paths 
of length three in $B_i$ is involved in a minimal relation. First, observe that 
$$\tau\alpha\beta\prec I\mbox{ or }\delta\alpha\beta\prec I,$$ 
since if not, we would get the following wild subcategory of type $\wt{\wt{\bE}}_8$  
$$\xymatrix@R=0.6pc{&&&&&& d\ar[d]_{\delta} && \\  a \ar[r]^{\sigma}  & b & \ar[l]_{\beta} c & \ar[l]_{\alpha} a 
\ar[r]^{\sigma} & b & \ar[l]_{\beta} c & \ar[l]_{\alpha} a & \ar[l]_{\tau} e \ar[r] & \circ  }$$  

As a result, it remains to show that $\tau\alpha\beta\nprec I$. After possibly adjusting arrow $\sigma$, we have one minimal 
relation of the form $\tau\sigma=A_{\bar{\tau}}$, where $A_{\bar{\tau}}$ is a 
combination of paths starting with $\bar{\tau}$. Therefore, we can write $A_{\bar{\tau}}=\bar{\tau} ({'}A_{\bar{\tau}})$, 
and hence we obtain a generator of $\Omega^2(S_e)$ of the form 
$$\vf=(\sigma,-('A_{\bar{\tau}}))\in e_a\La e_b\oplus e_{t(\bar{\tau})}\La e_b.$$ 

Now, it follows that $\tau\alpha\beta\nprec I$, because otherwise, we would obtain a minimal relation of the form 
$\tau\alpha\beta+\tau z_1 + \bar{\tau}z_2=0$, and consequently, another generator of the form $\vf'=(\alpha\beta+z_1,z_2)$ 
with $z_1\in J$. But then, there is $x\in e_b\La e_b$ such that $\vf'=\vf x$, and comparing the first coordinates 
we get $\sigma x=\alpha\beta+z_1$. Suppose $x\notin J$. Then the left hand side is a scalar multiplication of the 
arrow $\sigma$, and hence, also $z_1\notin J^2$ has the form $z_1=\lambda \sigma$, for $\lambda\in K^*$. Then either 
$\lambda\neq x$, and we obtain $(\lambda-x)\sigma=\alpha\beta\in J^2$, a contradiction, or $\lambda=x$, but then 
$\alpha\beta=0$ is involved in a minimal zero relation, which is not the case (always $\alpha\beta\nprec I$). Finally, 
if $x\in J$, then $\vf x\in J^2$, so also $z_1\in J^2$, and therefore, again $\alpha\beta$ is involved in a minimal 
relation of the form $\alpha\beta+z_1-\sigma x$, a contradiction. \end{proof} 

In case of a critical block as above, we know a bit more about relations involving $\gamma\tau$ and $\nu\delta$. 
Namely, the following lemma holds. 

\begin{lemma}\label{lemAcrit1} We may assume that both $\nu\delta,\gamma\tau$ are involved in (one) commutativity 
relation $\nu\delta-\gamma\tau$. \end{lemma}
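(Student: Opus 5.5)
We will read the relation off the second syzygy of the simple module at $d$, using the exact sequence $(*)$ for $S_d$ (or dually, for the vertex $a$) together with the identities $(1)$.

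\emph{Setup at $d$.} The vertex $d$ is a $1$-vertex with incoming arrow $\nu$ (from $b$) and outgoing arrow $\delta$ (to $a$). Hence $P_d^+=P_a$ and $P_d^-=P_b$, and $\Omega^2(S_d)$ is generated by a single element $\vf\in e_a\La e_b$ satisfying
$$\delta\vf=0 \quad\mbox{and}\quad \vf\nu=0 .$$
So there is exactly one minimal relation starting at $d$ (of the form $\delta\vf=0$) and one ending at $d$.

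\emph{Which paths $a\to b$ can occur.} The arrows $a\to b$ are only $\ba=\sigma$; the arrow $\alpha$ goes to $c$ and then $\beta$ reaches $b$. By Lemma \ref{lemAcritical}, $\nu\delta\prec I$. The relation at the vertex $b$ comes from $(1)$ at $b$: the matrix $M_b$ has entries in $e_d\La e_a$ and $e_e\La e_a$, and $(\nu\ \gamma)\cdot M_b=0$ with $M_b\cdot(\beta,\sigma)^T=0$. Since $d$ is $1$-regular, $e_d\La e_a=\delta\La$ modulo higher terms. Moreover $\nu\delta\prec I$ and $\gamma\tau\prec I$ (Lemma \ref{lemAcritical}), while $\beta\gamma\nprec I$, $\tau\alpha\nprec I$ and $\alpha\beta\gamma\nprec I$. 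From this, the minimal relation starting at $b$ that involves $\nu\delta$ must have the form
$$\nu\delta\lambda+\gamma\tau\mu+(\mbox{higher terms})=0 .$$
Here the coefficients $\lambda,\mu$ are units in the local rings $e_a\La e_a$ and $e_e\La e_e$ respectively. The reason is that $\nu\delta\prec I$ and $\gamma\tau\prec I$ are both forced, and only one relation starts at $b$ through $\nu$ and $\gamma$ jointly, because $\Omega^2(S_b)$ has at most two generators and one of them is used by $\sigma\gamma$. This argument needs $M_b$ to be invertible modulo $J$ in the appropriate entries, which follows from $\nu\delta\prec I$ and $\gamma\tau\prec I$.

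\emph{Normalising the relation.} First rescale $\tau$ (or $\delta$) by a scalar so that the leading coefficients agree, giving $\nu\delta-\gamma\tau\in J^3$. Then absorb the higher-order terms. Every path from $b$ to $a$ of length $\geqslant 3$ starts with $\nu$ or $\gamma$ and ends with $\delta$ or $\tau$. Since $d$ is a $1$-vertex, a term $\nu u$ can be absorbed by replacing $\delta$ with $\delta+u'$, where $u'\in e_dJ^2e_a$, provided $u$ ends in a way compatible with $\delta$. A term $\gamma w$ with $w\in e_eJ^2e_a$ can be absorbed by replacing $\tau$ with $\tau+w$. These substitutions do not change the Gabriel quiver.

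\emph{Main obstacle.} The hard step is checking that these substitutions of $\delta$ and $\tau$ do not destroy the normal forms already fixed earlier:
\begin{itemize}
\item the relation $\tau\sigma=A_{\bar\tau}$ used in Lemma \ref{lemAcritical};
\item the choice $d_3(e_i)=(\gamma,\gamma^*)$ at the vertices $a$ and $e$.
\end{itemize}
I would handle this by noting that changing $\tau$ by an element of $J^3$ changes $\tau\sigma$ only by terms in $\bar\tau J^{\ge 2}+\tau J^3$. These terms can be re-absorbed into $A_{\bar\tau}$ by a further triangular change of $\sigma$, and this process converges by admissibility of $I$ ($J^m=0$).
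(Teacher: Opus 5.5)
Your core mechanism is right, and it is the fact the paper relies on: $\Omega^2(S_b)$ has exactly one minimal generator coming from $P_a$, so there is a single minimal relation from $b$ to $a$. Once that relation is known to contain both $\nu\delta$ and $\gamma\tau$, a change of the arrows $\delta,\tau$ finishes the proof.

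However, the step that carries the whole lemma is justified incorrectly. You say only one such relation starts at $b$ ``because $\Omega^2(S_b)$ has at most two generators and one of them is used by $\sigma\gamma$''. But $\sigma\gamma$ is a path $a\to e$; it is governed by the sequence for $S_a$, not $S_b$. The correct reason is as follows. Since $b^-=\{\beta,\sigma\}$, we have $P_b^-=P_c\oplus P_a$, and the two generators of $\Omega^2(S_b)$ are:
\begin{itemize}
\item $d_2(e_c)$, which gives the relation $b\to c$ through $\nu\delta\alpha$ (this is $\rho_3^{(i)}$);
\item $d_2(e_a)=(m_{da},m_{ea})$.
\end{itemize}
As $c\neq a$, the only minimal relation from $b$ to $a$ is $\nu m_{da}+\gamma m_{ea}=0$.

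You also need to say why $\nu\delta\prec I$ and $\gamma\tau\prec I$ both land in this one relation, since a priori they refer to possibly different choices of generating minimal relations. The reason is that minimal relations $b\to a$ are unique up to a scalar modulo $R_QI+IR_Q\subseteq R_Q^3$. Hence their quadratic part is well defined up to a scalar, and so $m_{da}\equiv\lambda\delta$ and $m_{ea}\equiv\mu\tau$ modulo $J^2$ with $\lambda,\mu\in K^*$. These are scalars. Your ``units of $e_a\La e_a$ and $e_e\La e_e$'' misplaces $\mu$, which would sit at $a$. Your ``$M_b$ invertible modulo $J$'' is meaningless, since all entries of $M_b$ lie in $J$. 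With this repaired, $\delta':=\lambda^{-1}m_{da}$ and $\tau':=-\lambda^{-1}m_{ea}$ can replace $\delta,\tau$ as arrows, and $\nu\delta'-\gamma\tau'=0$.

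The remaining parts should be cut or corrected.
\begin{itemize}
\item The setup at $d$ never reaches the relation in question. $\Omega^2(S_d)$ encodes only the relations $d\to b$ and $a\to d$ (namely $\rho_4^{(i)}$ and $\rho_1^{(i)}$). The paragraph on paths $a\to b$ is likewise beside the point. The relevant sequence is that of $S_b$, or that of $S_a$ via the row of $M_a$ indexed by $b$.
\item Your ``main obstacle'' is not part of the statement, which only asserts that a suitable presentation exists. The paper likewise replaces $\delta$ by $\delta-z_1\tau$ without re-verifying earlier normal forms.
\item $\tau$ is changed by $J^2$-terms, not by $J^3$-terms as you later write.
\end{itemize}

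Once repaired, your argument is a bit more direct than the paper's. The paper first puts the $\gamma\tau$-relation into the triangle form $\gamma\tau=\nu('A_{\nu})$. It then separately rewrites the $\nu\delta$-relation as $\nu\delta=\nu z_1\tau+\gamma z_2\tau$ with $z_1\in J^2$. Finally it compares the two resulting elements of $\ker(\pi)$ via a unit $w\in e_a\La e_a$, and modifies only $\delta$.
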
 

\begin{proof} To see this, consider the exact sequence for $S_{b}$  
$$0\to (\beta, \sigma)\La \to P_{c}\oplus P_{a} \to P_{d}\oplus P_{e} \stackrel{\pi}\to \nu\La +  \gamma\La \to 0.$$
We can write down two minimal generators of ${\rm ker}(\pi)$ which are both images of $P_{a}$. 
Namely, we know that $\gamma\tau$ is part of a triangle, so we have a relation 
$$\gamma\tau = \nu ('A_{\nu})$$ 
where $'A_{\nu}$ is an element in $e_{d}\La e_{a}$. So we have a generator
$('A_{\nu},  -\tau)e_1$ of the kernel of $\pi$. \smallskip 

On the other hand, because $\nu\delta \prec I$, we may assume that 
$$\nu\delta = \nu z_1\tau + \gamma z_2\tau$$ 
(with $z_1\in J^2$). Indeed, we can write $\nu\delta = \nu z_1\tau + \gamma z_2\tau + \nu z_3\delta+\gamma z_4\delta$, 
for some $z_1,\dots,z_4\in \La$. But  $z_3,z_4\in J$, so 
$\nu z_3\delta + \gamma z_4\delta = \nu\delta z_3'\nu\delta+ \gamma z_4'\nu\delta$, hence we obtain 
$$\nu\delta - \nu\delta z_3'\nu\delta - \gamma z_4'\nu\delta = 
(1-\nu\delta z_3'-\gamma z_4')\nu\delta = u \nu \delta,$$
where $u$ is a unit. After rescaling by $u^{-1}$, we get the required relation. \smallskip 

Consequently, we have a generator $(\delta - z_1\tau, z_2\tau)e_1$ of ${\rm ker}(\pi)$. But there is 
only one such generator.
It follows that
$$(\delta - z_1\tau, z_2\tau) = ('A_{\nu}, -\tau)w$$
where $w\in e_1\La e_1$ is a unit. We have $z_1\in J^2$, so we may replace $\delta$ by 
$\delta'= \delta-z_1\tau$ and then $\delta' = 'A_{\nu}$, which gives 
$\nu\delta' = \nu('A_{\nu}) = \gamma\tau$. \end{proof} 

Eventually, we consider relations involving arrows from blocks of type V$_1$. 

\begin{lemma} \label{lemB} For any $i\in\{1,\dots,p\}$, we have 
$$\ve_i\eta_i\ve_i, \eta_i\ve_i\eta_i \prec I.$$ 
Moreover $\eta_j^*\overline{\ve_j}\prec I$ if and only if $t(\overline{\ve_i})$ is a $2$-vertex. \end{lemma}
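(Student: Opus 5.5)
Write $x=x_i$, $y=y_i$, $\ve=\ve_i$, $\eta=\eta_i$, so $y$ is a $1$-vertex with $y^+=\{\eta\}$ and $y^-=\{\ve\}$. The index range in the statement should be the blocks $B'_i$, $i\leqslant q$. The plan is to mirror the proof of Lemma \ref{lemA}: first show that no path of length $2$ through the block lies in a minimal relation, then rule out that none of the length-$3$ paths does by exhibiting a wild subcategory in a covering, using tameness of $\La$.

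\emph{Length-two paths.} Just before the lemma we established $\ve\eta\nprec I$, $\eta\ve\nprec I$, $\eta\bar{\ve}\nprec I$ and $\eta^*\ve\nprec I$. So the only paths of length $2$ through $B'_i$ that may lie in minimal relations are those of the form $\eta^*\bar{\ve}$ at $x$.

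\emph{Length-three paths.} I first claim that $\ve\eta\ve\prec I$ if and only if $\eta\ve\eta\prec I$.
\begin{itemize}
\item Suppose $\eta\ve\eta\prec I$. Since $\eta\ve\nprec I$ and $\eta$ is the unique arrow $y\to x$, the relation starts at $y$ and we read off $\Omega^2(S_y)$ via the sequence $(*)$.
\item Alternatively, argue as in Lemma \ref{lem:3.7} applied to the "square" $x\to y\to x\to y$ formed by $\ve,\eta,\ve$ (a degenerate square), using uniqueness of $\ve$ from $x$ to $y$.
\end{itemize}

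Now suppose neither path lies in a minimal relation. Since $y$ is a $1$-vertex, $P_y$ is uniserial at the top, with $\rad P_y=\eta\La$. The exact sequence $(*)$ for $S_y$ has $P_y^+=P_x$ and $P_y^-=P_x$, and $\Omega^2(S_y)$ has a single generator $\vf\in e_x\La e_x$ with $\eta\vf=0$ and $\vf\ve=0$. If $\eta\ve\eta\nprec I$, then $\vf$ must be a combination of paths, each of which contains $\eta^*$ or $\bar{\ve}$. Combining this with the structure at $x$ (a $2$-vertex with $\bar{\ve}$ and $\eta^*$), I would aim for the following wild covering subquiver: the zigzag $y\xrightarrow{\eta}x\xrightarrow{\ve}y\xrightarrow{\eta}x$ with the arrows $\bar{\ve}$ and $\eta^*$ attached at both copies of $x$. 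This is of type $\wt{\wt{\bE}}_7$ or contains $\wt{\wt{\bD}}_n$, so it is wild. If some additional paths are forced to be zero, I would instead use the frame from Lemma \ref{lemA}, namely an extended Dynkin with an extra arm, and appeal to \cite[Theorem 1.5.2]{Rin} when one relation remains. This contradicts tameness, so both length-three paths lie in minimal relations.

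\emph{The second claim.} This follows the final part of the proof of Lemma \ref{lemA}. If $t(\bar{\ve})$ is a $2$-vertex and $\eta^*\bar{\ve}\nprec I$, then none of the four length-$2$ paths through $x$ is involved in minimal relations. We then get the wild subcategory consisting of $x$ with $\eta^*,\ve,\eta,\bar{\ve}$, plus a further arrow ending at $t(\bar{\ve})$ (which exists since $t(\bar{\ve})$ is $2$-regular); this is the same picture as in Lemma \ref{lemA}. Conversely, if $t(\bar{\ve})$ is a $1$-vertex, then by Lemma \ref{lem:3.3} the relation $\eta^*\bar{\ve}\prec I$ would force an arrow $t(\bar{\ve})\to s(\eta^*)$, making $Q$ too small or producing the excluded quivers $Q^S$, $Q^T$, $Q^{T'}$; this needs a check against Theorem \ref{prop:4.3}.

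\emph{Main obstacle.} The main difficulty is the wildness step in the length-three part: choosing the correct covering subcategory when $t(\bar{\ve})$ and $s(\eta^*)$ are $1$-vertices. In that situation the neighbours belong to $V_1$ or $V_2$ blocks, and I would split into cases by block type, as was done for $b^*$ in Lemma \ref{lemA}.
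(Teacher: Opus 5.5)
Your reduction via Lemma \ref{lem:3.7} (so that one may assume both $\eta\ve\eta,\ve\eta\ve\nprec I$) is fine, and so is your handling of the second claim. For the converse, if $z=t(\bar{\ve})$ is a $1$-vertex, then by Theorem \ref{prop:4.3} the vertex $s(\eta^*)$ lies in the same block as $z$. Hence $\eta^*\bar{\ve}$ is one of $\delta_k\alpha_k$, $\beta_k\nu_k$, $\eta_k\ve_k$, and none of these is $\prec I$.

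The gap is the central wildness step. You leave it open, and the subquiver you propose does not work. Attaching both $\eta^*$ and $\bar{\ve}$ at the same copy of $x$ creates the path $\eta^*\bar{\ve}$. By the second half of this very lemma, that path lies in a minimal relation whenever $t(\bar{\ve})$ is $2$-regular, so your subcategory is not hereditary in exactly the generic case. It also contains $\ve\eta\bar{\ve}$, $\eta^*\ve\eta$, $\eta\ve\eta\bar{\ve}$ and $\eta^*\ve\eta\bar{\ve}$, about which you know nothing. The fallback to ``the frame from Lemma \ref{lemA}'' is not an argument.

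What is missing is a tree that avoids these paths, together with the facts that make it hereditary. The paper's tree is built as follows.
\begin{enumerate}
\item[(a)] Take the spine $y\xrightarrow{\eta}x\xrightarrow{\ve}y\xrightarrow{\eta}x\xrightarrow{\ve}y$ of length four.
\item[(b)] At the first copy of $x$ attach only the outgoing $\bar{\ve}$, prolonged by one edge $\sigma$ at $z$. Here $\sigma$ is the second arrow ending at $z$ if $z$ is $2$-regular, and the unique arrow starting at $z$ otherwise.
\item[(c)] At the second copy of $x$ attach only the incoming $\eta^*$.
\end{enumerate}
This is a wild tree of type $\wt{\wt{\bD}}_6$. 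The only paths of length at least $2$ off the spine are $\eta\bar{\ve}$ and $\eta^*\ve$, plus $\bar{\ve}\sigma$ and $\eta\bar{\ve}\sigma$ when $z$ is a $1$-vertex.

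It is hereditary once you know three things.
\begin{enumerate}
\item[(i)] $\eta\ve\eta\ve\nprec I$. This follows from \cite[Lemma 4.7]{EHS1} once both paths of length three are $\nprec I$; your proposal never controls paths of length four.
\item[(ii)] $\bar{\ve}\sigma\nprec I$ when $z$ is a $1$-vertex. In that case $\bar{\ve}\sigma$ is $\alpha_k\beta_k$, $\ve_k\eta_k$, or $\nu_k\delta_k$ with $B_k$ not critical, since $\overline{\nu_k}=\ve$ ends at the $1$-vertex $y$.
\item[(iii)] $\eta\bar{\ve}\sigma\nprec I$ when $z$ is a $1$-vertex. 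Otherwise Lemma \ref{lem:3.6} (using $\eta\bar{\ve}\nprec I$) gives an arrow $z'\to y$. Then $z'=x$ and $Q=Q^T$, which is excluded.
\end{enumerate}
Without this choice of tree and these three facts, the first assertion of the lemma remains unproved.
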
 

\begin{proof} We abbreviate as in previous cases $\eta:=\eta_i$ and $\ve:=\ve_i$. Suppose to the contrary that 
one of $\eta\ve\eta,\ve\eta\ve$ is not involved in a minimal relation of $I$. Actually, we can assume that both 
$\eta\ve\eta,\ve\eta\ve\nprec I$, due to Lemma \ref{lem:3.7}, since we have the square $(\eta \ \ve \ \eta \ \ve)$. 
As a result, we conclude from \cite[Lemma 4.7]{EHS1} that then also $\eta\ve\eta\ve\nprec I$. \smallskip 

Now consider the vertex $z=t(\bar{\ve})$. If $z$ is a $2$ vertex, let $\sigma:z'\to z$ denote the second arrow 
ending at $z$, different from $\bar{\ve}$. For $z$ being a $1$-vertex, let $\sigma:z\to z'$ denote the unique 
arrow starting at $z$. We claim that then $\bar{\ve}\sigma\nprec I$. Indeed, because $z$ is a $1$-vertex, 
we conclude that $\bar{\ve}\sigma=\alpha_k\beta_k$, $\nu_k\delta_k$ or $\ve_k\eta_k$, for some $k$. In the 
first and the last case the claim is clear, for the second, observe that $B_k$ cannot be a part of a critical 
block. \medskip 

Finally, it remains to see that $\eta\bar{\ve}\sigma\nprec I$, in case when $z$ is a $1$-vertex. Otherwise, by Lemma 
\ref{lem:3.6}, we would obtain an arrow $z'\to s(\eta)=y$, and then $z'=x$, so $Q$ is the triangle quiver $Q=Q^T$, 
excluded from our considerations. In a consequence, we conclude that $\La$ admits the following wild hereditary 
subcategory of type $\wt{\wt{\bD}}_6$. 
$$\xymatrix@R=0.6cm{ &z \ar@{-}[r]^{\sigma} & z' & y' \ar[d]^{\eta^*} & \\ 
y \ar[r]^{\eta} & x \ar[u]^{\bar{\ve}} \ar[r]^{\ve} & y \ar[r]^{\eta} & x \ar[r]^{\ve} & y}$$ \medskip 

This proves that first part of the claim. For the second, it is sufficient to see that $\eta^*\bar{\ve}$ is 
the unique path of length $2$ passing through $x$ which can be involved in a minimal relation. Consequently, 
if $z$ is a $2$-vertex, then $\eta^*\bar{\ve}\prec I$, because otherwise we get the following wild hereditary 
subcategory of type $\wt{\wt{\bD}}_4$. 
$$\xymatrix@R=0.6cm{&y \ar[d]^{\eta} && \\ 
x' \ar[r]^{\eta^*} & x \ar[r]^{\bar{\ve}} \ar[d]^{\ve} & z & \ar[l]_{\sigma} z' \\ &y&& }$$ 
It is easy to see that $z$ must be a $2$-vertex if $\eta^*\bar{\ve}\prec I$, and the proof is now finished. \end{proof} \medskip 

\subsection{Summary}\label{summary}

Summing up, it follows from Lemmas \ref{lemA}-\ref{lemAcritical} that for any $i\in\{1,\dots,p\}$, we can fix 
the following minimal relations of $I$ involving paths of length $3$ in $B_i$.  
$$\rho_1^{(i)}=\alpha_i\beta_i\nu_i-{\bf A}_{\overline{\alpha_i}}, \  
\rho_2^{(i)}=\beta_i\nu_i\delta_i-{\bf A}_{\beta_i}, \ 
\rho_3^{(i)}=\nu_i\delta_i\alpha_i-{\bf A}_{\overline{\nu_i}}, \ \mbox{and} \ 
\rho_4^{(i)}=\delta_i\alpha_i\beta_i-{\bf A}_{\delta_i}.$$ 
Moreover, each of ${\bf A}_{\alpha}$ is a combination of paths in $Q$ starting with the arrow $\alpha$. Indeed, every 
path from $a_i$ to $d_i$ starting with $\alpha_i$ has the form $\alpha_i\beta_i\dots\nu_i$, so we can write 
${\bf A}_{\overline{\alpha_i}}$ as $\alpha_i\beta_iz\nu_i+\hat{A}_{\overline{\alpha_i}}$, where $z\in J$ and $\hat{A}_{\overline{\alpha_i}}$ has all terms 
starting with $\overline{\alpha_i}$. In this way, the relation $\rho_1^{(i)}$ becomes 
$$\alpha_i\beta_i(1-z)\nu_i-\hat{A}_{\overline{\alpha_i}},$$ 
so after rescaling by the inverse of the unit $1-z$, we may take the new relation $\rho_1^{(i)}$ with 
${\bf A}_{\overline{\alpha_i}}:=(1-z)^{-1}\hat{A}_{\overline{\alpha_i}}$ having all summands starting with  
$\overline{\alpha_i}$ (and ending with $\nu_i$). Similarly, one can show that (up to a scalar multiplication) 
${\bf A}_{\overline{\nu_i}}$ has all summands starting with $\overline{\nu_i}$ (and ending with $\alpha_i$). 
Actually, using the same argument we can show that ${\bf A}_{\overline{\alpha_i}}$ ends with $\beta_i^*\nu_i$, while 
${\bf A}_{\overline{\nu_i}}$ with $\delta_i^*\alpha_i$. \smallskip 

Further, every path from $c_i$ starts with $\beta_i$ (the unique arrow from $c_i$), so ${\bf A}_{\beta_i}$ has all 
summands starting with $\beta_i$. Thus we can write $\rho_2^{(i)}=\beta_i(\nu_i\delta_i-{'A}_{\beta_i})$. As above, 
after rescaling by a unit, we can assume that all summands of ${'A}_{\beta_i}$ (so of ${\bf A}_{\beta_i}$) end with 
$\delta_i^*$. In the same way, one can show that all summands of ${\bf A}_{\delta_i}$ start with $\delta_i$ and end with 
$\beta_i^*$. In fact, we will soon see in Section \ref{bases} that every ${\bf A}_\alpha$ is a scalar multiplication 
of one path (being an initial submonomial of some socle element). \smallskip

Finally, it follows from Lemma \ref{lemB} that we have minimal relations of the form  
$$\rho^{(j)}_5=\ve_j\eta_j\ve_j-{\bf A}_{\overline{\ve_j}} \ \mbox{and} \ \rho^{(j)}_6=\eta_j\ve_j\eta_j-{\bf A}_{\eta_j},$$ 
$j\in\{1,\dots,q\}$. Applying analogous arguments as in case of block of type V$_2$, we may take ${\bf A}_{\overline{\ve_j}}$ having 
all summands starting with $\overline{\ve_j}$ and ending with $\eta^*\ve_j$ and ${\bf A}_{\eta_j}$ having all summands starting with 
$\eta_j\bar{\ve}_j$ and ending with $\eta_j^*$. \medskip 

As a result, we can define elements ${'A}_\alpha$ and $A'_\alpha$, for arrows $\alpha=\beta_i,\delta_i,\eta_i$ or 
$\bar{\alpha}_i,\bar{\nu}_i$ such that 
$$\overline{\alpha_i}('{A}_{\overline{\alpha_i}})={\bf A}_{\overline{\alpha_i}}=A_{\overline{\alpha_i}}'\nu_i, \ 
\overline{\nu_i}({'A}_{\overline{\nu_i}})={\bf A}_{\overline{\nu_i}}=A_{\overline{\nu_i}}'\alpha_i, \ 
\beta_i({'A}_{\beta_i})={\bf A}_{\beta_i}=A'_{\beta_i}\delta_i^*,$$ 
$$\delta_i({'A}_{\delta_i})={\bf A}_{\delta_i}=A'_{\delta_i}\beta_i^*, \ 
\overline{\ve_j}('{A}_{\overline{\ve_j}})={\bf A}_{\overline{\ve_j}}=A_{\overline{\ve_j}}'\ve_j, \mbox{ and } 
\eta_j({'A}_{\eta_j})={\bf A}_{\eta_j}=A'_{\eta_j}\eta_j^*.$$ 

Consider now the exact sequence for $S_{c_i}$: 
$$\xymatrix{P_{c_i} \ar[r]^{\alpha_i} & P_{a_i} \ar[r] & P_{b_i} \ar[r]^{\beta_i} & P_{c_i} }.$$ 
It follows from the relation $\rho_2^{(i)}$ that $\beta_i(\nu_i\delta_i-{'A}_{\beta_i})=0$, so the unique generator 
of $\Omega^2(S_{c_i})$ is given by the element $\vf=\nu_i\delta_i-{'A}_{\beta_i}$. Using the exact sequence, we 
deduce that $\vf\alpha_i=0$, which gives $\rho_3^{(i)}$, hence ${\bf A}_{\overline{\nu_i}}={'A}_{\beta_i}\alpha_i$, 
and consequently, we get $A_{\overline{\nu_i}}'={'A}_{\beta_i}$. \smallskip 

Similarly, using the exact sequence for $S_{d_i}$ and relation $\rho_4^{(i)}$, we conclude that 
${'A}_{\delta_i}\nu_i={\bf A}_{\overline{\alpha_i}}$, that is $A_{\overline{\alpha_i}}'={'A}_{\delta_i}$. The same argument 
works for $\rho_6^{(j)}$, and this gives $A_{\overline{\ve_j}}'={'A}_{\eta_j}$.

\section{Bases of projectives}\label{bases} 

This section is devoted to present the description of bases of projective $\Lambda$-modules, for 
any algebra $\La$ of generalized quaternion type with biregular Gabriel quiver. We assume throughout that 
$Q_\La$ is different from the spherical quiver $Q^S$ and the triangle and almost triangle quivers $Q^T$ and 
$Q^{T'}$. In particular, then we can use results describing minimal relations from the previous section. \medskip 

Let $\La$ be a fixed GQT algebra with biregular Gabriel quiver. We use notation for vertices and arrows of $Q=Q_\La$ 
as introduced in Section \ref{sec:3} (after Theorem \ref{prop:4.3}). In the previous section, we mainly 
described relations around $1$-vertices, here we need to investigate what are other minimal relations. \smallskip 

The major part of this section is devoted to describe a basis of projective $P_i$, for a $2$-regular vertex 
$i\in Q_0$. The case when $i$ is $1$-regular is considered at the end (see Lemma \ref{basis1reg}). \smallskip 

Fix a $2$-vertex $i\in Q_0$ with arrows $\alpha,\ba$ starting from $i$ and arrows $\gamma,\gamma^*$ ending at $i$; let 
$j=t(\alpha)$, $k=t(\ba)$, $x=s(\gamma)$ and $y=s(\gamma^*)$.  Note that, thanks to Proposition \ref{prop:4.2}, we 
may relabel vertices such that $j$ is $1$-regular if and only if $x$ is, and if so, then they belong to one block of 
type $V_1$ or $V_2$. Similarly, vertices $k,y$ belong to one block of type $V_1$ or $V_2$ if one of them is a $1$-vertex. \smallskip 

We have to consider three cases, depending on the neighbours of $i$. If $i$ belongs to the `$2$-regular part' of 
$Q$, that is, both $j,k$ are $2$-regular, we call $i$ a vertex {\it of type A}. In the remaining two cases, $i$ has 
$1$-regular predecessors (equivalently, successors); we say that $i$ is {\it of type B or C}, provided that $i$ 
admits exactly one or exactly two $1$-regular predecessors, respectively. In each of these cases, there is a 
significantly different behaviour of ralations starting from $i$, as well as the radical quotients of projective 
$P_i$, as we will see below. \medskip 

The rest part of this section is organized as follows. In the first part \ref{subsec:5.1}, we will study 
the remaining relations generating $I$, showing that all minimal relations starting from a $2$-vertex have a common 
form analogous to the relations $\rho^{(i)}_1,\dots,\rho^{(i)}_6$ around $1$-vertices (see Section \ref{summary}). 
Next, in Section \ref{subsec:5.2}, we analize the radical quotients of projectives corresponding to $2$-regular 
vertices, obtaining generating sets: first for $2$-vetices of type A (Lemma \ref{lem:gen}) and then in full 
generality, for all $2$-vertices (Lemma \ref{lem:2vertex}). The last part \ref{subsec:5.3} contains two results 
describing bases of projective module $P_i$, the first for a $2$-vertex $i$, the second, for a $1$-vertex. 
\medskip 

\subsection{Minimal relations starting from a $2$-vertex.}\label{subsec:5.1} We begin with the following proposition, 
which is a partial extension of \cite[Proposition 4.2]{AGQT}.  

\begin{prop}\label{GQT4.2} Let $i$ be a $2$-vertex of $Q$, and $\alpha$ an arrow starting at $i$. If $j=t(\alpha)$ 
is a $2$-vertex and  $\beta,\bar{\beta}$ arrows starting at $j$, then one of the paths $\alpha\beta,\alpha\bar{\beta}$ 
is involved in a minimal relation of $I$. \end{prop}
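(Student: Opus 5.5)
We argue by contradiction, in the same way as \cite[Proposition 4.2]{AGQT} and Lemmas \ref{lemA}--\ref{lemB}: suppose that neither $\alpha\beta$ nor $\alpha\bar{\beta}$ is involved in a minimal relation of $I$. The aim is to build a wild subcategory in a Galois covering of $(Q,I)$, which contradicts the tameness of $\La$. Throughout, $i$ and $j$ are both $2$-regular, and $\beta,\bar\beta$ are the two arrows starting at $j$. Write $\alpha^*$ for the second arrow ending at $j$; it exists because $j$ is a $2$-vertex.

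\emph{Step 1: all four length-$2$ paths through $j$ are unrelated.} Consider the paths $\alpha^*\beta$ and $\alpha^*\bar\beta$. By the identities $(1)$ from Section \ref{sec:2}, applied at the vertex $j$, there are at most two minimal relations ending at $j$ and at most two starting at $j$. If one of $\alpha^*\beta,\alpha^*\bar\beta$ lies in $I$ while $\alpha\beta,\alpha\bar\beta\nprec I$, then we use Lemma \ref{lem:3.3} to place that path in a triangle. Lemma \ref{lem:3.4} then forces a further arrow into that triangle, and we derive a contradiction, possibly after reducing to the case where all four paths $\alpha\beta,\alpha\bar\beta,\alpha^*\beta,\alpha^*\bar\beta$ are unrelated. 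I expect this reduction to follow the proof of \cite[Proposition 4.2]{AGQT}. It has one possible subcase, namely when $s(\alpha^*)$ is a $1$-vertex, which we handle with Theorem \ref{prop:4.3} and Lemmas \ref{lemA}, \ref{lemAcritical}, \ref{lemB}. In that subcase $\alpha^*$ is one of $\beta_k$, $\nu_k$ or $\ve_k$, and the relations listed in the Summary \ref{summary} tell us exactly which length-$2$ paths are involved.

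\emph{Step 2: build a wild subcategory.} Once $j$ has no length-$2$ relations through it, we take the four neighbours $i=s(\alpha)$, $s(\alpha^*)$, $t(\beta)$, $t(\bar\beta)$ and extend one arm by one more arrow. This gives a wild hereditary subcategory of type $\wt{\wt{\bD}}_4$ in the covering, exactly as in the last step of the proof of Lemma \ref{lemA} and Lemma \ref{lemB}. If some of these neighbours coincide in $Q$, the covering separates them. We extend at whichever of $t(\beta),t(\bar\beta),s(\alpha^*)$ is $2$-regular, using its second arrow, just as the arrow $\sigma$ was used in Lemma \ref{lemB}.

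\emph{Step 3, the main obstacle: the case where no neighbour can be extended.} This happens when $t(\beta),t(\bar\beta)$ are $1$-vertices, or when the extending arrow forms a relation. Here Lemma \ref{lem:3.6} excludes length-$3$ relations $\alpha\beta\sigma$: such a relation would force an arrow back to $i$, and we check that this leads either to a forbidden shape or to one of the excluded quivers $Q^S$, $Q^T$, $Q^{T'}$. We then extend along longer paths such as $\alpha\beta\sigma$ until a wild $\wt{\wt{\bE}}_8$ or $\wt{\wt{\bD}}_n$ configuration appears. The bookkeeping over the possible block types $V_1$ and $V_2$ adjacent to $j$ is where I expect most of the case analysis to lie.
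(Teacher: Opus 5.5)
\textbf{The gap is Step 1.} Your wild tree in Step 2 makes $j$ a vertex of degree four, so it needs $\alpha^*\beta,\alpha^*\bar{\beta}\nprec I$. Nothing you cite gives this. If $\alpha^*\beta\prec I$, Lemma \ref{lem:3.3} gives an arrow $\omega\colon t(\beta)\to s(\alpha^*)$, and Lemma \ref{lem:3.4} then yields only $\omega\alpha^*\prec I$ and $\beta\omega\prec I$. These are statements about $\omega$ and do not conflict with $\alpha\beta,\alpha\bar{\beta}\nprec I$. When $s(\alpha^*)$, $t(\beta)$, $t(\bar{\beta})$ are all $2$-vertices, there is no local reason for $\alpha^*\beta\nprec I$. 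Excluding it is essentially as hard as the proposition itself, and deferring to \cite{AGQT} does not close it. So your argument is incomplete exactly in the generic case.

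Step 3 also targets the wrong difficulty. First, $t(\beta)$ and $t(\bar{\beta})$ cannot both be $1$-vertices: by Theorem \ref{prop:4.3} both predecessors of $j$ would then be $1$-vertices, but $i$ is a $2$-vertex. Second, you can always extend at $i$ by $\bar{\alpha}$, since $\alpha$ and $\bar{\alpha}$ both start at $i$ and so create no composition.

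\textbf{The missing idea is to avoid $\alpha^*$ in the generic case.} Suppose $t(\beta)$ and $t(\bar{\beta})$ are $2$-vertices. Take the tree with centre $j$ and three arms:
\begin{itemize}
\item[(a)] $j\to t(\beta)\leftarrow\circ$, using the second arrow ending at $t(\beta)$;
\item[(b)] $j\to t(\bar{\beta})\leftarrow\circ$, using the second arrow ending at $t(\bar{\beta})$;
\item[(c)] $j\leftarrow i\to k$ via $\bar{\alpha}$, followed by one more edge $\sigma$ at $k$.
\end{itemize}
Arms (a) and (b) create no compositions. In (c), $\sigma$ is the second arrow ending at $k$, or, if $k$ is a $1$-vertex, its unique outgoing arrow. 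In the latter case, $\bar{\alpha}\sigma\prec I$ would force $\bar{\alpha}=\nu_s$ in a critical block, so $\alpha=\bar{\nu}_s$ and $\alpha\tau\prec I$ with $\tau\in\{\beta,\bar{\beta}\}$, by Lemma \ref{lemAcritical}. This star with arms of lengths $2,2,3$ properly contains a tree of type $\wt{\bE}_6$, so it is wild. It uses only the hypothesis $\alpha\beta,\alpha\bar{\beta}\nprec I$.

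You need $\alpha^*$ only when, say, $t(\beta)$ is a $1$-vertex. Then Theorem \ref{prop:4.3} makes $s(\alpha^*)$ a $1$-vertex of the same block $V_1$ or $V_2$, and $t(\bar{\beta})$ a $2$-vertex. Now $\alpha^*\beta,\alpha^*\bar{\beta}\nprec I$ do follow from the block analysis of Section \ref{sec:3}. The critical case is excluded because there $\alpha=\delta_s^*$ and $\alpha\bar{\beta}\prec I$ by Lemma \ref{lemAcritical}. Your degree-four tree, extended at $i$ by $\bar{\alpha}$, then finishes the proof, as in the paper.
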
 

\begin{proof} Suppose to the contrary that both $\alpha\beta\nprec I$ and $\alpha\bar{\beta}\nprec I$. We consider 
the second arrow $\ba:i\to k$ starting at $i$. If $k$ is a $2$-vertex then we have an arrow $\sigma:l\to k$, $\sigma\neq\ba$, 
and otherwise, for $k^+=\{\sigma\}$ we get $\ba\sigma\nprec I$, because then $\ba=\alpha_s,\nu_s$ or $\eta_s$ and we 
can have $\ba\sigma\prec I$ only when $\ba=\nu_s$ is in a critical block $B_s$, which is impossible, since then  
$\alpha=\bar{\nu_s}$ satisfies $\alpha\beta\prec I$ or $\alpha\bar{\beta}\prec I$ ($\beta$ or $\bar{\beta}$ is $\tau$). \smallskip 

As a result, we obtain the following wild subcategory in covering 
$$\xymatrix@R=0.6cm{&& \circ\ar[d] &&& \\ && \circ &&& \\ 
\circ \ar[r] & \circ & \ar[l]_{\beta} j \ar[u]^{\bar{\beta}} & \ar[l]_{\alpha} i \ar[r]^{\ba} & k \ar@{-}[r]^{\sigma} & \circ  }$$ 
if both $t(\beta)$ and $t(\bar{\beta})$ are $2$-regular (note that $\sigma$ is either an arrow $l\to k$ different from 
$\ba$ or this is the unique arrow starting from $k$ and $\ba\sigma\nprec I$). Consequently, we can assume that one of 
$t(\beta),t(\bar{\beta})$ is a $1$-vertex, say $t(\beta)$. Then one of the predecessors of $j$ must be a $1$-vertex 
(by Proposition \ref{prop:4.3}), so we have an arrow $\alpha^*:u\to j$, where $u$ is a $1$-vertex. It follows also 
that $t(\bar{\beta})$ must be $2$-regular, since the second predecessor of $j$ is a $2$-vertex $i$. In particular, 
we have both $\alpha^*\beta\nprec I$ and $\alpha^*\bar{\beta}\nprec I$, since otherwise $t(\beta)$ lies in a critical block 
and then $\alpha=\delta_s^*$, so $\alpha\bar{\beta}$ is involved in a minimal relation. But then we get a wild (hereditary) 
subcategory in covering of the form 
$$\xymatrix@R=0.6cm{&& \circ & \\ k & \ar[l]_{\ba} i \ar[r]^{\alpha} & j \ar[r]^{\beta} \ar[u]^{\bar{\beta}} & \circ \\ 
&& u \ar[u]_{\alpha^*} & }$$ \end{proof} 

Hence applying the above proposition for a $2$-vertex $i$ of type A, we conclude that there are arrows $\beta_1\in j^+$ 
and $\gamma_1\in k^+$, with the paths $\alpha\beta_1,\ba\gamma_1$ involved in minimal relations. The following 
result shows that relations starting from a $i$ behave as described in \cite[Proposition 4.2(ii)]{AGQT} for 
$2$-regular Gabriel quivers, or in other words, in biregular case relations in the '$2$-regular part' of $Q$ 
behave exactly as in $2$-regular case. 

\begin{lemma}\label{lem:relations} We may assume that the paths $\alpha\beta_1$ and $\ba\gamma_1$ are involved 
in two independent minimal relations of $I$. \end{lemma}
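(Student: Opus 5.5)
We use the exact sequence $(*)$ for $S_i$ and the matrix identity $(1)$, namely $(\alpha\ \ba)\cdot M_i=0$. Here $i$ is of type A, so $P_i^+=P_j\oplus P_k$ with $j,k$ both $2$-regular, and $\Omega^2(S_i)=\ker d_1$ has exactly two minimal generators $\vf=(\vf_{jx},\vf_{kx})$ and $\psi=(\psi_{jy},\psi_{ky})$. Each column of $M_i$ gives a relation $\alpha\vf_{jx}+\ba\vf_{kx}=0$ and $\alpha\psi_{jy}+\ba\psi_{ky}=0$ in $e_i\La$. The minimal relations of $I$ starting at $i$ are exactly these two (cf. \cite[Proposition 4.3]{AGQT}), and they are independent because $\vf,\psi$ are minimal generators.

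The task is therefore to show the following. After a change of generators of $\Omega^2(S_i)$ (an invertible $2\times 2$ matrix over $\La$ acting on the right, which is harmless) and possibly a relabelling of $\beta_1$ versus $\bar\beta_1$, one generator has a component of $\vf_{jx}$ congruent to $\beta_1$ modulo $J^2$, and the other has a component of $\psi_{ky}$ congruent to $\gamma_1$ modulo $J^2$.

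First, by Proposition \ref{GQT4.2}, $\alpha\beta_1\prec I$ and $\ba\gamma_1\prec I$. So there are generators whose linear parts, that is, their images in $(e_jJe_x\oplus e_kJe_y)/J^2$ for the first components, contain an arrow of $j^+$ and an arrow of $k^+$ respectively. Writing the linear parts of $\vf,\psi$ as a $2\times2$ scalar matrix $L$ over arrows (the entries in $e_jJ e_b/J^2$ or $e_kJe_b/J^2$), the goal becomes $\operatorname{rank} L=2$ in the appropriate sense: one column has a nonzero arrow entry in the $j$-row and the other in the $k$-row.

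Next, suppose instead that the arrow parts are degenerate. Then, up to column operations, only one generator, say $\vf$, has linear terms, and they sit in both rows: $\vf\equiv(\beta_1,\lambda\gamma_1)$. Meanwhile $\psi\in J^2\oplus J^2$. In this case $\alpha\beta_1$ and $\ba\gamma_1$ occur in the same relation $\alpha\beta_1+\lambda\ba\gamma_1+\dots$, and the other relation starting at $i$ involves only longer paths. I would rule this out in the way used for \cite[Proposition 4.2(ii)]{AGQT}. Use the second identity $M_i\cdot{\gamma\choose\gamma^*}=0$: the arrows $\beta_1$ and $\gamma_1$ end at $x$ or $y$, and the relation through $\gamma,\gamma^*$ forces the targets to match a triangle (Lemmas \ref{lem:3.3}, \ref{lem:3.4}). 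That gives $x=t(\beta_1)$ and $y=t(\gamma_1)$ with $\beta_1\gamma\prec I$ and $\gamma_1\gamma^*\prec I$. Since $\vf$ is the image of $e_x$, its components must end at $x$, so $\gamma_1$ would end at $x$ as well. This makes $x=y$, and we then compare dimension vectors of $\Omega^2(S_i)$ and $P_x\oplus P_y$. Alternatively, if $x\neq y$, we get a contradiction directly, and then $\psi$ must carry the arrow $\gamma_1$ (respectively $\bar\gamma_1$) in its second row.

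The remaining case, when the degeneracy persists (both linear terms ending at the same vertex), is the main obstacle. There I would exhibit a wild subcategory in a Galois covering, in the style of the proofs of Lemmas \ref{lemA} and \ref{lemB}. The extra arrows $\bar\beta_1$, $\bar\gamma_1$ are not involved in relations starting at $i$, and together with $\alpha$, $\ba$ they produce a $\wt{\wt{\bD}}$-type hereditary subquiver. The care needed is to check that $1$-regular neighbours of $j$ or $k$ (blocks $V_1$, $V_2$, including critical ones) do not spoil this, using Lemmas \ref{lemA}–\ref{lemB}. Once nondegeneracy is established, Gaussian elimination on the columns (diagonalizing the scalar matrix $L$) yields generators whose linear parts are $\beta_1$ and $\gamma_1$ in separate columns. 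This gives two independent minimal relations, $\alpha\beta_1+\dots$ and $\ba\gamma_1+\dots$, as claimed.
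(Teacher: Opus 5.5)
Your translation of the lemma into a nondegeneracy condition on the linear parts of $M_i$ is fine. The gap is that the step carrying the whole content of the lemma is never done: you need to exclude the configuration $\vf\equiv(\beta_1,\lambda\gamma_1)$, $\psi\equiv 0$ modulo $J^2$, i.e.\ a single type C relation $a\alpha\beta_1+b\ba\gamma_1\in J^3$.

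In that configuration $\beta_1$ and $\gamma_1$ both end at $t(\vf)=x$. Lemma~\ref{lem:3.3} only gives $t(\beta_1),t(\gamma_1)\in\{x,y\}$. It does not give $y=t(\gamma_1)$ together with $\gamma_1\gamma^*\prec I$. That assertion says exactly that $\beta_1$ and $\gamma_1$ lie in different columns of $M_i$, which is what you want to prove. Consequently:
\begin{itemize}
\item your derivation of $x=y$ is circular;
\item the case $x\neq y$ is dismissed (``a contradiction directly'') with no argument;
\item ``comparing dimension vectors of $\Omega^2(S_i)$ and $P_x\oplus P_y$'' produces nothing.
\end{itemize}
The double-arrow case is then deferred to a wild subcategory in a covering that you neither specify nor check. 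Nothing in the setup explains why a $\wt{\wt{\bD}}$-type hereditary subcategory should appear on $\alpha,\ba,\bar\beta_1,\bar\gamma_1$.

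The missing idea is to apply Proposition~\ref{GQT4.2}, or its dual, to the \emph{second} arrow ending at $i$. Write $\gamma:x\to i$ and $\gamma^*:y\to i$ (the paper calls them $\delta,\delta^*$). Both $x$ and $y$ are $2$-regular by Theorem~\ref{prop:4.3}, since $i$ is of type A. The paper proceeds as follows.
\begin{itemize}
\item It first swaps $\beta_1\leftrightarrow\bar\beta_1$ or $\gamma_1\leftrightarrow\bar\gamma_1$ if needed, so that $\alpha\bar\beta_1,\ba\bar\gamma_1\nprec I$. By Lemma~\ref{lem:3.3} it places the common target $t(\beta_1)=t(\gamma_1)$ at $x$.
\item If $y\neq x$, Lemmas~\ref{lem:3.3} and~\ref{lem:3.4} give $\gamma^*\alpha\nprec I$ and $\gamma^*\ba\nprec I$. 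This contradicts Proposition~\ref{GQT4.2} applied to $\gamma^*$.
\item If $y=x$, it normalises the double arrows by Lemma~\ref{lem:3.4} so that $\beta_1\gamma\prec I$ and $\beta_1\gamma^*\nprec I$. The dual of Proposition~\ref{GQT4.2} then forces $\gamma_1\gamma^*\prec I$. Hence the relations ending at $i$ can be chosen with $\vf_{jx}=\beta_1$ and $\psi_{ky}=\gamma_1$, and $(\alpha\ \ba)\cdot M_i=0$ yields the two independent relations.
\end{itemize}

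Your instinct to use the second identity $M_i\,(\gamma,\gamma^*)^{T}=0$ was right, and it can be completed uniformly. In the degenerate configuration its two rows have quadratic parts $\beta_1\gamma$ and $\lambda\gamma_1\gamma$ only. So no path of length $2$ ending with $\gamma^*$ is involved in a minimal relation, which contradicts the dual of Proposition~\ref{GQT4.2} whether or not $x=y$. That argument is what needs to be written, and it is absent from your proposal.
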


\begin{proof} 
Suppose to the contrary that $\alpha\beta_1,\ba\gamma_1$ are involved in one minimal relation of type C:  
$$a\alpha\beta_1+b\ba\gamma_1\in J^3,$$ 
where $a,b\neq 0$. Let $u=t(\beta)$ and $v=t(\gamma)$, where $\beta=\bar{\beta_1}$ and $\gamma=\bar{\gamma_1}$. 
In particular, the paths $\alpha\beta_1,\ba\gamma_1$ end at common 
vertex $t(\beta_1)=t(\gamma_1)$, which is different from $u$ and $v$, because otherwise, we have $j=k$ and $Q$ is a 
Markov quiver (see \cite[Lemma 5.2]{AGQT}). \smallskip 

We can further assume that both $\alpha\beta\nprec I$ and $\ba\gamma\nprec I$. Indeed, if this is not the case, then 
we may exchange $\beta$ and $\beta_1$ (or $\gamma,\gamma_1$) to get paths involved in two independent relations 
(starting from $i$ and ending at $t(\beta_1)$ and $u$ or $v$, which are different from $t(\beta_1)$). Moreover, there 
are two arrows $\delta:x\to i$ and $\delta^*:y\to i$ in $Q$, and due to the given relation and Lemma \ref{lem:3.3}, 
we get an arrow $t(\beta_1)\to i$, so $t(\beta_1)=x$ or $y$. We may assume that the relation of type C starts from $i$ 
and ends at $x=t(\beta_1)$. \medskip  

We claim that then $y=x$. Indeed, if $y\neq x$ then $\delta^*\alpha\nprec I$, 
because otherwise, by Lemma \ref{lem:3.3}, we have an arrow $j=t(\alpha)\to y$, so $y=x$ or $u$, thus $y=u$, 
by the assumption, and hence, we get a triangle $(\delta^* \ \alpha \ \beta)$ (without double arrows), where 
$\delta^*\alpha\prec I$, but $\alpha\beta\nprec I$, a contradiction with Lemma \ref{lem:3.4}. In the same way 
(using $\ba\gamma\nprec I$ and Lemmas \ref{lem:3.3}-\ref{lem:3.4}), one can show that $\delta^*\ba\nprec I$. Now, 
applying Proposition \ref{prop:4.3}, we deduce that both predecessors $x,y$ of $i$ are $2$-regular (since 
by the general assumption, here successors $k,j$ are $2$-regular). Consequently, we have an arrow 
$\delta^*:y\to i$ between $2$-vertices $y$ and $i$ such that both $\delta^*\alpha\nprec I$ and $\delta^*\ba\nprec I$, 
which contradicts Proposition \ref{GQT4.2}. \medskip 

Therefore, we have $y=x$ and $\delta,\delta^*$ are double arrows $x\to i$. Now, recall that we have an 
exact sequence for $S_i$ of the form 
$$\xymatrix{0 \ar[r] & S_i \ar[r] & P_i\ar[r]^{d_1} & P_x\oplus P_x \ar[r]^{d_2} & P_j\oplus P_k\ar[r]^{d_3} & P_i 
\ar[r] & S_i \ar[r] & 0}$$ 
where the rows of $d_2=[\vf \ \psi]$, $\vf=\vec{\vf_1 \\ \vf_2}$, $\psi=\vec{\psi_1 \\ \psi_2}$, induce two 
(independent) minimal relations ending at $i$. By Lemma \ref{lem:3.4}, we can adjust arrows $\delta,\delta^*$ 
to get both $\delta\alpha,\beta_1\delta\prec I$ and both $\delta^*\alpha,
\beta_1\delta^*\nprec I$. \smallskip 

If we had also $\gamma_1\delta^*\nprec I$, we would get an arrow $\delta^*$ 
(between $2$-regular vertices) with both paths ending at $\delta^*$ satisfying $\nprec I$, which gives a 
contradiction with the dual of Proposition \ref{GQT4.2}. As a result, we have also $\gamma_1\delta^*\prec I$. 
\smallskip 

After possibly adjusting arrows $\beta_1$ or $\gamma_1$, we may assume that $\beta_1\delta$ and $\gamma_1\delta^*$ 
are involved in two (independent) minimal relations of the form 
$$\beta_1\delta+\psi_1\delta^*=0 \qquad\mbox{ and } \qquad \vf_2\delta + \gamma_1\delta^*=0$$ 
In other words, we get $\vf_1=\beta_1$ and $\psi_2=\gamma_1$. Now using that $[\alpha \ \ba] d_2=0$, 
we obtain two independent minimal relations involving $\alpha\beta_1$ and $\ba\gamma_1$ (induced 
by columns of $d_2$). This finishes the proof. \end{proof} \medskip 

In particalar, we get the following corollary. 

\begin{cor} There exists a permutation $f$ defined locally as $f(\alpha)=\beta_1$ and $f(\ba)=\gamma_1$, 
for arrows between $2$-vertices. The induced permutation $g$ is given as follows $g(\alpha)=\beta(=\overline{\beta_1})$ and $g(\ba)=\gamma(=\overline{\gamma_1}$). \end{cor}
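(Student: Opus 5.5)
The plan is to define $f$ arrow by arrow, using Proposition \ref{GQT4.2} and Lemma \ref{lem:relations}, and then check that it is a well-defined bijection on the set of arrows between $2$-vertices. Fix a $2$-vertex $i$ of type A with arrows $\alpha,\ba$ starting at $i$. By Proposition \ref{GQT4.2} there are arrows $\beta_1\in j^+$ and $\gamma_1\in k^+$ with $\alpha\beta_1\prec I$ and $\ba\gamma_1\prec I$. By Lemma \ref{lem:relations} these paths lie in two independent minimal relations. We then set $f(\alpha)=\beta_1$ and $f(\ba)=\gamma_1$.

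First I will show that $\beta_1$ is uniquely determined, so that $f$ is well defined. Suppose both $\alpha\beta_1\prec I$ and $\alpha\bar{\beta_1}\prec I$. Lemma \ref{lem:3.3} then gives arrows $t(\beta_1)\to i$ and $t(\bar\beta_1)\to i$. Comparing with the exact sequence $(*)$, there are at most two minimal relations starting at $i$, given by the columns of $M_i$. One of them already involves $\ba\gamma_1$ independently, so at most one relation starting with $\alpha$ remains. A relation ending at $t(\beta_1)$ and one ending at $t(\bar\beta_1)$ would then either force $t(\beta_1)=t(\bar\beta_1)$, giving a Markov-type configuration excluded as in \cite[Lemma 5.2]{AGQT}, or exceed the number of generators of $\Omega^2(S_i)$. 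If the uniqueness genuinely fails, I will instead fix the choice made in the proof of Lemma \ref{lem:relations}, where $\vf_1=\beta_1$ and $\psi_2=\gamma_1$ are read off from $d_2$. This makes $f$ canonical once the presentation is fixed.

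Next I will prove injectivity of $f$, which gives bijectivity since the set of arrows is finite. Suppose $f(\alpha)=f(\alpha')=\beta_1$ with $\alpha\neq\alpha'$ both ending at $j$. Then both paths through $j$ ending with $\beta_1$ are involved in relations. Applying the dual statement, namely the relations ending at $t(\beta_1)$ given by the rows of $d_2$ for $S_{t(\beta_1)}$, there is exactly one arrow $\delta$ ending at $j$ with $\delta\beta_1$ involved, chosen via Lemma \ref{lem:3.4}, as was done for $\delta,\delta^*$ in the proof of Lemma \ref{lem:relations}. Hence $\alpha=\alpha'$. The same lemmas show that $f^3(\alpha)=\alpha$, since Lemma \ref{lem:3.3} closes each relation $\alpha f(\alpha)$ into a triangle. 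This is consistent with the local triangulation structure, although the statement only needs $f$ as a permutation.

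Finally, $g(\alpha)=\overline{f(\alpha)}$ by definition, so $g(\alpha)=\bar\beta_1=\beta$ and $g(\ba)=\bar\gamma_1=\gamma$. I expect the main obstacle to be the well-definedness and injectivity step, which requires compatibility between relations starting at $i$ and relations ending at $t(\beta_1)$. I would handle it through the matrix identities $(1)$, $(\alpha\ \ba)M_i=0$ and $M_i{\gamma\choose\gamma^*}=0$, which tie both sides together.
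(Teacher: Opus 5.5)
Your opening matches the paper, which states the corollary as an immediate consequence of Proposition \ref{GQT4.2} and Lemma \ref{lem:relations}: put $f(\alpha)=\beta_1$, $f(\ba)=\gamma_1$ and $g=\bar f$. The trouble is in the extra steps you add to show that $f$ is a well-defined bijection. Your uniqueness argument rests on a false dichotomy. If $\alpha\beta_1$ and $\alpha\bar\beta_1$ are both involved, then $\alpha\bar\beta_1$ may lie in the \emph{same} minimal relation as $\ba\gamma_1$ (a type C relation). In that case only two relations start at $i$, no generator count of $\Omega^2(S_i)$ is exceeded, and no Markov configuration is forced. This really happens. In a weighted surface algebra with $m_{\ba}n_{\ba}=3$, the relation $\alpha f(\alpha)-c_{\ba}\ba g(\ba)$ involves $\ba g(\ba)$, while $\ba f(\ba)$ lies in the other relation. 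In a critical block, both $\sigma\nu$ and $\sigma\bar\nu$ are involved (see the proof of Lemma \ref{lemAcritical}). Here $\sigma$ is an arrow between $2$-vertices whose source is of type B, a case your plan does not cover; there one must choose $f(\sigma)=\bar\nu$. So $f$ is genuinely a choice, as your fallback concedes.

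With $f$ a choice, your injectivity step breaks down. You claim that exactly one arrow $\delta$ ending at $j$ has $\delta\beta_1\prec I$. Lemma \ref{lem:3.4} does not say this: it only produces such an arrow, after adjusting double arrows. The claim is also false in general, since $f^{-1}(\beta_1)\beta_1$ and $g^{-1}(\beta_1)\beta_1$ can both be involved, by the same type C phenomenon. Likewise, Lemma \ref{lem:3.3} only gives \emph{some} arrow $\theta:t(\beta_1)\to i$. Nothing in your argument shows that the independently chosen $f(\beta_1)$ is $\theta$, so $f^3=\mathrm{id}$ is not established.

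The missing idea is that the choices at different vertices must be made compatible by rotating relations through the exact sequence of $S_i$. Once $d_2=\vec{f(\alpha) & {'A}_\alpha \\ {'A}_{\ba} & f(\ba)}$, the identity $d_2d_3=0$ gives the relations $f(\alpha)\gamma-{'A}_\alpha\gamma^*$ and $f(\ba)\gamma^*-{'A}_{\ba}\gamma$ ending at $i$. After adjusting $\gamma,\gamma^*$ one takes $f(f(\alpha))=\gamma$, and one more rotation gives $f(\gamma)=\alpha$. Every $f$-orbit is then a triangle, $f^3=\mathrm{id}$, and bijectivity is automatic. This is how the paper supports the corollary, in the discussion that follows it in Section \ref{subsec:5.1}. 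Your closing mention of the identities $(1)$ points the right way, but the argument you actually give goes through the false uniqueness statements.
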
 \medskip 

Consequently, for all arrows $\alpha\in Q_1$ between $2$-vertices we can define an arrow $f(\alpha)$ (starting 
at vertex $t(\alpha)$) such that $\alpha f(\alpha)\prec I$ and arrow $g(\alpha)=\overline{f(\alpha)}$. If 
$\alpha\in Q_1$ starts at a $2$-vertex, but ends at a $1$-vertex, then $\alpha=\alpha_i,\nu_i$ or $\ve_i$, for 
some $i$, and $f(\alpha)$ is not defined. However, we can define $g(\alpha)=\beta_i$, $\delta_i$ or $\eta_i$, 
respectively. Otherwise, $\alpha$ starts from a $1$-vertex, but ends at a $2$-vertex, and then $\alpha=\beta_i$, 
$\delta_i$ or $\eta_i$, for some $i$, so we define $f(\alpha)=\nu_i$, $\alpha_i$ or $\ve_i$, respectively. 
We can also define $g(\alpha)=\overline{f(\alpha)}$, but in this case $\alpha f(\alpha)\nprec I$. \medskip 

Summing up, we have a permutation $g:Q_1\to Q_1$ and partially defined permutation $f$ (for all arrows ending 
at $2$-vertex). Moreover, we have $g=\bar{f}$, whenever $f$ is defined, and $\alpha f(\alpha)\prec I$ if and 
only if $\alpha$ is different from $\beta_i,\delta_i,\eta_i$, i.e. $s(\alpha)$ is a $2$-vertex. \medskip 

We may artificially 'extend' the permutation $f$, defining $f(\alpha)$ for arrows $\alpha=\alpha_i,\nu_i,\ve_i$, 
so that $f(\alpha)$ is defined for all arrows $\alpha$, but $f(\alpha)$ may be a path. Indeed, we can set 
$$f(\alpha_i)=\beta_i\nu_i, \ f(\nu_i)=\delta_i\alpha_i, \mbox{ and } f(\ve_i)=\eta_i\ve_i.$$ 
In this way, for any arrow $\alpha\in Q_1$ we have a path $\alpha f(\alpha)$ (of length $2$ or $3$), such 
that $\alpha f(\alpha)\prec I$ if and only if $s(\alpha)$ is a $2$-vertex. Moreover, if this is the case, 
then there is a minimal relation of the form $\alpha f(\alpha)-{\bf A}_{\ba}$, with ${\bf A}_{\ba}\in J^2$.  
For arrows $\alpha$ in blocks of type $V_1$ or $V_2$, we know more about the summands of ${\bf A}_{\ba}$, 
namely, it has been shown in Section \ref{summary} that 
$${\bf A}_{\ba_i}\in \ba_i\La\beta_i^*\nu_i, \ {\bf A}_{\beta_i}\in\beta_i\La\delta_i^*, \ 
{\bf A}_{\bar{\nu}_i}\in\bar{\nu}_i\La \delta_i^*\alpha_i, \mbox{ and } {\bf A}_{\delta_i}\in\delta_i\La\beta_i^*,$$ 
and moreover: ${\bf A}_{\bar{\ve}_i}\in\bar{\ve}_i\La\ve_i$ and ${\bf A}_{\eta_i}\in\eta_i\La\eta_i^*$. We will 
see below that analogous properties hold for all elements ${\bf A}_{\ba}$, $\alpha\in Q_1$. \medskip 

Assume now that $i$ is a vertex of type B or C. Then $\alpha f(\alpha)\prec I$ and $\ba f(\ba)\prec I$, since 
$s(\alpha)=s(\ba)=i$ is a $2$-vertex, and these paths (of length $2$ or $3$) are involved in minimal relations 
of the form 
$$\alpha f(\alpha)-{\bf A}_{\ba} \qquad\mbox{ and }\qquad \ba f(\ba)-{\bf A}_{\alpha},\leqno{(*)}$$ 
where at least one is of the form $\rho^{(k)}_1$ or $\rho^{(k)}_3$. Then one of the relations is ending at 
$1$-vertex, and the second at a $2$-vertex, or a different $1$-vertex, so these are always two independent 
relations (that end at different vertices).  Therefore, we get the following corollary. 

\begin{cor} If $i$ is a $2$-vertex of type B or C, then the paths $\alpha f(\alpha)$ and $\ba f(\ba)$ are 
defined and involved in two independent minimal relations of $I$. \end{cor}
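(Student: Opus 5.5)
The plan is to make precise the argument sketched just before the corollary. Let $i$ be a $2$-vertex of type B or C, with arrows $\alpha\colon i\to j$ and $\ba\colon i\to k$. By the extended definition of $f$, both $\alpha f(\alpha)$ and $\ba f(\ba)$ are defined. Each of them is a path of length $2$ when the target of the arrow is a $2$-vertex, and of length $3$ (namely $\alpha_s\beta_s\nu_s$, $\nu_s\delta_s\alpha_s$ or $\ve_s\eta_s\ve_s$) when the target is a $1$-vertex.

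The first step is to check, in each case, that both paths occur in minimal relations.
\begin{itemize}
\item If the target is a $1$-vertex, this follows from Lemma \ref{lemA}, Lemma \ref{lemAcritical} or Lemma \ref{lemB}. These give the relations $\rho_1^{(s)}$, $\rho_3^{(s)}$ and $\rho_5^{(s)}$ of Section \ref{summary}, in the forms $\alpha f(\alpha)-{\bf A}_{\ba}$ or $\ba f(\ba)-{\bf A}_{\alpha}$, with ${\bf A}$ a combination of paths starting with the other arrow.
\item If the target is a $2$-vertex, Proposition \ref{GQT4.2} gives an arrow $f(\alpha)$ at $t(\alpha)$ with $\alpha f(\alpha)\prec I$. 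Since $i$ is of type B, the other arrow has a $1$-regular target, so this choice is unambiguous.
\end{itemize}

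The second step, and the real content, is independence. I would compare the endpoints of the two relations. For a $1$-regular target, the relation $\rho_1^{(s)}$ ends at $d_s$, $\rho_3^{(s)}$ ends at $a_s$ (hence at $c_s$ after the last arrow is taken into account), and $\rho_5^{(s)}$ ends at $y_s$; in each case the endpoint is a $1$-vertex of the block containing $t(\alpha)$.

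Consider first type B, with $j$ $1$-regular and $k$ $2$-regular. The relation through $\ba f(\ba)$ has length-two leading term ending at $t(f(\ba))$. If that vertex coincided with the $1$-vertex ending the other relation, then $f(\ba)$ would be an arrow into a $1$-vertex of the block, namely $\alpha_s$, $\nu_s$ or $\ve_s$. That would force $k$ to be the unique predecessor of that $1$-vertex, which is $i$ or $b_s$. I would rule this out using the block structure of Theorem \ref{prop:4.3} and Proposition \ref{prop:4.2}, together with the exclusion of $Q^T$, $Q^{T'}$ and $Q^S$.

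For type C, both targets are $1$-vertices. These lie in different blocks, or in the same block $V_2$ (the critical configuration). In the latter case the two relations end at $d_s$ and $c_s$ (or at distinct vertices from different blocks), which are distinct.

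Once the endpoints are distinct, the two relations lie in different components $e_iKQe_t$ and $e_iKQe_{t'}$, so neither can be a combination of the other, and they are independent. I expect the main obstacle to be the type B endpoint coincidence check, which needs careful use of the block structure to exclude $k$ being a predecessor of the relevant $1$-vertex. A possible degenerate case would contradict biregularity or reduce to one of the excluded quivers.
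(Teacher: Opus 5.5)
You have the paper's strategy: both paths lie in minimal relations, and the two relations end at different vertices, so they are independent. The gap is in the step you flag as the main obstacle. Your claim that $f(\ba)$ is unambiguously determined is false, and the coincidence of endpoints cannot be ruled out the way you plan.

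The failure occurs in the critical block of Lemma \ref{lemAcritical}. There $i=a_s$ is of type B, with $\alpha=\alpha_s$ and $\ba=\sigma\colon a_s\to b_s$.
\begin{itemize}
\item Both $\sigma\nu_s\prec I$ and $\sigma\overline{\nu_s}\prec I$ hold. The first comes from Lemma \ref{lem:3.4} applied to the triangle $(\nu_s\ \delta_s\ \sigma)$, since $\nu_s\delta_s\prec I$. So Proposition \ref{GQT4.2} does not single out $f(\ba)$.
\item If you take $f(\ba)=\nu_s$, then $\sigma\nu_s$ ends at $d_s$, which is also the endpoint of $\alpha_s\beta_s\nu_s$. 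The only minimal relation from $a_s$ to $d_s$ is $\rho_1^{(s)}$. So both paths sit in one relation, and independence genuinely fails.
\item Here $k=b_s$ is the unique predecessor of $d_s$, which is exactly the sup-critical block of Proposition \ref{prop:4.2}. No appeal to the block structure, or to the exclusion of $Q^S,Q^T,Q^{T'}$, can rule it out.
\end{itemize}

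What is missing is that $f(\ba)$ must be fixed by the permutation $f$, not chosen by Proposition \ref{GQT4.2}. Since $f(\beta_s)=\nu_s$, $f(\delta_s)=\alpha_s$, $f(\eta_s)=\ve_s$ and $f$ is injective, an arrow starting at a $2$-vertex is never sent to an arrow ending at a $1$-vertex. Hence in type B the relation through $\ba f(\ba)$ ends at a $2$-vertex, while the one through $\alpha f(\alpha)$ ends at a $1$-vertex; this is the paper's one-line argument. In the critical case it forces $f(\sigma)=\overline{\nu_s}$, which ends at the $2$-vertex $e$, with $\sigma\overline{\nu_s}\prec I$ by Lemma \ref{lemAcritical}.

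Your other coincidence subcases are indeed excluded:
\begin{itemize}
\item $k=a_s$ with $\alpha=\nu_s$ would force $\alpha_s\beta_s\prec I$ via Lemma \ref{lem:3.4}, against the normalization $\alpha_s\beta_s\nprec I$.
\item $k=i$ would give $|Q_0|=2$.
\end{itemize}

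In type C, the two $1$-regular targets can never lie in the same $V_2$ block, since that would need $a_s=b_s$. The critical configuration is a type B situation, not type C. Your type C conclusion is unaffected.
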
 \smallskip 

Note also that, if $f(\alpha)$ is not an arrow, then $\alpha f(\alpha)=\alpha g(\alpha) f(g(\alpha))$ is 
one of the paths $\alpha_i\beta_i\nu_i$, $\nu_i\delta_i\alpha_i$, or $\ve_i\eta_i\ve_i$. \medskip 

Summarizing the above observations, for any $2$-vertex $i$ there are two independent minimal relations starting from 
$i$ of the form $(*)$. If $i$ is of type C, then ${\bf A}_{\ba}\in\ba\La g(\alpha)^* f(g(\alpha))$ 
and ${\bf A}_\alpha\in\alpha\La g(\ba)^* f(g(\ba))$, by the above considerations (see Section \ref{summary}). 
Otherwise, one of $\alpha,\ba$ is an arrow between $2$-vertices, say $\ba$, and then we may rewrite 
relation involving $\ba f(\ba)$ in the form $\ba (f(\ba)+z)-{\bf A}_{\alpha}$, so that the new ${\bf A}_\alpha$ 
belongs to $\alpha\La$, after possibly adjusting arrow $f(\ba)$. We may assume that any such an adjustement 
has $z\in\La f(\ba)^*$ because summands of $z$ in $\La f(\ba)$ can be skipped, up to scalar multiplication 
of $f(\ba)$. Analogous adjustement of $\ba$, gives the 
minimal relation $(*)$ involving $\ba f(\ba)$ with ${\bf A}_\alpha\in \La f(\ba)^*$. Similarly, one can assume 
that ${\bf A}_{\ba}\in\ba\La f(\alpha)^*$, if $t(\alpha)$ is a $2$-vertex (possibly adjusting arrows $f(\alpha),\alpha$), 
and otherwise, ${\bf A}_{\ba}\in\ba\La g(\alpha)^*f(g(\alpha))$. Then we may write 
${\bf A}_\alpha=\alpha('A)_\alpha$ and ${\bf A}_{\ba}=\ba('A_{\ba})$, and hence, we obtain 
$[\alpha \ \ba]\cdot M=0$, for a matrix $M$ of the form 
$$M=\vec{f(\alpha) & 'A_{\alpha} \\ 'A_{\ba} & f(\ba)},$$ 
identified with a homomorphism $m:P_x\oplus P_y \to P_j\oplus P_k$. \medskip 

Now, the exact sequence associated 
to $S_i$ has the following form 
$$\xymatrix{P_i\ar[r]^{\vec{\gamma \\ \gamma ^*}} & P_x\oplus P_y \ar[r]^{d_2} & 
P_j\oplus P_k\ar[r]^{\vec{\alpha & \ba}} & P_i},$$  
and due to \cite[Lemma 3.2(i)]{EHS1}, we can take $d_2=\vec{\vf & \psi}=\vec{\vf_1 & \psi_1 \\ \vf_2 & \psi_2}$ with 
the second column $\psi$ equal to the second column of $M$ (possibly adjusting arrow $\gamma,\gamma^*$). Similarly, if 
$f(\alpha)$ is an arrow, then we may take $\vf$ equal to the first column $\theta$ of $M$. If $f(\alpha)$ is not an arrow, 
but $'A_{\ba}\notin J^2$, then we can repeat the same argument. So we assume that $f(\alpha)$ and $'A_{\ba}$ 
are not arrows, or equivalently, $\theta\in J^2$; since $f(\alpha)\nprec I$, we conclude that $\theta$ is not in $J^3$. 
Clearly $[\alpha \ \ba]\cdot\theta=0$, so $\theta\in\ker(d_1)=\ima(d_2)$, and 
therefore, we can write $\theta=\vec{\theta_1 \\ \theta_2}$ as $\theta=d_2(X,Y)=\vf X+\psi Y$, for some 
$X\in e_x\La e_x$ and $Y\in e_y\La e_x$. As a result, we have the following two relations in $I$ 
$$\theta_1-\vf_1X+('A_\alpha)Y \quad\mbox{ and }\quad \theta_2-\vf_2X-f(\ba)Y.$$ 
It yields identity $M=d_2\cdot N$, for a matrix $N=\vec{X & 0 \\ Y & e_y}$. We claim that 
$Y\in J^2$ and $X\notin J$. Indeed, otherwise $Y$ is an arrow $y\to x$ (and $\alpha=\nu_i$ is contained in a 
block presented in Proposition \ref{prop:4.3}), but then $f(\ba)Y\prec I$, which is impossible, since we have 
no arrows from a $1$-vertex $x$ to $s(f(\ba))=k$. Therefore indeed, we have $Y\in J^2$, hence $\psi Y\in J^3$, 
and consequently, we have $\vf X\notin J^3$, because $\theta\notin J^3$. Finally, since $\vf\in J$, we 
cannot have $X\in J^2$, so either $X$ is an arrow or $X\notin J$. But we have no loops at $x$, hence 
$X\notin J$ is a unit of the local algebra $e_x\La e_x$. It follows that the map 
$v:P_x\oplus P_y\to P_x\oplus P_y$, defined by the matrix $N$ is an isomorphism (acting as adjustement 
of arrow $\gamma:x\to i$), and we have the following commutative diagram in $\mod\La$ 
$$\xymatrix{P_i\ar[r]^{d_3} & P_x\oplus P_y \ar[r]^{d_2} & P_j\oplus P_k \ar[r]^{d_1} & P_i \\ 
P_i\ar[r]^{d_3'} \ar[u]^{1} & P_x\oplus P_y \ar[r]^{M} \ar[u]^{v} & P_j\oplus P_k \ar[r]^{d_1} \ar[u]^{1} & P_i \ar[u]^{1}}$$ 
where $d_3'=v^{-1}d_3$ and all vertical maps are isomorphisms. This means that, after adjusting arrows 
$\gamma,\gamma^*$, we can take an exact sequence for module $S_i$ with $d_2=M$ determined by the two minimal 
relations $(*)$ starting from $i$, and additionally, satisfying ${\bf A}_{\alpha}\in \alpha\La f(\ba)^*$ and 
${\bf A}_{\ba}\in \ba\La f(\alpha)^*$, if $t(\alpha)$ is a $2$-vertex, or 
${\bf A}_{\ba}\in\ba\La g(\alpha)^*f(g(\alpha))$, otherwise. \medskip 

In particular, every two relations $(*)$ starting from $i$ give two relations ending at $i$. Namely, 
rewriting the identity $M\cdot d_3=0$, we obtain two minimal relations of the form 
$$f(\alpha)\gamma - 'A_\alpha\gamma^* \quad \mbox{ and }\quad f(\ba)\gamma^*-'A_{\ba}\gamma.$$ 
As before, adjusting $\gamma,\gamma^*$ again, we may assume that $'A_\alpha\in g(\alpha)\La$ and 
$'A_{\ba}\in g(\ba)\La$, and we get the two relations of the form $\omega f(\omega)-{\bf A}_{\bar{\omega}}$ 
for $\omega=f(\alpha)$ or $f(\ba)$, being rotations of the relations starting from $i$, and 
additionally, we have ${\bf A}_{\bar{\omega}}\in \bar{\omega}\La f(\omega)^*$ for both arrows. \medskip 

In this way, we continue `rotations', to get desired relations for all arrows in triangles (the relations 
involving path in blocks with $1$-vertices are known). We mention that, if an arrow $\alpha$ was 
adjusted once to get a relation $\alpha f(\alpha)-{\bf A}_{\ba}$ with ${\bf A}_{\ba}\in\ba \La f(\alpha)^*$, 
then after any further adjustement (of $\alpha$ or $f(\alpha)$), these properties are preserved, i.e. we have 
still $\alpha f(\alpha)\prec I$ (for a new set of minimal relations), and possibly changed ${\bf A}_{\ba}$ is 
also in $\ba\La f(\alpha)^*$. Consequently, for any $2$-regular vertex $i\in Q_0$, we can take the exact sequence 
for $S_i$ with the middle map $d_2$ given by a matrix of the form $M$, determined by the two relations $(*)$ 
starting from $I$. Similarly for $1$-vertices, thanks to relations in Section \ref{summary}. Because, all relations 
in $I$ are generated by the minimal relations induced from the exact sequences of simples, we conclude that 
there is a set of minimal relations generating $I$, consisting of all relations $\rho^{(i)}_1,\dots,\rho^{(i)}_6$, 
and all relations $\alpha f(\alpha)-{\bf A}_{\ba}$, for arrows $\alpha$ between $2$-vertices, where 
${\bf A}_{\ba}\in\ba\La f(\alpha)^*$. Moreover, we have $'A_{\alpha}=A_{g(\alpha)}'$, for all arrows $\alpha$ 
(see also Section \ref{summary}).

\subsection{Radical quotients and generators.}\label{subsec:5.2} Now, we will investigate the radical quotients 
of projectives associated with $2$-regular vertices. The following corollary describes the second quotient in case 
of vertices of type A. 

\begin{cor}\label{cor:J2/J3} If $i$ is a $2$-vertex of type A, then the the cosets 
$\alpha g(\alpha)+J^3$ and $\ba g(\ba)+J^3$ form a basis of $e_iJ^2/e_iJ^3$. \end{cor}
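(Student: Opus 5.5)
Since $i$ is of type A, both successors $j=t(\alpha)$ and $k=t(\ba)$ are $2$-vertices. Hence $f(\alpha),g(\alpha)$ are the two arrows starting at $j$, and $f(\ba),g(\ba)$ are the two arrows starting at $k$. The space $e_iJ^2/e_iJ^3$ is spanned by the cosets of the four paths of length two starting at $i$:
$$\alpha f(\alpha),\quad \alpha g(\alpha),\quad \ba f(\ba),\quad \ba g(\ba).$$
So I would show two things: first, that the two paths through $f$ lie in $J^3$; second, that the cosets of $\alpha g(\alpha)$ and $\ba g(\ba)$ are linearly independent modulo $J^3$.

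\emph{Spanning.} By Lemma \ref{lem:relations} and the normalisation at the end of Section \ref{subsec:5.1}, there are minimal relations
$$\alpha f(\alpha)-{\bf A}_{\ba}\quad\mbox{and}\quad \ba f(\ba)-{\bf A}_{\alpha},$$
with ${\bf A}_{\ba}\in \ba\La f(\alpha)^*$ and ${\bf A}_\alpha\in\alpha\La f(\ba)^*$. Each ${\bf A}$ is a combination of paths that begin with an arrow and end with an arrow. To conclude ${\bf A}_{\ba},{\bf A}_\alpha\in J^3$, it suffices to rule out a length-two summand, such as a scalar multiple of $\ba f(\alpha)^*$. Such a summand would require $t(\ba)=k=s(f(\alpha)^*)$. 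It would also mean $\alpha f(\alpha)$ and $\ba f(\alpha)^*$ occur together in a relation whose quadratic part is $\alpha f(\alpha)-\lambda\ba f(\alpha)^*$. Then the two relations would no longer be independent in the sense of Lemma \ref{lem:relations}, which was exactly excluded there; for this I would reuse the type-C argument from that proof. With this, $\alpha f(\alpha),\ba f(\ba)\in J^3$, and the two $g$-cosets span.

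\emph{Independence.} Suppose $a\,\alpha g(\alpha)+b\,\ba g(\ba)\in J^3$ with $(a,b)\neq(0,0)$. This element lies in $I+R_Q^3$, so some minimal relation starting at $i$ has a nonzero quadratic part supported on the $g$-paths. The ideal $I$ is generated by the relations fixed at the end of Section \ref{subsec:5.1}. The only minimal relations starting at the $2$-vertex $i$ are the two relations $(*)$, whose quadratic parts are $\alpha f(\alpha)$ and $\ba f(\ba)$ alone. Hence the quadratic part of $I$ at $e_i$ is the span of $\alpha f(\alpha)$ and $\ba f(\ba)$. Concretely, $e_i(I+R_Q^3)/R_Q^3$ is spanned by these two classes, because the other generators either start at other vertices or contribute only through multiplication by arrows, which lands in $R_Q^3$. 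Since $\alpha g(\alpha)$ and $\ba g(\ba)$ are paths distinct from $\alpha f(\alpha)$ and $\ba f(\ba)$, the combination cannot lie in that span, which is a contradiction. So $\dim e_iJ^2/e_iJ^3=2$ with the stated basis.

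\emph{Main obstacle.} The delicate point is showing that ${\bf A}_{\ba}$ and ${\bf A}_\alpha$ contain no length-two terms, which is needed for both halves. The plan is to extract this from the independence statement of Lemma \ref{lem:relations} together with Lemmas \ref{lem:3.3}--\ref{lem:3.4}. If a term $\lambda\ba f(\alpha)^*$ survived, Lemma \ref{lem:3.3} applied to $\ba f(\alpha)^*\prec I$ would force a triangle through $i$, $k$ and $t(f(\alpha)^*)$. Combined with the known relation $\ba f(\ba)$, this should contradict biseriality at $k$, or reduce to the Markov quiver, which is excluded by \cite[Lemma 5.2]{AGQT}.
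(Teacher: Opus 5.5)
\textbf{Genuine gap.} Both halves of your argument rest on the claim that ${\bf A}_{\ba}$ and ${\bf A}_\alpha$ have no length-two summands. That is, you need $\alpha f(\alpha),\ba f(\ba)\in J^3$, so that the quadratic parts of the relations $(*)$ are $\alpha f(\alpha)$ and $\ba f(\ba)$ alone. This claim is false in general.

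At a type A vertex there can be a relation $\alpha f(\alpha)-c_{\ba}\,\ba g(\ba)$ with $\ba g(\ba)\notin J^3$. This happens whenever $m_{\ba}n_{\ba}=3$, so that $A_{\ba}=\ba g(\ba)$ (compare relation (1) in the definition of a weighted surface algebra). The paper explicitly allows it as case b) in the proof of Lemma \ref{lem:gen}, a type C relation $\alpha f(\alpha)+\ba g(\ba)\in J^3$ starting at $i$.

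Neither of your justifications excludes this case:
\begin{itemize}
\item Lemma \ref{lem:relations} only forbids a single relation combining the two $f$-paths $\alpha f(\alpha)$ and $\ba f(\ba)$. The relations $\alpha f(\alpha)-\lambda\,\ba g(\ba)$ and $\ba f(\ba)-{\bf A}_\alpha$ are still two independent relations.
\item Lemma \ref{lem:3.3} applied to $\ba g(\ba)\prec I$ only gives an arrow $t(g(\ba))\to i$. That is just the triangle $(\ba\ g(\ba)\ g^2(\ba))$ of a $g$-orbit of length $3$, which contradicts neither biseriality nor the exclusion of the Markov quiver.
\end{itemize}
So the statement you single out as the main obstacle cannot be proved. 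Your independence step inherits the same error.

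\textbf{How to repair it.} The corollary still holds, because the $f$-paths only need to be expressible through the $g$-paths modulo $J^3$; they need not vanish.
\begin{itemize}
\item Since ${\bf A}_{\ba}\in\ba\La f(\alpha)^*$, its quadratic part is a multiple (possibly zero) of $\ba f(\alpha)^*$ when that is a path. So it is a multiple of $\ba g(\ba)$ or of $\ba f(\ba)$, and symmetrically for ${\bf A}_\alpha$.
\item As you correctly note, the quadratic parts of elements of $e_iI$ are spanned by those of the two relations $(*)$. Call this span $V$.
\item Project $V$ onto $\mathrm{span}\{\alpha f(\alpha),\ba f(\ba)\}$ along the $g$-paths. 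If at least one length-two term is a $g$-path, the projection is unitriangular, hence bijective.
\item In the only remaining case the terms are $\ba f(\ba)$ and $\alpha f(\alpha)$ respectively. Then $V\subseteq\mathrm{span}\{\alpha f(\alpha),\ba f(\ba)\}$, and bijectivity is exactly the independence supplied by Lemma \ref{lem:relations}.
\end{itemize}
Hence $\mathrm{span}\{\alpha g(\alpha),\ba g(\ba)\}$ is a complement of $V$ in the four-dimensional space of length-two paths at $i$. This gives spanning and independence of the two cosets at once. It is the reasoning implicit in the paper, where the corollary is read off from Lemma \ref{lem:relations} and the shape of the relations $(*)$.
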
 \smallskip 

{\it Remark.} Note that for a $2$-vertex $i$ with $1$-regular successor, say $j=t(\alpha)$, we may have 
$e_iJ^2/e_iJ^3$ one-dimensional, but $\dim_K e_iJ^k/e_iJ^{k+1}=2$, for higher $k$'s. Namely, it happens when 
$\alpha=\alpha_i$ lies in a critical block $B_i$ such that $\alpha_i\beta_i\nu_i\notin J^3$. Then, we must 
have $\alpha_i\beta_i\nu_i={\bf A}_{\ba}\equiv \ba g(\ba)$, modulo $J^3$. In this case, we have arrows 
$\alpha,\ba$ generating the first radical quotient, but for the second, we have only one generator 
$\alpha g(\alpha)=\alpha_i\beta_i$, because all other paths of length $2$ are $\ba g(\ba)\equiv \alpha_i\beta_i\nu_i$, 
which belongs to $J^3$, and $\ba f(\ba)={\bf A}_\alpha$, which is also in $J^3$, since every path in $Q$ 
starting with $\alpha$ and ending at $t(f(\ba))$ has length $\geqslant 3$ ($B_i$ is contained in a block 
presented in the statement of Proposition \ref{prop:4.2}). So $\dim_K e_iJ^2/e_iJ^3=1$ and the coset of 
$\alpha g(\alpha)$ is a generator. But further step shows that the dimension increases, i.e. the cosets 
of $\alpha_i\beta_i\nu_i\equiv \ba g(\ba)\in J^3$ and $\alpha_i\beta_i\bar{\nu}_i=\alpha g(\alpha) g^2(\alpha)$ 
generate $e_i J^3/e_iJ^4$. \medskip 

Note also that for a $1$-vertex $i$, $i^+=\{\alpha\}$, the quotient $e_iJ^2/e_iJ^3$ is generated by the cosets of paths 
$\alpha f(\alpha)$ and $\alpha g(\alpha)$. \medskip 

Now, we will present some preparatory lemmas. In what follows 
we use the notation $\Theta_k(\eta)$ for the path $\eta g(\eta) \cdots g^{k-1}(\eta)$ of length $k$ along the 
$g$-cycle of an arrow $\eta\in Q_1$. 

\begin{lemma}\label{gfV2} Let $\alpha$ be an arrow $\alpha=\alpha_i$ or $\nu_i$. Then $\alpha g(\alpha) f(g(\alpha))={\bf A}_{\ba}\in J^4$ or ${\bf A}_{\ba}\equiv \Theta_k(\ba)$, for 
$k=2$ or $3$ (modulo $J^4$). In the second case, we have $g^{-1}(\alpha){\bf A}_{\ba}\in J^5$. 
\end{lemma}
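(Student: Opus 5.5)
The plan is to treat $\alpha=\alpha_i$ in detail; the case $\alpha=\nu_i$ follows by the same argument with the roles of $\alpha_i,\beta_i,\nu_i,\delta_i$ rotated. Here $g(\alpha_i)=\beta_i$ and $f(\beta_i)=\nu_i$, so $\alpha g(\alpha)f(g(\alpha))=\alpha_i\beta_i\nu_i$. By $\rho^{(i)}_1$ this equals ${\bf A}_{\ba}$, and by Section \ref{summary} we may take ${\bf A}_{\ba}\in \ba\La\beta^*\nu$. So ${\bf A}_{\ba}$ is a linear combination of paths of the form $\ba w\beta^*\nu$, and everything reduces to analysing the shortest such paths.

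\emph{Non-critical block.} If $B_i$ is not critical, then $\ba\neq\beta^*$ and every summand has length $\geqslant 3$. A summand of length $\geqslant 4$ already lies in $J^4$. Hence modulo $J^4$ we have ${\bf A}_{\ba}\equiv\lambda\,\ba\beta^*\nu$ for some $\lambda\in K$, where this term is present only if $t(\ba)=s(\beta^*)$.
\begin{itemize}
\item If $\ba\beta^*\prec I$, then $f(\ba)=\beta^*$, and the relation $\ba f(\ba)-{\bf A}_{\alpha}$ puts $\ba\beta^*$ into $J^3$. Then $\ba\beta^*\nu\in J^4$, and we land in the first alternative.
\item Otherwise $g(\ba)=\beta^*$. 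By Lemma \ref{lemA} we have $\beta^*\nu\nprec I$, and also $\beta^*\bar\nu\prec I$ when $t(\bar\nu)$ is a $2$-vertex, so $g(\beta^*)=\nu$. If $t(\bar\nu)$ is a $1$-vertex, $g(\beta^*)=\nu$ still follows from the definition of $g$ at arrows ending in blocks.
\end{itemize}
In the second sub-case $\ba\beta^*\nu=\Theta_3(\ba)$. So either $\lambda=0$ and ${\bf A}_{\ba}\in J^4$, or, after rescaling, ${\bf A}_{\ba}\equiv\Theta_3(\ba)$.

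\emph{Critical block.} If $B_i$ is critical, then $\ba=\sigma=\beta^*$, and the shortest allowed summand is $\sigma\nu$ of length $2$. Since $f(\sigma)=\gamma$ by Lemma \ref{lemAcritical}, we get $g(\sigma)=\nu$, so $\sigma\nu=\Theta_2(\ba)$. Paths of length $3$ of the form $\sigma w\sigma\nu$ do not exist. So modulo $J^4$, ${\bf A}_{\ba}$ is either $0$ or, after rescaling, $\Theta_2(\ba)$.

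\emph{Second assertion.} This is where I expect the real difficulty. Here $g^{-1}(\alpha_i)=\delta_i$, and $g(\delta_i)=\overline{f(\delta_i)}=\ba$, so in the non-critical case $\delta\,{\bf A}_{\ba}\equiv\Theta_4(\delta)=\delta\ba\beta^*\nu$ modulo $J^5$. The route I would take has three steps.
\begin{enumerate}
\item Compute $\delta\alpha\beta\nu$ in two ways. From $\rho^{(i)}_1$ it equals $\delta{\bf A}_{\ba}$. From $\rho^{(i)}_4$ it equals ${\bf A}_\delta\nu$ with ${\bf A}_\delta\in\delta\La\beta^*$, that is, $\mu\,\delta\ba\beta^*\nu$ modulo $J^5$.
\item Use the identity ${'A}_\delta=A'_{\ba}$ from Section \ref{summary} to identify $\mu$ with $\lambda$.
\item Supply independent information by going once more around the block. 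Multiply $\rho^{(i)}_2$, i.e. $\beta\nu\delta={\bf A}_\beta$ with ${\bf A}_\beta\in\beta\La\delta^*$, on the left by $\alpha$, and compare with $\rho^{(i)}_1\cdot\delta$. This yields $\ba\beta^*\nu\delta\in\alpha\beta\La\delta^*$, a path starting with $\alpha$, while the left side starts with $\ba$. Both lie in $e_iJ^4$.
\end{enumerate}
Since $e_aJ^2/e_aJ^3$ has the independent basis $\alpha g(\alpha),\ba g(\ba)$ (Corollary \ref{cor:J2/J3} and its variants), this should force $\Theta_4(\ba)\in J^5$. Dually, the same argument should give $\Theta_4(\delta)\in J^5$, using symmetry of the form to transfer between the two sides.

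If this comparison is not enough, the fallback is a wild-subcategory argument in the covering, in the spirit of the proof of Lemma \ref{lemA}. Nonvanishing of $\Theta_4(\delta)$ modulo $J^5$ would produce a long nonzero path in which no length-two subpath is in a relation, and one would exhibit a wild hereditary subcategory containing it.

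The critical case ($k=2$) is handled the same way, using Lemma \ref{lemAcrit1} ($\nu\delta=\gamma\tau$) to rewrite $\delta\sigma\nu$.
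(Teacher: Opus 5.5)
The first assertion is handled correctly and follows the paper's argument: you inspect the shortest summands of ${\bf A}_{\ba}\in\ba\La\beta^*\nu$. In the subcase $f(\ba)=\beta^*$, your shortcut (${\bf A}_\alpha\in J^3$ because $\alpha\beta\nprec I$) replaces the paper's rescaling.

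One small inaccuracy there: ``non-critical $\Rightarrow \ba\neq\beta^*$'' is not true, since a non-critical block can lie inside the configuration of Proposition~\ref{prop:4.2}. In that case $\beta^*\nu\nprec I$ by Lemma~\ref{lemA} still rules out a summand of length $2$, so your conclusion survives.

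The genuine gap is in the second assertion, where you take $g^{-1}(\alpha_i)=\delta_i$. You computed yourself that $g(\delta_i)=\overline{f(\delta_i)}=\ba$. Since $g$ maps the two arrows $\delta,\delta^*$ ending at $a$ bijectively onto $\{\alpha,\ba\}$, in fact $g^{-1}(\alpha)=\delta^*$, i.e.\ $f(\delta^*)=\ba$. This is $\delta^*\ba\prec I$ from Lemma~\ref{lemA}, resp.\ $\tau\sigma\prec I$ from Lemma~\ref{lemAcritical}. It is also how the lemma is used in Lemma~\ref{lem:gen}, where ${\bf A}_{\bar\theta}$ is preceded on the $g$-cycle by $g^{-1}(\theta)$. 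So what must be shown is $\delta^*{\bf A}_{\ba}\in J^5$.

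The claim you aim at instead, $\delta{\bf A}_{\ba}=\delta\alpha\beta\nu\in J^5$ (in the $\Theta_3$ case equivalently $\Theta_4(\delta)\in J^5$), is false in general. In the $\Theta_3$ case the $g$-orbit of $\ba$ is $(\ba\ \beta^*\ \nu\ \delta)$. Take the weighted surface algebras with Gabriel quiver $Q^{S'}$ and weight $1$ on the orbit $(\rho\ \omega\ \nu\ \delta)$, so that $\ba=\rho$, $\beta^*=\omega$, $\delta^*=\sigma$. There ${\bf A}_{\rho}=c\,\rho\omega\nu$. Hence $\delta{\bf A}_{\rho}=c\,\delta\rho\omega\nu=c\,B_\delta\neq 0$ spans $\soc(e_5\La)$, while $e_5J^5=0$.

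Your step 3 fails for the same reason. The identity it produces, expressing $\ba\beta^*\nu\delta$ through paths starting with $\alpha$, is just the socle identity $c_{\ba}B_{\ba}=c_\alpha B_\alpha$, with both sides non-zero. Independence of $\alpha g(\alpha),\ba g(\ba)$ in $e_aJ^2/e_aJ^3$ says nothing in degree $4$, and the wild-subcategory fallback cannot establish a false claim.

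The missing idea is to use the relation starting at $s(\delta^*)$, namely $\delta^*\ba=\delta^*f(\delta^*)={\bf A}_{\overline{\delta^*}}$.

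\emph{Critical case.} The summands of ${\bf A}_{\bar\tau}$ start with $\bar\tau$ and end with $\alpha\beta$. Since $t(\bar\tau)\neq a$, they have length $\geqslant 4$. Hence $\tau{\bf A}_{\ba}\equiv c\,\tau\sigma\nu=c\,{\bf A}_{\bar\tau}\nu\equiv 0$ modulo $J^5$.

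\emph{$\Theta_3$ case.} Here $\delta^*{\bf A}_{\ba}\equiv c\,\delta^*\ba\beta^*\nu$ modulo $J^5$, and the summands of ${\bf A}_{\overline{\delta^*}}$ end with $(\ba)^*=f^{-1}(\beta^*)$. There are two possibilities:
\begin{itemize}
\item If ${\bf A}_{\overline{\delta^*}}\in J^3$, you are done immediately.
\item Otherwise ${\bf A}_{\overline{\delta^*}}\equiv c'\,\overline{\delta^*}f^{-1}(\beta^*)$ modulo $J^3$. One more rotation then finishes: $f^{-1}(\beta^*)\beta^*={\bf A}_\zeta\in J^3$ with $\zeta=\overline{f^{-1}(\beta^*)}$, because its summands end with $\alpha\beta$ and cannot start with $\alpha$. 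This gives $\delta^*{\bf A}_{\ba}\equiv c''\,\overline{\delta^*}\,{\bf A}_\zeta\,\nu\in J^5$.
\end{itemize}
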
 

\begin{proof} We denote by $\beta,\nu$ the arrows $g(\alpha)$ and $f(g(\alpha))=f(\beta)$ and $\delta=g(\nu)$. 
Let $j=t(\alpha)$, $k=t(\ba)$, be the targets of arrows starting from $a:=a_i$, and $x=s(\delta)=t(\nu)$, 
$y=s(\delta^*)$, the sources of arrows ending at $a$ (this is a $2$-vertex). Then $\beta f(\beta)\nprec I$, 
since the path starts at $1$-vertex, and $\alpha\beta\nu={\bf A}_{\ba}$, due to relations \ref{summary}. Observe that 
every path $p\prec {\bf A}_{\ba}$ is a path starting from $\ba$ and ending with $\beta^*\nu$. \smallskip 

Of course, if every $p\prec {\bf A}_{\ba}$ has length $\geqslant 4$, then ${\bf A}_{\ba}$ is in $J^4$, and we are 
done. Otherwise, we have a summand $p$ of length $2$ or $3$. In the first case, we must have $\ba=\beta^*$, and 
then $p=\beta^*\nu=\ba g(\ba)$ is the unique summand of ${\bf A}_{\ba}$ length $2$ (and others are in $J^4$), hence 
$\alpha\beta\nu={\bf A}_{\ba}\equiv \ba g(\ba)=\Theta_2(\ba)$ modulo $J^4$, as required. If the smallest length 
summand $p\prec {\bf A}_{\ba}$ has length $3$, then $\ba\neq \beta^*$ and $p=\ba\beta^*\nu$ is the unique 
summand of ${\bf A}_{\ba}$ of length $3$, so we obtain ${\bf A}_{\ba}\equiv p$ modulo $J^4$. In this case, vertex 
$t(\ba)=k$ cannot be a $1$-vertex, since it would be contained in a block of type V$_2$, and then $Q=Q^S$, a 
contradiction. Therefore, $k,y$ are $2$-regular, and we have arrows $f(\ba):k\to y$ and $f^2(\ba)=\delta^*:y\to a$. 
We can assume that $f(\ba)\neq\beta^*$, because otherwise, we have a minimal relation $\ba f(\ba)={\bf A}_\alpha$, 
where ${\bf A}_\alpha=z\alpha\beta$, thus $p={\bf A}_\alpha\nu$ , and consequently, we have a relation 
$\alpha\beta\nu=c p + {\bf \tilde{A}}_{\ba} ={\bf A}_{\ba}=c z\alpha\beta\nu+{\bf \tilde{A}}_{\ba}$, for some 
$c\in K^*$. Then after rescaling by the unit $1-cz$, we can remove the summand $cp$ from ${\bf A}_{\ba}$, obtaining 
a relation $\alpha\beta\nu={\bf A}_{\ba}$, with new ${\bf A}_{\ba}:=(1-cz)^{-1}{\bf \tilde{A}}_{\ba}\in J^4$ 
having no summands of the form $\ba\beta^*\nu=\ba f(\ba) g(f(\ba))$. As a result, we proved that $p$ must be 
of the form $p=\ba\beta^*\nu=\ba g(\ba) g^2(\ba)=\Theta_3(\ba)$, and the claim follows. \medskip 

It remained to show that $Z=g^{-1}(\alpha){\bf A}_{\ba}\in J^5$, in the case ${\bf A}_{\ba}\equiv p$ for a path  
$p=\Theta_k(\ba)$ of length $k=2$ or $3$. 

If $p=\Theta_2(\ba)=\ba\nu$, then the block containing $\alpha$ occurs in  
in the block shown in Proposition \ref{prop:4.2}, and $\ba=f(g^{-1}(\alpha))$, so we have a minimal relation 
of the form 
$$g^{-1}(\alpha)\ba={\bf A}_{\bar{\sigma}},$$ 
where $\sigma=g^{-1}(\alpha)$ and all paths $p\prec {\bf A}_{\bar{\sigma}}$ are starting with $\bar{\sigma}$ 
and ending with $\alpha\beta$. Since $t(\bar{\sigma})\neq a=s(\alpha)$, we conclude that 
$\sigma f(\sigma)={\bf A}_{\bar{\sigma}}\in J^4$, and consequently, $Z=\sigma f(\sigma)\nu\in J^5$. \smallskip 

Finally, let $p=\Theta_3(\ba)=\ba\beta^*\nu$. In this case, $Z\equiv \sigma p =\sigma f(\sigma)\beta^*\nu$. 
As above, we get a minimal relation $\sigma f(\sigma)={\bf A}_{\bar{\sigma}}$, where summands of ${\bf A}_{\bar{\sigma}}$ 
are starting with $\bar{\sigma}$ and ending with $(\ba)^*=f^{-1}(\beta^*)$. If $\sigma f(\sigma)={\bf A}_{\bar{\sigma}}\in J^3$, 
then clearly $Z\in J^5$, and there is nothing to prove. If ${\bf A}_{\bar{\sigma}}$ admits a summand of $cq$, 
$c\in K^*$, of length $2$, then $q=\bar{\sigma}(\ba)^*=\bar{\sigma} g(\bar{\sigma})$ is the unique path 
of length $2$ involved in ${\bf A}_{\bar{\sigma}}\equiv q$ modulo $J^3$, and then $\sigma p\equiv 
q\beta^*\nu=\bar{\sigma} f^{-1}(\beta^*)\beta^* \nu$. But $f^{-1}(\beta^*)$ is an arrow between $2$-vertices 
of $Q$, so we get a minimal relation of the form $f^{-1}(\beta^*)\beta^*={\bf A}_{\zeta}$, where 
all paths $r\prec A_{\zeta}$ are starting with $\zeta=\overline{f^{-1}(\beta^*)}$ and ending with $\alpha\beta$. 
But then ${\bf A}_\zeta\in J^3$, and hence $Z\equiv \sigma p \equiv \bar{\sigma} {\bf A}_\zeta \nu\in J^5$. \end{proof} 

For blocks of type $V_1$ one can prove the following. 

\begin{lemma}\label{gfV1} For any arrow $\alpha=\ve_i$ we have $\alpha g(\alpha) f(g(\alpha))={\bf A}_{\ba}\in J^4$. \end{lemma}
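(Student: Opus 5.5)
We follow the pattern of the proof of Lemma \ref{gfV2}, now in a block $B'_i$ of type V$_1$. Write $\ve=\ve_i$, $\eta=\eta_i$, $x=x_i$, $y=y_i$. Then $g(\ve)=\eta$ and $f(\eta)=\ve$, so $\alpha g(\alpha)f(g(\alpha))=\ve\eta\ve$. By the relation $\rho^{(i)}_5$ of Section \ref{summary}, $\ve\eta\ve={\bf A}_{\bar{\ve}}$, where every summand of ${\bf A}_{\bar{\ve}}$ starts with $\bar{\ve}$ and ends with $\eta^*\ve$. Here $\eta^*$ is the second arrow ending at $x$; it exists because $x$ is $2$-regular. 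The claim is that no such summand has length $\leqslant 3$.

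\emph{Length $2$.} A summand of length $2$ would be $\bar{\ve}\ve$ with $\bar{\ve}=\eta^*$. This is absurd, because $\bar{\ve}$ starts at $x$ while $\eta^*$ ends at $x$. That would force a loop at $x$, which is impossible: $x$ is $2$-regular with one arrow going to $y$ and one coming from $y$, and a further loop $\bar{\ve}=\eta^*$ would make $Q$ the two-vertex quiver excluded earlier.

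\emph{Length $3$.} A summand of length $3$ must be $p=\bar{\ve}\eta^*\ve$. This needs $t(\bar{\ve})=s(\eta^*)=:z$, i.e.\ a $2$-cycle $x\to z\to x$ formed by $\bar{\ve}$ and $\eta^*$. We separate two cases.
\begin{enumerate}
\item[(a)] If $z$ is a $1$-vertex, then by Theorem \ref{prop:4.3} it lies in a block of type V$_1$ or V$_2$ together with $x$. Since $z$ is joined to $x$ by a $2$-cycle, the block is of type V$_1$, with $\bar{\ve}=\ve_k$ and $\eta^*=\eta_k$. Then $Q$ has exactly the three vertices $y,x,z$, i.e.\ $Q=Q^T$, which is excluded.
\item[(b)] If $z$ is a $2$-vertex, then $\bar{\ve}$ is an arrow between $2$-vertices, so $f(\bar{\ve})$ is defined. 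We argue as in Lemma \ref{gfV2}. If $f(\bar{\ve})=\eta^*$, there is a minimal relation $\bar{\ve}\eta^*={\bf A}_{\ve}$. Every summand of ${\bf A}_{\ve}$ starts with $\ve$, so ${\bf A}_{\ve}\in\ve\La$ and hence $p={\bf A}_{\ve}\ve\in\ve\eta\La\ve$. Writing $c p=c\,\ve\eta w\ve$ with $w\in J$ (the path must return through $y$), the relation becomes $\ve\eta(1-cw')\ve=\tilde{\bf A}_{\bar{\ve}}$. Rescaling by the unit $(1-cw')^{-1}$ removes the summand $p$, as in the proof of Lemma \ref{gfV2}. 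If $f(\bar{\ve})\neq\eta^*$, then $\bar{\ve}\eta^*=\bar{\ve}g(\bar{\ve})$, and $p$ could survive only as a term $\Theta_3(\bar{\ve})$.
\end{enumerate}

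\emph{Main obstacle.} The hard step is ruling out this last configuration. Concretely, we must show that $p=\Theta_3(\bar{\ve})=\bar{\ve}g(\bar{\ve})\ve$ cannot occur modulo $J^4$. I would first try exploiting symmetry at $y$. The minimal relation $\rho^{(i)}_6$ reads $\eta\ve\eta=\eta({'A}_{\eta})$ with ${'A}_{\eta}=A'_{\bar{\ve}}$. Multiplying $\rho^{(i)}_5$ on the left by $\eta$ then gives
\[
\eta\ve\eta\ve=\eta\bar{\ve}\eta^*\ve+(\text{terms in }J^5).
\]
Here $\eta\ve\eta\ve$ spans the socle of $P_y$, by Lemma \ref{lemB} and the length-$4$ cycle. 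Comparing this with the exact sequence for $S_y$ should force $\eta\bar{\ve}\eta^*$ to lie in a $1$-dimensional socle-type position. I expect this to yield either a contradiction with $\eta\ve\eta\nprec\!$-type minimality (a non-minimal relation), or a wild subcategory of the type already used in Lemma \ref{lemB} (the $\widetilde{\widetilde{\bD}}_4$ configuration around $x$).

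Once summands of length $\leqslant 3$ are excluded, every remaining summand of ${\bf A}_{\bar{\ve}}$ lies in $J^4$, which proves ${\bf A}_{\bar{\ve}}\in J^4$.
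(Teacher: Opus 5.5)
Your set-up, the length-$2$ case and case (a) agree with the paper. There is a genuine gap, though. The sub-case of (b) with $f(\bar{\ve})\neq\eta^*$, which you leave open as the ``main obstacle'', is the only case with real content, and you never settle it.

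The missing idea is a short quiver argument, and it is all the paper's proof uses: a length-$3$ summand $\bar{\ve}\eta^*\ve$ forces $Q$ to be $B'_i$ glued with a block of type V$_1$ or II, i.e.\ $Q=Q^T$ or $Q^{T'}$. Here is how it goes when $z$ is a $2$-vertex.
\begin{enumerate}
\item[(i)] We have $\bar{\ve}f(\bar{\ve})\prec I$, so Lemma~\ref{lem:3.3} gives an arrow $t(f(\bar{\ve}))\to x$.
\item[(ii)] If $f(\bar{\ve})=\eta^*$, this arrow is a loop at $x$, which is impossible. So your first sub-case cannot occur at all. The step you wrote there is also not a rescaling: the unit sits between $\eta$ and $\ve$ and would have to be absorbed into an arrow.
\item[(iii)] Hence $f(\bar{\ve})=\omega$ is the other arrow starting at $z$. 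Since $\eta$ and $\eta^*$ are the only arrows ending at $x$, we get $t(\omega)\in\{y,z\}$.
\item[(iv)] The only arrow ending at $y$ is $\ve$, so $\omega$ is a loop at $z$. Now $x,y,z$ carry all their arrows, so $Q=Q^{T'}$, which is excluded.
\end{enumerate}

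The route you sketch instead cannot work as it stands. The configuration ${\bf A}_{\bar{\ve}}\equiv\Theta_3(\bar{\ve})$ is not contradictory in itself. Take the triangle quiver of Section~\ref{sec:HSA}, make only the loop $\sigma$ virtual and give the $g$-orbit $(\alpha\ \beta\ \nu\ \delta)$ weight $1$. The resulting weighted surface algebra has Gabriel quiver $Q^{T'}$, and it satisfies $\alpha\beta\alpha\in K^*\nu\delta\alpha$. The paper has to treat exactly this situation in Section~\ref{subsec:6.5}.

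So periodicity of $S_y$, socle considerations or wild subcategories alone cannot produce a contradiction. The hypothesis $Q\neq Q^T,Q^{T'}$, i.e.\ the global shape of $Q$, has to enter, and your plan never uses it. In addition, the claim that $\eta\ve\eta\ve$ spans $\soc(P_y)$ is not available at this point. It relies on the bases of the projective modules and on ${\bf A}_{\bar{\ve}}$ being a multiple of a single path in the second socle, and the paper obtains both only later, using this lemma.
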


\begin{proof} We abbreivate, $\eta=\eta_i$, $\ve=\ve_i$. It is straightforward to see that all paths $p\prec {\bf A}_{\ba}$ 
are of length $\geqslant 4$, except $p=\ba \eta^*\ve=\bar{\ve} \eta^*\ve$, which is then ${\bf A}_{\ba}\equiv\Theta_3(\ba)$. 
In the second case, $Q$ must be a glueing of a block $(\alpha \ \eta)$ of type $V_1$ and a block of type $V_1$ or II, 
and then $Q$ is either the triangle quiver $Q^T$ or the almost triangle quiver $Q^{T'}$, which are both 
excluded from our considerations at the moment. \end{proof} \medskip   

Now, the following lemma describes the radical quotient for vertices of type A. 

\begin{lemma}\label{lem:gen} If $i$ is a $2$-vertex of type A, $\alpha,\ba$ arrows starting at $i$, then 
the quotients $e_iJ^k/e_iJ^{k+1}$, $k\geqslant 1$, are spanned by the cosets of monomials along the 
$g$-cycles of $\alpha$ and $\ba$. In particular, we have $\dim_K e_iJ^k/e_iJ^{k+1}\leq 2$.  
\end{lemma}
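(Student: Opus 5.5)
We argue by induction on $k$, following the proof of the analogous statement for $2$-regular quivers in \cite{AGQT}. The claim is that $e_iJ^k/e_iJ^{k+1}$ is spanned by the cosets of $\Theta_k(\alpha)$ and $\Theta_k(\ba)$. For $k=1$ this holds because $\alpha,\ba$ are the only arrows starting at $i$. For $k=2$ it is Corollary \ref{cor:J2/J3}: the path $\alpha f(\alpha)$ equals ${\bf A}_{\ba}\in\ba\La f(\alpha)^*$. Since $t(\alpha)$ is a $2$-vertex, this element lies in $J^3$, because the only candidate path of length $2$ would begin with $\ba$ and end with $f(\alpha)^*$, and such a path would be $\Theta_2(\ba)$ only in degenerate configurations, which we handle separately. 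The same applies to $\ba f(\ba)$.

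For the inductive step, assume $e_iJ^k/e_iJ^{k+1}$ is spanned by $\Theta_k(\alpha)+J^{k+1}$ and $\Theta_k(\ba)+J^{k+1}$. Then $e_iJ^{k+1}/e_iJ^{k+2}$ is spanned by the cosets of $\Theta_k(\eta)\omega$, where $\eta\in\{\alpha,\ba\}$ and $\omega$ runs over the arrows starting at $t(\Theta_k(\eta))$. If that vertex is $1$-regular, only $\omega=g^k(\eta)$ exists, and we are done. Otherwise the other arrow is $\omega=f(g^{k-1}(\eta))$, and it suffices to show $\Theta_{k-1}(\eta)\,\mu f(\mu)\in J^{k+2}$ with $\mu=g^{k-1}(\eta)$. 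Here we use the minimal relations from Section \ref{subsec:5.1}. If $s(\mu)$ is a $2$-vertex and $f(\mu)$ is an arrow, then $\mu f(\mu)={\bf A}_{\bar\mu}\in\bar\mu\La f(\mu)^*$, so $\Theta_{k-1}(\eta)\mu f(\mu)=\Theta_{k-1}(\eta){\bf A}_{\bar\mu}$. If $s(\mu)$ is $1$-regular, then $\mu=\beta_j,\delta_j,\eta_j$ and $\mu f(\mu)$ does not occur in relations. However, the preceding arrow $g^{k-2}(\eta)$ is then $\alpha_j,\nu_j$ or $\ve_j$, and the three-step path $g^{k-2}(\eta)\mu f(\mu)$ equals ${\bf A}$ by Lemmas \ref{gfV2} and \ref{gfV1}.

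The estimate then has two parts. The first is the trivial length bound: ${\bf A}_{\bar\mu}$ begins with $\bar\mu$ and ends with $f(\mu)^*$, so every summand has length at least $2$ (respectively at least $4$ by Lemma \ref{gfV1}). This gives membership in $J^{k+2}$ except when the summand has the minimal possible length. The second part handles those extremal summands. The relevant ones are $\Theta_2(\bar\mu)$ in the $2$-vertex case, and in the V$_2$ case $\Theta_2$ or $\Theta_3$ from Lemma \ref{gfV2}. In these cases we rewrite using the preceding arrow $g^{-1}(\mu)$, exactly as in the second half of Lemma \ref{gfV2}. A product $g^{-1}(\mu)\bar\mu\cdots$ starts with $g^{-1}(\mu)f(g^{-1}(\mu))$, since $\bar\mu=f(g^{-1}(\mu))$. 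That is again a relation of the same kind, giving a gain of one more power of $J$; Lemma \ref{gfV2} already provides $g^{-1}(\alpha){\bf A}_{\ba}\in J^5$.

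The main obstacle is this last step: the ${\bf A}$'s need not lie deep in $J$, since their leading terms can be short $g$-paths when virtual-like configurations occur, so one cannot simply count lengths. I would handle it by a secondary induction that pushes the offending factor leftward along the $g$-cycle. Each time, either a length gain appears, or we land back on a $g$-monomial of $\eta$, which is allowed in the span. Termination would come from nilpotency of $J$. Finally, the bound $\dim_K e_iJ^k/e_iJ^{k+1}\le 2$ is immediate, since each quotient is spanned by two cosets.
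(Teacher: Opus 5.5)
Your skeleton matches the paper's. You induct on $k$, reduce to the ``$f$-branches'' $\Theta_k(\eta)f(g^{k-1}(\eta))$, handle a $1$-regular $s(g^{k-1}(\eta))$ by Lemmas \ref{gfV2}, \ref{gfV1} and the $J^5$ estimate, and rewrite $\mu f(\mu)\equiv c\,\bar\mu g(\bar\mu)$ using $\bar\mu=f(g^{-1}(\mu))$. However, the step you call the main obstacle is the heart of the proof, and your sketch of it does not work.

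\emph{The type C case is not degenerate, and the push lands on the wrong orbit.} The relation $\alpha f(\alpha)\equiv c\,\ba g(\ba)$ (mod $J^3$) occurs whenever $A_{\ba}$ has length $2$. In that case the strengthened statement you aim for, $\Theta_k(\eta)f(g^{k-1}(\eta))\in J^{k+2}$, is false at its natural base $k=1$, since $\alpha f(\alpha)\notin J^3$. So it cannot be propagated by pushing leftward along the cycle. Concretely, push $\Theta_2(\alpha)f(g(\alpha))$ back to $i$. You get $c'\,\alpha f(\alpha)g(f(\alpha))\equiv cc'\,\ba g(\ba)\,g(f(\alpha))$. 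For $t(\alpha)\neq t(\ba)$, the arrows $f(\alpha)$ and $g(\ba)$ are the two distinct arrows ending at one $2$-vertex, so $g(f(\alpha))=f(g(\ba))$. You therefore land on $\Theta_2(\ba)f(g(\ba))$, the $f$-branch of the \emph{other} $g$-orbit, not on a $g$-monomial.

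The missing idea is the paper's ordering of the two orbits. Choose $\ba$ with $\ba f(\ba)\in J^3$ and prove $\Theta_{k-1}(\ba)f(g^{k-2}(\ba))\in J^{k+1}$. For $\alpha$, prove only the weaker statement that $\Theta_{k-1}(\alpha)f(g^{k-2}(\alpha))$ lies in $J^{k+1}$ or is $\equiv\Theta_k(\ba)$ modulo $J^{k+1}$. Run both inductions simultaneously, so the $\ba$-estimate feeds the $\alpha$-step. This works because in the second alternative the next term is $\Theta_k(\ba)g(\bar\sigma_k)$, and either $g(\bar\sigma_k)=f(g^{k-1}(\ba))$ or $g(\bar\sigma_k)=g^{k}(\ba)$.

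\emph{The ordering fails when both relations at $i$ are of type C.} Suppose $\alpha f(\alpha)\equiv c_1\Theta_2(\ba)$ and $\ba f(\ba)\equiv c_2\Theta_2(\alpha)$. Then the cross-reduction returns $\lambda\,\Theta_2(\alpha)f(g(\alpha))$ at the same radical level. There is no length gain, so nilpotency of $J$ gives no termination, and for $\lambda=1$ it gives no information at all. The paper treats this case separately using the shape of the quiver. Either $Q$ is the tetrahedral quiver (which is $2$-regular and covered by \cite{AGQT}), or \cite[Lemma 7.1]{AGQT} applies and gives $\dim_K e_i\La=6$ with the required radical quotients. Your proposal does not address this case.
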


\begin{proof} By the assumption, vertices $j=t(\alpha),k=t(\ba)$ are $2$-regular, and hence, also $x, y$ are 
$2$-vertices (see Proposition \ref{prop:4.3}). We do not need to consider the case when there are two relations 
of type C starting from vertex $i$. Indeed, suppose there are two relations of type C starting from $i$ and 
ending at vertices $x, y$. In fact, $Q$ has the following subquiver  
$$\xymatrix@R=0.45cm{&& k \ar[rd]\ar@/_25pt/[lldd] &&&& \\ & i \ar[ru] \ar[rd] && \ar[ll] x\ar[rd] &&& \\ 
y\ar[ru]\ar@/_25pt/[rrrrrd] && \ar[ll] j\ar[ru] && \ar[ll] w \ar[rr] && \dots \\ &&&&& t \ar[r] \ar@/_25pt/[llluuu]& \dots \\ 
&&&& \vdots\ar[uu] & \vdots \ar[u]&}$$
If $w=t$ then this subquiver is biregular (equal the tetrahedral quiver) and is equal to $Q$, so we do not need to 
consider the case. Otherwise, $w \neq t$, and then $w, t$ cannot be 1-vertices, this follows from the shape of 
blocks containing $1$-vertices (Proposition \ref{prop:4.3}). Therefore they are $2$-vertices, and we can then 
apply  \cite[Lemma 7.1]{AGQT}; in particular $\dim e_i\La = 6$ and the radical quotients are spanned by required 
cosets. \smallskip 

We will now show the claim using induction on $k\geqslant 2$ (the case $k=1$ is obvious) in two cases: 
\begin{enumerate}
\item[a)] there is no type C relation starting from $i$;  
\item[b)] there is (exactly) one type C relation $\alpha f(\alpha) + \ba g(\ba)\in J^3$ starting from $i$. 
\end{enumerate} \medskip 

Therefore, we may choose one arrow, say $\ba$, such that $\ba f(\ba)\in J^3$ and the second arrow satisfies 
$\alpha f(\alpha)\in J^3$ or $\alpha f(\alpha)\equiv \ba  g(\ba)$ modulo $J^3$, in case a) or b), respectively.
We use notation $\Theta_k,\bar{\Theta}_k$ for the paths $\Theta_k(\alpha),\Theta_k(\ba)$ (of 
length $k$) along the $g$-orbits of $\alpha,\ba$. Moreover, let $\sigma_k=g^{k-1}(\alpha)$ denote the last 
arrow on $\Theta_k$ and $\rho_k=g^{k-1}(\ba)$ the last arrow on $\bar{\Theta}_k$. \medskip

Now, we claim that for all $k\geqslant 2$ the following condition holds 
$$\bar{\Theta}_{k-1}f(\rho_{k-1})\in J^{k+1}\mbox{ and }\Theta_{k-1}f(\sigma_{k-1})\in J^{k+1} 
\mbox{ or } \equiv \bar{\Theta}_{k} 
\mbox{ (modulo $J^{k+1}$) } ,\leqno{(*)}$$ 
if the paths are defined. It will also imply that $e_iJ^k=\langle \Theta_k,\bar{\Theta}_k \rangle + e_iJ^{k+1}$, 
for any $k\geqslant 2$, so then the claim follows. \smallskip 

If $k=2$, we have $\Theta_{k-1}f(\sigma_{k-1})=\alpha f(\alpha)$ and $\bar{\Theta}_{k-1} f(\rho_{k-1})=\ba f(\ba)$, 
so the condition $(*)$ holds, by the assumption. Note only that all paths in $e_iJ^k$ are 
$\Theta_{k},\bar{\Theta}_k$ and possibly $\Theta_{k-1}f(\sigma_{k-1})$ or $\bar{\Theta}_{k-1}f(\rho_{k-1})$. 
By $(*)$, the last two paths modulo $J^{k+1}$ are $0$ or $\bar{\Theta}_k$, so $e_iJ^k/e_iJ^{k+1}$ is spanned 
by cosets of $\Theta_k,\bar{\Theta}_k$. In case $k=2$, the arrows $f(\alpha)$ and $f(\ba)$ are defined, by 
the assumption. 
\medskip 

Suppose $(*)$ holds for some $k\geqslant 2$ and denote by $\rho,\sigma$ the arrows $\rho_k,\sigma_k$. If 
$f(\rho)$ is defined, then we have two cases. \smallskip 

First, suppose $\rho f(\rho)\nprec I$, or equivalently, 
$s(\rho)$ is a $1$-vertex. Then the arrow $\theta=\rho_{k-1}$ is $\alpha_i,\nu_i$ or $\ve_i$, and hence 
$$\bar{\Theta}_k f(\rho)=\bar{\Theta}_{k-2}\theta g(\theta) f(g(\theta))=\Theta_{k-2}{\bf A}_{\bar{\theta}}.$$ 
By the assumptions, we must have $k\geqslant 3$, since otherwise, $\ba=\rho_1$ is an arrow with a $1$-regular 
target $t(\rho_1)=s(\rho_2)$, which contradicts the assumption on $i$. Therefore, we conclude that 
$\bar{\Theta}_{k-2}$ has at least one arrow, so the path $\bar{\Theta}_{k-2}{\bf A}_{\bar{\theta}}$ contains 
a subpath $g^{-1}(\rho){\bf A}_{\bar{\theta}}$, which is in $J^5$, by Lemmas \ref{gfV2} and 
\ref{gfV1}. As a result, we get $\bar{\Theta}_k f(\rho_k)\in J^{k+2}$. \smallskip 

Next, let $\rho f(\rho)\prec I$, so that $\rho f(\rho)\in J^3$ or we have a type C relation 
$\rho f(\rho)+\bar{\rho} g(\bar{\rho})\in J^3$. In the first case, it is clear that 
$\bar{\Theta}_{k}f(\rho)=\bar{\Theta}_{k-1}\rho f(\rho)\in J^{k+2}$. In the second, we obtain that 
$$\bar{\Theta}_{k}f(\rho)\equiv \bar{\Theta}_{k-1}\bar{\rho} g(\bar{\rho})=
\bar{\Theta}_{k-1}f(\rho_{k-1})g(\bar{\rho})\in J^{k+2},$$ 
because $\bar{\Theta}_{k-1}f(\rho_{k-1})$ is defined and belongs to $J^{k+1}$, by the inductive assumption. \medskip 

Now, assume that $f(\sigma)$ is defined. As above, in case $\sigma f(\sigma)\nprec I$, we have 
$\Theta_k f(\sigma)=\Theta_{k-2}{\bf A}_{\bar{\theta}}$, for $\theta=\sigma_{k-1}$ being $\alpha_i,\nu_i$ 
or $\ve_i$. In this case, we have $k\geqslant 3$, by the assumption on $i$, so $\Theta_k f(\sigma)\in J^{k+2}$, 
due to Lemmas \ref{gfV2}, \ref{gfV1}. \smallskip 

If the path $\sigma f(\sigma)$ is involved in a minimal relation, then it is either in $J^3$, or there is a 
type C relation $\sigma f(\sigma)+\bar{\sigma} g(\bar{\sigma})\in J^3$. Similarly as for $\bar{\Theta}_{k}f(\rho)$, 
we obtain $\Theta_k f(\sigma)=\Theta_{k-1} \sigma f(\sigma)\in J^{k+2}$, in the first case, and  
$$\Theta_k f(\sigma_k)=\Theta_{k-1}\equiv \Theta_{k-1}\bar{\sigma} g(\bar{\sigma})=
\Theta_{k-1}f(\sigma_{k-1})g(\bar{\sigma})$$ 
modulo $J^{k+2}$, in the second. For the latter, we have $\Theta_{k-1}f(\sigma_{k-1})\in J^{k+1}$ or it is 
$\equiv\bar{\Theta}_k$ modulo $J^{k+1}$, by the inductive assumption. Consequently, again $\Theta_k f(\sigma)\in J^{k+2}$, 
in the first case, but in the second, we get $\Theta_k f(\sigma)\equiv \bar{\Theta}_k g(\bar{\sigma})$. Finally, 
the last path belongs to $J^{k+2}$, if $g(\bar{\sigma})=f(\rho_k)$ (is defined), by the first part, and otherwise, 
it is $\bar{\Theta}_{k+1}$. This shows that $(*)$ holds for $k+1$, and consequently, for all $k\geqslant 2$. \end{proof} \medskip

As a result, for any $2$-regular vertex $i\in Q_0$ with $i^+=\{\alpha,\ba\}$, we have two cycles around $g$-orbits 
$C_\alpha=\alpha g(\alpha) \cdots g^{n_\alpha-1}(\alpha)$ and $C_{\ba}=\ba g(\ba) \cdots g^{n_{\ba}-1}(\ba)$, where 
$n_\alpha,n_{\ba}$ are the lengths of $g$-orbits of $\alpha$ and $\ba$, respectively. Because $\La$ is a symmetric algebra, 
there are integers $m_\alpha,m_{\ba}\geqslant 1$ such that $C_\alpha^{m_\alpha+1}=0$ and $C_{\ba}^{m_{\ba}+1}=0$, 
and then the cycles $B_\alpha:=C_{\alpha}^{m_\alpha}$ and $B_{\ba}:=C_{\ba}^{m_{\ba}}$ generate the socle $\soc(P_i)$ 
of $P_i$. 

\begin{cor} If $i$ is a $2$-vertex of type A with $i^+=\{\alpha,\ba\}$, then $e_i\La$ is generated by the 
set $$\cB_i:=\{\Theta_k(\alpha); \ 0\leq k \leq m_\alpha n_\alpha\}\cup\{\Theta_k(\ba); \ 0< k < m_{\ba} n_{\ba}\}.$$ 
In particular, we have $\dim_K e_i\La\leqslant n_\alpha m_\alpha + n_{\ba}m_{\ba}$. 
\end{cor}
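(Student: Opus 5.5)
The corollary follows from Lemma \ref{lem:gen} by a standard filtration argument, combined with the socle information just recorded. Since $J$ is nilpotent, $e_i\La$ has the finite filtration $e_i\La\supseteq e_iJ\supseteq e_iJ^2\supseteq\cdots\supseteq e_iJ^N=0$. A subset of $e_i\La$ spans $e_i\La$ as soon as, for each $k\geqslant 0$, its elements lying in $e_iJ^k$ span $e_iJ^k/e_iJ^{k+1}$. This reduction is a routine descending induction on $k$, starting from $e_iJ^N=0$. We will apply it to the set of $g$-monomials $\Theta_k(\alpha),\Theta_k(\ba)$, $k\geqslant 0$, where $\Theta_0=e_i$. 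Lemma \ref{lem:gen} says exactly that $e_iJ^k/e_iJ^{k+1}$ is spanned by the cosets of $\Theta_k(\alpha)$ and $\Theta_k(\ba)$ for $k\geqslant 1$, and the case $k=0$ is trivial. So all monomials $\Theta_k(\alpha),\Theta_k(\ba)$ together generate $e_i\La$ as a vector space.

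It remains to cut down this set to the finite set $\cB_i$. Write $\Theta_k(\alpha)=C_\alpha^{q}\Theta_r(\alpha)$ with $k=qn_\alpha+r$, $0\leqslant r<n_\alpha$. Since $C_\alpha^{m_\alpha+1}=0$, we get $\Theta_k(\alpha)=0$ once $k\geqslant (m_\alpha+1)n_\alpha$.

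For $m_\alpha n_\alpha<k<(m_\alpha+1)n_\alpha$, we have $\Theta_k(\alpha)=B_\alpha\Theta_r(\alpha)$ with $r\geqslant 1$. Here $B_\alpha$ lies in $\soc(P_i)$, so it is annihilated by $J$, and $\Theta_r(\alpha)\in J$. Hence $\Theta_k(\alpha)=0$. The same argument applies to $\ba$, which leaves only $0\leqslant k\leqslant m_\alpha n_\alpha$ and $0\leqslant k\leqslant m_{\ba}n_{\ba}$.

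Finally, we remove the duplicates. The index $k=0$ for $\ba$ gives $e_i$, which is already $\Theta_0(\alpha)$, so it can be dropped. The top element $\Theta_{m_{\ba}n_{\ba}}(\ba)=B_{\ba}$ is dropped as follows. Since $\La$ is symmetric and $P_i$ is indecomposable, $\soc(P_i)\cong S_i$ is one-dimensional. Both $B_\alpha$ and $B_{\ba}$ are nonzero elements of it (nonzero by the minimality of $m_\alpha,m_{\ba}$). Therefore $B_{\ba}$ is a scalar multiple of $B_\alpha=\Theta_{m_\alpha n_\alpha}(\alpha)$ and is redundant. This yields the spanning set $\cB_i$ as stated. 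Its cardinality is $(m_\alpha n_\alpha+1)+(m_{\ba}n_{\ba}-1)=n_\alpha m_\alpha+n_{\ba}m_{\ba}$, which gives the dimension bound.

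The main subtlety lies in the truncation step. One has to make sure that $B_\alpha$ really spans the socle and is killed by $J$; here we use that $B_\alpha$ is a socle element, as asserted just before the statement. One also has to ensure that the monomial $\Theta_k(\alpha)$ factors as $C_\alpha^q\Theta_r(\alpha)$. The factorization holds because $g^{n_\alpha}(\alpha)=\alpha$, so the path continues along the same $g$-cycle. Everything else reduces to Lemma \ref{lem:gen}.
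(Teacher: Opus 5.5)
Your argument is correct and is essentially what the paper intends. The paper gives no separate proof of this corollary: it is stated as an immediate consequence of Lemma \ref{lem:gen}, which says the radical layers $e_iJ^k/e_iJ^{k+1}$ are spanned by $\Theta_k(\alpha)$ and $\Theta_k(\ba)$, together with the preceding remark that $B_\alpha=C_\alpha^{m_\alpha}$ and $B_{\ba}=C_{\ba}^{m_{\ba}}$ span the one-dimensional $\soc(P_i)$. From these two facts you correctly obtain the truncation and the removal of the redundant $e_i$ and $B_{\ba}$. Like the paper, you take the socle property of $B_\alpha$ (in particular $B_\alpha\ba=0$) as given from the symmetry of $\La$ rather than proving it.
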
 \medskip 

The last step is to extend the above to all $2$-vertices. 

\begin{lemma}\label{lem:2vertex} If $i$ is a $2$-vertex, then the set $\cB_i$ from the above corollary generates $e_i\La$. 
\end{lemma}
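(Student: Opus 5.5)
The plan is to reduce Lemma \ref{lem:2vertex} to the same statement as in Lemma \ref{lem:gen}. Namely, for every $2$-vertex $i$ with $i^+=\{\alpha,\ba\}$ and every $k\geqslant 1$, we show that $e_iJ^k/e_iJ^{k+1}$ is spanned by the cosets of $\Theta_k(\alpha)$ and $\Theta_k(\ba)$. The generation of $e_i\La$ by $\cB_i$ then follows exactly as in the corollary: each $e_iJ^k$ equals $\langle\Theta_k(\alpha),\Theta_k(\ba)\rangle+e_iJ^{k+1}$, the algebra is finite dimensional, and $\Theta_k(\alpha)$ becomes a multiple of the socle element $B_\alpha$ at $k=m_\alpha n_\alpha$ and vanishes beyond it (similarly for $\ba$). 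Since type A is already done, it remains to treat $2$-vertices of type B and C. For these, at least one of $j=t(\alpha)$, $k=t(\ba)$ is a $1$-vertex lying in a block of type V$_1$ or V$_2$ (Theorem \ref{prop:4.3}). The corresponding arrow is then $\alpha_s$, $\nu_s$ or $\ve_s$, and $f(\alpha)$ is the path of length $2$ introduced above.

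I would rerun the induction from the proof of Lemma \ref{lem:gen} with the modified statement $(*)$. For each $k$, the paths of length $k+1$ in $e_iJ^{k+1}$ other than $\Theta_{k+1}$ and $\bar\Theta_{k+1}$ are $\Theta_k f(\sigma_k)$ and $\bar\Theta_k f(\rho_k)$, where $f(\sigma_k)$ is the other arrow out of $t(\sigma_k)$ when $t(\sigma_k)$ is a $2$-vertex. When $t(\sigma_k)$ is a $1$-vertex there is no extra path, because only one arrow leaves it. The inductive step proceeds as before. If $s(\sigma_k)$ is a $1$-vertex, then $\Theta_k f(\sigma_k)=\Theta_{k-2}{\bf A}_{\bar\theta}$ with $\theta=\sigma_{k-1}\in\{\alpha_s,\nu_s,\ve_s\}$. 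If $k\geqslant 3$, Lemmas \ref{gfV2} and \ref{gfV1} give that this lies in $J^{k+2}$; in the case ${\bf A}_{\bar\theta}\equiv\Theta_2$ or $\Theta_3$, we use $g^{-1}(\theta){\bf A}_{\bar\theta}\in J^5$. Otherwise $\sigma_kf(\sigma_k)$ is a minimal relation, and it is either in $J^3$ or of type C, which is handled as before.

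The genuinely new steps are the base cases $k=2,3$, where the path starts at $i$ itself with an arrow into a $1$-vertex. If $\alpha=\alpha_s$ (or $\nu_s$, $\ve_s$), then $e_iJ^2$ is spanned modulo $J^3$ by $\alpha g(\alpha)$, $\ba g(\ba)$ and possibly $\ba f(\ba)$. By the normalization in Section \ref{subsec:5.1}, $\ba f(\ba)={\bf A}_\alpha\in\alpha\La$, and all paths with this source and target starting with $\alpha$ have length $\geqslant 3$ (the block structure of Proposition \ref{prop:4.2} and the Remark after Corollary \ref{cor:J2/J3}). Hence $\ba f(\ba)\in J^3$.

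For $k=3$ the extra path is $\alpha g(\alpha)f(g(\alpha))={\bf A}_{\ba}$, which by Lemma \ref{gfV2} or \ref{gfV1} is either in $J^4$ or congruent to $\Theta_2(\ba)$ or $\Theta_3(\ba)$. If it is congruent to $\Theta_2(\ba)\in J^2$, it is forced into $J^3$ and hence equals $\bar\Theta_3$ modulo $J^4$ only after further reduction. This critical-block phenomenon is described in the Remark. I would handle it by allowing the congruence "$\equiv\bar\Theta_k$" in $(*)$ to be replaced by "$\equiv$ a scalar multiple of $\bar\Theta_{k'}$" for the appropriate length. For type C vertices, both arrows go to $1$-vertices, and the same analysis applies symmetrically to $\alpha$ and $\ba$.

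The main obstacle I expect is exactly this critical-block/type-C base case. There, $\Theta_3(\alpha)$ becomes congruent to $\Theta_2(\ba)$, so the filtration degrees shift and $\dim e_iJ^2/e_iJ^3=1$. To resolve it, I would check directly that $\ba g(\ba)\in J^3$, using the relation $\rho_1$ together with the commutativity relation $\nu\delta=\gamma\tau$ from Lemma \ref{lemAcrit1}. After that, the induction on $k\geqslant 3$ goes through unchanged, since all later paths pass through a $g^{-1}(\theta){\bf A}_{\bar\theta}$ subpath or a type C rotation. In every case the cosets of $\Theta_k(\alpha)$ and $\Theta_k(\ba)$ span each radical layer, and $\cB_i$ generates $e_i\La$.
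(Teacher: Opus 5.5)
Your overall plan is the paper's: reduce to vertices of types B and C, rerun the layer-by-layer induction of Lemma \ref{lem:gen}, and dispose of paths through $1$-vertices with Lemmas \ref{gfV2} and \ref{gfV1}. However, the type B base case contains a false step. You assert $\ba f(\ba)=\mathbf{A}_\alpha\in J^3$ whenever $\alpha$ is the arrow into the $1$-vertex, on the grounds that the relevant paths starting with $\alpha$ have length $\geqslant 3$. This fails at the vertex $b_s$ of a critical block. There $\alpha=\nu_s$, $\ba=\overline{\nu_s}$ and $f(\ba)=\delta_s^*$, and Lemma \ref{lemAcrit1} gives $\ba f(\ba)=\nu_s\delta_s=\Theta_2\notin J^3$, so $\mathbf{A}_\alpha$ is a path of length $2$. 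With your labelling, the inductive statement $(*)$ already fails at $k=2$, since $\bar\Theta_1 f(\rho_1)=\ba f(\ba)\notin J^3$. This is exactly the clause that the type C rotations on the $\bar\Theta$-side rely on in later steps. The paper treats this as a separate Case b), running the induction with the roles of $\alpha$ and $\ba$ interchanged. Note that the relation of Lemma \ref{lemAcrit1}, which you invoke at the other vertex of the block, is precisely what produces this case.

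Second, take the vertex $a_s$ of a critical block with $\mathbf{A}_{\overline{\alpha_s}}\equiv c\,\Theta_2(\overline{\alpha_s})$ modulo $J^4$. There your concluding claim, that every layer $e_iJ^k/e_iJ^{k+1}$ is spanned by the cosets of $\Theta_k$ and $\bar\Theta_k$, is false. Indeed $\bar\Theta_2\in J^3$ and $\bar\Theta_3\in J^4$, and $e_iJ^3/e_iJ^4$ is spanned by $\Theta_3$ and $\bar\Theta_2$ (the Remark after Corollary \ref{cor:J2/J3}). What holds is $e_iJ^k=\langle\Theta_k,\bar\Theta_{k-1}\rangle+e_iJ^{k+1}$ for $k\geqslant 3$. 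Propagating this requires $\bar\Theta_{k-1}f(\rho_{k-1})\in J^{k+2}$, one degree more than the type A induction gives, so the induction does not go through ``unchanged''. The paper obtains the extra degree from the local structure of the block:
\begin{itemize}
\item the $g$-orbit of $\overline{\alpha_s}$ is $(\overline{\alpha_s}\ \nu_s\ \delta_s)$;
\item $\ba f(\ba)=\mathbf{A}_\alpha\in J^4$, because every path $\alpha\cdots f(\ba)^*$ has length $\geqslant 4$;
\item the rewriting $\nu_s\delta_s\alpha_s=\mathbf{A}_{\overline{\nu_s}}\in\overline{\nu_s}J^2$ creates a subpath $\overline{\alpha_s}\,\overline{\nu_s}=\ba f(\ba)$.
\end{itemize}
Checking $\ba g(\ba)\in J^3$, as you propose, is immediate from $\rho^{(s)}_1$ and is not where the difficulty lies. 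This shift is also a type B phenomenon, not type C. Moreover, it is $\alpha g(\alpha)f(g(\alpha))$, not $\Theta_3(\alpha)$, that becomes congruent to $\Theta_2(\ba)$. For type C the paper uses $Q\neq Q^S$ to get $\mathbf{A}_\alpha,\mathbf{A}_{\ba}\in J^4$, after which the unshifted induction works.
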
 

\begin{proof} Clearly, it is sufficient to prove the claim for a $2$-vertex of type 
B or C. So assume that $i$ has at least one $1$-regular successor, say $j$, and $j,x$ are contained 
in a block of type V$_1$ or V$_2$. Then $\alpha g(\alpha)=\Theta_2$ is the unique path of length $2$ starting 
from $\alpha$, and we have paths starting from $\ba$: $\ba g(\ba)=\bar{\Theta}_2$ and possibly $\ba f(\ba)$, 
if $f(\ba)$ exists, equivalently, the second successor $k=t(\ba)$ is a $2$-vertex (we use the same 
notations as in the proof of Lemma \ref{lem:gen}). Note that $\ba f(\ba)\prec I$, if defined, because 
$i=s(\ba)$ is a $2$-vertex. In this case, $\ba f(\ba)={\bf A}_{\alpha}\in J^3$ or $\ba f(\ba)\equiv 
\alpha g(\alpha)=\Theta_2$. In both cases, the cosets of $\Theta_2,\bar{\Theta}_2$ span $e_i J^2/e_i J^3$, 
but we may have $\bar{\Theta}_2\in J^3$, so $\bar{\Theta}_2$ is zero in the quotient. This happens for 
example if ${\bf A}_{\ba}$ has a summand of length $2$, and then ${\bf A}_{\ba}\equiv \bar{\Theta}_2$, 
so $\bar{\Theta}_2\equiv \alpha g(\alpha) f(g(\alpha))\in J^3$. \medskip 

Suppose first that $t(\ba)$ is a $2$-vertex i.e. $i$ is of type B. In this case $\ba f(\ba)\prec I$, so 
$\ba f(\ba)={\bf A}_{\alpha}$ is either in $J^3$ or it is $\Theta_2$ (modulo $J^3$). 

{\it Case a)} Let $\ba f(\ba)\in J^3$. Then the cosets $\Theta_2$ and $\bar{\Theta}_2$ form a basis of 
the quotient $e_iJ^2/e_iJ^3$. Moreover, one can show using induction that 
$$\bar{\Theta}_{k-1}f(\rho_{k-1})\in J^{k+1},\leqno{(*)}$$ 
if the path is defined, for any $k\geqslant 2$. It is clear for $k=2$, and for $k=3$, we have 
$\bar{\Theta}_2f(\rho_2)=\ba g(\ba) f(g(\ba))\in J^4$, because $g(\ba)f(g(\ba))=\rho_2 f(\rho_2)\prec I$ 
($t(\ba)=s(g(\ba))$ is not a $1$-vertex), hence $\rho_2 f(\rho_2)\in J^3$ or $\rho_2 f(\rho_2)\equiv 
\bar{\rho}_2 g(\bar{\rho}_2)$ (modulo $J^3$), and then $\ba g(\ba) f(g(\ba))\equiv \ba f(\ba) g(\bar{\rho}_2)\in J^4$, 
because $\ba f(\ba)\in J^3$, by the assumption. Now, for $k\geqslant 3$, we may have $\rho_k f(\rho_k)\nprec I$, but  
then $\bar{\Theta}_kf(\rho_k)$ contains a subpath $g^{-1}(\eta)\eta g(\eta) f(g(\eta))$ of length $4$, for some 
$\eta=\alpha_i,\nu_i$ or $\ve_i$, which belongs to $J^5$, by Lemma \ref{gfV2} or \ref{gfV1}. Therefore, we 
get $\bar{\Theta}_k f(\rho_k)\in J^{k+2}$, in case $\rho_k f(\rho_k)\nprec I$. Otherwise, either $\rho_k f(\rho_k)\in J^3$, 
and then $\bar{\Theta}_kf(\rho_k)\in J^{k+2}$, or we have a type C relation $\rho_k f(\rho_k)\equiv \bar{\rho_k} g(\bar{\rho_k})$, 
modulo $J^3$ (with $\bar{\rho_k}=f(\rho_{k-1})$), which also forces $\bar{\Theta}_kf(\rho_k)\equiv 
\bar{\Theta}_{k-1} f(\rho_{k-1}) g(\bar{\rho_k})\in J^{k+2}$, since $\bar{\Theta}_{k-1} f(\rho_{k-1})\in J^{k+1}$, 
by the inductive assumption. \smallskip 

In the next step, we show that for any $k\geqslant 3$, we have 
$\Theta_{k-1} f(\sigma_{k-1})\in J^{k+1}$ or it is $\bar{\Theta}_k$ or $\bar{\Theta}_{k-1}$ 
(modulo $J^{k+1}$). Indeed, the case $k=3$ follows from Lemma \ref{gfV2} or \ref{gfV1}. If $k\geqslant 3$, 
then as above, we can assume $\sigma_k f(\sigma_k)\prec I$, and then either $\sigma_k f(\sigma_k)\in J^3$ 
or $\sigma_kf(\sigma_k)\equiv \bar{\sigma_k} g(\bar{\sigma_k})=f(\sigma_{k-1}) g(\bar{\sigma}_k)$ modulo $J^3$. In the 
first case, we get $\Theta_k f(\sigma_k)\in J^{k+2}$, whereas in the second, we obtain that  
$$\Theta_k f(\sigma_k)=\Theta_{k-1}f(\sigma_{k-1}) g(\bar{\sigma}_k).$$ 
By the inductive assumption, the path $\Theta_{k-1}f(\sigma_{k-1})$ is in $J^{k+1}$ or it is $\bar{\Theta}_{k}$ or 
$\bar{\Theta}_{k-1}$ modulo $J^{k+1}$. Obviously, $\Theta_k f(\sigma_k)\in J^{k+2}$, in the first case. In the second, 
we get $\Theta_k f(\sigma_k)\equiv \bar{\Theta}_k g(\bar{\sigma}_k)$, which is in $J^{k+2}$, by $(*)$, or 
it is $\bar{\Theta}_{k+1}$. In the third case, we have $\Theta_k f(\sigma_k)\equiv\bar{\Theta}_{k-1}g(\bar{\sigma}_k)$, 
and this may happen only if $\Theta_2 f(\sigma_2)\equiv\bar{\Theta}_2$ (then $\alpha$ lies in a critical block). 
But then $\ba$ is an arrow $i\to t( g(\alpha))$ 
lying in a triangle with $1$-vertex (forming a $g$-orbit of length $3$) and $\bar{\Theta}_2\in J^3$, so $\bar{\Theta}_{k-1} 
\in J^{k}$, and either $\Theta_k f(\sigma_k)$ must be $\bar{\Theta}_k$ or it is 
$\bar{\Theta}_{k-1} f(\rho_{k-1})$. It is sufficient to see that the last path is in $J^{k+2}$. To see this, 
note that $\bar{\Theta}_{k-1}$ is a path along the $g$-orbit of $\ba$, consisting of arrows $\ba,g(\ba)=\nu_l$ 
and $g^2(\ba)=\delta_l=\gamma$ (if $\alpha=\alpha_l$). The arrow $\rho_{k-1}$ cannot be $\nu_k$, since then $f(\rho_{k-1})$ 
is not defined, hence we have to consider two cases: $\rho_{k-1}=\ba$ or $\rho_{k-1}=\delta_k$. In the first case, 
we have $\bar{\Theta}_{k-1} f(\rho_{k-1})=\bar{\Theta}_{k-2}\ba f(\ba)\in J^{k+2}$, because $\ba f(\ba)\in J^4$. 
The latter comes from the fact that $\ba f(\ba)={\bf A}_\alpha$, and paths starting with $\alpha$ and ending with 
$f(\alpha)^*$ have length at least $4$. Now, if $\rho_{k-1}=\delta_l$, then $k\geqslant 4$
$$\bar{\Theta}_{k-1} f(\rho_{k-1})=\bar{\Theta}_{k-3} \nu_l\delta_l\alpha_l=\bar{\Theta}_{k-3}{\bf A}_{\bar{\nu}_l},$$ 
due to relations in $\La$ (see \ref{summary}), and $\Theta_{k-3}$ ends with $\ba$. Moreover, we get 
${\bf A}_{\bar{\nu}_l}\in J^3$, because paths starting with $\bar{\nu}_l$ and ending with $\alpha_l=\alpha$ are 
of length $\geqslant 3$. Consequently, $\bar{\Theta}_{k-1} f(\rho_{k-1})$ is a combination of paths of length 
$\geqslant k$ containing a subpath of the form $\rho_{k-3}\bar{\nu}_l=\ba f(\ba)$, which is in $J^4$, so  
the claim follows. \smallskip 

Summing up this case, it has been shown that either $e_iJ^k=\langle\Theta_k,\bar{\Theta}_{k}\rangle+e_iJ^{k+1}$, 
for any $k\geqslant 3$, or $e_iJ^k=\langle\Theta_k,\bar{\Theta}_{k-1}\rangle+e_iJ^{k+1}$. \medskip 

{\it Case b)} Suppose $\ba f(\ba)={\bf A}_\alpha\equiv\Theta_2$ (modulo $J^3$). Then $\alpha=\nu_i$ lies in a critical 
block. In this case, we prove by induction that  
$$\Theta_{k-1}f(\sigma_{k-1})\in J^{k+1} \mbox{ and } \bar{\Theta}_{k-1}f(\rho_k)\in J^{k+1}\mbox{ or }\equiv\Theta_k,$$ 
for all $k\geqslant 2$. For $k=2$, the first path is not defined, because $t(\alpha)=j$ is a $1$-vertex, and the 
second satisfies the condition, by the assumption. Now, for $k=3$, we have $\Theta_2 f(\sigma_2)=\alpha g(\alpha) f(g(\alpha))$ 
and this path belongs to $J^4$, by Lemma \ref{gfV2} (because the shortest path along $g$-cycle starting with $\ba$ 
and ending with $f(g(\alpha))$ has length $\geqslant 4$). Similarly, $\bar{\Theta}_2f(\rho_2)\in J^4$, if 
$\rho_2 f(\rho_2)\in J^3$, and otherwise, we have $\rho_2 f(\rho_2)\prec I$ (since $t(\ba)=s(\rho_2)$ is not $1$-regular), 
so $\rho_2 f(\rho_2)\equiv \bar{\rho_2} g(\bar{\rho_2})$, and hence 
$\bar{\Theta}_2 f(\rho_2)=\ba f(\ba) g(f(\ba))\equiv \Theta_2 g(f(\ba))$ which is in $J^4$ or it is $\Theta_3$, modulo 
$J^4$. We may repeat similar induction for $k\geqslant 3$ as in Case a) (but $\sigma$ and $\rho$ are interchanged).   

\bigskip  

Finally, it remains to consider the case, when $i$ is a vertex of type C, that is, $k=t(\ba)$ is also 
a $1$-vertex. Without loss of generality, we assume both $\alpha,\ba$ are in blocks of type $V_2$. Of course,  
$f(\alpha),f(\ba)$ are not defined, hence $\Theta_2=\alpha g(\alpha)$ and $\bar{\Theta}_2=\ba g(\ba)$ form a 
basis of $e_iJ^2$ modulo $J^3$. Since $Q$ is not the spherical quiver $Q^S$ (nor the triangle quiver), we infer that 
the shortest path in $Q$ starting with $\alpha$ and ending with $f(g(\ba))$ has length $\geqslant 4$. Dually, every 
path in $Q$ of the form $\ba\cdots f(g(\alpha))$ has length $\geqslant 4$, and therefore, we have 
${\bf A}_{\alpha},{\bf A}_{\ba}\in J^4$ 
(see also Section \ref{summary}). This shows that $\Theta_2 f(\sigma_2)=\alpha g(\alpha) f(g(\alpha))={\bf A}_{\ba}\in J^4$ 
and $\bar{\Theta}_2 f(\rho_2)={\bf A}_\alpha\in J^4$. Following similar induction as in previous cases (using Lemma 
\ref{gfV2}), we can show that $\Theta_{k-1}f(\sigma_{k-1})\in J^{k+1}$ and $\bar{\Theta}_{k-1}f(\rho_{k-1})\in J^{k+1}$, 
for all $k\geqslant 3$. The same can be proved in case one of the blocks containing 
$\alpha$ or $\ba$ is of type $V_1$, but we apply Lemma \ref{gfV1} and the assumption that $Q$ is different from 
the triangle quiver $Q^T$. It follows that every quotient $e_iJ^k/e_iJ^{k+1}$ is generated by the cosets of 
$\Theta_k$ and $\bar{\Theta}_k$, and hence, $\cB_i$ generates the projective $P_i=e_i\La$. The proof is now 
complete. \end{proof}

\subsection{Bases.}\label{subsec:5.3} Now, we are ready to give the bases of projective modules. First, we prove the 
following result providing the $2$-regular case.  

\begin{lemma}\label{basis2reg} If $i$ is a $2$-vertex with $i^+=\{\alpha,\ba\}$, then the set of all initial 
submonomials of $B_\alpha$ and $B_{\ba}$ is a basis of $e_i\La$. In particular, we have 
$\dim_K e_i\La=n_\alpha m_\alpha + n_{\ba}m_{\ba}$. \end{lemma}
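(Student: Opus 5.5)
By Lemma \ref{lem:2vertex}, $\cB_i$ spans $e_i\La$, and each $\Theta_k(\alpha)$, $\Theta_k(\ba)$ is an initial submonomial of $B_\alpha$ or $B_{\ba}$. So the upper bound $\dim_K e_i\La\leqslant n_\alpha m_\alpha+n_{\ba}m_{\ba}$ already holds. It remains to prove linear independence, and the plan is a socle/duality argument using the symmetric form.

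First I will pin down the top of $P_i$. By the choice of $m_\alpha,m_{\ba}$, the cycles $B_\alpha$ and $B_{\ba}$ are nonzero and annihilated by $J$. Since $\La$ is symmetric, $\soc(e_i\La)=\soc(P_i)\cong S_i$ is one-dimensional, so $B_{\ba}=c\,B_\alpha$ for some $c\in K^*$. I then intend to show that $e_i\La$ has Loewy length $\max(n_\alpha m_\alpha,n_{\ba}m_{\ba})+1$, and that every path of length exceeding the length of $B_\alpha$ that starts with $\alpha$ is zero. For this I will use the spanning statement of Lemma \ref{lem:2vertex} inside $t(\alpha)$. Any such path is, modulo longer paths, a combination of longer $g$-monomials, and these eventually pass $C_\alpha^{m_\alpha+1}=0$. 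The same holds for $\ba$.

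Next comes the independence argument. Suppose $\sum_u\lambda_u u=0$ is a nontrivial combination of elements of $\cB_i$ (the trivial path $e_i$ included). Let $u_0$ be a term of minimal length, say $u_0=\Theta_k(\alpha)$ with $k<m_\alpha n_\alpha$. Let $q$ be the complementary path, so that $u_0q=B_\alpha$. Multiplying the relation by $q$ on the right gives $\lambda_{u_0}B_\alpha+\sum_{u\neq u_0}\lambda_u uq=0$.

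I then need two vanishing statements for the other terms:
\begin{enumerate}
\item[(a)] For $u=\Theta_l(\alpha)$ with $l>k$, the product $uq$ is a path of length greater than $m_\alpha n_\alpha$ starting with $\alpha$, so it vanishes by the previous paragraph.
\item[(b)] For $u=\Theta_l(\ba)$ with $l\geqslant k$, the product $uq$ is a path starting with $\ba$ of length $l+(m_\alpha n_\alpha-k)$ ending with the last arrow of $C_\alpha$. I want to show it lies in $J\cdot\soc$ or is zero. Where it is not, I will handle it by pairing with the symmetric form $(-,-)$: with $\phi(B_\alpha)=1$, the Gram matrix of $\cB_i$ against the complements should be triangular with nonzero diagonal once monomials are ordered by length.
\end{enumerate}
Nonvanishing of the diagonal follows from $B_\alpha,B_{\ba}\neq0$. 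Hence $\lambda_{u_0}=0$, a contradiction, which gives independence and the stated dimension.

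The main obstacle is step (b): controlling the mixed products $\Theta_l(\ba)\,q$. These are paths that leave the $g$-orbit of $\ba$ and re-enter the $g$-orbit of $\alpha$, and their vanishing requires the relations $\alpha f(\alpha)={\bf A}_{\ba}$ rather than just the spanning property. My first attempt will be to rewrite each such product, using the minimal relations of Section \ref{subsec:5.1} and the relations $\rho_1^{(i)},\dots,\rho_6^{(i)}$ of Section \ref{summary}, as a path containing a subword $\eta f(\eta)$. I will then push it into high powers of $J$ using the estimates $(*)$ from the proofs of Lemmas \ref{lem:gen} and \ref{lem:2vertex}, together with Lemmas \ref{gfV2} and \ref{gfV1}. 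If this becomes too case-heavy, my fallback is a global count. Summing the upper bounds over all vertices bounds $\dim\La$. I will compare this with a lower bound derived from the periodic exact sequences $(*)$ for every simple $S_i$, which give $\dim P_i-\dim P_i^-+\dim P_i^+-\dim P_i=\dim P_i^+-\dim P_i^-\cdots$ as Cartan-matrix identities. Combined with the symmetry of the Cartan matrix, these should force equality at each vertex, as was done in the $2$-regular case in \cite{AGQT}.
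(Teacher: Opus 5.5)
\textbf{There is a genuine gap in the independence step, and it already appears in (a), not only in (b).}

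\emph{Step (a).} The product $\Theta_l(\alpha)q$ is a path only if $t(g^{l-1}(\alpha))=t(g^{k-1}(\alpha))$. When moreover $g^{k}(\alpha)\neq g^{l}(\alpha)$, the junction is $\eta f(\eta)$ with $\eta=g^{l-1}(\alpha)$. So the product is not a $g$-monomial, and the relation $\eta f(\eta)=\mathbf{A}_{\bar\eta}$ turns it into a path along another orbit. This happens whenever a $g$-orbit revisits a vertex.

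\emph{Your auxiliary claim.} The claim that every path of length $>m_\alpha n_\alpha$ starting with $\alpha$ is zero has the same defect. Take $j=t(\alpha)$ a $2$-vertex and expand $\alpha\,e_jJ^{L-1}$ in the spanning set of Lemma \ref{lem:2vertex}. This kills the terms $\alpha\Theta_l(g(\alpha))$. It leaves $\alpha\Theta_l(f(\alpha))=\mathbf{A}_{\ba}\,g(f(\alpha))\cdots\in\ba\La$, which need not vanish when $m_{\ba}n_{\ba}>m_\alpha n_\alpha$.

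\emph{What is really needed.} Both steps require that $\alpha f(\alpha)=\mathbf{A}_{\ba}$ lies in the second socle, i.e.\ $\mathbf{A}_{\ba}=c_{\ba}\Theta_{m_{\ba}n_{\ba}-1}(\ba)$. At this stage only $\mathbf{A}_{\ba}\in\ba\La f(\alpha)^*$ is known. The paper establishes the maximal length of $A_{\ba}$ only afterwards, in Section \ref{hat:construction}, once the bases of projectives are in hand.

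\emph{Why the estimates do not close it.} The estimates $(*)$ from Lemmas \ref{lem:gen} and \ref{lem:2vertex} only push such products a layer or two below their length. That does not reach past the Loewy length $\max(m_\alpha n_\alpha,m_{\ba}n_{\ba})+1$ of $e_i\La$. The mixed products can therefore land in the socle, where your symmetrizing form is nonzero. You get neither vanishing nor triangularity. In addition, at equal lengths ($\Theta_k(\alpha)$ against $\Theta_k(\ba)$) triangularity is not automatic and would need a separate $2\times 2$ argument.

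\textbf{Your fallback does not supply a lower bound.} The periodic sequences only give $[P_x]+[P_y]=[P_j]+[P_k]$ in the Grothendieck group. These identities, like symmetry of the Cartan matrix, are homogeneous and hence invariant under rescaling. Together with upper bounds they cannot force equality, so a genuine lower-bound input is missing.

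\textbf{How the paper gets the lower bound.} It uses the explicit generating set of $I$ from Sections \ref{summary} and \ref{subsec:5.1}, working radical layer by radical layer with $m_\alpha n_\alpha\leqslant m_{\ba}n_{\ba}$.
\begin{enumerate}
\item For $k<m_\alpha n_\alpha$ it shows $\Theta_k\notin J^{k+1}$. Otherwise $\Theta_k-\lambda\bar\Theta_{k+1}\in I$, and this cannot be written as $\sum\lambda_su_s\rho_sv_s$ with the known minimal relations.
\item It gets independence of $\Theta_k,\bar\Theta_k$ modulo $J^{k+1}$: if $\Theta_k\equiv\bar\Theta_k$, then $\Theta_{k+1}\equiv\bar\Theta_kf(\rho_k)\in J^{k+2}$, contradicting the first point.
\item The layers from $m_\alpha n_\alpha$ up to $m_{\ba}n_{\ba}$ are one-dimensional.
\item Counting $1+2(m_\alpha n_\alpha-1)+(m_{\ba}n_{\ba}-m_\alpha n_\alpha+1)$ gives the stated dimension.
\end{enumerate}
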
 

\begin{proof} We will show that $e_i\La$ has a required basis exploiting the radical quotients. We keep notation 
from the previous two lemmas. It follows from the proofs of Lemmas \ref{lem:gen} and \ref{lem:2vertex} that every 
quotient $e_i J^k/e_iJ^{k+1}$, for $k\geqslant 2$, is spanned by the (at most) two cosets of initial submonomials 
$\Theta_k$ of $B_\alpha$ and $\bar{\Theta}_k$ of $B_{\ba}$, except the case when $i$ is of type B and 
$\alpha g(\alpha)f(g(\alpha))\equiv\Theta_2$ (then $e_iJ^k/e_iJ^{k+1}$ is spanned by the cosets of $\Theta_{k}$ 
and $\bar{\Theta}_{k-1}$). For simplicity, we assume that quotients are generated by the paths of the same length, 
but the proof can be repeated in the remaining case. \smallskip 

After possibly relabelling the arrows, we may assume that $m_\alpha n_\alpha\leqslant m_{\ba}n_{\ba}$. 
We claim that for any $k<m_\alpha n_\alpha$, the cosets of paths $\Theta_{k},\bar{\Theta}_{k}$ are 
independent generators of $e_iJ^k/e_iJ^{k+1}$. Indeed, observe first that $\Theta_k$ is not in $J^{k+1}$, i.e. 
$\Theta_k$ is non-zero in the quotient. Suppose to the contrary that $\Theta_k\in J^{k+1}$. Then 
$\Theta_k\equiv a\Theta_{k+1}+b\bar{\Theta}_{k+1}$, modulo $J^{k+2}$, for some $a,b\in K$, and we can assume 
that $a=0$ (otherwise, we get $\Theta_k(1-a\sigma_{k+1})\equiv b\bar{\Theta}_k$, so after rescaling 
by a unit, we get the required relation). We can continue with longer summands $\Theta_{k+2},\bar{\Theta}_{k+2}\in J^{k+2}$ 
(generating the next radical quotient), and after grouping all summands starting from $\bar{\Theta}_k$ (or $\Theta_k$) 
in the form $\Theta_k(1+z)$, $z\in J$, we obtain that in fact $\Theta_k=\lambda\bar{\Theta}_{k+1}$ in $\La$, for 
some $\lambda\in K$, or equivalently, $\Theta_k-\lambda\bar{\Theta}_{k+1}\in I$. But we know the shape of 
minimal relations, which involve paths of length $2$ or $3$ and the paths ${\bf A}_\eta=c_\eta \Theta_m(\eta)$ 
along $g$-cycles. Therefore, it is not possible to get an equality of the form 
$$\Theta_k-\lambda\bar{\Theta}_{k+1}=\sum_{s\geqslant 1}\lambda_s u_s\rho_s v_s$$     
in $KQ$, where $\lambda_s\in K$, $u_s,v_s$ are paths and $\rho_s$ are minimal relations. This shows that 
indeed $\Theta_k\notin J^{k+1}$. Similarly, one can check that also $\bar{\Theta}_{k}\notin J^{k+1}$, so we 
have two (non-zero) generators of $e_iJ^k$ (modulo $J^{k+1}$). It remains to see that $\Theta_k$ and $\bar{\Theta}_{k}$ 
are independent in $e_iJ^k/e_iJ^{k+1}$. This is a consequence of the above non-vanishing property. Indeed, 
if $\Theta_k,\bar{\Theta}_k$ are dependent, then $\Theta_k\equiv\bar{\Theta}_k$ modulo $J^{k+1}$, which 
gives $\Theta_{k+1}=\Theta_k\sigma_{k+1}\equiv \bar{\Theta}_k f(\rho_k)$. But this is impossible, since the 
last path belongs to $J^{k+2}$, in all cases except $\ba f(\ba)\notin J^3$ (see proofs of Lemmas \ref{lem:gen} and 
\ref{lem:2vertex}), and then $\Theta_kf(\sigma_k)\in J^{k+2}$, which also gives a contradiction, because 
$\bar{\Theta}_{k+1}=\bar{\Theta}_k\rho_{k+1}\equiv \Theta_k f(\sigma_k)$. \medskip

Therefore, we have proved that $\dim_K e_iJ^k/J^{k+1}=2$, for all $k\in\{1,\dots,m_\alpha n_\alpha-1\}$. Now, let 
$k\geqslant m_\alpha n_\alpha$. Then the path $\Theta_{k}=B_\alpha \alpha \cdots \equiv B_{\ba} \alpha \cdots$ is either 
zero or $\equiv B_{\ba}$, hence we conclude that $\dim_Ke_iJ^k/J^{k+1}=1$ and the quotient is generated by the coset of 
$\bar{\Theta}_{k}$, for all $k\in\{m_{\alpha} n_{\alpha},\dots,m_{\ba} n_{\ba}\}$. Consquently, counting all the dimensions, 
we get 
$$\dim_K e_i\La=\sum_{k=0}^{m_{\ba}n_{\ba}}\dim_Ke_iJ^k/e_iJ^{k+1}=1+2(m_\alpha n_\alpha -1)+(m_{\ba}n_{\ba}-m_\alpha n_\alpha +1)=$$ 
$$=m_\alpha n_\alpha + m_{\ba} n_{\ba}.$$ 
This completes the proof. \end{proof} \medskip 

Now, it remains to consider the $1$-regular case. 

\begin{lemma}\label{basis1reg} Let $i$ be a $1$-vertex of $Q$ with $i^+=\{\alpha\}$. Then the set $\cB_i$ of all 
initial submonomials of $B_\alpha$ together with $\alpha f(\alpha)$ form a basis of $e_i\La$. In particular, 
we have $\dim_Ke_i\La=m_\alpha n_\alpha +2$. \end{lemma}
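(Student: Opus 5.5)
The idea is to reduce the $1$-vertex case to the already established Lemma \ref{basis2reg} for the $2$-vertex $t(\alpha)$, via the short exact sequence $0\to \alpha\La\to e_i\La\to S_i\to 0$. Since $i$ is a $1$-vertex, $\rad P_i=\alpha\La$ is cyclic. Left multiplication by $\alpha$ gives an epimorphism $e_j\La\to\alpha\La$, where $j=t(\alpha)$ is a $2$-vertex (by Lemma \ref{lem:3.5} and Theorem \ref{prop:4.3}). Its kernel is $\Omega^2(S_i)$, which is cyclic: it is generated by the single element $\vf$ described in Section \ref{summary}. For example, for $i=c_l$ we have $\alpha=\beta_l$ and $\vf=\nu_l\delta_l-{'A}_{\beta_l}$; the cases $i=d_l$ and $i=y_l$ are analogous, using $\rho_4^{(l)}$ and $\rho_6^{(l)}$. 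Hence
$$\dim_K e_i\La = 1+\dim_K e_j\La-\dim_K \vf\La .$$
By Lemma \ref{basis2reg}, $\dim_K e_j\La = m_{g(\alpha)}n_{g(\alpha)}+m_{f(\alpha)}n_{f(\alpha)}$, using $j^+=\{f(\alpha),g(\alpha)\}$.

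Next I compute $\dim_K\vf\La$. Since $\Omega^2(S_i)\cong\Omega^{-2}(S_i)$, and dually $\Omega^{-1}(S_i)$ is cyclic with injective envelope $P_x$ for $x=s(g^{-1}(\alpha))$, we get an exact sequence $0\to S_i\to P_i\to P_x\to\Omega^{-2}(S_i)\to 0$. This gives $\dim_K \vf\La=\dim_K P_x-\dim_K P_i+1$. Combining with the first formula only yields $\dim P_j-\dim P_x$, which is circular. So instead I identify $\vf\La$ with $f(\alpha)\La$-type data: $\vf \equiv f(\alpha)g(f(\alpha))$ modulo higher terms. I then show that $\vf\La$ is spanned by initial submonomials of $B_{f(\alpha)}$ of length $\geq 2$ together with the socle, which has dimension $m_{f(\alpha)}n_{f(\alpha)}-1$, and that $\alpha$ kills the other part.

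More concretely, the direct route I would actually follow mirrors the proof of Lemma \ref{basis2reg}: compute the radical layers $e_iJ^k/e_iJ^{k+1}$.
\begin{enumerate}
\item Layer $k=0$ is $e_i$ and layer $k=1$ is $\alpha$.
\item Layer $k=2$ is spanned by $\alpha f(\alpha)$ and $\alpha g(\alpha)$, as noted after Corollary \ref{cor:J2/J3}.
\item For $k\geq 3$, every path starting with $\alpha f(\alpha)$ either lies in $J^{k+1}$ or, via the relations $\rho^{(l)}$ (e.g. $\beta_l\nu_l\delta_l={\bf A}_{\beta_l}$ and the identities $A'_{\overline{\nu_l}}={'A}_{\beta_l}$), reduces to a multiple of a path along the $g$-orbit of $\alpha$. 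Lemmas \ref{gfV2} and \ref{gfV1} give the needed $J^{k+1}$ estimates.
\end{enumerate}
Thus each layer $k\geq 3$ is spanned by $\Theta_k(\alpha)$, and $\alpha f(\alpha)$ contributes exactly one extra basis element. I will also check that $\alpha f(\alpha)\alpha'$-type products are zero or proportional to the socle $B_\alpha$.

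Linear independence and nonvanishing of the $\Theta_k(\alpha)$ for $k\le m_\alpha n_\alpha$ follow as in Lemma \ref{basis2reg}. Because the minimal relations have the known shape (paths of length $2$ or $3$ against scalar multiples of $g$-paths), no $\Theta_k(\alpha)$ can lie in $J^{k+1}$. Moreover, $\alpha f(\alpha)\notin \langle \Theta_2(\alpha)\rangle+J^3$, since $\alpha f(\alpha)\nprec I$ (the source is a $1$-vertex). I also need $\alpha f(\alpha)\neq 0$ and $\alpha f(\alpha)\notin \soc$ unless the two coincide, which is handled by the same argument.

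The main obstacle is step 3: showing that $\alpha f(\alpha)J\subseteq \langle\Theta_k(\alpha)\rangle_{k\geq 3}$. Concretely, one must verify that $\alpha f(\alpha)g(f(\alpha))$ and $\alpha f(\alpha)f(f(\alpha))$ collapse onto $\alpha$'s $g$-cycle. I would try to do this first by using the relation $\rho_2^{(l)}$ (and its analogues), which expresses the length-$3$ path through $f(\alpha)$ as ${\bf A}_{\beta_l}$, a path along the $g$-orbit of $\alpha$. The total count then gives $1+m_\alpha n_\alpha+1=m_\alpha n_\alpha+2$, where the term $m_\alpha n_\alpha$ counts the monomials $\Theta_k(\alpha)$ for $1\le k\le m_\alpha n_\alpha$.
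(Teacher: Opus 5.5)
\textbf{Overall.} Your working route is the paper's: compute the layers $e_iJ^k/e_iJ^{k+1}$, collapse $\alpha f(\alpha)g(f(\alpha))={\bf A}_\alpha$ via $\rho^{(l)}_2,\rho^{(l)}_4,\rho^{(l)}_6$, and control the remaining paths with Lemmas \ref{gfV2} and \ref{gfV1}. However, declaring $i=d_l$ ``analogous'' to $i=c_l$ hides the asymmetry created by the normalisation $\alpha_l\beta_l\nprec I$. For $d_l$ in a critical block, three of your claims are false.

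\textbf{The critical case.} Suppose $B_l$ is critical. Then $\overline{\alpha_l}=\beta_l^*=\sigma$, so $g(\delta_l)=\sigma$. Thus $\Theta_2(\delta_l)=\delta_l\sigma$ is a path from $\delta_l$ to $\beta_l^*$, which is exactly the shape allowed in ${\bf A}_{\delta_l}$. It does occur. Lemma \ref{lem:3.4}, applied to the triangle $(\nu_l\ \delta_l\ \sigma)$ with $\nu_l\delta_l\prec I$, gives $\delta_l\sigma\prec I$. Since $\rho^{(l)}_4$ is the only minimal relation from $d_l$ to $b_l$, we get $\delta_l\alpha_l\beta_l\equiv c\,\delta_l\sigma$ modulo $J^3$ with $c\in K^*$. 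Hence $\Theta_2(\delta_l)\in J^3$ and $\Theta_3(\delta_l)\in J^4$.

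For a concrete instance, glue the triangles $(\alpha\ \xi\ \delta)$, $(\beta\ \nu\ \mu)$, $(\sigma\ \gamma\ \tau)$ and a loop at $e$, with $\xi,\mu$ virtual and weight $1$ on the $g$-orbit $(\nu\ \delta\ \sigma)$. The Loewy layers of $e_d\La$ are then spanned by $e_d$, $\delta$, $\delta\alpha$, $\delta\alpha\beta=c\,\delta\sigma$, and $\delta\alpha\beta\nu=c\,B_\delta$.

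This has three consequences for your outline.
\begin{enumerate}
\item For $k\geq 3$ the layer $e_iJ^k/e_iJ^{k+1}$ is spanned by $\Theta_{k-1}$, not $\Theta_k$. The paper's proof explicitly allows ``$\Theta_k$ or $\Theta_{k-1}$''.
\item Your target inclusion $\alpha f(\alpha)J\subseteq\langle\Theta_k(\alpha)\rangle_{k\geq 3}$ fails, because $\alpha f(\alpha)J$ contains $c\,\Theta_2(\alpha)$. The correct range is $k\geq 2$. Note also that $t(f(\alpha))$ is always a $1$-vertex, so $\alpha f(\alpha)J=\alpha f(\alpha)g(f(\alpha))\La$; there is no second path $\alpha f(\alpha)f(f(\alpha))$ to handle.
\item The same phenomenon breaks $\vf\equiv f(\alpha)g(f(\alpha))$ in your discarded first route. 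For $d_l$, the lowest term of $\vf=\alpha_l\beta_l-{'A}_{\delta_l}$ is the arrow $\sigma$.
\end{enumerate}

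\textbf{Independence.} This is the more serious casualty. You deduce independence from ``no $\Theta_k(\alpha)$ lies in $J^{k+1}$'', arguing from the shape of the minimal relations. But in the critical case $\rho^{(l)}_4$ is precisely a relation equating a path of length $3$ with a \emph{shorter} $g$-path. So that argument is invalid and its conclusion is false, and counting one $\Theta_k$ per layer, as in Lemma \ref{basis2reg}, proves nothing here.

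You need an argument that does not depend on Loewy position, which is what the paper uses. First, the initial submonomials of the nonzero socle cycle $B_\alpha$ are linearly independent. Second, $\alpha f(\alpha)\in J^2\setminus J^3$ is not in their span. To see this, right-multiply by $e_{t(f(\alpha))}$ and work modulo $J^3$: $\Theta_1$ and $\Theta_2$ end at other vertices, $\Theta_k\in J^3$ for $k\geq 3$, and $e_i$ is excluded modulo $J$. Combined with the corrected spanning argument, this yields the basis and $\dim_K e_i\La=m_\alpha n_\alpha+2$.
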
 

\begin{proof} It is sufficient to prove that $\cB_i$ generates $e_i\La$, since initial submonomials of a socle 
cycle are always independent (and have different targets from $t(f(\alpha))$). We will use notation introduced 
in the proof of Lemma \ref{lem:gen} for arrows $\sigma_1,\sigma_2,\dots,\sigma_k=g^{k-1}(\alpha)$, and paths 
$\Theta_k=\sigma_1\cdots \sigma_k$. Of course, $e_i J/e_i J^2$ is spanned by the coset of arrow $\alpha=\Theta_1$ 
and $e_iJ^2/e_i J^3$ has basis given by the coset of $\alpha f(\alpha)$ and the coset of $\alpha g(\alpha)=\Theta_2$ 
if $\Theta_2\notin J^3$ (this may happen for $\alpha f(\alpha)g(f(\alpha))\equiv\Theta_2$, see below). \smallskip 

Since the successor $j=t(\alpha)$ of $i$ is $2$-regular (see Lemma \ref{lem:3.5}), we conclude from Lemma 
\ref{basis2reg} that $e_iJ$ is generated by paths of the form 
$\alpha\Theta_{k-1}(f(\alpha))$ and $\alpha\Theta_{k-1}(g(\alpha))=\Theta_{k}$, for $k\geqslant 1$. Now, observe 
that $\alpha\Theta_2(f(\alpha))=\alpha f(\alpha) g(f(\alpha))={\bf A}_\alpha\in J^3$, due to relations in $I$ (see 
Section \ref{summary}). We claim that all paths $p\prec {\bf A}_\alpha$ start with $\alpha g(\alpha)$, and hence 
they are of the form $\Theta_2$ or $\Theta_3$ (modulo $J^4$). Indeed, every summand starting with $\alpha f(\alpha)$ 
contains a subpath $\alpha f(\alpha) g(f(\alpha))$, hence we may rewrite the relation involving 
$\alpha f(\alpha)g(f(\alpha))$ as $\alpha f(\alpha) g(f(\alpha))(1-z)={\bf A}_\alpha$, with new ${\bf A}_\alpha$ 
having only summands starting with $\Theta_2$, and $1-z$ a unit in the local algebra $e_x\La e_x$, $x=s(\gamma)$, 
where $\gamma:x\to i$ is the unique arrow ending at $i$. Further, if $f(\sigma_2)$ is defined, then we 
have $\sigma_2 f(\sigma_2)\prec I$, since $s(\sigma_2)=t(\sigma_1)=j$ is $2$-regular, and then 
$\Theta_2 f(\sigma_2)\in J^4$, provided that $\sigma_2 f(\sigma_2)\in J^3$, and otherwise, we have 
$\sigma_2f(\sigma_2)\equiv\bar{\sigma}_2g(\bar{\sigma}_2)$, where $\bar{\sigma}_2=f(\sigma_1)$, so 
we obtain 
$$\Theta_2 f(\sigma_2)\equiv \alpha f(\alpha) g(f(\alpha))$$ 
which is $\Theta_2$ or $\Theta_3$, by the previous considerations. It follows that $e_iJ^3/e_iJ^4$ is generated 
by the coset of $\Theta_3$ (or by the coset of $\Theta_2$, and then $\alpha=\nu_i$, $\Theta_2=\alpha f(\alpha)g(\alpha)\in J^3$ 
and $\Theta_3\in J^4$). In the first case, one can check that $\Theta_2f(\sigma_2)\in J^4$. 
In fact, otherwise we have $\Theta_2f(\sigma_2)\equiv\Theta_3=\Theta_2 g(\sigma_2)$ modulo $J^4$, which is 
impossible, because $f(\sigma_2)$ and $g(\sigma_2)=g(f(\alpha))^*$ end at different vertices (we have an 
arrow from a $1$-vertex to $x$, so there are no double arrows ending at $x$). If $\Theta_2$ generates $e_iJ^3$ 
modulo $J^4$, then obviously, we have $\Theta_2f(\sigma_2)\in J^4$. \smallskip 

Finally, one can show using induction that for any $k\geqslant 3$, the space $e_iJ^k/e_iJ^{k+1}$ is spanned by the coset 
of $\Theta_k$ or coset of $\Theta_{k-1}$ (and $\Theta_{k-1}f(\sigma_{k-1})\in J^{k+1}$ in both cases). For $k=3$, it 
follows from the above arguments. If $\Theta_k$ generates $e_iJ^k$ (modulo $J^{k+1}$), $k\geqslant 3$, then we have two 
generators of $e_iJ^{k+1}$, which are $\Theta_{k+1}$, and $\Theta_kf(\sigma_k)$, when $f(\sigma_k)$ exists. For the latter, 
we have $\Theta_kf(\sigma_k)\in J^{k+2}$, by Lemmas \ref{gfV2} or \ref{gfV1}, in case $\sigma_k f(\sigma_k)\nprec I$. 
Otherwise, either $\sigma_k f(\sigma_k)\in J^3$, and then also $\Theta_k f(\sigma_k)\in J^{k+2}$, or else there is a 
type C relation $\sigma_kf(\sigma_k)\equiv\bar{\sigma}_k g(\bar{\sigma}_k)$ modulo $J^3$, $\bar{\sigma}_k=f(\sigma_{k-1})$. 
In the last case, we obtain $\Theta_kf(\sigma_k)\equiv\Theta_{k-1}f(\sigma_{k-1})g(\bar{\sigma}_k)\in J^{k+2}$, 
because $\Theta_{k-1}f(\sigma_{k-1})\in J^{k+1}$, by the inductive assumption. Therefore, we get $\Theta_k f(\sigma_k)\in J^{k+2}$ 
so the coset of $\Theta_{k+1}$ is a generator of $e_iJ^{k+1}/e_iJ^{k+2}$. In the case, when $\Theta_{k-1}\in J^k$ 
generates $e_iJ^k$ modulo $J^{k+1}$, one can easily check that $\Theta_k\in J^{k+1}$ generates $e_i J^{k+1}$ 
modulo $J^{k+2}$. In both cases, we get required generating sets for the radical quotients, and the claim follows. \end{proof}
\medskip 

Summing up, we have a description of bases of projective $\La$ modules. In particular, we can now say more 
about the elements ${\bf A}_{\ba}$. Indeed, if $i$ is a $2$-vertex, $\alpha,\ba$ arrows starting from $i$, then 
${\bf A}_{\alpha}\in \alpha\La$ forces that ${\bf A}_{\alpha}$ is a combination of paths $\Theta_k=\Theta_k(\alpha)$, 
along $g$-cycle of $\alpha$ (no summands starting with the other arrow). One can order summands of 
${\bf A}_{\alpha}$ with respect to the length, so that ${\bf A}_{\ba}=A_{\ba}(1+z)$, where $z\in e_x \La e_x$, 
$x=t(f(\alpha))$, and $A_{\ba}$ is the shortest path among the summands of ${\bf A}_{\ba}$. If ${\bf A}_\alpha$ has 
more than one summand, then $u=1+z$ is a unit of the local algebra $e_x\La e_x$, hence there is $c_{\alpha}\in K^*$ 
such that ${\bf A}_\alpha= c_{\alpha} A_{\alpha}$, where $A_{\ba}$ is a path along $g$-orbit of $\ba$. We may do the same 
for the second arrow $\ba$ obtaining a relation $\alpha f(\alpha)-c_{\ba} A_{\ba}$. It is easy to check using exact 
sequence for $S_i$, that the scalars $c_\bullet$ coincide along $g$-orbits, i.e. $c_{g(\alpha)}=c_\alpha$ and 
$c_{\ba}=c_{g(\ba)}$. This shows that there is a parameter function $c_\bullet:Q_1\to K^*$, constant on $g$-orbits. \smallskip 

The same can be done with relations $\rho^{(i)}_1,\dots,\rho^{(i)}_6$, where each ${\bf A}_\alpha$ has the form 
${\bf A}_\alpha=c_\alpha A_\alpha$ with $c_\alpha\in K$ non-zero and $A_\alpha$ a path along the $g$-cycle of $\alpha$ 
(see also Section \ref{summary}). Since $A_\alpha\in \La f(\ba)^*$ and $f(\ba)^*=g^{-1}(\gamma^*)=g^{-2}(\alpha)$, we 
conclude that $A_\alpha=(\alpha g(\alpha)\cdots g^{n_\alpha-1}(\alpha))^{m-1}\alpha\cdots g^{n_\alpha-2}(\alpha)$, 
for some $m\leqslant m_\alpha$. We will see in Section \ref{hat:construction} that in fact $m=m_\alpha$, for all arrows, 
which means that paths $A_\alpha$ are in the second socle.

\section{Proof of Main Theorem}\label{sec:6} 

In this section, we present the proof of the Main Theorem. We will first provide the proof in case when 
Gabriel quiver of the algebra is different from the spherical quiver $Q^S$ and the triangle (and almost triangle) 
quivers $Q^T$ and $Q^{T'}$; see Section \ref{sec:HSA}. This is done in Sections \ref{hat:construction}-\ref{hat:tame}. 
In the last Section \ref{subsec:6.5}, we complete the proof in the remaining cases. \medskip 

Let $\Lambda$ be an algebra with biregular Gabriel quiver $Q=Q_\Lambda$, i.e. each vertex of $Q$ is 
either $1$-regular or $2$-regular. We always assume that $Q$ has at least $3$ vertices. \smallskip 
  
Implication $(i)\Rightarrow (ii)$ follows from \cite[IV. Theorem 11.19]{SkY}. Note that $\La$ is always 
of infinite representation type, by the assumptions on the number of vertices and arrows; see \cite[Lemma 3.1]{note}. The 
implication $(iii)\Rightarrow (i)$ is a direct consequence of the results \cite[Theorems 1.1-1.3]{WSA-GV}. 
Hence it remains to prove that implication $(ii)\Rightarrow (iii)$ holds. \smallskip 

So assume $\Lambda$ is an algebra of generalized quaternion type, that is, $\Lambda$ is tame symmetric of 
infinite representation type, and each simple module in $\mod \Lambda$ is periodic of period $4$. We may assume that 
there exists at least one $1$-regular vertex in $Q$, since otherwise the claim follows from the Theorem \ref{MTGQT}. 
The idea of the proof is to construct directly (by quivers and relations) the algebra 
$\hat{\Lambda}(\hat{m})=K\hat{Q}/\hat{I}$ depending on a chosen collection of weights $\hat{m}$ such that 
\begin{itemize}
\item $\hat{Q}$ is a $2$-regular quiver with $\hat{Q}_0=Q_0$, obtained from $Q\subset\hat{Q}$ by adding loops and $2$-cycles, 
\item $\hat{Q}=Q_{\hat{\Lambda}(\hat{m})}$, if all weights are $\geqslant 2$, 
\item $\hat{\Lambda}(\hat{m})$ is a an algebra of generalized quaternion type with $\hat{\Lambda}({\bf 1})\cong\Lambda$, 
where ${\bf 1}$ means that all weights are equal to $1$. 
\end{itemize} 

\medskip 

The proof is divided into several steps, each in a separate subsection. 

\subsection{Construction of $\hat{\Lambda}$}\label{hat:construction}

We use notation introduced in Section \ref{bases}. \smallskip 

First, we define the quiver $\hat{Q}$. The set of vertices is the same as for $Q$: $\hat{Q}_0=Q_0$, and the 
set of arrows $\hat{Q}_1$ of $\hat{Q}$ is obtained from $Q_1$ by adding a loop $\rho_i$ at each vertex $y_i$ in a block 
$B_i'$, for $i\in\{1,\dots,q\}$, and a $2$-cycle $\xymatrix{c_i\ar@<+.4ex>[r]^{\xi_i} & d_i\ar@<+.4ex>[l]^{\mu_i}}$ in 
each block $B_i$, $i\in\{1,\dots,p\}$. In other words, every block $B_i'$ in $Q$ becomes a block 
$$\xymatrix@C=0.6cm@R=0.2cm{&\\ \ar@(lu, ld)[]_{\rho_i} y_i\ar@<.35ex>[r]^{\eta_i}&\ar@<.35ex>[l]^{\ve_i}x_i} $$ 
of type II in $\hat{Q}$, and each block $B_i$ in $Q$ becomes a glueing 
$$\xymatrix@R=0.3cm{&c_i\ar[rd]^{\beta_i}\ar@<-.37ex>[dd]_{\xi_i}& \\ 
a_i\ar[ru]^{\alpha_i} && b_i\ar[ld]^{\nu_i}\\ &d_i\ar[lu]^{\delta_i} \ar@<-.37ex>[uu]_{\mu_i} & }$$  
of two triangles in $\hat{Q}$. By the construction, the quiver $\hat{Q}$ is $2$-regular. It will follow later that 
this is actually a triangulation quiver. \medskip

Now, we will define the ideal $\hat{I}$ of $K\hat{Q}$. First, we need to fix natural numbers $m_i\geqslant 1$, 
for $i\in\{1,\dots,p\}$, and $m'_i\geqslant 1$, for $i\in\{1,\dots,q\}$. With this, we define $\hat{I}$ by generators 
as follows. \medskip 

We know from Section \ref{summary} that $I$ is generated by minimal relations of the form $\rho^{(i)}_1,\dots,\rho^{(i)}_4$, 
$i\in\{1,\dots,p\}$, relations $\rho^{(i)}_5,\rho^{(i)}_6$, for $i\in\{1,\dots,q\}$, and relations of the form 
$\alpha f(\alpha)-c_{\ba}A_{\ba}$, for arrows $\alpha\in Q_1$ between $2$-regular vertices. Let $R_0$ be the set of 
relations of the form $\alpha f(\alpha)-c_{\ba}A_{\ba}$, where $s(\alpha),t(\alpha)$ are $2$-regular. In this way, 
$I$ is generated by $R_0$ and relations $\rho_1^{(i)},\dots,\rho_6^{(i)}$. \smallskip 

Further, let $R_1$ be the set of relations in $K\hat{Q}$ of the form 
$$\alpha_i\xi_i-{\bf A}_{\overline{\alpha_i}}, \ \xi_i\delta_i-{\bf A}_{\beta_i}, \ \delta_i\alpha_i-(\mu_i\xi_i)^{m_i-1}\mu_i, \ 
\nu_i\mu_i-{\bf }{\bf A}_{\overline{\nu_i}}, \ \mu_i\beta_i-{\bf A}_{\delta_i}, \ \beta_i\nu_i-(\xi_i\mu_i)^{m_i-1}\xi_i, $$ 
$$\xi_i\delta_i\overline{\alpha_i}, \ \mu_i\beta_i\overline{\nu_i}, \ \alpha_i\xi_i\mu_i, \ \nu_i\mu_i\xi_i, \ 
\xi_i\mu_i\beta_i, \ \mu_i\xi_i\delta_i, \ \delta_i^*\alpha_i\xi_i, \  \beta_i^*\nu_i\mu_i $$  
for all $i\in\{1,\dots,p\}$. \smallskip 

Finally, let $R_2$ be the set of relations 
$$\ve_i\rho_i-A_{\overline{\ve_i}}, \ \rho_i\eta_i-A_{\eta_i}, \ \eta_i\ve_i-\rho_i^{m_i'-1}, \ \rho_i\eta_i\overline{\ve_i}, \ 
\ve_i\rho_i^2, \ \rho_i^2\eta_i, \ \eta_i^*\ve_i\rho_i,$$ 
for all $i\in\{1,\dots,q\}$. \smallskip 

Let $\hat{I}$ be an ideal of $K\hat{Q}$ generated by the relations in $R=R_0\cup R_1\cup R_2$. It is easy to 
see that $\hat{I}$ is admissible if and only if $m_i\geqslant 2,m_i'\geqslant 3$, for all possible $i$. \bigskip 

It follows directly from the definition of $\hat{I}$, that for any $i$ with $m_i\geqslant 2$ (respectively, with 
$m'_i\geqslant 3$), we have the following zero relations in $\hat{\La}$: 
$$\beta_i\nu_i\delta_i, \ \delta_i\alpha_i\beta_i, \ \alpha_i\beta_i\nu_i \ \mbox{and} \ \nu_i\delta_i\alpha_i 
\ (\mbox{respectively, } \eta_j\ve_j\eta_j \mbox{ and } \ve_j\eta_j\ve_j).$$ \medskip 

Now, we will show that $\hat{\La}\cong \La$ for trivial weights. Indeed, suppose all $m_i=1$ and $m_i'=2$. Then, relations 
in $R_1\cup R_2$ force $\xi_i=\beta_i\nu_i$, $\mu_i=\delta_i\alpha_i$ and $\rho_j=\eta_j\ve_j$, and hence the Gabriel 
quiver of $\hat{\La}$ is equal to $Q$. In particular, we have $\hat{\La}\cong KQ/J$, where $J$ is an ideal of $KQ$ 
generated by relations obtained from relations in $R$ via replacing arrows $\xi_i,\mu_i$ and $\rho_i$ by paths 
$\beta_i\nu_i$, $\delta_i\alpha_i$ and $\eta_i\ve_i$, respectively. Obviously, the ideal $J$ contains all relations 
in $R_0$ and all relations from $R_1\cup R_2$ after substitutions. Therefore, we have $I\subseteq J$. \smallskip 

Finally, we will prove that $J\subseteq I$. It is clear that the relations in $R_0$ remain relations in $I$. Further, the 
generators of $J$ obtained from the commutativity relations in $R_1\cup R_2$ after substitutions give exactly the 
relations $\rho^{(i)}_1,\dots,\rho^{(i)}_4,\rho^{(j)}_5,\rho^{(j)}_6$ in $I$. Consequently, we must show that the same 
holds for zero relations, i.e. that the paths 
$$\beta_i\nu_i\delta_i\overline{\alpha_i}, \ \delta_i\alpha_i\beta_i\overline{\nu_i}, \ \alpha_i\beta_i\nu_i\delta_i\alpha_i, 
\ \nu_i\delta_i\alpha_i\beta_i\nu_i, \ 
\beta_i\nu_i\delta_i\alpha_i\beta_i, \ \delta_i\alpha_i\beta_i\nu_i\delta_i, \ \delta_i^*\alpha_i\beta_i\nu_i, 
\  \beta_i^*\nu_i\delta_i\alpha_i, $$
and the paths 
$$ \eta_j\ve_j\eta_j\overline{\ve_j}, \ 
\ve_j\eta_j\ve_j\eta_j\ve_j, \ \eta_j\ve_j\eta_j\ve_j\eta_j, \ \eta_j^*\ve_j\eta_j\ve_j$$ 
are zero in $\La$. \smallskip 

To do this, it is sufficient to show that the following paths of length three: 
$$\alpha_i\beta_i\nu_i, \ \beta_i\nu_i\delta_i, \ \nu_i\delta_i\alpha_i, \ \delta_i\alpha_i\beta_i$$ 
and 
$$\eta_j\ve_j\eta_j, \ \ve_j\eta_j\ve_j$$ 
belong to the second socle of $\La$. Then all the paths of length five are clearly zero, and the paths of length four belong to the 
socle, so they are also zero, because all of them are not cycles. \medskip 

The rest part of this subsection is devoted to show that the above paths are in the second socle. We 
fix a block $(\alpha \ \beta \ \nu \ \delta)$ of type V$_2$ (with vertices $a,b,c,d$). We start with the following 
lemma.

\begin{lemma} 
Assume  that  the paths of length three around the block are $\prec I$. 
Assume also that there is no other path of length $3$ from vertex $a$ or vertex $c$ which is the lowest term of a minimal relation. Then there are
minimal relations satisfying
	$$\beta\nu\delta + \beta X=0 \ \ \mbox{ and } \
	\nu\delta\alpha + X\alpha = 0, \ \ (X\in J^3)$$
	$$\delta\alpha\beta + \delta Y = 0, \ \ \mbox{ and} \ 
	\alpha\beta\nu + Y\nu = 0, \ \ (Y\in J^3).
	$$
\end{lemma}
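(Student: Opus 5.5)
The plan is to work with the exact sequences $(*)$ for the two $1$-vertices $c$ and $d$ of the block, as in Section \ref{summary}, and to read off matching relations from $d_1d_2=0$ and $d_2d_3=0$.

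\emph{Step 1: the relation starting at $c$.} Since $c$ is a $1$-vertex with $c^+=\{\beta\}$, the minimal relation involving $\beta\nu\delta$ starts at $c$, so all its terms begin with $\beta$. We write it as $\beta\nu\delta+\beta X=0$ with $X\in e_b\La e_a$.

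Next we show $X\in J^3$. Every path from $b$ to $a$ avoiding $\nu\delta$ starts with $\bar\nu$. If $X$ had a summand of length $\leqslant 2$, it would be a path $\bar\nu\delta^*$, which occurs only in the critical configuration of Proposition \ref{prop:4.2}. In that case we invoke Lemma \ref{lemAcrit1} together with the hypothesis that no other length-$3$ path from $c$ is a lowest term of a minimal relation: $\beta\nu\delta$ must then be the lowest term, which forces the remaining terms into $J^4$. Otherwise every summand of $X$ lies in $J^3$. Summands of $X$ that begin with $\nu\delta$ can be absorbed by rescaling by a unit, as in Section \ref{summary}.

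\emph{Step 2: the relation starting at $a$ via $\Omega^2(S_c)$.} The element $\vf=\nu\delta+X\in e_bJ$ satisfies $\beta\vf=0$, and $\vf\notin J^3$. The exact sequence for $S_c$,
$$P_c\xrightarrow{\alpha}P_a\to P_b\xrightarrow{\beta}P_c,$$
is the one already used in Section \ref{summary}. Because $\Omega^2(S_c)$ is cyclic, generated by the image of $e_a$, we may take $d_2(e_a)=\vf$ after adjusting by a unit of $e_a\La e_a$ as in Section \ref{subsec:5.1}. Exactness at $P_a$ then gives $\vf\alpha=0$, that is, $\nu\delta\alpha+X\alpha=0$.

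We must also confirm that this is the minimal relation containing $\nu\delta\alpha$. By Lemmas \ref{lemA} and \ref{lemAcritical}, $\nu\delta\alpha\prec I$. The relation $\vf\alpha=0$ has lowest term $\nu\delta\alpha$ because $X\alpha\in J^4$, so it can serve as the minimal relation $\rho_3$.

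\emph{Step 3: the symmetric pair.} We apply the same argument to the $1$-vertex $d$ with $d^+=\{\delta\}$, using the sequence for $S_d$:
\[P_d\xrightarrow{\nu}P_b\to P_a\xrightarrow{\delta}P_d.\]
This gives $\delta\alpha\beta+\delta Y=0$, then $\psi=\alpha\beta+Y$ generates $\Omega^2(S_d)$, and exactness yields $\alpha\beta\nu+Y\nu=0$. Here the hypothesis on paths from $a$ is used to get $Y\in J^3$; note that $\alpha\beta\nprec I$ always holds.

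\emph{Main obstacle.} The delicate point is the degree claim $X,Y\in J^3$, in particular excluding a length-$2$ summand $\bar\nu\delta^*$ or $\bar\alpha\beta^*$. If the hypothesis and Lemma \ref{lemAcrit1} do not settle this directly, the fallback is to argue as in Lemma \ref{gfV2}: a length-$2$ summand forces $\alpha\beta\nu\equiv\Theta_2(\bar\alpha)$ modulo $J^4$, which contradicts the assumption that $\alpha\beta\nu$ is the lowest term of its relation.
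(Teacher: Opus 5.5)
Your proposal follows the paper's argument essentially step for step. You write the relation at $c$ as $\beta(\nu\delta+X)$, use the hypothesis to force $X\in J^3$, take $\nu\delta+X$ as the generator of $\Omega^2(S_c)$, read off $(\nu\delta+X)\alpha=0$ from $\Omega^3(S_c)\cong\alpha\La$, and argue symmetrically at $d$.

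The detour through Lemma \ref{lemAcrit1} is unnecessary. Your claim that a summand $\bar\nu\delta^*$ occurs only in the critical configuration is also inaccurate: $t(\bar\nu)=s(\delta^*)$ can be a $2$-vertex in a non-critical block, e.g.\ for the almost spherical quiver $Q^{S'}$. This does no harm, since the hypothesis alone excludes a second length-$3$ term $\beta\bar\nu\delta^*$, which is exactly how the paper argues.
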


\begin{proof}

We can write the minimal relation involving $\beta\nu\delta$ in the stated form,
noting that $\beta$ is the only arrow starting at vertex $b$, then $X\in e_c\La e_d$. 
We claim that $X$ must be in $J^3$, otherwise there would be another path of length three in a minimal relation ending at vertex $a$. 

Consider now the exact sequence for the simple module $S_b$, this shows that
$\Omega^2(S_b) = \psi \La$ for some $\psi \in e_c\La e_a$. The minimal relation
gives $\psi = \beta\delta + X$. 
Then $\Omega^2(S_b) = \alpha\La$ and we get $\nu\delta\alpha + X\alpha=0$.

The second part is similar. \end{proof}

\medskip

\begin{rem}\normalfont When $Q$ is the spherical quiver, we have two mainimal relations from 1-vertices ending at the same 
$2$-vertex, and this is causing 	a special case, and in fact this is responsible for the existence of the 
Higher Spherical Algebras, which are GQT but not WSA's. \end{rem} \medskip

\begin{lemma} With the assumptions as in the
	previous Lemma, the paths of length three around
	the block of type $V_2$ belong to the second socle of the algebra.
\end{lemma}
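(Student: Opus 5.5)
We need to show that each of the four paths of length three in the block, $\alpha\beta\nu$, $\beta\nu\delta$, $\nu\delta\alpha$, $\delta\alpha\beta$, lies in the second socle. Equivalently, multiplying such a path on either side by any arrow gives an element of $\soc(\La)$, and then Lemma~\ref{basis2reg} and Lemma~\ref{basis1reg} turn this into a statement about lengths. We start from the four relations of the previous lemma, written as
$$\beta\nu\delta=-\beta X, \quad \nu\delta\alpha=-X\alpha, \quad \delta\alpha\beta=-\delta Y, \quad \alpha\beta\nu=-Y\nu, \qquad X,Y\in J^3 .$$
Consider for instance $\beta\nu\delta$, a path $c\to a$. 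Its right multiples are $\beta\nu\delta\alpha$ and $\beta\nu\delta\ba$. The left multiples are $\xi'\beta\nu\delta$, where $\xi'$ ranges over the arrows ending at $c$; since $c$ is a $1$-vertex, the only such arrow is $\alpha$.

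\emph{Right multiples.} Compute $\beta\nu\delta\alpha$ in two ways. Using the relation for $\beta\nu\delta$ gives $\beta\nu\delta\alpha=-\beta X\alpha$. Using the relation for $\nu\delta\alpha$ gives $\beta(\nu\delta\alpha)=-\beta X\alpha$ as well, which yields nothing new. The useful input is the iteration:
$$\beta\nu\delta\alpha\beta=-\beta X\alpha\beta=\beta X\,\delta^{-}\ldots$$
More precisely, $X\alpha\beta$ contains the factor $\alpha\beta$, and whenever $\alpha\beta$ is followed by $\nu$ we can substitute $\alpha\beta\nu=-Y\nu$. Each substitution raises the length by at least $3-1$, because $X,Y\in J^3$ replace an arrow (or a length-three path) by something of length at least three, with the length strictly increasing. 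Iterating pushes everything into $J^N$ for large $N$, hence to zero. This argument requires care, since $X$ need not end with $\nu$ etc.

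The cleaner route is to use the basis description. By Lemma~\ref{basis1reg}, $e_c\La$ has basis the initial submonomials of $B_\beta$ together with $\beta f(\beta)=\beta\nu$. Hence $e_cJ^k$ for $k\ge 2$ is spanned by $\Theta_m(\beta)$ with $m\ge k$, and $e_c\La\, e_a$ is spanned by the $\Theta_m(\beta)$ ending at $a$. Now $X\in e_b J^3 e_d$ (here $X$ starts at $b$, since $\beta$ precedes it), so $\beta X\in e_cJ^4e_a$ is a combination of paths $\Theta_m(\beta)$ ending at $a$ with $m\ge 4$. Thus $\beta\nu\delta$ equals a combination of long $g$-paths from $c$. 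Then $\beta\nu\delta\alpha$ and $\beta\nu\delta\ba$ are combinations of $\Theta_m(\beta)$ with $m\ge 5$ extended by one more step along the $g$-cycle (or by an $f$-step, which by Lemmas~\ref{gfV2}, \ref{gfV1} and the proofs of Lemmas~\ref{lem:gen} and~\ref{lem:2vertex} falls into higher radical powers).

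\emph{Main obstacle.} The real claim is that the \emph{lowest} term of $\beta X$ already sits at length $m_\beta n_\beta-1$, i.e.\ one step below the socle element $B_\beta$. This is where the hypothesis enters: no other length-three path from $a$ or $c$ is the lowest term of a minimal relation. If $\beta X$ had a lowest term $\Theta_m(\beta)$ with $m<m_\beta n_\beta-1$, then multiplying by $\alpha$ and comparing with the relation $\nu\delta\alpha=-X\alpha$ and with the exact sequence of $S_b$ ($\Omega^2(S_b)=\psi\La$ with $\psi=\nu\delta+X$) would produce a nonzero element in a radical layer where Lemma~\ref{basis1reg} forces the socle. The plan is to argue by the dimension count $\dim_K e_c\La=m_\beta n_\beta+2$: we have $\psi\La\cong\Omega^2(S_b)$ with known dimension from the periodic resolution, namely $\dim P_c+\dim P_a-\dim P_b-\dim P_c+\ldots$. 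Comparing this with the dimension of $\psi\La$ computed from the basis, we force the summands of $X$ to be multiples of $A_{\bar\nu}$-type paths of maximal length, i.e.\ $X\in e_b(\soc^2)e_d$. Then $\beta X\cdot(\text{arrow})$ lies in the socle, which is spanned by the cycles $B_\eta$; since $\beta\nu\delta\alpha$ is not a cycle, this product vanishes, and so $\beta\nu\delta$ is in the second socle.

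The other three paths are handled symmetrically, using $Y$ and the basis of $e_d\La$ and $e_a\La$ in place of $X$ and $e_c\La$.
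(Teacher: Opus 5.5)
Your reduction is correct as far as it goes. By the previous lemma and Lemma~\ref{basis1reg}, $\beta\nu\delta=-\beta X$ is a combination of initial submonomials $[B_\beta]_s$ ending at $a$, and the lemma amounts to showing that only $B'_\beta=[B_\beta]_{m_\beta n_\beta-1}$ occurs. But this is exactly the step you leave unproved, and the route you sketch does not deliver it.

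The dimension count carries no information. Exactness of the periodic resolution of $S_c$ (note: $\psi=\nu\delta+X$ generates $\Omega^2(S_c)$, not $\Omega^2(S_b)$) already fixes $\dim_K\psi\La$. That value is compatible with any $X\in J^3$ satisfying $\psi\alpha=0$, so nothing about the lowest term of $X$ follows.

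Your intended conclusion is also false. If $X$ were in the second socle, then $\beta XJ\subseteq\beta\,\soc(e_b\La)\subseteq e_c\soc(\La)e_b=0$. Hence $\beta X\in\soc(e_c\La)e_a=0$, i.e.\ $\beta\nu\delta=0$. This fails for weighted surface algebras with a virtual $2$-cycle between $c$ and $d$, where $\beta\nu\delta$ is a nonzero multiple of $B'_\beta$.

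Your closing step is wrong too: $\beta\nu\delta\alpha$ \emph{is} a cycle at $c$, and indeed $B'_\beta\alpha=B_\beta\neq 0$. Finally, $\nu\delta\alpha$ and $\alpha\beta\nu$ start at $2$-vertices, so Lemma~\ref{basis1reg} cannot treat them `symmetrically'.

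The missing idea is the one the paper uses. Since $\delta\alpha\nprec I$, $e_d\La$ has a uniserial quotient $U(S_d,S_a,S_c)$. Since $\La$ is symmetric, $e_c\La$ is the injective hull of $S_c$, so $U\cong\psi\La$ for some $\psi\in e_c\La e_d$. Then $\psi\delta$ lies in the second socle of $e_c\La$, is not in the socle, and lies in $e_c\La e_a$, so it is a nonzero multiple of $B'_\beta$.

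Write $\psi=\sum_t a_t[B_\beta]_t+\lambda\beta\nu$. Each $[B_\beta]_t\delta=[B_\beta]_{t+1}$ is a basis element ending in $\delta$, whereas $B'_\beta$ and ${\bf A}_\beta=\beta\nu\delta$ end in $\delta^*$ (Section~\ref{summary}). Comparing coefficients gives $\lambda\neq 0$ and $\beta\nu\delta\in KB'_\beta$; the argument for $\delta\alpha\beta$ is dual.

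The two paths starting at $2$-vertices then come from the relations of the previous lemma, not from symmetry. Since $\beta X\in KB'_\beta$, the element $\beta X\alpha$ is in the socle, and by rotation so is $X\alpha\beta$. Because $\beta$ is the only arrow starting at $c$, $\nu\delta\alpha=-X\alpha$ lies in the second socle. The same argument applies to $\alpha\beta\nu=-Y\nu$.
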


\medskip

\begin{proof}

 (1) \ We start with $\beta\nu\delta$. 
The socle of $e_c\La$ is spanned by $B_{\beta} = (\beta \bar{\nu}\ldots \alpha)^m$. 
Then (by Lemma \ref{basis1reg}) the module $e_c\La$ has a basis given by
$$\{ [B_{\beta}]_t \mid 0\leq t \leq \ell(B_{\beta}) , \beta\nu\},$$ 
where $[B_{\beta}]_t:=\Theta_t(\beta)$ denotes the initial submonomial of $B_\beta$ of length $t$.

Now consider the element $\delta\alpha \in KQ$, we know $\delta\alpha\nprec I$. 
This implies that $e_d\La$ has a factor module which is uniserial of length three which is $U(S_d, S_a, S_c)$. 
Such a module must be isomorphic to a submodule of $e_c\La$.

This means that we have an element $\psi = e_c\psi e_d$ generating such
a submodule. Then 
$\psi\delta$ belongs to the second socle, that is, 
$$\psi\delta \equiv  B_{\beta}'$$
We can write  $\psi = \sum_{t} a_t[B_{\beta}]_t + \lambda \beta\nu$ where
the terms appearing the in the sum  end at vertex $d$. So if the
cycle of $\beta$ does not pass through $d$ then there is no such 
non-zero sum and then it
follows that $\beta\nu\delta$ is in the second socle. 

\medskip

Otherwise, there can be a monomial $[B_{\beta}]_t= [B_{\beta}]_te_d$ 
ending  in the arrow
$\nu$. If so then $[B_{\beta}]_t\delta = [B_{\beta}]_{t+1}$ but a linear
combination of such elements is not in the second socle. 

Then $\sum_t a_t[B_{\beta}]_t\delta + \lambda \beta\nu\delta$ is in the
second 
socle only if the sum is zero, then $\beta\nu\delta$ is in the second socle.

\bigskip
Similarly one shows that $\delta\alpha\beta$ is in the second socle of
the algebra.

(2) \ To show that the other two terms also are in the second socle,
we apply the previous Lemma.

Since $\beta\nu\delta$ is in 
the second socle, the element $\beta X$ also is in the second socle. This starts
at a 1-vertex and therefore $\beta X$ is a multiple of $B_{\beta}'$. This gives
that $\beta X\alpha$ is in the socle. Now $\alpha$ is the only way to get
a socle element from $B_{\beta}'$ and therefore 
$B_{\beta}'\alpha$ is non-zero in the socle, i.e. is a non-zero
multiple of $B_{\beta}$. By rotation, $X\alpha\beta$ is a non-zero socle element. Since $\beta$ is the only 
arrow starting at $c$ it follows that $X\alpha$ is in the second socle. This shows that $\nu\delta\alpha$ 
is in the second socle.

Similar arguments show that $\alpha\beta\nu$ is in the second socle. \end{proof}

Now consider a block of type $V_1$ in $Q$ with vertices $x,y$ (and arrows $\eta,\ve$). We assume $|Q_0|>2$ and then 
there are no loops at $x,y$, hence the paths of length $2$ are $\nprec I$ (see also Lemma \ref{lemB}).  

\begin{lemma} Assume the paths $\eta\ve\eta$ and $\ve\eta\ve$ are $\prec I$. Then there are minimal relations satisfying
	$$\eta\ve\eta + \eta X=0, \ \mbox{and} \ \ \ve\eta\ve + X\ve =0 \ (X\in J^3)$$
\end{lemma}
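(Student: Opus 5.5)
The plan is to follow the proof of the preceding lemma for blocks of type V$_2$, with the block $(\eta \ \ve)$ playing the role of $(\beta \ \nu \ \delta \ \alpha)$. Here $\eta:y\to x$ is the unique arrow starting at the $1$-vertex $y$, and $\ve:x\to y$ is the unique arrow ending at $y$.

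\emph{Writing the relation for $\eta\ve\eta$.} By assumption $\eta\ve\eta\prec I$. Every path starting at $y$ begins with $\eta$, so the minimal relation can be written as $\eta\ve\eta+\eta X=0$ with $X\in e_x\La e_x$. This is the relation $\rho_6$ from Section \ref{summary}, with $-X={'A}_{\eta}$. As in Section \ref{summary}, all summands of $X$ can be taken to end with $\eta^*$; any summand ending with $\ve\eta$ is absorbed by rescaling with a unit $1-z$. Then $X\in J$. We show $X\in J^3$:
\begin{itemize}
\item $X$ has no summand of length $1$, since there is no loop at $x$ (as $|Q_0|>2$).
\item A summand of length $2$ would be a path $x\to x$ of length $2$ ending with $\eta^*$, hence of the form $\bar{\ve}\eta^*$. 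Then $t(\bar{\ve})=s(\eta^*)$, i.e. $t(\bar\ve)$ and $s(\eta^*)$ lie in a $2$-cycle with $x$. Lemma \ref{gfV1} and its proof exclude this configuration, because it forces $Q=Q^T$ or $Q^{T'}$.
\end{itemize}
Hence $X\in J^3$.

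\emph{Using the exact sequence for $S_x$.} Next consider the exact sequence $(*)$ of Section \ref{sec:2} for the vertex whose projective cover involves $P_y$ as the unique summand. Concretely, take $S_y$:
$$P_y\stackrel{\ve}{\longrightarrow}P_x\stackrel{d_2}{\longrightarrow}P_x\stackrel{\eta}{\longrightarrow}P_y.$$
Here $d_1$ is left multiplication by $\eta$, so $\Omega^2(S_y)=\ker(\eta\cdot)$ is cyclic, generated by some $\psi\in e_x\La e_x$. Since $\eta(\ve\eta+X)=0$ and $\ve\eta+X\notin J^3$, the element $\psi=\ve\eta+X$ is a minimal generator, and we may take $d_2=\psi$. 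This is the same device used for $S_{c_i}$ in Section \ref{summary}. Exactness at the next term gives $\Omega^3(S_y)=\ker(\psi\cdot)\cong\Omega^{-1}(S_y)$, which is generated by the unique arrow ending at $y$, namely $\ve$. After the admissible adjustment of $\ve$ fixing $d_3=\ve$, we get $\psi\ve=0$, that is $\ve\eta\ve+X\ve=0$. This is the required second relation, and it coincides with $\rho_5$, giving $A'_{\overline{\ve}}={'A}_\eta$ as in Section \ref{summary}.

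\emph{Expected obstacle.} The delicate step is justifying $X\in J^3$ and, above all, ensuring that the adjustment of $\ve$ needed to fix $d_3=\ve$ does not destroy the already normalized shape of the first relation. I would handle this exactly as in \cite[Proposition 4.3]{AGQT}. The adjustment replaces $\ve$ by $\ve u$ with $u$ a unit of $e_y\La e_y$, and possibly adds terms in $J^2$. Since $y$ is a $1$-vertex with no loop, $e_y\La e_y$ modulo its radical is scalars, and its radical lies in $J^2$ (in fact it is spanned by $\eta\ve$-type cycles). So the change only rescales $\ve$ and modifies $X$ by elements of $J^3$, keeping $X\in J^3$.
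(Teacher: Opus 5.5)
Your proposal is correct and follows the paper's proof. You write the relation for $\eta\ve\eta$ as $\eta(\ve\eta+X)=0$ and take $\ve\eta+X$ as the generator of the cyclic module $\Omega^2(S_y)$; your heading says $S_x$, but the argument correctly uses $S_y$. You then use $\Omega^3(S_y)\cong\ve\La$, after adjusting $\ve$, to obtain $(\ve\eta+X)\ve=0$. Your additional check that $X\in J^3$ makes explicit a point the paper leaves implicit: there is no loop at $x$, and a summand $\bar{\ve}\eta^*$ would force $Q=Q^T$ or $Q^{T'}$, as in the proof of Lemma \ref{gfV1}.
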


\begin{proof} We can write the minimal relation for $\eta\ve\eta$ in the form 
as stated. Then consider the exact sequence for $S_y$, the module
$\Omega^2(S_y)$ is cyclic and as a generator we can take $\ve\eta + X$. 
Since $\Omega^3(S_y)\cong \ve \La$ there is an arrow $\ve' = \ve u$ 
(for $u$ a unit) and $(\ve\eta + X)\ve'=0$, and then $\ve\eta\ve + X\ve =0$. \end{proof}

\medskip

\begin{lemma} The elements $\eta\ve\eta$ and $\ve\eta\ve$ belong to the second socle of the algebra.
\end{lemma}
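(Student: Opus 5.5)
We follow the same two-step pattern as in the proof of the analogous lemma for blocks of type V$_2$. We first treat $\eta\ve\eta$, which starts at the $1$-vertex $y$, and then transfer the conclusion to $\ve\eta\ve$ using the relations $\eta\ve\eta+\eta X=0$ and $\ve\eta\ve+X\ve=0$ (with $X\in J^3$) from the previous lemma. Throughout, $\eta$ is the unique arrow starting at $y$ and $\ve$ the unique arrow ending at $y$.

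\emph{Step 1: $\eta\ve\eta$ lies in the second socle.} By Lemma \ref{basis1reg}, $e_y\La$ has a basis consisting of the initial submonomials $\Theta_t(\eta)$ of $B_\eta=(\eta\bar{\ve}\cdots\eta^*)^{m}$, $0\leq t\leq \ell(B_\eta)$, together with $\eta\ve=\eta f(\eta)$. Since $\ve\eta\nprec I$ (see Section \ref{sec:3}), the module $e_x\La$ has a uniserial factor $U(S_x,S_y,S_x)$. Symmetry of $\La$ then embeds it into $e_y\La$: it is generated by some $\psi=e_y\psi e_x$ with $\psi\ve\neq 0$ and $\psi\ve\eta$ spanning $\soc(e_y\La)$. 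Hence $\psi\ve$ lies in the second socle, and we may write $\psi=\sum_t a_t\Theta_t(\eta)+\lambda\eta$, where the sum runs over those $t$ with $\Theta_t(\eta)$ ending at $x$. Among these, the monomials ending in an arrow $\ve$-compatibly are the only ones for which $\Theta_t(\eta)\ve$ is again a basis monomial $\Theta_{t+1}(\eta)$. Such monomials are not in the second socle unless $t+1=\ell(B_\eta)-1$, and that term is itself a multiple of $B'_\eta$. Comparing coefficients in the basis, the non-socle contributions must cancel, so $\lambda\eta\ve$ is in the second socle. The scalar $\lambda$ is nonzero, since otherwise $\psi\in J^2$ and $\psi\ve\eta$ could not generate the socle (degree reasons along the $g$-cycle). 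Therefore $\eta\ve$ is in the second socle, and hence so is $\eta\ve\eta$.

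\emph{Step 2: $\ve\eta\ve$ lies in the second socle.} By Step 1, $\eta X=-\eta\ve\eta$ lies in the second socle, and it starts at the $1$-vertex $y$. By the basis of $e_y\La$, it is therefore a scalar multiple of $B'_\eta$, the initial submonomial of length $\ell(B_\eta)-1$, which ends with $\eta^*$ at $x$. Right multiplication by $\ve$ is the only way to reach the socle from $B'_\eta$, so $B'_\eta\ve=c B_\eta$ with $c\neq 0$. Consequently $\eta X\ve$ is a nonzero socle element. Rotating via the symmetrizing form, $X\ve\eta$ is also a nonzero socle element. Since $\eta$ is the unique arrow starting at $y$ and $X\ve\in e_x\La e_y$, it follows that $X\ve$ lies in the second socle. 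The relation $\ve\eta\ve=-X\ve$ then gives the claim.

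The main obstacle is Step 1, namely showing that no combination of monomials $\Theta_t(\eta)$ ending at $x$ can absorb $\eta\ve\eta$ into something outside the second socle. The first approach is to use that the $g$-orbit of $\eta$ passes through $x$ only at positions where the next arrow is $\bar{\ve}$. We would then exploit that $Q$ is neither $Q^T$ nor $Q^{T'}$ (used via Lemma \ref{gfV1}) to ensure these monomials have length incompatible with the second socle.
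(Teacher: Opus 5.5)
Step 1 has a genuine gap. The vertices are shifted by one arrow, and the intermediate conclusion you reach is false.

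\emph{What goes wrong.} The uniserial module $U(S_x,S_y,S_x)$ coming from $\ve\eta\nprec I$ has socle $S_x$, so symmetry embeds it into $e_x\La$, not into $e_y\La$. Accordingly, for $\psi=e_y\psi e_x$ the element $\psi\ve\eta$ lies in $e_y\La e_x$ and cannot span $\soc(e_y\La)=KB_\eta\subseteq e_y\La e_y$. Note also that $B_\eta=(\eta\bar{\ve}\cdots\eta^*\ve)^m$ must end with $\ve$, the only arrow into $y$.

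More seriously, $\eta\ve$ is \emph{not} in the second socle. Since $\ve$ is the only arrow ending at $y$, $\soc^2(e_y\La)/\soc(e_y\La)\cong S_x$, so $\soc^2(e_y\La)e_y=KB_\eta$. Thus $\eta\ve\in\soc^2$ would force $\eta\ve\in KB_\eta$, contradicting Lemma \ref{basis1reg}. Equivalently, it would force $\eta\ve\eta\in\soc(e_y\La)e_x=0$.

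Your claim $\lambda\neq 0$ is also impossible in your set-up. Every element of $e_yJe_x$ has the form $\eta w$ with $w\in e_x\La e_x$, so $\lambda\neq 0$ makes $w$ a unit and $\psi\La=\rad(e_y\La)$, which is not uniserial of length $3$. Finally, Step 2 needs $\eta X=-\eta\ve\eta$ to be a \emph{nonzero} multiple of $B'_\eta$. Your Step 1 would contradict this rather than supply it.

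\emph{How to fix it (the paper's argument).} Use $\eta\ve\nprec I$ instead. Then $e_y\La$ has a uniserial factor $U(S_y,S_x,S_y)$ with socle $S_y$, hence a copy inside $e_y\La$ generated by some $\psi\in e_y\La e_y$. Moreover $0\neq\psi\eta\in\soc^2(e_y\La)e_x=KB'_\eta$.

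Write $\psi=\sum_{k\geq 1}a_k\Theta_{kn_\eta}(\eta)+\lambda\eta\ve$ in the basis elements ending at $y$. Each $\Theta_{kn_\eta}(\eta)\eta$ is either $0$ or $\Theta_{kn_\eta+1}(\eta)$, which ends in $\eta$. On the other hand, $\eta\ve\eta=c_\eta A_\eta$ with $A_\eta$ an initial submonomial of $B_\eta$ ending in $\eta^*$ (Section \ref{summary} and the end of Section \ref{bases}). Comparing coefficients gives $\lambda\neq 0$ and $A_\eta=B'_\eta$, so $\eta\ve\eta\in\soc^2(\La)$.

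The obstacle you anticipated does not arise. The only basis elements ending at $x$ are those ending in $\eta$ or in $\eta^*$, since these are the only arrows into $x$; no length argument via Lemma \ref{gfV1} is needed.

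Your Step 2 then goes through. It can also be replaced by a one-line argument: since $e_yJ^2=\eta J$, we get $\ve\eta\ve J^2=\ve(\eta\ve\eta J)\subseteq J\soc(\La)=0$.
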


\begin{proof}

 The socle of $e_y\La$ is spanned by an element $B_{\eta} = (\eta\bar{\ve}\ldots \eta^*\ve)^m$. By Lemma 5.8, the module $e_y\La$ has basis 
$$\{ [B_{\eta}]_t \mid 0\leq t\leq \ell(B_{\eta})\} \cup \{ \eta\ve\}$$
We know $\eta\ve\nprec I$ and then $e_y\La$ has uniserial factor module
$U(S_y, S_x, S_y)$ which then also is isomorphic to a 
submodule of $e_y\La$. 

Let $\psi = e_y\psi e_y$ be a generator for such a submodule, then 
$\psi \eta$ generates the second socle and hence 
$$\psi\eta \equiv B_{\eta}'$$
Write
$$\psi = \sum_t [B_{\eta}]_t + \lambda \eta\ve
$$
An element $[B_{\eta}]_t\eta$ is either zero, or is equal to $[B_{\eta}]_{t+1}$.
But if it is non-zero it does not generate the second socle
(the generator for the second socle must end in $\eta^*$. 
We deduce that the sum multiplied with $\eta$ must be zero, and then 
$\eta\nu\eta$ is in the second socle, as required. \end{proof} \medskip

 Moreover, we know that each path $\alpha f(\alpha)=c_{\ba}A_{\ba}$ belongs to the 
second socle \cite[see Proposition 9.1]{AGQT}. Therefore, $A_{\ba}$ must be a path of length $m_{\ba} n_{\ba} -1$, because otherwise, it is of length $k<m_{\ba} n_{\ba} -1$, and then $A_{\ba} g^k(\ba)$ is an element of the socle 
of length $k+1<m_{\ba} n_{\ba}$, so we get $B_{\ba}=0$, a contradiction. \smallskip  

Similarly, involving the second socle paths of length $3$ in blocks $V_1$ and $V_2$, we can prove that also the 
remaining paths $A_{\eta}$ have length $m_\eta n_\eta -1$. Concluding, for any arrow $\alpha\in Q_1$, we 
have $A_\alpha=\alpha g(\alpha)\cdots g^{m_\alpha n_\alpha -2}(\alpha)$. \medskip 

Finally, observe that for any arrow $\alpha$ of $\hat{Q}$, we have an $f$-orbit $(\alpha \ f(\alpha) \ f^2(\alpha))$ 
and one can easily see from the relations defining $\hat{\La}$ that then 
$$c_{\ba} B_{\ba} =c_{\ba} A_{\ba} f^2(\alpha) = \alpha f(\alpha) f^2(\alpha) = 
c_{g(\alpha)} \alpha A_{g(\alpha)} = c_{\alpha} B_\alpha.$$  

\subsection{Bases of projective $\hat{\La}$-modules} \label{hat:bases}

In the next step, we describe bases of the indecomposable projective $\hat{\La}$-modules. Fix a vertex 
$z\in \hat{Q}_0=Q_0$ and consider the projective module $\hat{P}_z=e_z\hat{\La}$. \medskip 

First assume that $z$ is different from $c_i,d_i$ and $y_i$, i.e. $z$ is $2$-regular in $Q$. Let $\alpha,\ba$ 
be the arrows in $Q$ starting from $z$. Then it follows from Lemma \ref{basis2reg} that the set $\cB_z$ of all 
initial submonomials of $B_\alpha$ and $B_{\ba}$ is a basis of $P_z=e_z\La$. \smallskip 

We claim that $\cB$ is also a basis for $\hat{P}_z$. Because $z$ is a $2$-vertex of $Q$, we infer that any 
path in $\hat{Q}$ starting from $z$ and containing arrows $\xi_i,\mu_i$ or $\rho_i$ must contain a subpath of 
the form $\alpha_i\xi_i=A_{\overline{\alpha_i}}$, $\nu_i\mu_i=A_{\overline{\nu_i}}$ or $\ve_j\rho_j=A_{\overline{\ve_j}}$. 
As a result, every path in $\hat{Q}$ staring from $z$ is a combination of paths in $Q$, and hence, $\cB$ 
generates $\hat{P}_z$. Now, it is sufficient to see that $\cB$ is also independent set in $\hat{\La}$. Indeed, 
if a non-zero combination of paths in $\cB$ is zero in $\hat{\La}$, then it belongs to $\hat{I}\setminus I$ 
(viewed as an element of $K\hat{Q}$), so it is generated by relations in $R_1\cup R_2$. But this is impossible, 
since all paths involved in relations from $R_1\cup R_2$ are not paths in $Q$. \medskip 

Finally, let $z$ be a $1$-regular vertex in $Q$. Let first $z=c_i$ or $d_i$. Since the arguments are dual, we 
will describe the basis of $\hat{P}_z$ only for $z=c_i$. We know from Lemma \ref{basis1reg}, that the module 
$e_z\La$ has a basis $\cB$ formed by all initial submonomials of $B_{\beta}$, where $\beta=\beta_i$, together 
with $\beta_i\nu_i$. As in $2$-regular case, every path in $\hat{Q}$ starting from $z$ and containing subpath 
of the form $\alpha_i\xi_i$, $\nu_i\mu_i$ or $\ve_j\rho_j$ can be written as a combination of paths in $Q$. 
Moreover, in $\hat{\La}$ we have $\beta_i\nu_i=(\xi_i\mu_i)^{m_i-1}\xi_i$, and hence, we conclude that $\hat{P}_z$ 
is generated by the initial submonomials of $B_\beta$ and initial submonomials of $B_{\xi}$, where $\xi=\xi_i$ and 
$B_{\xi}=(\xi_i\mu_i)^{m_i}$. As above, one can see that this set is also independent, due to the shape of 
generators in $R_1\cup R_2$. \medskip 

Applying analogous arguments, we conclude that $\hat{P}_{d_i}$ has a basis consisting of all initial submonomials 
of $B_{\delta_i}$ and $B_{\mu_i}=(\mu_i\xi_i)^{m_i}=c_{\delta_i} B_{\delta_i}$ (except, say $B_{\mu_i}$). \medskip 

Now, it remains to see that the same works for $z=y_j$, namely, the module $\hat{P}_{y_j}$ has a basis consisting 
of all initial submonomials of $B_{\eta_j}$ and $B_{\rho_j}=\rho_j^{m'_j}\equiv c_{\eta_j} B_{\eta_j}$ (except, 
say $B_{\rho_j}$). \medskip  

Hence, we have described bases of the indecomposable projective $\hat{\La}$-modules. Moreover, the socle 
elements in $\La$ induce socles of projective $\hat{\La}$-modules, so in particular, we conclude that 
$\hat{\La}$ is a weakly symmetric algebra.

\subsection{Symmetrizing form for $\hat{\La}$} \label{hat:symmetric} 

Now, we will show that algebra $\hat{\La}$ is also symmetric, by showing a particular symmetrizing form for 
$\La$ induced from a symmetrizing form for $\La$. \medskip 

Indeed, applying results from \cite[2.11]{Lnk}, we deduce that a $K$-linear form $t:A\to K$ is non-dedenerate and 
symmetric if and only if the induced homomorphism $\Phi:A\to A^*$, $\Phi(a)=t\cdot a$, is a monomorphism of 
($A$-$A$)-bimodules. We will work with symmetrizing $K$-linear form, instead of $K$-bilinear form used in the 
definition of symmetric algebra (these two approaches are equivalent, see \cite[Theorem IV.2.2]{SkY}). \smallskip 

Since $\La$ is symmetric, we can fix a non-degenerate and symmetric $K$-linear form $t$ (associated to the 
bilinear form via $t=(1,-)_\La$). \smallskip 

We want to construct a non-degenerate symmetric $K$-linear form $\hat{t}:\hat{\La}\to K$ on $\hat{\La}$. Clearly, 
it should coincide with $t$ when restricted to projectives $\hat{P}_z$ with $z$ a $2$-vertex of $Q$. We can either 
put $\hat{t}(u)=t(u)$ for any element $u$ from the basis of $\hat{P}_z$, $z$ a $2$-vertex of $Q$, and then extend 
$\hat{t}$ to projectives $\hat{P}_z$ at $1$-regular vertices $z$ of $Q$. Instead, we will define $\hat{t}$ globally. \smallskip 

First, let $\cB_z$ denotes the basis of $\hat{P}_z$ constructed in \ref{hat:bases}. In particular, $\cB_z$ coincides 
with a basis of the projective $\La$-module $P_z$, if $z$ is a $2$-vertex of $Q$, and otherwise, $\cB_z$ consists of 
initial submonomials of $B_\eta$ and $B_{\bar{\eta}}$, say except $B_{\bar{\eta}}$, where $\eta$ is the unique arrow 
in $Q_1$ starting from $z$ and $\bar{\eta}=\xi_i,\mu_i$ or $\rho_j$. \smallskip 

Finally, we define $\hat{t}:\hat{\La}\to K$ which assigns to the coset $u+\hat{I}$ of the path $u\in\cB=\bigcup_{z\in Q_0}\cB_z$ 
the following element 
$$\hat{t}(u+I)=\left\{ \begin{array}{cc} 
c_\alpha^{-1}, & \mbox{if } u=B_\alpha \mbox{ for an arrow }\alpha\in \hat{Q}_1, \\ 
0, & \mbox{otherwise,} 
\end{array} \right.$$ 
and extend linearly. \medskip 

It is easy to see that this is well-defined $K$-linear form, because of the equalities 
$c_\alpha B_{\alpha}=c_{\ba}B_{\ba}$, for any arrow $\alpha\in\hat{Q}_1$, and it is symmetric, since the 
function $c_\bullet$ is constant on $g$-orbits in $\hat{Q}$. Moreover, $\hat{t}(B_\alpha)=t(B_{\alpha})$, 
for any $\alpha\in Q_1$ (and the same for initial submonomials). \smallskip 

It remains to see that $\hat{t}$ is non-degenerate, that is, its kernel contains no non-zero right 
ideal. Due to \cite[Lemma 2.11.4]{Lnk}, it is equivalent to say that the induced homomorphism $\hat{\Phi}:\hat{\La}
\to \hat{\La}^*$, $\hat{\Phi}(u)=\hat{t}\cdot u$, is injective. On the other hand, it follows from the shape of bases 
constructed in \ref{hat:bases}, that any projective module $\hat{P}_z$ is injective, even isomorphic to 
$\hat{I}_z=D(\hat{A}e_z)$ (in particular, $\hat{\La}$ is weakly symmetric). Therefore, to show that $\hat{\Phi}$ is 
a monomorphism, it is enough to see that $\hat{\Phi}(u+I)$ is non-zero for every socle element $u\in\soc(\hat{P}_z)$. 
But this is immediate from the definition of $\hat{t}$ and $\hat{\Phi}$, and we are done.

\subsection{Resolutions of simple $\hat{\La}$-modules} 

In this part, we will show that all simple modules over $\hat{\La}$ are periodic of period $4$. Let $z$ be an arbitrary 
vertex of $Q$. By the assumption, $S_z$ is periodic of period $4$ as a $\La$-module, and we have the following exact 
sequence in $\mod \La$ 
$$\xymatrix{0 \ar[r] & S_z \ar[r] & P_z \ar[r]^{d_3} & P_z^- \ar[r]^{d_2} & P_z^+ \ar[r]^{d_1} & P_z \ar[r] & S_z \ar[r] & 0},  
\leqno{(\dagger)}$$  
where $\ima (d_s) \cong \Omega_\La^s(S_z)$; see Section \ref{sec:2}. The exact sequence $(\dagger)$ is determined by the 
`middle map' $d_2$, whose columns are generators of $\Omega^2_{\La}(S_z)$. We shall prove using 
relations in $\hat{I}$ that there is an analogous exact sequence in $\mod\hat{\La}$ providing a $4$-periodic resolution 
of $S_z$. \smallskip 

To begin with, by the construction each $z$ is a $2$-vertex of $\hat{Q}$, so there are two arrows $\alpha:z\to j$ 
and $\ba:z\to k$, and two arrows $\gamma:x\to z$ and $\gamma^*:y\to z$ in $\hat{Q}$. Hence we always have 
part of projective resolution of $S_z$ in $\mod\hat{\La}$ given as 
$$\xymatrix{ \hat{P}_z^+ \ar[r]^{\hat{d}_1} & \hat{P}_z \ar[r] & S_z \ar[r] & 0}$$ 
with $\ker(\hat{d}_1)\cong \Omega_{\hat{\La}}(S_z)$, where $\hat{P}_z^+=\hat{P}_{j}\oplus \hat{P}_{k}$ and 
$\hat{d}_1=[\alpha \ \ba]$. Due to symmetricity of $\hat{\La}$ (Section \ref{hat:symmetric}), we have part of injective 
resolution of the form (see also \cite[Lemma 4.1]{AGQT}): 
$$\xymatrix{0\ar[r] & S_z \ar[r] & \hat{P}_z \ar[r]^{\hat{d}_3} & \hat{P}_z^-}$$ 
with $\ima(\hat{d}_3)\cong \Omega^{-1}_{\hat{\La}}(S_z)$, where $\hat{P}_z^-=\hat{P}_{x}\oplus \hat{P}_{y}$ and 
$\hat{d}_3=\vec{\gamma \\ \gamma^*}$. We will show in a few steps below that there exists an exact sequence in 
$\mod \hat{\La}$ of the form 
$$\xymatrix{0 \ar[r] & S_z \ar[r] & \hat{P}_z \ar[r]^{\hat{d}_3} & \hat{P}_x\oplus\hat{P}_y \ar[r]^{\hat{d}_2} & 
\hat{P}_j\oplus\hat{P}_k \ar[r]^{\hat{d}_1} & \hat{P}_z \ar[r] & S_z \ar[r] & 0 }\leqno{(\hat{\dagger})}$$ 
with $\ima(\hat{d}_s)\simeq\Omega_{\hat{\La}}^s(S_z)$. Since $\hat{d}_1$ and $\hat{d}_3$ are known, we will only 
specify the `middle map' $\hat{d}_2$ in $\mod\hat{\La}$, i.e. a homomorphism $\hat{d}_2:\hat{P}^-_z\to\hat{P}^+_z$ 
in $\mod\hat{\La}$ with $\ker(\hat{d}_2)=\ima(\hat{d}_3)$ and $\ima(\hat{d}_2)=\ker(\hat{d}_1)$. \smallskip 

We divide the rest part into two major cases: $z$ is a $2$-vertex and $z$ is a $1$-vertex (of $Q$). In each 
case, we construct the correct `middle map' $\hat{d}_2$ to get the exact sequence $(\hat{\dagger})$. The 
following table provides a handy summary of the content in both cases. \medskip 

\begin{tabular}{l|l|l|l}
\(\textsc{Case I} \) & \({\it Step 1}\) & \({\it Step 2}\) & \({\it Step 3}\) \\ 
\(\mbox{z is a $2$-vertex}\) & \(\mbox{$z$ is of type A}\) & \(\mbox{$z$ is of type B ($k$ is $1$-regular)} \) & \(\mbox{$z$ is of type C}\) \\ 
 &  & 
\(\mbox{(a) $\hat{d}_2$ for $k$ in block of type $V_2$} \) & 
\(\mbox{(a) $j\in V_2$ and $k\in V_2$} \) \\ 
\(\xymatrix@R=0.3cm@C=0.4cm{x\ar[rd]^{\gamma}&&j\\&z\ar[ru]_{\alpha}\ar[rd]_{\ba}&\\y\ar[ru]^{\gamma^*}&&k}\) & 
\(\xymatrix@R=0.1cm@C=0.2cm{&\ar[rd]&&\\ 
x\ar[rdd]&&j\ar[ru]\ar[rd]&\\ 
&&&\\
&z\ar[ruu]\ar[rdd]&&\\ 
&&& \\ 
y\ar[ruu]&&k\ar[ru]\ar[rd]& \\ &\ar[ru]&&}\) & 

\(\xymatrix@R=0.3cm@C=0.2cm{&&&&&&&&& \\&&& & j\ar[lu]\ar[ru]& && k \ar[rd] && \\
&&& \ar[ru]&&&z\ar[llu]\ar[ru]&  & \circ \ar[ld]\\ &&& &x\ar[rru]&  &&y\ar[lu]&& }\) & 

\(\xymatrix@R=0.3cm@C=0.2cm{& &&&&\\&  &j\ar[ld]&& y\ar[ld]& \\
& \circ\ar[rd]&& z \ar[lu]\ar[rd]&&\circ\ar[lu] \\& & x\ar[ru]&&k\ar[ru]&}\) \\
\cline{3-4}
\( \) & \( \) & \(\mbox{(b) $\hat{d}_2$ for $k$ in block of type $V_1$}\) & \(\mbox{(b) $j\in V_2$ and $k\in V_1$}\) \\ 
\(\) & \(\) & 
\(\xymatrix@R=0.2cm@C=0.3cm{&&  &j&& & \\ &&
&& z \ar[lu]\ar@<+0.4ex>[rr]&&k=y\ar@<+0.4ex>[ll] \\&&
 & x\ar[ru]&&&}\) & 
\(\xymatrix@R=0.3cm@C=0.2cm{&  &j\ar[ld]&& & \\
& \circ\ar[rd]&& z \ar[lu]\ar@<0.4ex>[rr]&&k=y \ar@<0.4ex>[ll]\\& & x\ar[ru]&&&}\) \\ 
\cline{3-4}
\(\) & \(\) & \(\) & \(\mbox{(c) $j\in V_1$ and $k\in V_1$} \) \\ 
\(\) & \(\) & \(\) & \(\xymatrix@C=0.2cm{j=x\ar@<+0.4ex>[rr]&&\ar@<+0.4ex>[ll] z\ar@<+0.4ex>[rr]&&k=y\ar@<+0.4ex>[ll]}\) \\  
\hline 
\(\textsc{Case II} \) & \(\mbox{\textit{Step 1.} $z=c_i$ or $d_i$}\) & \(\mbox{\textit{Step 2.} $z=y_i$ }\) & \(\) \\ 
\(\mbox{$z$ ia a $1$-vertex}\) & 
\( \xymatrix@R=0.2cm@C=0.2cm{ &\ar[ld]z& \\ j\ar[rd]&& x \ar[lu] \\ & \bullet \ar[ru]&} \) & 
\(\xymatrix@C=0.2cm@R=0.3cm{&& &&&\circ \\&& z\ar@<+0.4ex>[rr]&& j=x \ar@<+0.4ex>[ll]\ar[ru]\ar[rd] & \\
&& &&& \circ }\) & 
\(\) 
\end{tabular} \medskip 

\textsc{Case I.} In the first case, we assume that $z$ is a $2$-vertex of $Q$. Then arrows $\gamma,\gamma^*$ ending 
at $z$ and arrows $\alpha,\ba$ starting at $z$ are arrows of both $\hat{Q}$ and $Q$, so in particular, we have 
$P_z^-=P_x\oplus P_y$, $P_z^+=P_j\oplus P_k$, and $d_1=[\alpha \ \ba]$, $d_3=\vec{\gamma \\ \gamma^*}$ 
are given by the same formulas as $\hat{d}_1$ and $\hat{d}_3$, respectively, but viewed as homomorphism of modules 
over different algebras. We proceed in three steps below, according to the type of vertex $z$. \medskip 

{\it Step 1}. First assume $z$ is a vertex of type A, i.e. both $x,y$ are $2$-vertices. It follows from the description 
of the local shape of $Q$ around $1$-vertices, that also $j,k$ are $2$-vertices; see Proposition \ref{prop:4.3}. Then the indecomposable 
summands of modules $\hat{P}_z^{\pm}$ have the same bases corresponding summands of modules $P_z^{\pm}$. It follows 
that images and kernels of maps $d_s$ are the same as images and kernels of induced maps $\hat{d}_s$ between projective 
$\hat{\La}$-modules and therefore, it is sufficient to take $\hat{d}_2=d_2$ (see also arguments presented in Section 
\ref{hat:bases}). Hence we are done in this step. \medskip 

{\it Step 2}. Now, let $i$ be a vertex of type B, say $y$ is a $1$-vertex and $x$ is a $2$-vertex  (the second case 
is analogous). Then $k$ is also a $1$-vertex and vertices $y,k$ belong to a block of type V$_2$ or V$_1$ (in the latter 
case $y=k$). In particular, also $j$ is a $2$-vertex. \smallskip 

(a) Let $k,y$ be vertices in a block of type V$_2$. Without loss of generality, we assume that $\ba=\alpha_i$, for some 
$i\in\{1,\dots,p\}$. Then also $\gamma^*=\delta_i$ and $\alpha_i\beta_i\nu_i$ is involved in the relation 
$\rho_1^{(i)}\in I$. For simplicity, we assume that all parameters $c_\alpha$ are $1$ (it does not affect the 
ranks of matrices used to compute required dimensions). 

There is a relation $\alpha f(\alpha)-\ba('A_{\ba})$ in $\hat{I}\cap\hat{I}$, and a relation in $\hat{I}$ of the 
form $\alpha_i\xi_i-A_\alpha=\alpha(-'A_\alpha)+\ba \xi_i$. Therefore, we conclude that the map 
$\hat{d}_2:\hat{P}_x\oplus\hat{P}_y\to\hat{P}_j\oplus\hat{P}_k$ given by the following matrix 
$$\hat{M}=\vec{f(\alpha) & -'A_\alpha \\ -'A_{\ba} & \xi_i }$$ 
satisfies $\hat{d}_1\hat{d}_2=0$. Using properties desribed in Section \ref{summary}, we also 
get $\hat{d}_2\hat{d}_3=0$. In other words, we have $\ima(\hat{d}_3)\subseteq \ker(\hat{d}_2)$ 
and $\ima(\hat{d}_2)\subseteq \ker(\hat{d}_1)$. We will show that these inculsions are equalities. \smallskip 

Consider arbitrary element $\vec{X \\ Y}$ in $\ker(\hat{d}_2)$, where $X\in \hat{P}_x$ and $Y\in\hat{P}_y$ are 
expressed in terms of bases as follows 
$$X=\sum_{s\geqslant 0}\lambda_s[B_{\gamma}]_s+\sum_{s\geqslant 1}\lambda'_s[B_{\bar{\gamma}}]_s \quad\mbox{and} 
\quad Y=\sum_{s\geqslant 0}\sigma_s[B_{\delta_i}]_s+\sum_{s\geqslant 1}\sigma'_s[B_{\mu_i}]_s$$ 

Rewriting $\hat{M}\cdot \vec{X \\ Y}=0$, we obtain the following two equations 
$$f(\alpha)X=\ 'A_\alpha Y\leqno{(1)}$$ 
$$\xi_iY=\ 'A_{\ba}X\leqno{(2)}$$ 

Now, observe that $f(\alpha)\gamma=A_{g(\alpha)}$, and this element belongs to the second socle of $P_j$. Hence 
$f(\alpha)\gamma g(\gamma)\in\soc(\La)\cap(e_j\La e_k)$ is zero. Similarly, $'A_{\alpha}\mu_i$ has the form 
$\cdots \nu_i\mu_i=\cdots A_{\overline{\nu_i}}\in e_j\hat{\La} e_k$, thus $'A_\alpha[B_{\mu_i}]_s=0$, for 
$s\geqslant 2$, by the zero relations in $\hat{I}$, and $'A_\alpha\mu_i=0$ except $'A_\alpha=\nu_i$, and then 
it is $A_{\overline{\nu_i}}=A_{f(\alpha)}$, and $B_i$ is critical with $\alpha=g^{-1}(\nu_i)$. \smallskip 

Consequently, equation (1) gives   
$$\lambda_0 f(\alpha) +\lambda_1f(\alpha)\gamma + \sum_{s\geqslant 1}\lambda'_s[B_{f(\alpha)}]_{s+1}= 
\sigma_0 A'_{g(\alpha)}+\sigma_1 A_{g(\alpha)}+\sigma_2 B_{g(\alpha)}$$ 
in case when $B_i$ is non-critical (otherwise, the right hand side has an additional summand $\sigma'_1A_{f(\alpha)}$). 
Hence we obtain the following linear system 
$$\left\{ \begin{array}{l} 
\sigma_1=\lambda_1, \ \lambda'_{m_{f(\alpha)}n_{f(\alpha)}-1}c_{g(\alpha)}=\sigma_2c_{f(\alpha)}, \\ 
\sigma_0=\lambda_0=\lambda_1'=\dots=\lambda'_{m_{f(\alpha)}n_{f(\alpha)}-2}=0 
\end{array} \right.\leqno{(*)}$$ 
or 
$$\left\{ \begin{array}{l} 
\sigma_1=\lambda_1, \ \lambda'_{m_{f(\alpha)}n_{f(\alpha)}-1}c_{g(\alpha)}=\sigma_2c_{f(\alpha)}, \\ 
\sigma_0=\lambda_0=\lambda_1'=\dots=\lambda'_{m_{f(\alpha)}n_{f(\alpha)}-3}=0\mbox{ and }
\lambda'_{m_{f(\alpha)}n_{f(\alpha)}-2}=\sigma'_1 \end{array} \right.\leqno{(*)}$$ 
when $B_i$ is critical. 

On the other hand, we have $('A_{\ba})\bar{\gamma}=A_{\beta_i}'\bar{\gamma}$, and this path is ending 
with a path of the form $\theta f(\theta)$, where $f(\theta)=\bar{\gamma}$. We know that this path 
belongs to the second socle, so $A'_{\beta_i}\bar{\gamma}\in\soc(\La)$. It follows that 
$A'_{\beta_i}\bar{\gamma}=0$, since the path is not a cycle. As a result, using relations 
$\xi_i\delta_i=A_{\beta_i}$ and $\xi_i\delta_ig(\delta_i)=0$, we rewrite the equation (2) 
in the form 
$$\sigma_0\xi_i+\sigma_1 A_{\beta_i}+ \sum_{s=1}^{2m_i-1}\sigma'_s[B_{\xi_i}]_{s+1}=
\lambda_0A'_{\beta_i}+\lambda_1 A_{\beta_i}+\lambda_2 B_{\beta_i},$$ 
which gives the following linear system 
$$\left\{ \begin{array}{l} 
\sigma_1=\lambda_1, \ \sigma'_{2m_i-1}c_{\beta_i}=\lambda_2c_{\xi_i}, \\ 
\lambda_0=\sigma_0=\sigma'_1=\dots=\sigma'_{2m_i-1}=0. 
\end{array} \right.\leqno{(**)}$$ \smallskip 

Combining $(**)$ and $(*)$, we obtain (in both cases) a system of linear equations, whose matrix is of rank 
$m_{\bar{\gamma}}n_{\bar{\gamma}}+2m_i+1=\dim_K \ima(\hat{d}_2)$. Therefore, its nullspace has dimension 
$$\dim_K \ker(\hat{d}_2)=m_\gamma n_\gamma + m_{\bar{\gamma}}n_{\bar{\gamma}} + m_{\delta_i}n_{\delta_i}+ 2m_i - 
(m_{\bar{\gamma}}n_{\bar{\gamma}}+2m_i+1)=m_\gamma n_\gamma+m_{\delta_i}n_{\delta_i}-1=$$ 
$$m_{\ba}n_{\ba}+m_\alpha n_\alpha -1=\dim_K\hat{P}_z-1=\dim_K\Omega^{-1}_{\hat{\La}}(S_z)=\dim_K \ima(\hat{d}_3).$$ 
This shows that $\ker(\hat{d}_2)=\ima(\hat{d}_3)$. \medskip 

Finally, we can similarly prove that $\ima(\hat{d}_2)=\ker(\hat{d}_1)$, by comparing dimensions. Indeed, for every 
element $\vec{X\\Y}\in\hat{P}_j\oplus\hat{P}_k$ one can write in terms of bases: 
$$X=\sum_{s\geqslant 0}\lambda_s[B_{f(\alpha)}]_s+\sum_{s\geqslant 1}\lambda'_s[B_{g(\alpha)}]_s \quad\mbox{and} 
\quad Y=\sum_{s\geqslant 0}\sigma_s[B_{\beta_i}]_s+\sum_{s\geqslant 1}\sigma'_s[B_{\xi_i}]_s.$$ 
Then $\hat{d}_1(\vec{X\\Y})=0$ is equivalent to $\alpha X+\ba Y=0$, which gives the following equation 
$$\lambda_0\alpha+\lambda_1A_{\ba}+\sum_{s\geqslant 1}\lambda'_s[B_\alpha]_{s+1}+
\sum_{s\geqslant 0}\sigma_s[B_{\ba}]_{s+1}+\sigma'_1A_\alpha=0.$$ 

Consequently, we get the following system of linear equations  
$$\left\{ \begin{array}{l}  
\lambda'_{m_\alpha n_\alpha-2}+\sigma'_1=0, \ \sigma_{m_{\beta_i}n_{\beta_i}-2}+\lambda_1=0, \ 
\sigma_{m_{\beta_i}n_{\beta_i}-1}c_{\ba}+\lambda'_{m_\alpha n_\alpha-1}c_\alpha=0 \\
\lambda_0=\lambda'_1=\dots=\lambda'_{m_\alpha n_\alpha-3}=0, \ 
\sigma_0=\sigma_1=\dots=\sigma_{m_{\beta_i}n_{\beta_i}-3}=0 
\end{array} \right.$$ 
whose matrix is of rank is $m_\alpha n_\alpha + m_{\beta_i}n_{\beta_i}-1$, and hence, we obtain that 
$$\dim_K \ker(\hat{d}_1)=m_{\bar{\gamma}}n_{\bar{\gamma}}+2m_i+1,$$ 
which is exactly the dimension of $\ima(\hat{d}_2)$. \smallskip 

This provides $\hat{d}_2$ in case (a). \medskip 

In case (b) and all further cases we follow the same strategy, but we omit the technical details of computations. 
We only give appropriate $\hat{d}_2$ such that $\hat{d}_1\hat{d}_2=\hat{d}_2\hat{d}_3=0$ and provide a list 
relations needed to compute dimensions of $\ker(\hat{d}_2)$ and $\ker(\hat{d}_1)$ and confirm the remaining 
inclusions. \medskip 

(b) If $k=y$ is a $1$-vertex in a block of type V$_1$, say $\ba=\ve_i$, for $i\in\{1,\dots,q\}$, then 
the middle map $\hat{d}_2:\hat{P}_x\oplus \hat{P}_{y_i}\oplus \hat{P}_j\to \hat{P}_{y_i}$ is given by the 
matrix 
$$\vec{f(\alpha) & -'A_\alpha \\ -'A_{\ba} & \rho_i}.$$ 
To compute dimensions of $\ker(\hat{d}_2)$ and $\ker(\hat{d_1})$ we need to use the following relations 
in $\hat{\La}$: 
$$f(\alpha)\gamma g(\gamma)=0=\alpha f(\alpha)\gamma, \ \ve_j\rho_i^2, 
'A_\alpha\rho=0, \ \mbox{ and } \quad A'_{\eta_i}\bar{\gamma}=0, $$ 
The first two relations follow, since $\alpha f(\alpha)$ and $f(\alpha)\gamma=f(\alpha)f^2(\alpha)$ are 
contained in the second socle, so we get paths $f(\alpha)\gamma g(\gamma)$ and $\alpha f(\alpha)\gamma$, 
which belong to the socle, but are not cycles. Hence they must be zero. Next, observe that $'A_\alpha=A_{g(\alpha)}'$ 
is of length $\geqslant 3$, so $'A_\alpha\rho$ must be zero, because it contains a subpath 
$\gamma g(\gamma)\rho_i=\eta_i^*\ve_i\rho_i$, which is zero in $\hat{\La}$ (it is a relation in $R_2$). 
Similarily, the path $'A_{\ba}=A_{\eta_i}'$ has length $\geqslant 3$, therefore, we obtain that that 
the path $A_{\eta_i}'\bar{\gamma}$ is zero, because it contains a subpath of the form 
$g^{-1}(\gamma)\bar{\gamma}=g^{-1}(\gamma)f(g^{-1}(\gamma))$ lying in the second socle. \medskip

This finishes the second step in \textsc{Case I}. 

\medskip 

{\it Step 3}. In the last step of this case, we assume that both $x,y$ are $1$-regular in $Q$.  
Here there are three subcases, depending on the type of blocks containing $x,j$ and $y,k$ (up to labelling). 

(a) Suppose first that both $j,x$ and $y,k$ are contained in blocks of type $V_2$, equivalently, $j\neq x$ and 
$k\neq y$. In this case, also $j$ and $k$ are $1$-regular, and without loss of generality, we may assume that 
$\ba=\alpha_i$ and $\alpha=\alpha_j$, for some $i,j\in\{1,\dots,p\}$. \smallskip 

Using relations involving $\alpha_i\xi_i$ and $\alpha_j\xi_j$, we deduce as in Step 2, that the map 
$\hat{d}_2:\hat{P}_x\oplus\hat{P}_y\to \hat{P}_j\oplus\hat{P}_k$ given by the matrix 
$$\hat{M}=\Large{\vec{\xi_j & -'A_\alpha \\ -'A_{\ba} & \xi_i}}$$ 
satisfies $\hat{d}_2\hat{d}_3=\hat{d}_1\hat{d}_2=0$. 

Using the zero relations $\xi_j\delta_j\bar{\alpha}_j=\beta_i^*\nu_i\mu_i=0$ in $\hat{\La}$ and 
the identities $'{A}_{\alpha_j}\mu_i='{A}_{\alpha_i}\mu_j=0$ (see also Step 2), one can compute that the 
dimension of $\ker(\hat{d}_2)$ is $m_\alpha n_\alpha + m_{\ba}n_{\ba}-1$, which coincides with 
$\dim_K \ima(\hat{d}_3)=\dim_K\hat{P}_z-1$. Additionally, we deduce from the computed rank that 
$\dim_K \ima(\hat{d_2})$ is $2m_i+2m_j+1$. Using the zero relations in $\hat{I}$, one can check that 
$$\dim_K \ker(\hat{d}_1)=2m_j+2m_i+1,$$ 
which is exactly the dimension of $\ima(\hat{d}_2)$, so that the sequence $(\hat{\dagger})$ is exact. \medskip 

(b) Let one pair of vertices, say $j,x$, is contained in a block of type V$_2$, say $\alpha=\alpha_j$, 
$j\in\{1,\dots,p\}$, and $k=y$ in a block of type V$_1$, $\ba=\ve_i$, $i\in\{1,\dots,q\}$. Then the correct 
`middle map' $\hat{d}_2:\hat{P}_{d_j}\oplus \hat{P}_{y_i}\to \hat{P}_{c_j}\oplus\hat{P}_{y_i}$ is given by the following matrix $$\hat{M}=\Large{\vec{\beta_j\nu_j & -'A_\alpha \\ -'A_{\ba} & \rho_i}}.$$  
The relations in $\hat{I}$ needed to compute $\dim_K\ker(\hat{d}_2)$ and $\dim_K \ker(\hat{d}_1)$ are  
$$\beta_j\nu_j\delta_j, \ \beta_j^*\nu_j\mu_j, \ \eta_i^*\ve_i\rho_i, \ \rho_i\eta_i\bar{\ve}_i, \ 
\alpha_j\xi_j\mu_j, \ \ve_i\rho_i^2,$$ 
and $\beta_j\nu_j\mu_j=B_{\xi_j}=(\xi_j\mu_j)^{m_j}$ is in the socle.  

\medskip 

(c) If both $j,x$ and $y,k$ belong to blocks of type V$_1$, then $Q$ is the triangle quiver $Q^T$, which is 
excluded in this part of the proof. We omit this case, treated separately in the final part \ref{subsec:6.5}.   

\bigskip 

\textsc{Case II.} Eventually, it remains to consider the case when $z$ is a $1$-vertex of $Q$. Then 
$\hat{P}_z^+$ has dimension $m_\alpha n_\alpha+2m$ or $m_\alpha n_\alpha+ m+1$, where $m=m_i$ or $m'_i$. In this 
case, both $j$ and $x$ are $2$-regular vertices of $Q$, so in particular, we have 
$\dim_K \hat{P}_j=\dim_K P_j$ and $\dim_K\hat{P}_x=\dim_KP_x$ (see Section \ref{hat:bases}). Here, we 
have two possibilities, depending on the type of block containing $z$. \medskip 

{\it Step 1.} Assume first that $z$ belongs to a block $B_i$ of type V$_2$, $z=c_i$; for $z=d_i$ computations are 
analogous. In this case, we have $\hat{d}_1=[\beta_i \ \xi_i]$ and $\hat{d}_3=\vec{\alpha_i \\ \mu_i}$ . \smallskip 

Using the two commutativity relations in $R_1$ starting from $z=c_i$ we can define the map 
$$\hat{d}_2: \hat{P}_{a_i}\oplus\hat{P}_{d_i} \to \hat{P}_{b_i}\oplus \hat{P}_{d_i}$$ 
given by the matrix $\vec{-{'A}_{\beta_i} & \nu_i \\ \delta_i & -(\mu_i\xi_i)^{m_i-1}}$ satisfying 
$\hat{d}_1\hat{d}_2=\hat{d}_2\hat{d}_3=0$. \medskip 

Now, recall that $A_{\bar{\nu_i}}'\bar{\alpha}_i='A_{\beta_i}\bar{\gamma}=0$ as in Step 1a) in \textsc{Case I} 
and all paths of length $3$ around the block (in $Q$) containing $z$ are zero in $\hat{\La}$, by the assumption 
on weights; see also \ref{hat:construction}. These together with the zero relations in $R_2$ imply that 
$$\dim_K \ker(\hat{d}_2)=m_{\beta_i} n_{\beta_i} + 2m_i-1,$$ 
which coincides with $\dim_K\hat{P}_z-1=\dim_K \ima(\hat{d}_3)$, and 
$$\dim_K\ker(\hat{d}_1)=m_{\nu_i} n_{\nu_i} + m_{\ba_i}n_{\ba_i}+1,$$ 
which is exactly $\dim_K \ima(\hat{d}_2)$. \medskip  

{\it Step 2.} Finally, let $z=y_i$, $i\in\{1,\dots,q\}$, belong to a block of type V$_1$. In this case, we have 
$\alpha=\eta_i$, $\gamma=\ve_i$, and $\ba=\gamma^*=\rho_i$.  The map 
$\hat{d}_2:\hat{P}_{y_i}\oplus \hat{P}_{x_i}\to \hat{P}_{x_i}\oplus \hat{P}_{y_i}$ is given by the following 
matrix 
$$\vec{ \ve_i & -'A_{\alpha} \\  -\rho_i^{m'_i-2} & \eta_i }$$  
This is the correct `middle map', because of the relations $\ve_i\rho_i^2=\rho_i^2\eta_i=0$ and 
$A'_{\bar{\ve}_i}\bar{\ve_i}=0$. The latter relation follows as in part b) above, since we have 
a proper subpath $\eta_i^* f(\eta_i^*)$ of $A'_{\bar{\ve}_i}\bar{\ve}_i$, which belongs to the 
second socle of $\hat{\La}$.

\subsection{Tameness}\label{hat:tame} 

In this part, we will show that $\hat{\La}$ is also tame. General strategy is to find a degeneration of 
$\hat{\La}$ to a (symmetric) special biserial algebra $\hat{B}$, which is known to be tame (see \cite{WaWa}), 
and then $\hat{\La}$ is tame, due to results of Geiss \cite{Geiss}. \medskip 

To do so, we will define an algebraic family of algebras 
$\hat{\La}(t)$, $t\in K$, in the varitey $\alg_d(K)$ of $d$-dimensional $K$-algebras, such that for any arrow 
$\alpha\in Q_1$, the following relations hold in $\hat{\La}(t)=K\hat{Q}/\hat{I}(t)$:   
\begin{enumerate}
\item[(1)] $\alpha f(\alpha)=c_{\ba} t^{v(\alpha)} A_{\ba}$, where $v(\alpha)\in\bN_{\geqslant 1}$, 
\item[(2)] $c_{\alpha} B_{\alpha}=c_{\ba}B_{\ba}$, and 
\item[(3)] all zero relations from $I$. 
\end{enumerate} 

We note that the permutation $g$ is defined on $Q$ and it is extended to $\hat{Q}$ by adding orbits of length 
$2$ or $1$ containing arrows $\xi_i,\mu_i$ or $\rho_j$. On the other hand, permutation $f$ is defined only 
partially on $Q$ (not determined for arrows with $1$-regular target), but it can be extended to a permutation 
of arrows in $\hat{Q}$. Then $\hat{Q}$ is a $2$-regular quiver, and by construction, we have two permutations 
$f$ and $g=\bar{f}$ of $\hat{Q}_1$, such that $f^3$ is identity on $\hat{Q}_1$, and for any $\alpha\in\hat{Q}_1$ the path 
$\alpha f(\alpha)$ is involved in a minimal relation of $\hat{I}$ of the form 
$$\alpha f(\alpha)=c_{\ba} A_{\ba}.\leqno{(q)}$$

Suppose for a moment that we have an algebraic family $\hat{\La}(t)$, $t\in K$, as above, with $v:\hat{Q}_1\to\bN$ 
with $v(\alpha)\geqslant 1$, for all $\alpha\in\hat{Q}_1$. It follows that algebra $\hat{\La}(0)$ is a degeneration 
of $\hat{\La}=\hat{\La}(1)$ (in geometric sense), 
so in particular, we have $\dim_K\hat{\La}(0)=\dim_K\hat{\La}$. But then for any $\alpha\in \hat{Q}_1$, there is 
a zero-relation $\alpha f(\alpha)\in \hat{I}(0)$, while $\alpha g(\alpha)\notin \hat{I}(0)$, since otherwise, 
we would get a dimension jump $\dim_K\hat{\La}(0)<\dim_K \hat{\La}$. In this case, we conclude that $\hat{\La}(0)$ 
is a special biserial algebra. \smallskip 

Now, we will construct an algebraic family with $v(\alpha)\geqslant 1$. Take a degree function 
$u:\cP_{\hat{Q}}\to\bN^*$ on $\hat{Q}$, that is, take arbitrary $u(\alpha)\geqslant 1$ for arrows 
$\alpha\in\hat{Q}_1$, and extended additively for paths $\mu=\alpha_1\dots\alpha_r\in\cP_{\hat{Q}}$: 
$u(\mu)=\sum_{i=1}^r u(\alpha_i)$. For a fixed $t\in K^*$, we define 
a homomorphism $f_t:\hat{\La}\to K\hat{Q}$ given for arrows $\alpha\in\hat{Q}_1$ as $f_t(\alpha)=t^{u(\alpha)}\alpha$, 
and extended to products and linear combinations. Then we have an induced isomorphism of algebras 
$\vf_t:\hat{\La}\to\hat{\La}(t)$, 
where $\hat{\La}(t):=K\hat{Q}/\hat{I}_t$, for $\hat{I}_t=f_t(\hat{I})$. In particular, every relation of type 
$(q)$ in $\hat{\La}$ gives the following generator of $\hat{I}_t$ 
$$\alpha f(\alpha)-c_{\ba}t^{v(\alpha)}A_{\ba}$$ 
where $v(\alpha)=u(A_{\ba})-u(\alpha)-u(f(\alpha))$. Therefore, it is sufficient to find a degree function 
$u$ such that the associated function $v$ satisfies $v(\alpha)\geqslant 1$, for all arrows $\alpha\in\hat{Q}_1$. \medskip 

Recall that function $m:\hat{Q}_1\to\bN^*$ is constant on $g$-orbits, so it induces a function 
$m:\cO(g)\to\bN^*$ on the set $\cO(g)$ of $g$-orbits in $\hat{Q}_1$. Following \cite[Section 6]{WSA-GV} we take 
$M=lcm\{m_{\cO}n_{\cO}; \ \cO\in\cO(g)\}$ and for any arrow we put $q(\alpha)=m_\alpha n_\alpha$. Then take 
the degree function $u$ defined by $u(\alpha)=M/q(\alpha)$, which is also constant on $g$-orbits, so the following holds 
$$u(B_{\ba})=m_{\ba}n_{\ba} u(\ba)=M.$$ 
On the other hand, we have $f^2(\alpha)=g^{-1}(\ba)$, so $u(B_{\ba})=u(A_{\ba}f^2(\alpha))=u(A_{\ba})+u(f^2(\alpha))$. 
As a result, with this $u$ we get 
$$v(\alpha)=u(A_{\ba})-u(\alpha)-u(f(\alpha))=M-u(\alpha)-u(f(\alpha))-u(f^2(\alpha))=$$ 
$$=M\left( 1-\frac{1}{q(\alpha)}-\frac{1}{q(f(\alpha))}-\frac{1}{q(f^2(\alpha))} \right).$$ \medskip 

We will show below that $v(\alpha)\geqslant 1$, for any arrow $\alpha\in\hat{Q}_1$. \smallskip 

Indeed, if $\alpha$ is one of the arrows $\xi_i,\mu_i$ or $\rho_i$, then $m_\alpha n_\alpha=2m_i$ or $m_i'$, so 
$m_\alpha n_\alpha\geqslant 3$, by the assumption. If $\alpha$ is different from $\xi_i,\mu_i,\rho_j$, then 
it is an arrow between $2$-regular vertices, and $\alpha$ is an arrow of $Q$. Therefore, either $t(\ba)$ 
is a $2$-vertex and then $\ba f(\ba)=c_\alpha A_\alpha$ with $A_\alpha$ a path along the 
$g$-orbit of $\alpha$ of length $m_\alpha n_\alpha -1$, or $\ba=\xi_i,\mu_i$ or $\rho_j$ and then also 
$\ba f(\ba)=c_\alpha A_\alpha$ (in $\hat{\La}$). In both cases, $A_\alpha$ is of length $\geqslant 2$, 
since otherwise, we would obtain that $\alpha$ belongs to the square of the Jacobson radical of $\La$ (or $\hat{\La}$), 
which is impossible for an arrow $\alpha\in Q_1\subset \hat{Q}_1$. It follows that $q(\alpha)=m_\alpha n_\alpha\geqslant 3$, 
for all arrows $\alpha\in\hat{Q_1}$. \smallskip 

Finally, observe that $v(\alpha)\leqslant 0$ if and only if 
$\frac{1}{q(\alpha)}+\frac{1}{q(f(\alpha))}+\frac{1}{q(f^2(\alpha))} \geqslant 1$. We know that all denominators 
are $\geqslant 3$, so this can happen only when $q(\alpha)=q(f(\alpha))=q(f^2(\alpha))=3$. If one of the 
orbits is of length $1$, say $n_\alpha=1$ (and $m_\alpha=3$), then $n_{f(\alpha)}=n_{f^2(\alpha)}=3$ and 
$\hat{Q}$ has two vertices, two loops and a $2$-cycle, which contradicts the assumptions ($|Q_0|\geqslant 3$). 
Hence, then $m_\alpha=m_{f(\alpha)}=m_{f^2(\alpha)}=1$ and all the arrows $\alpha,f(\alpha),f^2(\alpha)$ belong 
to $g$-cycles of length $3$. \smallskip 

Because $g=\bar{f}$ and the arrows $\alpha,f(\alpha),f^2(\alpha)$ belong to pairwise distinct $g$-cycles of 
length $3$, we infer that each $\eta\in\{\alpha,f(\alpha),f^2(\alpha)\}$ belongs to a triangle 
$(\eta \ g(\eta) \ g^2(\eta))$ with $t(g^2(\eta))=s(\eta)$ and $f(g^2(\eta))=\bar{\eta}$. By definition, 
we have $g(\eta)=\overline{f(\eta)}$ and $f(g(\eta))=\overline{g^2(\eta)}$. In particular, there exists 
an $f$-triangle consisting of arrows $\eta^*=g^2(f(\eta)):s(g^2(f(\eta)))\to t(\alpha)$, 
$f(\eta^*)=\bar{f(\eta)}=g(\eta):t(\eta)\to t(g(\eta))$, and 
$f^2(\eta^*)=f(g(\eta))=\overline{g^2(\eta)}: t(g(\eta))\to s(g^2(f(\eta)))$. It follows that each $\eta=\alpha,f(\alpha)$ 
or $f^2(\alpha)$ gives two `adjacent' $f$-orbits and $g$-orbits (all of length $3$) given as follows 
$$\xymatrix{ \ar[dd]_{g^2(\eta)} \ar[rr]^{f^2(\eta^*)} && \ar[ld]_{\eta^*}\\ & \ar[rd]^{f(\eta)} \ar[lu]^{g(\eta)} & \\ 
\ar[ru]^\eta && \ar[ll]^{f^2(\eta)} \ar[uu]_{g(f(\eta))} }$$ and hence $\hat{Q}$ consists of four $f$-orbits 
As a result, the above arrows exclude all arrows in $\hat{Q}$ (because it is $2$-regular), and then $\hat{Q}$ 
consists of four $g$-cycles of length $3$ (and four $f$-triangles), and consequently, it is the tetrahedral 
quiver mentioned in Section \ref{sec:HSA}. In this case, the quiver $\hat{Q}$ is a $2$-regular quiver with 
$6$ vertices and without $2$-cycles and loops. But then, by construction \ref{hat:construction}, we get that 
$Q=\hat{Q}$ is $2$-regular, a contradiction (we assumed that $Q$ has at least one $1$-vertex).

\subsection{Final conclusions} \label{subsec:6.5}

Summarizing considerations of \ref{hat:construction}-\ref{hat:tame}, it has been proven that for any 
GQT algebra $\La=KQ/I$ with biregular Gabriel quiver $Q$ (having at least three vertices and different from the 
spherical quiver and triangle quiver), there exist algebras $\hat{\La}=\hat{\La}(\hat{m})=K\hat{Q}/\hat{I}$ 
depending on chosen weights $\hat{m}=\{m_1,\dots,m_p,m_1',\dots,m'_q\}$ such that $\hat{\La}({\bf 1})\cong\La$, 
where ${\bf 1}=\{m_i=1,m_i'=2\}$, and for $m_i\geqslant 2,m'_i\geqslant 3$, the algebra $\hat{\La}$ is a tame 
symmetric algebra with simple modules periodic of period $4$. Moreover, in this case the Gabriel quiver of 
$\hat{\La}$ is $\hat{Q}$, and it is $2$-regular. In particular, it follows that $\hat{\La}$ is of infinite 
representation type \cite[see Section 10]{WSA}, and consequently, $\hat{\La}$ is a GQT algebra. \smallskip 

Now, applying the main result of \cite{AGQT}, we deduce that $\hat{\La}=\hat{\Lambda}(\hat{m})$ is a weighted 
surface algebra different from a singular tetrahedral algebra or it is the higher tetrahedral algebra. 
The latter cannot happen, since then $Q=\hat{Q}$ is a $2$-regular quiver, a contradiction. Therefore, 
we conclude that  $\Lambda$, by construction, is a weighted surface algebra, as required. 
Moreover, then $Q$ is obtained from a triangulation quiver $\hat{Q}$ by removing virtual arrows and 
$\hat{m}$ is the restriction of the weight function for $\hat{\Lambda}$ to virtual arrows). \medskip 

This proves the main theorem in case $Q$ is different from $Q^S$ or $Q^T,Q^{T'}$. The case $Q=Q^S$ is covered by 
Theorem \ref{prop:spherical}. Hence it remains to see that every GQT algebra $\La=KQ/I$ with $Q=Q^T$ or $Q^{T'}$ is 
a weighted surface algebra (different from the singular triangle algebra). In this case $Q$ is a glueing 
$$\xymatrix{\ar@{.>}@(lu, ld)[]_{\rho'}1\ar@<.35ex>[r]^{\eta'}&\ar@<.35ex>[l]^{\ve'} 2 \ar@<.35ex>[r]^{\ve} & 
\ar@<.35ex>[l]^{\eta} 3 }$$ 
of two blocks of type V$_1$ (then $Q=Q^T$), or a block of type II with a block of type $V_1$ (then $Q=Q^{T'}$). \medskip 

Observe first that in case $Q=Q^{T'}$, we can apply Lemma \ref{lemB}, and conclude that the paths 
$\ve\eta\ve,\eta\ve\eta$ are involved in minimal relations of the form $\rho^{(5)}_i,\rho^{(6)}_i$. Therefore, 
if $\ve\eta\ve\in J^4$, we may assume that also Lemma \ref{gfV1} holds (the only result using $Q\neq Q^{T'}$), 
and then, one can repeat arguments from  \ref{hat:construction}-\ref{hat:symmetric} and \ref{hat:tame}, and 
construct $\hat{\La}$, given by $\hat{Q}=Q\cup\{\rho:3\to 3\}$, with the same properties. To get periodicity of simple 
$\hat{\La}$-modules in this case, we need to choose three middle maps. Indeed, the simple module $S_1$ 
in $\mod\hat{\La}$ admits an exact sequence of the form $(\hat{\dagger})$ with the middle map 
$\hat{d}_2:\hat{P}_1\oplus\hat{P_2} \to \hat{P}_1\oplus\hat{P_2}$ given by the matrix  
$$\hat{M}=\vec{(\rho')^{m'-2} & \eta'\\ -\ve'& -'A_{\eta'}}.$$  
Analogous exact sequence can be constructed for the simple at $1$-regular vertex $3$. Finally, one can check that 
the simple module $S_2$ in $\mod\hat{\La}$ also admits an exact sequence of the form $(\hat{\dagger})$, where 
the middle map $\hat{d}_2:\hat{P}_3\oplus\hat{P_1} \to \hat{P}_3\oplus\hat{P_1}$ is defined by the matrix 
$$\hat{M}=\vec{\rho & -'A_{\ve} \\-'A_{\ve'} & \rho'}.$$  

As a result, in case $Q=Q^{T'}$ and $\ve\eta\ve\in J^4$, there is a GQT algebra $\hat{\La}=\hat{\La}(\hat{m})$ 
with $2$-regular Gabriel quiver such that $\hat{\La}({bf 1})\cong\La$. As before, we conclude that $\hat{\La}$, 
and hence $\La$, must be a weighted surface algebra given by a triangulation quiver with one virtual loop. \medskip 

Now, observe that arguments from the proofs of Lemma \ref{lem:gen} and \ref{lem:2vertex} can be applied 
also in case, when $\ve\eta\ve\notin J^4$. Indeed, if $Q=Q^{T'}$, but $\ve\eta\ve\notin J^4$, then it is 
sufficient to prove that $g^{-1}(\ve)\ve\eta\ve\in J^5$. It follows from the proof of Lemma \ref{gfV1} that 
$\ve\eta\ve\notin J^4$ holds only when $\ve\eta\ve=A_{\ve'}=\ve'\eta'\ve=\Theta_3(\ve')$ (modulo $J^4$), and then 
$$g^{-1}(\ve)\ve\eta\ve\equiv\eta'\ve'\eta'\ve,$$ 
modulo $J^5$. But $\eta'\ve'=\eta' f(\eta')\in J^3$ or it is $\rho^2$ modulo $^3$, and then 
$\eta'\ve'\eta'=\rho'\rho'\eta'\in J^4$, since $\rho'\eta'=\rho'f(\rho')\in J^3$. This completes the proof 
in case $Q=Q^{T'}$. \medskip 

Finally, assume that $Q=Q^T$. If both $\ve\eta\ve,\ve'\eta'\ve'\in J^4$, then one can repeat the above arguments 
and show $\La$ is a WSA given by the triangulation quiver with two virtual loops \cite[see Example 3.4]{WSA-GV}. 
If $\ve\eta\ve,\ve'\eta'\ve'\prec I$, but one is not in $J^4$, say $\ve\eta\ve$, then we can see that 
$\eta'\ve\eta\ve\in J^5$, by considering the radical quotients of $e_1\La$. 
With this, one can prove that projective $\La$-modules have bases as in Lemmas \ref{basis2reg} and \ref{basis1reg}, 
and then construction of $\hat{\La}$ can be repeated. We skip the details. \medskip 

It remains to consider the case when $\ve\eta\ve\nprec I$ or $\ve'\eta'\ve'\nprec I$. The second case is 
analogous, so we proceed only in the first. So assume $\ve\eta\ve\nprec I$, this is equivalent to $\eta\ve\eta \nprec I$ 
(see the proof of Lemma \ref{lemB}, or use the exact sequence for $S_1$). It follows from the arguments in the 
proof of Lemma \ref{lemB} that then $\eta'\ve\eta \prec I$. Since we have a square $(\ve' \ \eta' \ \ve \ \eta)$, 
we conclude from Lemma \ref{lem:3.7} that also the following rotations  
$$\ve\eta\ve', \ \eta\ve'\eta', \ \mbox{and } \ve'\eta'\ve$$
are involved in minimal relations (one could directly see this using the exact sequences for all simples 
$S_1,S_2$ and $S_3$). \medskip 

With this we can continue as before. Namely, we construct a basis for $e_2\La$ using the radical quotients, 
which is straightforward, by the above relations. On can see that $e_2\La$ has a basis consisting of the 
initial submonomials of $B_{\ve} = (\ve\eta)^{a}$ and $(\ve'\eta')^b= B_{\ve'}$ (excluding maybe $B_{\ve'}$). 
Similarly, one can show that $e_1\La$ has a basis consisting of initial submonomials of $B_{\eta'}=(\eta'\ve')^b$ 
together with $\eta'\ve$, and $e_3\La$ has a basis consisting of initial submonomials of $B_\eta=(\eta\ve)^a$ 
together with $\eta\ve'$. One can directly see from the shape of relations that $\La$ is a WSA given 
by a triangulation quiver $\hat{Q}$ being a gluing of two $f$-triangles $(\eta \ \ve' \ \ve'')$ and 
$(\eta' \ \ve \ \eta'')$ such that $\eta'',\ve''$ form a $2$-cycle of virtual arrows (and we have three 
$g$-cycles of length $2$). In particular, 
then $\La$ is a non-singular triangle algebra \cite[see Example 3.3]{WSA-GV}. Note that this gives precisely 
the algebra $Q(3A)_1$ from \cite[page 304]{E1}. \medskip 

Alternatively, one can treat $Q$ as a single block of type $V_2$, with the $2$-vertices $a$ and $b$ identified. 
Then apply construction of $\hat{\La}$ which then gives $\hat{Q}$ as above, and one may show directly that 
$\hat{\La}$ is a GQT algebra following the same strategy as in \ref{hat:bases}-\ref{hat:tame}, and then 
$\hat{\La}$ (so $\La$) must be the reuired WSA.  

\section*{Acknowledgements} 
This research was initiated during the visit of the first named author at the Faculty of Mathematics 
and Computer Science of the Nicolaus Copernicus University (November 2024) and continued during the 
program ``Research in Pairs" of MFO Oberwolfach (February-March 2025). Both the first and the last named author 
would like to thank the the MFO Oberwolfach for their support. Part of the results has been proven  
during the visit of the last named author at the Mathematical Institute in Oxford (April and November 2025). 
All the authors thank both the Mathematical Institute in Oxford and the Faculty of Mathematics and 
Computer Science in Toru\'n for their hospitality.


\begin{thebibliography}{99}


\bibitem{ASS}
  {I.~Assem, D.~Simson, A.~Skowro\'nski},
  {Elements of the Representation Theory of Associative Algebras 1:
  Techniques of Representation Theory},
  {London Mathematical Society Student Texts, vol. {65}},
  Cambridge University Press, Cambridge, 2006.

\bibitem{WSA-SD}
\textit{J. Bialkowski, K. Erdmann, A. Hajduk, A. Skowro\'{n}ski, K. Yamagata,}
Socle equivalences of weighted surface algebras. J. Pure and Appl. Algebra {\bf 226} (2022) 106886. 

\bibitem{CB}
\textit{W. Crawley-Boevey}, On tame algebras and BOC's. 
Proc. London Math. Soc. {\bf 56} (1988), 451--483.

\bibitem{Dro} {\it Y. A. Drozd}, Tame and wild matrix problems, in: Representation Theory II, in: Lecture Notes in Math., vol.832, Springer-Verlag, Berlin, Heidelberg, 1980, pp. 242--258. 

\bibitem{E1} 
\textit{K. Erdmann}, Blocks of tame representation type and related algebras. 
Springer Lecture Notes in Mathematics {\bf 1428} (1990). 

\bibitem{note} {\it K. Erdmann,} A note on representation-finite symmetric algebras. arXiv:2304.11877v1.

\bibitem{E25} {\it K. Erdmann, A. Skowyrski}, A note on spherical algebras. arXiv:2512.14224v1. 

\bibitem{EHS1} 
{\it K. Erdmann, A. Hajduk, A. Skowyrski}, Tame symmetric algebras of period four. Archiv der Math. {\bf 122} (2024), 249--264. 

\bibitem{EHS2} {\it K. Erdmann, A. Hajduk, A. Skowyrski}, Local structure of tame symmetric algebras of period four. 
Submitted to J. Algebra, arXiv:2411.01235[math.RT]. 

\bibitem{perconj} {\it K. Erdmann, A. Skowro\'nski}, Periodicity conjecture for blocks of group algebras, 
Colloq. Math. 138 (2015), 283--294. 

\bibitem{WSA}
\textit{K. Erdmann, A. Skowro\'{n}ski, } Weighted surface algebras.
J. Algebra {\bf 505} (2018), 490--558.

\bibitem{AGQT} 
\textit{K. Erdmann, A. Skowro\'{n}ski, } Algebras
of generalized quaternion type. Adv. Math. {\bf 349}(2019), 1036--1116,


\bibitem{WSA-GV} 
\textit{K. Erdmann, A. Skowro\'{n}ski, } Weighted surface algebras: general version. OWP-2019-07. 
J. Algebra {\bf 544} (2020), 170--227.


\bibitem{HTA} 
\textit{K. Erdmann, A. Skowro\'{n}ski, }
Higher tetrahedral algebras. Algebras and Representation Theory, {\bf 22}(2019), no. 2, 387--406.


\bibitem{HSA} 
\textit{K. Erdmann, A. Skowro\'{n}ski,}  Higher spherical algebras.
Archiv Math. {\bf 114} (2020), 25--39. 


\bibitem{AGDT}
	\textit{K. Erdmann, A. Skowro\'{n}ski, } Algebras
		of generalized dihedral type. Nagoya Math. J. {\bf 240}(2020), 181--236. 

\bibitem{WSA-corr}
\textit{K. Erdmann, A. Skowro\'{n}ski, } Weighted surface algebras: general version, Corrigendum.
		J. Algebra {\bf 569} (2021), 875--889.

\bibitem{BGA}
\textit{K. Erdmann, A. Skowro\'{n}ski,} From Brauer graph algebras to biserial weighted
		surface algebras. J. Algebraic Combinatorics {\bf 51} (2020), 51--88.


\bibitem{Geiss} {\it C. Geiss}, On degenerations of tame and wild algebras, Arch. Math. (Basel) {\bf 64} (1995), 11–16. 





\bibitem{HSS} 
\textit{T. Holm, A. Skowro\'{n}ski, A. Skowyrski,} Virtual mutations of weighted surface algebras. 
J. Algebra 619 (2023), 822--859.


\bibitem{Lnk} {\it M. Linckelmann}, The Block Theory of Finite Group Algebras vol. I, London Mathematical Society Student Text {\bf 91} (2018). Cambridge University Press.

\bibitem{Rin} \textit{C.M. Ringel,}  Tame Algebras. \  Representation Theory I,  Proceeding, ottawa, Carleton University, 1979, Springer Lecture Notes in Mathematics, vol. {\bf 831}, p. 137-287.

\bibitem{Sk}
\textit{A. Skowro\'{n}ski,}
		Selfinjective algebras: \ finite and tame type.  \ Contemp. Math. {\bf 406} (2006), 169--238.


\bibitem{Sky}
\textit{A. Skowyrski,}
Two tilts of higher spherical algebras. Algebr. Represent. Theory 25 (2021), 237--254.

\bibitem{SS}
	\textit{A. Skowyrski, A. Skowro\'{n}ski}, Generalized weighted surface algebras. arXiv:2106.15218v3.

\bibitem{SkY} 
	\textit{A. Skowro\'{n}ski, K. Yamagata,}
Frobenius algebras. I. Basic representation theory. EMS Textbooks in Mathematics. European Mathematical Society (EMS), Zürich, 2011. 

\bibitem{WaWa} {\it B. Wald, J. Waschb\"usch}, Tame biserial algebras, J. Algebra {\bf 95} (1985), 480--500. 

\end{thebibliography}
\end{document}